\newtheorem{Thm}{Theorem}[section]
\newtheorem{Thm*}{Theorem*}[section]
\newtheorem{Prop}[Thm]{Proposition}
\newtheorem{Lem}[Thm]{Lemma}
\newtheorem{Def}[Thm]{Definition}
\newenvironment{proof}[1][Proof]{\noindent\textbf{#1.} }{\ \rule{0.5em}{0.5em}}
\def\bC {\mathbf{C}}
\def\bE {\mathbf{E}}
\def\bF {\mathbf{F}}
\def\bN {\mathbf{N}}
\def\bP {\mathbf{P}}
\def\bR {\mathbf{R}}
\def\bZ {\mathbf{Z}}
\def\bz {\mathbf{z}}
\def\scrL{\mathscr{L}}
\def\fH {\mathfrak{H}}
\def\fS {\mathfrak{S}}
\def\fZ {\mathfrak{Z}}
\def\cA {\mathcal{A}}
\def\cB {\mathcal{B}}
\def\cD {\mathcal{D}}
\def\cE {\mathcal{E}}
\def\cF {\mathcal{F}}
\def\cH {\mathcal{H}}
\def\cK {\mathcal{K}}
\def\cL {\mathcal{L}}
\def\cN {\mathcal{N}}
\def\cP {\mathcal{P}}
\def\cS {\mathcal{S}}
\def\cX {\mathcal{X}}
\def\a {{\alpha}}
\def\de {{\delta}}
\def\g {{\gamma}}
\def\eps {{\epsilon}}
\def\ka {{\kappa}}
\def\l {{\lambda}}
\def\si {{\sigma}}
\def\om {{\omega}}
\def\Om {{\Omega}}
\def\rstr {{\big |}}
\def\indc {{\bf 1}}
\def\wto {{\rightharpoonup}}
\def\la {\langle}
\def\ra {\rangle}
\def \La {\bigg\langle}
\def \Ra {\bigg\rangle}
\def\d {{\partial}}
\def\grad {{\nabla}}
\def\Dlt {{\Delta}}
\def\bu {{\noindent$\bullet$ }}
\newcommand{\Div}{\operatorname{div}}
\newcommand{\Rot}{\operatorname{rot}}
\newcommand{\Dist}{\operatorname{dist}}
\newcommand{\DDist}{\operatorname{Dist}}
\newcommand{\Sign}{\operatorname{sign}}
\newcommand{\Supp}{\operatorname{supp}}
\newcommand{\Span}{\operatorname{span}}
\newcommand{\Det}{\operatorname{\hbox{det}}}
\newcommand{\Tr}{\operatorname{trace}}
\newcommand{\Ker}{\operatorname{Ker}}
\newcommand{\Lip}{\operatorname{Lip}}
\newcommand{\be}{\begin{equation}}
\newcommand{\ee}{\end{equation}}
\newcommand{\ba}{\begin{aligned}}
\newcommand{\ea}{\end{aligned}}
\newcommand{\lb}{\label}
\newcommand{\IM}{\operatorname{Im}}
\begin{document}

\frontmatter

\title{\textbf{On the Dynamics of Large Particle Systems in the Mean Field Limit}}

\author{\sc{Fran\c cois Golse}}

\date{{Ecole Polytechnique\\ Centre de Math\'ematiques Laurent Schwartz\\ 91128 Palaiseau Cedex, France }
\\
\vskip 8cm
\rightline{\textit{In memory of Seiji Ukai (1940-2012)}}
}

\maketitle


\tableofcontents


\mainmatter

\chapter[Mean Field Limit]{On the Dynamics of Large Particle Systems in the Mean Field Limit}

The general purpose of all the mean field equations considered below is to describe the dynamics of a very large number of identical particles, assuming 
that the interaction between particles is known exactly. For instance, at a temperature of $273$ K and a pressure of $1.01\cdot 10^5$ Pa, the number of 
molecules of any ideal gas to be found in a volume of $2.24\cdot 10^{-2}\,\mathrm{m}^3$ is the Avogadro number, i.e. $6.02\cdot 10^{23}$. This is typically 
what we have in mind when we think of a number of particles so large that the dynamics of each individual particle cannot be exactly determined. Thus, 
in order to be of any practical interest, these mean field models should be set on (relatively) low dimensional spaces. In any case, they should involve fewer
degrees of freedom (of the order of a few units) than the total number of degrees of freedom of the whole particle system. 

To be more precise: in classical mechanics, the number of degrees of freedom of a single point particle moving without constraint in the $d$-dimensional
Euclidean space $\bR^d$ is $d$. The \textit{single-particle phase space}\index{Single-particle phase space} is the set $\bR^d\times\bR^d$ of pairs of all 
possible positions and momenta of an unconstrained single point particle in $\bR^d$. 

For a system of $N$ identical point particles moving without constraint in the $d$-dimensional Euclidean space $\bR^d$, the number of degrees of freedom 
is therefore $dN$. The space $(\bR^d\times\bR^d)^N$ of $2N$-tuples of all possible positions and momenta of the $N$ point particles is the 
\textit{$N$-particle phase space}\index{$N$-particle phase space}. 

Thus, the laws of motion of classical mechanics (Newton's second law) written for each molecule of a monatomic gas enclosed in a container form a system 
of differential equations set on the $N$-particle phase space, where $N$ is the total number of gas molecules to be found in the container. With $N$ of the
order of the Avogadro number, this is of little practical interest. In the kinetic theory of gases, the evolution of the same gas is described by the Boltzmann
equation, an integro-differential equation set on the single-particle phase space. Although the kinetic theory of gases will not be discussed in these notes,
all the mean field limits considered below will involve the same reduction of the $N$-particle phase space to the single-particle phase space.

\smallskip
Here is a (by no means exhaustive) list of typical examples of mean field equations:

\smallskip
\noindent
(a) the particles are the ions and the electrons in a plasma; the interaction is the Coulomb electrostatic force;  in that case, the mean field equation is the
Vlasov-Poisson system of the kinetic theory of plasmas;

\smallskip
\noindent
(b) the particles are the nuclei and the electrons in a molecule; the interaction is again the Coulomb electrostatic force; the corresponding mean field model
in this context is the Hartree equation or the system of Hartree-Fock equations in atomic physics;

\smallskip
\noindent
(c) the particles are vortices in an incompressible fluid in space dimension $2$; the interaction is given by the Helmholtz potential; the corresponding mean
field model is the vorticity formulation of the Euler equations of incompressible fluid mechanics 

\smallskip
All these models are obtained as approximations of the system of equations of motion for each particle in the limit when the number of particles involved 
tends to infinity. Rigorous justifications of these approximations are based on various mathematical formalisms that are introduced and discussed below.

Excellent references on the issues discussed above are \cite{Spohn80,Spohn91,CIP}

\section[Examples in classical mechanics]{Examples of mean field models in classical mechanics}

In this section, we consider the examples mentioned above in which the motion of each particle is described in the formalism of classical mechanics, i.e.
examples (a) and (c). All these examples have a common structure, to be discussed later, which involves the Liouville equation recalled below.

\subsection{The Liouville equation}

The Liouville equation\index{Liouville equation} governs the evolution of the distribution function for a system of particles subject to an external force field. 

The notion of distribution function is fundamental in the kinetic theories of gases and plasmas, and more generally in statistical mechanics. It was introduced 
by Maxwell in one of his most famous articles\footnote{\textit{Illustrations of the Dynamical Theory of Gases}, Philosophical Magazine (1860); reprinted in 
``The Scientific Papers of James Clerk Maxwell'', edited by W.D. Niven, Cambridge University Press, 1890; pp. 377--409.}. 

The distribution function\index{Distribution function} of a system of identical point particles is $f\equiv f(t,x,v)$, that is the number density of particles that are 
located at the position $x$ and have instantaneous velocity $v$ at time $t$. In other words, the number of particles to be found at time $t$ in an infinitesimal 
volume $dxdv$ of the single-particle phase space centered at $(x,v)$ is $\simeq f(t,x,v)dxdv$. 

Assume that a particle located at the position $x$ with instantaneous velocity $v$ at time $t$ is subject to some external (or imposed) force field $F(t,x,v)$. 
As a consequence of Newton's second law of classical mechanics, the distribution function $f$ of the system of particles under consideration satisfies the
Liouville equation
$$
\d_tf+v\cdot\grad_xf+\tfrac1m\Div_v(F(t,x,v)f)=0\,,
$$
where $m>0$ is the particle mass. The Liouville equation is a partial differential equation (PDE) of order $1$, whose solution can be expressed by the 
method of characteristics.

To the PDE of order $1$ above, one associates the system of ordinary differential equations (ODE)
$$
\left\{
\ba
{}&\dot{X}=V\,,
\\
&\dot{V}=\tfrac1mF(t,X,V)\,,
\ea
\right.
$$
with the usual notation
$$
\dot{\phi}(t)=\frac{d\phi}{dt}(t)\,.
$$
These ODEs are referred to as the ``equations of characteristics''\index{Method of characteristics} for the Liouville equation. Denote by 
$t\mapsto(X(t,t_0,x,v),V(t,t_0,x,v))$ the solution of this ODE system such that
$$
X(t_0,t_0,x,v)=x\,,\quad V(t_0,t_0,x,v)=v\,;
$$
the map $(t,t_0,x,v)\mapsto(X(t,t_0,x,v),V(t,t_0,x,v))$ will be henceforth referred to as ``the characteristic flow''\index{Characteristic flow} associated to the 
Liouville equation above.

One immediately recognizes in the system of ODEs above the equations of motion of classical mechanics for a particle of mass $m$ subject to the force
field $F\equiv F(t,x,v)$ (the first equality being the definition of the velocity, while the second is Newton's second law).

Assume that the force field $F$ is such that the characteristic flow $(X,V)$ is globally defined (i.e. defined on $\bR\times\bR\times\bR^d\times\bR^d$). This
characteristic flow is used as follows to express the solution of the Cauchy problem
$$
\left\{
\ba
{}&\d_tf+v\cdot\grad_xf+\tfrac1m\Div_v(F(t,x,v)f)=0\,,\quad x,v\in\bR^d\times\bR^d\,,\,\,t\in\bR\,,
\\
&f\rstr_{t=0}=f^{in}\,.
\ea
\right.
$$
For each test function\footnote{For each topological space $X$ and each finite dimensional vector space $E$ on $\bR$, we designate by $C(X,E)$ the set 
of continuous functions defined on $X$ with values in $E$, and by $C_c(X,E)$ the set of functions belonging to $C(X,E)$ whose support is compact in $X$. 
For each $n,k\ge 1$, we denote by $C^k_c(\bR^n,E)$ the set of functions of class $C^k$ defined on $\bR^n$ with values in $E$ whose support is compact
in $\bR^n$. We also denote $C(X):=C(X,\bR)$, $C_c(X):=C_c(X,\bR)$ and $C^k_c(\bR^n):=C^k_c(\bR^n,\bR)$.} $\phi\in C^1_c(\bR^d\times\bR^d)$, and for 
each $t\in\bR$, one has
$$
\iint_{\bR^d\times\bR^d}f(t,x,v)\phi(x,v)dxdv=\iint_{\bR^d\times\bR^d}f^{in}(x,v)\phi((X,V)(t,0,x,v))dxdv\,,
$$
and this completely determines the distribution function $f(t,x,v)$.

\noindent
\textbf{Exercise:} Deduce from the equality above an explicit formula giving $f(t,x,v)$ in terms of $f^{in}$ and of the characteristic flow $(X,V)$.

\smallskip
For a concise discussion of the method of characteristics (and a solution of the exercise above), see chapter 1, section 1, in \cite{BouGolPul}.

\subsection{The Vlasov-Poisson system}\lb{SS-VP}

Our first example of a mean field kinetic model is the Vlasov-Poisson system\index{Vlasov-Poisson system} used in plasma physics. 

Consider a system of identical point particles with mass $m$ and charge $q$. The electrostatic (Coulomb) force exerted on any such particle located at
the position $x\in\bR^3$ by another particle located at the position $y\not=x$ is
$$
\frac{q^2}{4\pi\eps_0}\frac{x-y}{|x-y|^3}\,,
$$
where $\eps_0$ is the dielectric permittivity of vacuum. This force is indeed repulsive, as it is of the form $\l(x-y)$ with $\l>0$.

More generally, the electrostatic force exerted on a particle located at the position $x$ by a cloud of particles with number density $\rho(t,y)$ (which means 
that, at time $t$, approximately $\rho(t,y)dy$ particles are to be found in any infinitesimal volume element $dy$ centered at $y$) is
$$
F(t,x):=\frac{q^2}{4\pi\eps_0}\int_{\bR^3}\frac{x-y}{|x-y|^3}\rho(t,y)dy\,.
$$

Recall that
$$
G(x)=\frac1{4\pi|x|}\,,\quad x\in\bR^3\setminus\{0\}
$$
is the solution of 
$$
\left\{
\ba
{}&-\Dlt G=\de_0\quad&&\hbox{ in }\cD'(\bR^3)\,,
\\
&\,\,G(x)\to 0&&\hbox{ as }|x|\to\infty\,,
\ea
\right.
$$
where $\de_0$ designates the Dirac measure at the origin.

Thus the electrostatic force $F$ is given by
$$
F(t,x)=qE(t,x)\,,
$$
where $E\equiv E(t,x)$ is the electric field, i.e.
$$
E(t,x)=-\grad_x\phi(t,x)
$$
with electrostatic potential $\phi$ given by
$$
\phi(t,\cdot)=\frac{q}{\eps_0}G\star\rho(t,\cdot)\,.
$$
In particular
$$
\ba
-\Dlt\phi(t,\cdot)&=-\frac{q}{\eps_0}\Dlt(G\star\rho(t,\cdot))=-\frac{q}{\eps_0}(\Dlt G)\star\rho(t,\cdot)
\\
&=\frac{q}{\eps_0}\de_0\star\rho(t,\cdot)=\frac{q}{\eps_0}\rho(t,\cdot)\,.
\ea
$$

The Vlasov-Poisson system describes the motion of a system of identical charged point particles accelerated by the electrostatic force created by their
own distribution of charges --- referred to as the ``self-consistent electrostatic force''. It consists of the Liouville equation governing the evolution of the
distribution function, coupled to the Poisson equation satisfied by the self-consistent electrostatic potential as follows: 
$$
\left\{
\ba
{}&(\d_t+v\cdot\grad_x)f(t,x,v)-\tfrac{q}{m}\grad_x\phi(t,x)\cdot\grad_vf(t,x,v)=0\,,
\\	\\
&-\Dlt_x\phi(t,x)=\tfrac1{\eps_0}q\rho_f(t,x)\,,
\\	\\
&\rho_f(t,x)=\int_{\bR^3}f(t,x,v)dv\,.
\ea
\right.
$$

\smallskip
\noindent
\textbf{Exercise:} Let $f\equiv f(t,x,v)$ and $\phi\equiv\phi(t,x)$ be a solution of the Vlasov-Poisson system. To avoid technicalities, we assume that $f$
and $\phi$ belong to $C^\infty(\bR_+\times\bR^3\times\bR^3)$ and $C^\infty(\bR_+\times\bR^3)$ respectively, and that, for each $t\ge 0$, the functions
$(x,v)\mapsto f(t,x,v)$ and $x\mapsto\phi(t,x)$ belong to the Schwartz class\footnote{For each $n\ge 1$, the Schwartz class $\cS(\bR^n)$ is the set of
real-valued $C^\infty$ functions defined on $\bR^n$ all of whose partial derivatives are rapidly decreasing at infinity:
$$
\cS(\bR^n):=\{f\in C^\infty(\bR^n)\hbox{ s.t. }|x|^m\d^\a f(x)\to 0\hbox{ as }|x|\to\infty\hbox{Ê for all }m\ge 1\hbox{  and }\a\in\bN^n\}\,.
$$} 
$\cS(\bR^3\times\bR^3)$ and $\cS(\bR^3)$ respectively.

\noindent
1) Prove that
$$
\ba
\d_t\rho_f(t,x)+\Div_x\int_{\bR^3}vf(t,x,v)dv=0&\,,
\\
\d_t\int_{\bR^3}mvf(t,x,v)dv+\Div_x\int_{\bR^3}mv^{\otimes 2}f(t,x,v)dv&
\\
+q\rho_f(t,x)\grad_x\phi(t,x)=0&\,,
\\
\d_t\int_{\bR^3}\tfrac12m|v|^2f(t,x,v)dv+\Div_x\int_{\bR^3}v\tfrac12m|v|^2f(t,x,v)dv&
\\
+\grad_x\phi(t,x)\cdot\int_{\bR^3}qvf(t,x,v)dv=0&\,.
\ea
$$
These three equalities are respectively the local conservation laws of mass, momentum and energy\index{Local conservation laws of mass, momentum 
and energy}.

\noindent
2) Prove the global conservation of mass\index{Global conservation of mass, momentum and energy} (or equivalently of the total number of particles) 
$$
\frac{d}{dt}\iint_{\bR^3\times\bR^3}f(t,x,v)dxdv=0\,.
$$
3) Prove that, for each $\psi\in C^2(\bR^3)$, one has
$$
\Dlt\psi\grad\psi=\Div((\grad\psi)^{\otimes 2})-\tfrac12\grad|\grad\psi|^2
$$
and conclude that
$$
\frac{d}{dt}\iint_{\bR^3\times\bR^3}mvf(t,x,v)dxdv=0\,.
$$
(This is the global conservation of momentum).

\noindent
4) Prove the global conservation of energy:
$$
\frac{d}{dt}\left(\iint_{\bR^3\times\bR^3}\tfrac12m|v|^2f(t,x,v)dxdv+\tfrac12\eps_0\int_{\bR^3}|\grad_x\phi(t,x)|^2dx\right)=0\,.
$$
(Hint: the term
$$
\int_{\bR^3}\phi(t,x)\Div_x\left(\int_{\bR^3}qvf(t,x,v)dv\right)dx
$$
can be simplified by using the result in question 1).)

The Vlasov-Poisson system above is written in the case of a single species of identical charged particles. In reality, a plasma is a globally neutral system, 
and therefore involves many different species of particles. Denoting by $f_k\equiv f_k(t,x,v)$ for $k=1,\ldots,M$ the distribution function of the $k$th species in 
a system of $M$ different kinds of particles, the Vlasov-Poisson describes the motion of this system of particles submitted to the self-consistent electrostatic 
force resulting from the distribution of charges of all the particles in that system:
$$
\left\{
\ba
{}&(\d_t+v\cdot\grad_x)f_k(t,x,v)-\tfrac{q_k}{m_k}\grad_x\phi(t,x)\cdot\grad_vf_k(t,x,v)=0\,,\quad k=1,\ldots,M\,,
\\	\\
&-\Dlt_x\phi(t,x)=\tfrac1{\eps_0}\sum_{k=1}^Mq_k\rho_k(t,x)\,,
\\	\\
&\rho_k(t,x)=\int_{\bR^3}f_k(t,x,v)dv\,.
\ea
\right.
$$
Here $m_k$ and $q_k$ designate respectively the mass and the charge of particles of the $k$th species. In practice, considering systems of particles of
different species in the Vlasov-Poisson system does not involve additional mathematical difficulties. Therefore, we shall  consider only the (unphysical)
case of a single species of particles in our mathematical discussion of this system.

\smallskip
There is a huge literature on the Vlasov-Poisson system, which is of considerable importance in plasma physics. The global existence and uniqueness 
of classical solutions of the Cauchy problem for the Vlasov-Poisson system was obtained by Ukai-Okabe \cite{UkaiOka} in the $2$-dimensional case, and
independently by Lions-Perthame \cite{LionsPerth} and Pfaffelmoser \cite{Pfaffel} in the $3$-dimensional case. For a presentation of the mathematical 
analysis of this system, the interested reader is referred to \cite{Glassey,BouGolPul,ReinEVEQ}.

\subsection{The Euler equation for two-dimensional incompressible fluids}

The Euler equation for an incompressible fluid\index{Euler equation} with constant density (set to $1$ in the sequel without loss of generality) governs the 
evolution of the unknown velocity field $u\equiv u(t,x)\in\bR^2$\index{Velocity field} and of the unknown pressure field $p\equiv p(t,x)\in\bR$ in the fluid, 
assumed to fill the Euclidean plane $\bR^2$. It takes the form
$$
\d_tu(t,x)+(u\cdot\grad_x)u(t,x)+\grad_xp(t,x)=0\,,\quad\Div_xu(t,x)=0\,.
$$
The pressure field $p$ can be viewed as the Lagrange multiplier associated to the constraint $\Div_xu=0$. 

There is another formulation of the Euler equation in which the pressure field is eliminated. To the velocity field $u\equiv u(t,x)\in\bR^2$, one associates
its scalar vorticity field $\om\equiv\om(t,x)$\index{Vorticity field} defined as follows
$$
\om(t,x):=\d_{x_1}u_2(t,x)-\d_{x_2}u_1(t,x)\,.
$$
The vorticity field satisfies
$$
\d_t\om+\Div_x(u\om)=0\,.
$$
One can check that the Euler equation is (formally) equivalent to the system\index{Vorticity formulation Euler's equation}
$$
\left\{
\ba
{}&\d_t\om+\Div_x(u\om)=0\,,
\\
&\Div_xu=0\,,
\\
&\om=\Div_x(Ju)\,,
\ea
\right.
$$
where $J$ designates the rotation of an angle $-\tfrac{\pi}2$:
$$
J=\left(\begin{matrix} 0\,\,\,&1\\-1&0\end{matrix}\right)
$$

There is yet another formulation of this system, where the velocity field $u$ is represented in terms of a stream function. Indeed, since $\Div_xu=0$, there
exists a stream function $\phi\equiv\phi(t,x)$\index{Stream function} such that
$$
u(t,x)=J\grad_x\phi(t,x)\,.
$$
The vorticity field $\om$ is related to the stream function $\phi$ as follows:
$$
\om=\Div_x(Ju)=\Div_x(J^2\grad_x\phi)=-\Dlt_x\phi\,,
$$
so that the vorticity formulation of the Euler equation becomes
$$
\left\{
\ba
{}&\d_t\om+\Div_x(\om J\grad_x\phi)=0\,,
\\
&-\Dlt_x\phi=\om\,.
\ea
\right.
$$
In this last system, the unknown is the pair $(\om,\phi)$; once the stream function $\phi$ is known, the velocity field $u$ is obtained as the spatial gradient
of $\phi$ rotated of an angle $-\tfrac{\pi}2$.

There is an obvious analogy between this formulation of the Euler equation and the Vlasov-Poisson system: the vorticity field $\om$ is the analogue of the
distribution function $f$ in the Vlasov-Poisson system, the stream function is the analogue of the electrostatic potential in the Vlasov-Poisson system, and
the velocity field $u$ is the analogue of the electric field in the Vlasov-Poisson system. There is however a subtle difference between both systems: the
equations of characteristics associated to the vorticity formulation of the Euler equations are
$$
J\dot{X}=-\grad_x\phi(t,X)\,,
$$
while those corresponding with the Vlasov-Poisson system are
$$
\ddot{X}=-\frac{q}m\grad_x\phi(t,X)\,,
$$
(after eliminating $V=\dot{X}$). The first ODE system is of order $1$, while the second is of order $2$, because the gradient of the solution of the Poisson
equation is a velocity field in the vorticity formulation of the Euler equation, while it is an acceleration field in the case of the Vlasov-Poisson system.

\smallskip
An excellent reference on the Euler equations of incompressible fluids and on issues related to the mean field limit is \cite{MarchioPulvi}.

\subsection{The Vlasov-Maxwell system}

Observe that magnetic effects are not taken into account in the Vlasov-Poisson system. Yet charged particles in motion generate both an electric and a
magnetic field; on the other hand, the electric and magnetic fields both act on charged particles via the Lorentz force. Studying magnetized plasmas 
(such as tokamak plasmas, or the solar wind, for instance) requires using the Vlasov-Maxwell, instead of the Vlasov-Poisson system.

The unknown in the Vlasov-Maxwell system is the triple $(f,E,B)$, where $E\equiv E(t,x)\in\bR^3$ and $B\equiv B(t,x)\in\bR^3$ are respectively the electric
and the magnetic fields, while $f\equiv f(t,x,\xi)$ is the particle distribution function. Notice that the velocity variable $v$ is replaced with the momentum $\xi$
in the distribution function. In other words, $f(t,x,\xi)$ is the phase space density of particles located at the position $x$ at time $t$, with momentum $\xi$. The 
momentum and the velocity are related as follows. The relativistic energy of a particle of mass $m$ with momentum $\xi$ is 
$$
e(\xi):=\sqrt{m^2c^4+c^2|\xi|^2}\,,
$$
where $c$ is the speed of light (notice that $e(0)=mc^2$, the energy at rest of a particle with mass $m$). Then
$$
v(\xi)=\grad e(\xi)=\frac{c^2\xi}{\sqrt{m^2c^4+c^2|\xi|^2}}\,.
$$

The (relativistic) Vlasov-Maxwell system\index{Vlasov-Maxwell system} takes the form
$$
\left\{
\ba
{}&\d_tf+v(\xi)\cdot\grad_xf+q(E+v(\xi)\times B)\cdot\grad_\xi f=0\,,
\\	\\
&\Div_xB=0\,,\qquad\qquad\d_tB+\Rot_xE=0\,,
\\	\\
&\Div_xE=\tfrac1{\eps_0}q\rho_f\,,\qquad\tfrac1{c^2}\d_tE-\Rot_xB=-\mu_0qj_f\,,
\\	\\
&\rho_f=\int_{\bR^3}fd\xi\,,\qquad\quad\,\,j_f=\int_{\bR^3}v(\xi)fd\xi\,.
\ea
\right.
$$
The first equation in this system is the relativistic Liouville equation governing $f$; the term
$$
q(E(t,x)+v(\xi)\times B(t,x))
$$
is the Lorentz force\index{Lorentz force} field at time $t$ and position $x$ exerted by the electromagnetic field $(E,B)$ on a particle with charge $q$ and 
momentum $\xi$. The second and third equations are respectively the equation expressing the absence of magnetic monopoles and the Faraday equation, 
while the fourth and fifth equations are respectively the Gauss equation (as in electrostatics) and the Maxwell-Amp\`ere equation. Since the source terms
in the Maxwell system are the charge density $q\rho_f$ and the current density $qj_f$, the Lorentz force in the Liouville equation is the self-consistent 
Lorentz force, i.e. the electromagnetic force generated by the motion of the charged particles accelerated by this force itself.

For the same reason as in the case of the Vlasov-Poisson system, the case of a single species of charged particles is somewhat unrealistic. Physically 
relevant models in plasma physics involve different species of particles so as to maintain global neutrality of the particle system.

\smallskip
\noindent
\textbf{Exercise:} Following the discussion in the previous section, write the relativistic Vlasov-Maxwell system for a system of $M$ species of particles 
with masses $m_k$ and charges $q_k$, for $k=1,\ldots,M$. Write the local conservation laws of mass, momentum and energy for the resulting system,
following the analogous discussion above in the case of the Vlasov-Poisson system.

\smallskip
For more information on the Vlasov-Maxwell system, see \cite{Glassey,BouGolPul,ReinEVEQ}.

\section[General formalism in classical mechanics]{A general formalism for mean field limits in classical mechanics}

We first introduce a formalism for mean field limits in classical mechanics that encompasses all the examples discussed above.

Consider a system of $N$ particles, whose state at time $t$ is defined by phase space coordinates $\hat z_1(t),\ldots,\hat z_N(t)\in\bR^d$. For instance,
$z_j$ is the position $x_j$ of the $j$th vortex center in the case of the two dimensional Euler equations for incompressible fluids, and the phase space 
dimension is $d=2$. In the case of the Vlasov-Poisson system, the phase space is $\bR^3\times\bR^3\simeq\bR^6$, so that $d=6$, and $z_j=(x_j,v_j)$,
where $x_j$ and $v_j$ are respectively the position and the velocity of the $j$th particle.

The interaction between the $i$th and the $j$th particle is given by $K(\hat z_i,\hat z_j)$, where 
$$
K:\,\bR^d\times\bR^d\to\bR^d
$$
is a map whose properties will be discussed below.

The evolution of $\hat z_1(t),\ldots,\hat z_N(t)\in\bR^d$ is governed by the system of ODEs
$$
\frac{d\hat z_i}{dt}(t)=\sum_{j=1\atop j\not=i}^NK(\hat z_i(t),\hat z_j(t))\,,\quad i,j=1,\ldots,N\,.
$$

\smallskip
\noindent
\textbf{Problem:} to describe the behavior of $\hat z_1(t),\ldots,\hat z_N(t)\in\bR^d$ in the large $N$ limit and in some appropriate time scale.

\smallskip
First we need to rescale the time variable, and introduce a new time variable $\hat t$ so that, in new time scale, the action on any one of the $N$ particles
due to the $N-1$ other particles is of order $1$ as $N\to+\infty$\index{Mean-field scaling}. In other words, the new time variable $\hat t$ is chosen so that
$$
\frac{d\hat z_i}{d\hat t}=O(1)\hbox{ for each }i=1,\ldots,N\hbox{ as }N\to\infty\,.
$$

The action on the $i$th particle of the $N-1$ other particles is
$$
\sum_{j=1\atop j\not=i}^NK(\hat z_i,\hat z_j)\,,
$$
and it obviously contains $N-1$ terms of order $1$ (assuming each term $K(\hat z_i,\hat z_j)$ to be of order $1$, for instance). Set $\hat t=t/N$, then
$$
\frac{d\hat z_i}{d\hat t}=\frac1N\sum_{j=1\atop j\not=i}^NK(\hat z_i,\hat z_j)\,.
$$

From now on, we drop hats on all variables and consider as our starting point the rescaled problem
$$
\dot{z_i}(t)=\frac1N\sum_{j=1\atop j\not=i}^NK(z_i(t),z_j(t))\,,\qquad i=1,\ldots,N\,.
$$

\smallskip
At this point, we introduce an important assumption on the interaction kernel: the action of the $j$th particle on the $i$th particle must exactly balance the 
action of the $i$th particle on the $j$th particle. When the interaction is a force, this is precisely Newton's third law of mechanics. Thus we assume that the
interaction kernel satisfies
$$
K(z,z')=-K(z',z)\,,\qquad z,z'\in\bR^d\,.
$$

We have assumed here that the interaction kernel $K$ is defined on the whole $\bR^d\times\bR^d$ space; in particular, the condition above implies that
$K$ vanishes identically on the diagonal, i.e.
$$
K(z,z)=0\,,\qquad z\in\bR^d\,.
$$
Hence the restriction $j\not=i$ can be removed in the summation that appears on the right hand side of the ODEs governing the $N$-particle dynamics:
since $K(z_i(t),z_i(t))=0$ for all $i=1,\ldots,N$, one has
$$
\dot{z_i}(t)=\frac1N\sum_{j=1}^NK(z_i(t),z_j(t))\qquad i=1,\ldots,N\,.
$$

\smallskip
At this point, we can explain the key idea in the mean field limit: if the points $z_j(t)$ for $j=1,\ldots,N$ are ``distributed at time $t$ under the probability 
measure $f(t,dz)$'' in the large $N$ limit, then, one expects that
$$
\frac1N\sum_{j=1}^NK(z_i(t),z_j(t))\to\int_{\bR^d}K(z_i(t),z')f(t,dz')\qquad\hbox{ as }N\to+\infty\,.
$$
This suggests replacing the $N$-particle system of differential equations with the single differential equation 
$$
\dot{z}(t)=\int_{\bR^d}K(z(t),z')f(t,dz')\,.
$$
Here $f(t,dz)$ is unknown, as is $z(t)$, so that it seems that this single differential equation is insufficient to determine both these unknowns.

But one recognizes in the equality above the equation of characteristics for the mean field PDE\index{Mean field PDE}
$$
\d_tf+\Div_z(f\cK f)=0\,,
$$
where the notation $\cK$ designates the integral operator defined by the formula
$$
\cK f(t,z):=\int_{\bR^d}K(z,z')f(t,dz')\,.
$$
Now, this is a single PDE (in fact an integro-differential equation) for the single unknown $f$.

A priori $f$ is a time dependent Borel probability measure on $\bR^d$, so that the mean field PDE is to be understood in the sense of distributions on
$\bR^d$. In other words, 
$$
\frac{d}{dt}\int_{\bR^d}\phi(z)f(t,dz)=\int_{\bR^d}\cK f(t,z)\cdot\grad\phi(z)f(t,dz)
$$
for each test function\footnote{For each topological space $X$ and each finite dimensional vector space $E$ on $\bR$, we denote by $C_b(X,E)$ the 
set of continuous functions defined on $X$ with values in $E$ that are bounded on $X$. For each $n,k\ge 1$, we denote by $C^k_b(\bR^n,E)$ the set 
of functions of class $C^k$ defined on $\bR^n$ with values in $E$ all of whose partial derivatives are bounded on $\bR^n$: for each norm $|\cdot|_E$ 
on $E$, one has
$$
C^k_b(\bR^n,E):=\{f\in C^k(\bR^n,E)\hbox{ s.t. }\sup_{x\in\bR^n}|\d^\a f(x)|_E<\infty\hbox{Ê for each }\a\in\bN^n\}\,.
$$
We also denote $C_b(X):=C_b(X,\bR)$ and $C^k_b(\bR^n):=C^k_b(\bR^n,\bR)$.} $\phi\in C^1_b(\bR^d)$.

\smallskip
A very important mathematical object in the mathematical theory of the mean field limit is the empirical measure, which is defined below.

\begin{Def}
To each $N$-tuple $Z_N=(z_1,\ldots,z_N)\in(\bR^d)^N\simeq\bR^{dN}$, one associates its empirical measure\index{Empirical measure}
$$
\mu_{Z_N}:=\frac1N\sum_{j=1}^N\de_{z_j}\,.
$$
\end{Def}

\smallskip
The empirical measure of a $N$-tuple $Z_N\in(\bR^d)^N$ is a Borel probability measure on $\bR^d$. As we shall see in the next section, the $N$-tuple
$$
t\mapsto Z_N(t)=(z_1(t),\ldots,z_N(t))
$$ 
is a solution of the $N$-particle ODE system
$$
\dot{z}_i(t)=\frac1N\sum_{j=1}^NK(z_i(t),z_j(t))\,,\quad i=1,\ldots,N
$$
if and only if the empirical measure $\mu_{Z_N(t)}$ is a solution of the mean field PDE
$$
\d_t\mu_{Z_N(t)}+\Div_z(\mu_{Z_N(t)}\cK\mu_{Z_N(t)})=0\,.
$$

\smallskip
We conclude this section with a few exercises where the reader can verify that the formalism introduced here encompasses the two main examples of 
mean-field theories presented above, i.e. the two dimensional Euler equation and the Vlasov-Poisson system.

\smallskip
\noindent
\textbf{Exercise:} 

\noindent
1) Compute $\Dlt\ln|x|$ in the sense of distributions on $\bR^2$ (answer: $2\pi\de_0$).

\noindent
2) Define
$$
K(x,x'):=-\tfrac1{2\pi}\frac{J(x-x')}{|x-x'|^2}\,,\quad x\not=x'\in\bR^2\,,
$$
where $J$ designates the rotation of an angle $-\tfrac{\pi}2$:
$$
J=\left(\begin{matrix} 0\,\,\,&1\\-1&0\end{matrix}\right)
$$
For each $\om\equiv\om(t,x)$ belonging to $C^1_b(\bR_+\times\bR^2)$ such that $\Supp(\om(t,\cdot))$ is compact for each $t\ge 0$, prove that
the vector field $u$ defined by 
$$
u(t,x):=\int_{\bR^2}K(x,x')\om(x')dx'
$$
is of class $C^1_b$ on $\bR_+\times\bR^2$ and satisfies
$$
\Div_xu(t,x)=0\,,\quad\Div_x(Ju)(t,x)=\om(t,x)\,.
$$
3) Conclude that the two dimensional Euler equation for incompressible fluids can be put in the formalism described in the present section, except 
for the fact that the interaction kernel $K$ is singular on the diagonal of $\bR^2\times\bR^2$.

\smallskip
\noindent
\textbf{Exercise:} Let $(f,\phi)$ be a solution of the Vlasov-Poisson system satisfying the same assumptions as in the exercise of section \ref{SS-VP}.
Assume further that
$$
\iint_{\bR^3\times\bR^3}f(0,x,v)dxdv=1\,,\quad\hbox{ and }\iint_{\bR^3\times\bR^3}vf(0,x,v)dxdv=0\,.
$$
1) Prove that
$$
\iint_{\bR^3\times\bR^3}f(t,x,v)dxdv\!=\!1\quad\hbox{ and }\iint_{\bR^3\times\bR^3}vf(t,x,v)dxdv\!=\!0\quad\hbox{ for all }t\ge 0\,.
$$
2) Set $z=(x,v)$ and
$$
K(z,z')=K(x,v,x',v'):=\left(v-v',\tfrac{q^2}{4\pi\eps_0m}\frac{x-x'}{|x-x'|^3}\right)\,.
$$
Prove that
$$
\iint_{\bR^3\times\bR^3}K(x,v,x',v')f(t,x',v')dx'dv'=(v,-\tfrac{q}{m}\grad_x\phi(t,x))\,,
$$
where
$$
-\Dlt_x\phi(t,x)=\frac{q}{\eps_0}\int_{\bR^3}f(t,x,v)dv\,.
$$
3) Conclude that the Vlasov-Poisson system can be put in the formalism described in the present section, except for the fact that the interaction kernel 
$K$ is singular on the set $\{(x,v,x',v')\in(\bR^3)^4\hbox{ s.t. }x=x'\}$.

\section[Mean field characteristic flow]{The mean field characteristic flow}

Henceforth we assume that the interaction kernel $K:\,\bR^d\times\bR^d\to\bR^d$ satisfies the following assumptions. 

First $K$ is skew-symmetric:
$$
K(z,z')=-K(z',z)\quad\hbox{ for all }z,z'\in\bR^d\,.\leqno{(HK1)}
$$

Besides, $K\in C^1(\bR^d\times\bR^d;\bR^d)$, with bounded partial derivatives of order $1$. In other words,  there exists a constant $L\ge 0$ such that
$$
\sup_{z'\in\bR^d}|\grad_zK(z,z')|\le L\,,\quad\hbox{ and }\sup_{z\in\bR^d}|\grad_{z'}K(z,z')|\le L\,.\leqno{(HK2)}
$$

Applying the mean value theorem shows that assumption (HK2) implies that $K$ is Lipschitz continuous in $z$ uniformly in $z'$ (and conversely):
$$
\left\{\ba
\sup_{z'\in\bR^d}|K(z_1,z')\!-\!K(z_2,z')|\le L|z_1-z_2|\,,
\\
\sup_{z\in\bR^d}\,|\,K(z,z_1)-K(z,z_2)|\le L|z_1-z_2|\,.
\ea
\right.
$$

Assumption (HK2) also implies that $K$ grows at most linearly at infinity:
$$
|K(z,z')|\le L(|z|+|z'|)\,,\quad z,z'\in\bR^d\,.
$$

Notice also that the integral operator $\cK$ can be extended to the set of Borel probability measures\footnote{Henceforth, the set of Borel probability 
measures on $\bR^d$ will be denoted by $\cP(\bR^d)$.} on $\bR^d$ with finite moment of order $1$, i.e.
$$
\cP_1(\bR^d):=\left\{p\in\cP(\bR^d)\hbox{ s.t. }\int_{\bR^d}|z|p(dz)<\infty\right\}\,,
$$
in the obvious manner, i.e.
$$
\cK p(z):=\int_{\bR^d}K(z,z')p(dz')\,.
$$
The extended operator $\cK$ so defined maps $\cP_1(\bR^d)$ into the class $\Lip(\bR^d;\bR^d)$ of Lipschitz continuous vector fields on $\bR^d$.

With the assumptions above, one easily arrives at the existence and uniqueness theory for the $N$-body ODE system.

\begin{Thm}\lb{T-ExUnNbody}
Assume that the interaction kernel $K\in C^1(\bR^d\times\bR^d,\bR^d)$ satisfies assumptions (HK1-HK2). Then

\smallskip
\noindent
a) for each $N\ge 1$ and each $N$-tuple $Z_N^{in}=(z_1^{in},\ldots,z_N^{in})$, the Cauchy problem for the $N$-particle ODE system
$$
\left\{
\ba
{}&\dot{z_i}(t)=\frac1N\sum_{j=1}^NK(z_i(t),z_j(t))\,,\qquad i=1,\ldots,N\,,
\\
&z_i(0)=z_i^{in}\,,
\ea
\right.
$$
has a unique solution of class $C^1$ on $\bR$
$$
t\mapsto Z_N(t)=(z_1(t),\ldots,z_N(t))=:T_tZ_N^{in}\,;
$$

\smallskip
\noindent
b) the empirical measure $f(t,dz):=\mu_{T_tZ_N^{in}}$ is a weak solution of the Cauchy problem for the mean field PDE
$$
\left\{
\ba
{}&\d_tf+\Div_z(f\cK f)=0\,,
\\
&f\rstr_{t=0}=f^{in}\,.
\ea
\right.
$$
\end{Thm}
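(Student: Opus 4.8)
The plan is to obtain a) from the Cauchy--Lipschitz theorem combined with an a priori bound ruling out finite-time blow-up, and b) from a direct computation exploiting the explicit structure of the empirical measure.

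For a), I would first rewrite the $N$-particle system as a single ODE $\dot Z_N=F_N(Z_N)$ on $(\bR^d)^N$, where the $i$-th component of $F_N$ is $Z_N\mapsto\frac1N\sum_{j=1}^NK(z_i,z_j)$. Assumption (HK2) makes each map $(z,z')\mapsto K(z,z')$ of class $C^1$ with gradient bounded by $L$, hence $F_N$ is $C^1$ and globally Lipschitz on $(\bR^d)^N$; the Cauchy--Lipschitz theorem then yields a unique maximal $C^1$ solution for each initial datum $Z_N^{in}$. To upgrade this to a global solution, I would use the linear growth bound $|K(z,z')|\le L(|z|+|z'|)$ recorded above together with Gr\"onwall's lemma: working, say, with the norm $|Z_N|:=\frac1N\sum_{i=1}^N|z_i|$, one gets
$$
\frac{d}{dt}|Z_N(t)|\le\frac1N\sum_{i=1}^N|\dot z_i(t)|\le\frac1{N^2}\sum_{i,j=1}^NL\big(|z_i(t)|+|z_j(t)|\big)=2L\,|Z_N(t)|\,,
$$
so $|Z_N(t)|\le|Z_N^{in}|e^{2L|t|}$; the maximal solution stays bounded on bounded time intervals and therefore extends to all of $\bR$. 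Uniqueness on the whole line and the flow property $T_{t+s}=T_t\circ T_s$ are then standard, which legitimizes the notation $T_tZ_N^{in}$.

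For b), I would fix a test function $\phi\in C^1_b(\bR^d)$ and simply differentiate $t\mapsto\int_{\bR^d}\phi\,d\mu_{Z_N(t)}=\frac1N\sum_{i=1}^N\phi(z_i(t))$, which is licit since $t\mapsto Z_N(t)$ is $C^1$ by a). The chain rule and the ODE give
$$
\frac{d}{dt}\int_{\bR^d}\phi\,d\mu_{Z_N(t)}=\frac1N\sum_{i=1}^N\grad\phi(z_i(t))\cdot\dot z_i(t)=\frac1{N^2}\sum_{i,j=1}^N\grad\phi(z_i(t))\cdot K\big(z_i(t),z_j(t)\big)\,.
$$
On the other hand, since $\cK\mu_{Z_N(t)}(z)=\int_{\bR^d}K(z,z')\mu_{Z_N(t)}(dz')=\frac1N\sum_{j=1}^NK(z,z_j(t))$, one has
$$
\int_{\bR^d}\cK\mu_{Z_N(t)}(z)\cdot\grad\phi(z)\,\mu_{Z_N(t)}(dz)=\frac1{N^2}\sum_{i,j=1}^NK\big(z_i(t),z_j(t)\big)\cdot\grad\phi(z_i(t))\,,
$$
which is the same double sum. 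Hence $\mu_{Z_N(t)}$ satisfies the weak formulation of the mean field PDE for every $\phi\in C^1_b(\bR^d)$; moreover it is compactly supported, hence lies in $\cP_1(\bR^d)$ for each $t$, so $\cK\mu_{Z_N(t)}$ is well defined, and $\mu_{T_0Z_N^{in}}=\mu_{Z_N^{in}}$ gives the initial condition.

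Finally, I do not expect any serious obstacle: the argument is essentially bookkeeping. The one place the hypotheses are genuinely needed is the global-in-time existence in a), where (HK2) --- through the linear growth bound --- and Gr\"onwall's lemma prevent finite-time blow-up of the particle trajectories; without such a growth restriction the conclusion could fail. In b) the only subtlety is the time regularity needed to differentiate under the (finite) sum, which is exactly the $C^1$ statement proved in a); everything else is the identity between two ways of summing $\grad\phi(z_i)\cdot K(z_i,z_j)$ over the pairs $(i,j)$.
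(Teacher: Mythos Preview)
Your proposal is correct and follows essentially the same approach as the paper, which simply states that a) follows from the Cauchy--Lipschitz theorem and that b) follows from the method of characteristics, leaving the details as an exercise. Your direct computation in b) is precisely the verification requested in that exercise (question 6), specialized to a finite sum of Dirac masses, and your Gr\"onwall argument in a) makes explicit the global-in-time part that the paper leaves implicit in its appeal to Cauchy--Lipschitz.
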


\smallskip
Statement a) follows from the Cauchy-Lipschitz theorem. Statement b) follows from the method of characteristics for the transport equation. For the sake of
being complete, we sketch the main steps in the proof of statement b), and leave the details as an exercise to be treated by the reader.

\smallskip
\noindent
\textbf{Exercise:} Let $b\equiv b(t,y)\in C([0,\tau];\bR^d)$ be such that $D_yb\in C([0,\tau];\bR^d)$ and\index{Method of characteristics}
$$
|b(t,y)|\le\ka(1+|y|)\leqno(H)
$$
for all $t\in[0,\tau]$ and $y\in\bR^d$, where $\ka$ is a positive constant.

\noindent
1) Prove that, for each $t\in[0,\tau]$, the Cauchy problem for the ODE
$$
\left\{
\ba
{}&\dot{Y}(s)=b(s,Y(s))\,,
\\
&Y(t)=y\,,
\ea
\right.
$$
has a unique solution $s\mapsto Y(s,t,y)$. What is the maximal domain of definition of this solution? What is the regularity of the map $Y$ viewed as a 
function of the 3 variables $s,t,y$?

\noindent
2) What is the role of assumption (H)?

\noindent
3) Prove that, for each $t_1,t_2,t_3\in[0,\tau]$ and $y\in\bR^d$, one has
$$
Y(t_3,t_2,Y(t_2,t_1,y))=Y(t_3,t_1,y)\,.
$$
4) Compute
$$
\d_tY(s,t,y)+b(t,y)\cdot\grad_yY(s,t,y)\,.
$$
5) Let $f^{in}\in C^1(\bR^d)$. Prove that the Cauchy problem for the transport equation
$$
\left\{
\ba
{}&\d_tf(t,y)+b(t,y)\cdot\grad_yf(t,y)=0\,,
\\
&f\rstr_{t=0}=f^{in}\,,
\ea
\right.
$$
has a unique solution $f\in C^1([0,\tau]\times\bR^d)$, and that this solution is given by the formula
$$
f(t,y)=f^{in}(Y(0,t,y))\,.
$$
6) Let $\mu^{in}$ be a Borel probability measure on $\bR^d$. Prove that the push-forward measure\footnote{Given two measurable spaces $(X,\cA)$ and
$(Y,\cB)$, a measurable map $\Phi:\,(X,\cA)\to(Y,\cB)$ and a measure $m$ on $(X,\cA)$, the push-forward of $m$ under $\Phi$ is the measure on $(Y,\cB)$
defined by the formula\index{Push-forward of a measure}
$$
\Phi\#m(B)=m(\Phi^{-1}(B))\,,\quad \hbox{ for all }B\in\cB\,.
$$}
$$
\mu(t):=Y(t,0,\cdot)\#\mu^{in}
$$
is a weak solution of
$$
\left\{
\ba
{}&\d_t\mu+\Div_y(\mu b)=0\,,
\\
&\mu\rstr_{t=0}=\mu^{in}\,.
\ea
\right.
$$
Hint: for $\phi\in C^1_c(\bR^d)$, compute
$$
\frac{d}{dt}\int_{\bR^d}\phi(Y(t,0,y))\mu^{in}(dy)\,.
$$
7) Prove that the unique weak solution\footnote{We designate by $w-\cP(\bR^d)$ the set $\cP(\bR^d)$ equipped with the weak topology of probability 
measures, i.e. the topology defined by the family of semi-distances
$$
d_\phi(\mu,\nu):=\left|\int_{\bR^d}\phi(z)\mu(dz)-\int_{\bR^d}\phi(z)\nu(dz)\right|
$$
as $\phi$ runs through $C_b(\bR^d)$.} $\mu\in C([0,\tau],w-\cP(\bR^d))$ of the Cauchy problem considered in 6) is the push-forward measure defined
by the formula
$$
\mu(t):=Y(t,0,\cdot)\#\mu^{in}
$$
for each $t\in[0,\tau]$. (Hint: for $\phi\in C^1_c(\bR^d)$, compute
$$
\frac{d}{dt}\la Y(0,t,\cdot)\#\mu(t),\phi\ra
$$
in the sense of distributions on $(0,\tau)$.)

\smallskip
For a solution of this exercise, see chapter 1, section 1 of \cite{BouGolPul}.

\smallskip
Our next step is to formulate and solve a new problem that will contain both the $N$-particle ODE system in the mean-field scaling and the mean-field PDE.

\begin{Thm}\lb{T-MFChar}
Assume that the interaction kernel $K\in C^1(\bR^d\times\bR^d,\bR^d)$ satisfies assumptions (HK1-HK2). For each $\zeta^{in}\in\bR^d$ and each Borel 
probability measure $\mu^{in}\in\cP_1(\bR^d)$, there exists a unique solution denoted by
$$
\bR\ni t\mapsto Z(t,\zeta^{in},\mu^{in})\in\bR^d
$$
of class $C^1$ of the problem
$$
\left\{
\ba
{}&\d_tZ(t,\zeta^{in},\mu^{in})=(\cK\mu(t))(Z(t,\zeta^{in},\mu^{in}))\,,
\\
&\mu(t)=Z(t,\cdot,\mu^{in})\#\mu^{in}\,,
\\
&Z(0,\zeta^{in},\mu^{in})=\zeta^{in}\,.
\ea
\right.
$$
\end{Thm}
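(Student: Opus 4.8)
The plan is to decouple the system: first solve the self-consistent equation for the curve of measures $t\mapsto\mu(t)$ by itself, then recover $Z(t,\zeta^{in},\mu^{in})$ as the characteristic issued from $\zeta^{in}$. Concretely, suppose we have produced on some interval $[0,\tau]$ a curve $\mu\in C([0,\tau],\cP_1(\bR^d))$ with first moment bounded uniformly in $t$, such that $\mu(t)=Z_\mu(t,0,\cdot)\#\mu^{in}$, where $(s,t,z)\mapsto Z_\mu(s,t,z)$ denotes the flow of the non-autonomous vector field $b_\mu(t,z):=(\cK\mu(t))(z)$. By (HK2) this $b_\mu$ is Lipschitz in $z$ uniformly in $t$ with constant $L$, and by the at most linear growth of $K$ together with the moment bound on $\mu(t)$ it obeys the linear-growth hypothesis $(H)$ of the exercise above; that exercise then furnishes a unique global-in-time $C^1$ flow $Z_\mu$, and setting $Z(t,\zeta^{in},\mu^{in}):=Z_\mu(t,0,\zeta^{in})$ produces a $C^1$ function of $t$ solving the three-equation system, since $\mu(t)=Z(t,\cdot,\mu^{in})\#\mu^{in}$ and $\partial_tZ=(\cK\mu(t))(Z)$ by construction. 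Moreover, once $\mu$ is known, $Z(\cdot,\zeta^{in},\mu^{in})$ is the unique solution of $\dot Z=b_\mu(t,Z)$, $Z(0)=\zeta^{in}$, by Cauchy--Lipschitz, so uniqueness of $Z$ follows from uniqueness of $\mu$. Everything thus reduces to constructing $\mu$ and proving it unique. (Any solution automatically satisfies $m_1(\mu(t)):=\int|z|\mu(t,dz)\le e^{2Lt}m_1(\mu^{in})$ by Gronwall, using the equation for $Z$ and the linear growth of $K$, so the uniform moment bound is not an extra hypothesis.)

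To build $\mu$ I would run a Banach fixed point. Equip $\cP_1(\bR^d)$ with the Monge--Kantorovich distance $\DDist_{MK}$ --- recalling that it is a complete metric, that $|\int\phi\,d\mu-\int\phi\,d\nu|\le\Lip(\phi)\,\DDist_{MK}(\mu,\nu)$ for Lipschitz $\phi$, and that $\DDist_{MK}(T\#m,S\#m)\le\int|T-S|\,dm$ for measurable maps $T,S$ --- and let $E:=C([0,\tau],\cP_1(\bR^d))$ with $D(\mu,\nu):=\sup_{0\le t\le\tau}\DDist_{MK}(\mu(t),\nu(t))$, a complete metric space. For $\nu\in E$ the field $b_\nu(t,z)=(\cK\nu(t))(z)$ is $L$-Lipschitz in $z$ and satisfies $|b_\nu(t,z)|\le L(|z|+m_1(\nu(t)))$, with $m_1(\nu(\cdot))$ bounded on $[0,\tau]$ by continuity of $\nu$ into $\cP_1$; hence it has linear growth and its flow $Z_\nu$ is global. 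Set $\Phi(\nu)(t):=Z_\nu(t,0,\cdot)\#\mu^{in}$. A Gronwall estimate on $\int|Z_\nu(t,0,z)|\mu^{in}(dz)=m_1(\Phi(\nu)(t))$ gives $m_1(\Phi(\nu)(t))\le e^{Lt}m_1(\mu^{in})+(e^{Lt}-1)\sup_{[0,\tau]}m_1(\nu(\cdot))$, so first moments stay controlled, and $t\mapsto Z_\nu(t,0,\cdot)\#\mu^{in}$ is continuous into $(\cP_1,\DDist_{MK})$ by dominated convergence; hence $\Phi:E\to E$.

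The crux is a Dobrushin-type stability estimate. For $\nu_1,\nu_2\in E$, write the flows $Z_{\nu_1},Z_{\nu_2}$ in integrated form and split the difference of velocities as
$$
b_{\nu_1}(s,Z_{\nu_1})-b_{\nu_2}(s,Z_{\nu_2})=\big[b_{\nu_1}(s,Z_{\nu_1})-b_{\nu_1}(s,Z_{\nu_2})\big]+\big[\big(\cK\nu_1(s)-\cK\nu_2(s)\big)(Z_{\nu_2})\big]\,;
$$
by (HK2) the first bracket has size $\le L|Z_{\nu_1}(s,0,z)-Z_{\nu_2}(s,0,z)|$, and since $K(z,\cdot)$ is $L$-Lipschitz the second has size $\le L\,\DDist_{MK}(\nu_1(s),\nu_2(s))$. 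Integrating against $\mu^{in}$, setting $g(t):=\int|Z_{\nu_1}(t,0,z)-Z_{\nu_2}(t,0,z)|\mu^{in}(dz)$ and using $\DDist_{MK}(\Phi(\nu_1)(t),\Phi(\nu_2)(t))\le g(t)$, one gets $g(t)\le L\int_0^t g(s)\,ds+L\tau\,D(\nu_1,\nu_2)$, whence by Gronwall $D(\Phi(\nu_1),\Phi(\nu_2))\le L\tau\,e^{L\tau}D(\nu_1,\nu_2)$. Choosing $\tau>0$ with $L\tau e^{L\tau}<1$ makes $\Phi$ a contraction, giving a unique fixed point $\mu$ on $[0,\tau]$; the same inequality with both arguments equal to a solution gives uniqueness of the coupled problem on $[0,\tau]$. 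Since $L$ is a fixed constant and the moment bounds propagate with controlled constants, the construction is then iterated over $[\tau,2\tau],[2\tau,3\tau],\dots$ and, running time backwards, over $[-\tau,0],[-2\tau,-\tau],\dots$, producing the unique $C^1$ solution on all of $\bR$.

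I expect the main obstacle to be the non-compactness of $\bR^d$: one cannot close the scheme by a weak-compactness argument, so it must be carried out in $\cP_1(\bR^d)$ with the Monge--Kantorovich metric, which forces the two bookkeeping points above --- propagating a uniform first-moment bound along the iteration (this is exactly where the at most linear growth consequence of (HK2) enters) and checking that push-forwards of $\mu^{in}$ stay in $\cP_1$ and depend continuously on $t$. Once those are settled, the stability estimate is the only genuinely computational step, and it is short.
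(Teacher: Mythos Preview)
Your argument is correct, but it follows a genuinely different route from the paper's. The paper does \emph{not} work in $\cP_1(\bR^d)$ with the Monge--Kantorovich distance at this stage (that metric is only introduced in the next section, for Dobrushin's estimate). Instead, it runs a Picard iteration directly on the flow map $\zeta\mapsto Z(t,\zeta)$, viewed as an element of the Banach space
$$
X:=\Big\{v\in C(\bR^d;\bR^d)\ \hbox{s.t.}\ \sup_{z}\frac{|v(z)|}{1+|z|}<\infty\Big\},
$$
and shows that the sequence $Z_{n+1}(t,\zeta)=\zeta+\int_0^t\int K(Z_n(s,\zeta),Z_n(s,\zeta'))\mu^{in}(d\zeta')\,ds$ converges in $C([-\tau,\tau];X)$ for every $\tau$, via the classical $((2+C_1)L|t|)^n/n!$ bound. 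This gives global existence in one shot, with no small-time step and no concatenation.

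What each approach buys: the paper's argument is more self-contained --- it needs only the Banach space $X$ and the Lipschitz bound on $K$, not the completeness of $(\cP_1,\Dist_{MK,1})$ or the Kantorovich duality inequality that you invoke. Your approach is more conceptual, treating the curve of measures as the primary unknown and essentially front-loading the Dobrushin stability estimate; it also makes explicit the decoupling ``first determine $\mu(t)$, then solve an ordinary non-autonomous ODE for $Z$'', which clarifies why uniqueness of $Z$ reduces to uniqueness of $\mu$. The trade-off is that you need the short-time contraction plus iteration, and you must import the metric properties of $\cP_1$ (completeness, the bound $\Dist_{MK,1}(T\#m,S\#m)\le\int|T-S|\,dm$) before they appear in the paper's logical flow.
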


Notice that the ODE governing the evolution of $t\mapsto Z(t,\zeta^{in},\mu^{in})$ is set in the single-particle phase space $\bR^d$, and not in the $N$-particle
phase space, as is the case of the ODE system studied in Theorem \ref{T-ExUnNbody}. 

Obviously, the ODE appearing in Theorem \ref{T-MFChar} is precisely the equation of characteristics for the mean field PDE. Henceforth, we refer to this ODE 
as the equations of ``mean field characteristics'', and to its solution $Z$ as the ``mean field characteristic flow''\index{Mean field characteristic flow}.

How the mean field characteristic flow $Z$ and the flow $T_t$ associated to the $N$-particle ODE system are related is explained in the next proposition.

\begin{Prop}\lb{P-MFNbody}
Assume that the interaction kernel $K\in C^1(\bR^d\times\bR^d,\bR^d)$ satisfies assumptions (HK1-HK2). For each $Z_N^{in}=(z_1^{in},\ldots,z_N^{in})$, the solution 
$$
T_tZ_N^{in}=(z_1(t),\ldots,z_N(t))
$$ 
of the $N$-body problem and the mean field characteristic flow $Z(t,\zeta^{in},\mu^{in})$ satisfy
$$
z_i(t)=Z(t,z_i^{in},\mu_{Z_N^{in}})\,,\quad i=1,\ldots,N\,,
$$
for all $t\in\bR$.
\end{Prop}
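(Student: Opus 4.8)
The plan is to feed the empirical initial datum $\mu^{in}=\mu_{Z_N^{in}}$ into the mean field characteristic problem of Theorem \ref{T-MFChar} and observe that it degenerates into the $N$-body ODE system of Theorem \ref{T-ExUnNbody}; the uniqueness statements of those two theorems will then identify the two flows.

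First I would note that $\mu_{Z_N^{in}}$, being a finite convex combination of Dirac masses, belongs to $\cP_1(\bR^d)$, so Theorem \ref{T-MFChar} does provide a solution $t\mapsto Z(t,\zeta^{in},\mu_{Z_N^{in}})$, unique and of class $C^1$ in $t$, for every $\zeta^{in}\in\bR^d$. Then I would use the elementary fact --- immediate from the definition of push-forward recalled in an exercise above --- that for any Borel map $\Phi:\bR^d\to\bR^d$ one has $\Phi\#\mu_{Z_N^{in}}=\frac1N\sum_{j=1}^N\de_{\Phi(z_j^{in})}$. Applying this with $\Phi=Z(t,\cdot,\mu_{Z_N^{in}})$ shows that the measure $\mu(t)$ appearing in Theorem \ref{T-MFChar} is itself an empirical measure:
$$
\mu(t)=\frac1N\sum_{j=1}^N\de_{w_j(t)}\,,\qquad\hbox{where}\quad w_j(t):=Z(t,z_j^{in},\mu_{Z_N^{in}})\,.
$$

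With this in hand, the key step is to substitute $\mu(t)$ into the mean field characteristic equation. Writing $W_N(t)=(w_1(t),\dots,w_N(t))$ and evaluating the ODE of Theorem \ref{T-MFChar} at $\zeta^{in}=z_i^{in}$, the integral against $\mu(t)$ collapses to a finite sum and one obtains
$$
\dot w_i(t)=(\cK\mu(t))(w_i(t))=\frac1N\sum_{j=1}^N K(w_i(t),w_j(t))\,,\qquad w_i(0)=z_i^{in}\,,
$$
which is precisely the Cauchy problem for the $N$-body ODE system. By the uniqueness assertion in part a) of Theorem \ref{T-ExUnNbody}, $W_N(t)=T_tZ_N^{in}$ for all $t\in\bR$, i.e. $z_i(t)=Z(t,z_i^{in},\mu_{Z_N^{in}})$, which is the claim.

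I do not anticipate any genuine obstacle: once one notices that empirical measures are stable under push-forward, the argument is pure bookkeeping. The only points deserving a line of care are (i) that the push-forward of a sum of Diracs under the (a priori merely measurable) map $Z(t,\cdot,\mu_{Z_N^{in}})$ is literally the sum of the pushed Diracs, and (ii) matching the regularity requirements --- $W_N$ is $C^1$ in $t$ since each $w_i$ inherits the $C^1$ regularity of $Z$ from Theorem \ref{T-MFChar}, so that the uniqueness statement of Theorem \ref{T-ExUnNbody}, phrased for $C^1$ solutions, indeed applies. One could equivalently run the argument in the reverse direction, checking that $\bigl(T_tZ_N^{in},\mu_{T_tZ_N^{in}}\bigr)$ solves the mean field characteristic problem and invoking uniqueness from Theorem \ref{T-MFChar}, but the direction above is the more economical.
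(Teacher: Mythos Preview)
Your proof is correct and follows essentially the same route as the paper: define $w_i(t)=Z(t,z_i^{in},\mu_{Z_N^{in}})$, use that the push-forward of an empirical measure is the empirical measure of the images, reduce the mean field characteristic ODE to the $N$-body system, and conclude by the uniqueness in Theorem~\ref{T-ExUnNbody}. Your additional remarks on the $C^1$ regularity and on the possibility of running the argument via uniqueness in Theorem~\ref{T-MFChar} are fine but not needed.
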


\begin{proof}[Proof of Proposition \ref{P-MFNbody}]
Define 
$$
\zeta_i(t):=Z(t,z_i^{in},\mu_{Z_N^{in}})\,,\quad i=1,\ldots,N\,.
$$
Then\footnote{The reader should be aware of the following subtle point. In classical references on distribution theory, such as \cite{Horm1}, the Dirac mass
is viewed as a distribution, therefore as an object that generalizes the notion of function. There is a notion of pull-back of a distribution under a $C^\infty$
diffeomorphism such that the pull-back of the Dirac mass at $y_0$ with a $C^\infty$ diffeomorphism $\chi:\,\bR^N\to\bR^N$ satisfying $\chi(x_0)=y_0$ is
$$
\de_{y_0}\circ\chi=|\Det(D\chi(x_0))|^{-1}\de_{x_0}\,.
$$
This notion is not to be confused with the push-forward under $\chi$ of the Dirac mass at $\de_{x_0}$ viewed as a probability measure, which, according to 
the definition in the previous footnote is\index{Push-forward of a measure}
$$
\chi\#\de_{x_0}=\de_{y_0}\,.
$$
In particular
$$
\chi\#\de_{x_0}\not=\de_{x_0}\circ\chi^{-1}
$$
unless $\chi$ has Jacobian determinant $1$ at $x_0$.}
$$
\mu(t)=Z(t,\cdot,\mu_{Z_N^{in}})\#\mu_{Z_N^{in}}=\frac1N\sum_{j=1}^N\de_{\zeta_j(t)}
$$
for all $t\in\bR$. Therefore, $\zeta_i$ satisfies
$$
\dot\zeta_i(t)=(\cK\mu(t))(\zeta_i(t))=\frac1N\sum_{j=1}^NK(\zeta_i(t),\zeta_j(t))\,,\quad i=1,\ldots,N\,,
$$
for all $t\in\bR$. Moreover
$$
\zeta_i(0)=Z(0,z_i^{in},\mu^{in})=z_i^{in}\,,\quad i=1,\ldots,N\,.
$$
Therefore, by uniqueness of the solution of the $N$-particle equation (Theorem \ref{T-ExUnNbody}), one has
$$
\zeta_i(t)=z_i(t)\,,
$$
for all $i=1,\ldots,N$ and all $t\in\bR$.
\end{proof}

\smallskip
The proof of Theorem \ref{T-MFChar} is a simple variant of the proof of the Cauchy-Lipschitz theorem. 

\begin{proof}[Proof of Theorem \ref{T-MFChar}]
Let $\mu^{in}\in\cP_1(\bR^d)$, and denote
$$
C_1:=\int_{\bR^d}|z|\mu^{in}(dz)\,.
$$
Let 
$$
X:=\left\{v\in C(\bR^d;\bR^d)\hbox{ s.t. }\sup_{z\in\bR^d}\frac{|v(z)|}{1+|z|}<\infty\right\}\,,
$$
which is a Banach space for the norm
$$
\|v\|_X:=\sup_{z\in\bR^d}\frac{|v(z)|}{1+|z|}\,.
$$

By assumption (HK2) on the interaction kernel $K$, for each $v,w\in X$, one has
$$
\ba
\left|\int_{\bR^d}K(v(z),v(z'))\mu^{in}(dz')-\int_{\bR^d}K(w(z),w(z'))\mu^{in}(dz')\right|
\\
\le L\int_{\bR^d}(|v(z)-w(z)|+|v(z')-w(z')|)\mu^{in}(dz')
\\
\le L\|v-w\|_X(1+|z|)+L\|v-w\|_X\int_{\bR^d}(1+|z'|)\mu^{in}(dz')
\\
=L\|v-w\|_X(1+|z|+1+C_1)
\\
\le L\|v-w\|_X(2+C_1)(1+|z|)\,.
\ea
$$

Define a sequence $(Z_n)_{n\ge 0}$ by induction, as follows:
$$
\left\{
\ba
{}&Z_{n+1}(t,\zeta)=\zeta+\int_0^t\int_{\bR^d}K(Z_n(t,\zeta),Z_n(t,\zeta'))\mu^{in}(d\zeta')ds\,,\quad n\ge 0\,,
\\
&Z_0(t,\zeta)=\zeta\,.
\ea
\right.
$$
One checks by induction with the inequality above that, for each $n\in\bN$,
$$
\|Z_{n+1}(t,\cdot)-Z_n(t,\cdot)\|_X\le\frac{((2+C_1)L|t|)^n}{n!}\|Z_1(t,\cdot)-Z_0(t,\cdot)\|_X\,.
$$
Since 
$$
\ba
|Z_1(t,\zeta)-\zeta|&=\left|\int_0^t\int_{\bR^d}K(\zeta,\zeta')\mu^{in}(d\zeta')ds\right|
\\
&\le\int_0^{|t|}\int_{\bR^d}L(|\zeta|+|\zeta'|)\mu^{in}(d\zeta')ds
\\
&=\int_0^{|t|}L(|\zeta|+C_1)ds\le L(1+C_1)(1+|\zeta|)|t|\,,
\ea
$$
one has
$$
\|Z_{n+1}(t,\cdot)-Z_n(t,\cdot)\|_X\le\frac{((2+C_1)L|t|)^{n+1}}{n!}\,.
$$

Thus, for each $\tau>0$, 
$$
Z_n(t,\cdot)\to Z(t,\cdot)\quad\hbox{ in }X\hbox{ uniformly on }[-\tau,\tau]\,,
$$
where $Z\in C(\bR;X)$ satisfies
$$
Z(t,\zeta)=\zeta+\int_0^t\int_{\bR^d}K(Z(s,\zeta),Z(s,\zeta'))\mu^{in}(d\zeta')ds
$$
for all $t\in\bR$ and all $\zeta\in\bR^d$. 

If $Z$ and $\tilde Z\in C(\bR;X)$ satisfy the integral equation above, then
$$
Z(t,\zeta)-\tilde Z(t,\zeta)=\int_{\bR^d}(K(Z(s,\zeta),Z(s,\zeta'))-K(\tilde Z(s,\zeta),\tilde Z(s,\zeta')))\mu^{in}(d\zeta')\,,
$$
so that, for all $t\in\bR$, one has
$$
\|Z(t,\cdot)-\tilde Z(t,\cdot)\|_X\le L(2+C_1)\left|\int_0^t\|Z(s,\cdot)-\tilde Z(s,\cdot)\|_Xds\right|\,.
$$
This implies that 
$$
\|Z(t,\cdot)-\tilde Z(t,\cdot)\|_X=0
$$
by Gronwall's inequality, so that $Z=\tilde Z$. Hence the integral equation has only one solution $Z\in C(\bR;X)$.

Since $Z\in C(\bR_+;X)$, $K\in C^1(\bR^d\times\bR^d,\bR^d)$ satisfies (HK2) and $\mu^{in}\in \cP_1(\bR^d)$, the function
$$
s\mapsto\int_{\bR^d}K(Z(s,\zeta),Z(s,\zeta'))\mu^{in}(d\zeta')
$$
is continuous on $\bR$. 

Using the integral equation shows that the function $t\mapsto Z(t,\zeta)$ is of class $C^1$ on $\bR$ and satisfies
$$
\left\{
\ba
{}&\d_tZ(t,\zeta)=\int_{\bR^d}K(Z(t,\zeta),Z(t,\zeta'))\mu^{in}(d\zeta')\,,
\\
&Z(0,\zeta)=\zeta\,.
\ea
\right.
$$
Substituting $z'=Z(t,\zeta')$ in the integral above, one has
$$
\int_{\bR^d}K(Z(t,\zeta),Z(t,\zeta'))\mu^{in}(d\zeta')=\int_{\bR^d}K(Z(t,\zeta),z')Z(t,\cdot)\#\mu^{in}(dz')
$$
so that the element $Z$ of $C(\bR;X)$ so constructed is the unique solution of the mean field characteristic equation.
\end{proof}

\smallskip
References for this and the previous section are \cite{BraunHepp,NeunWick}.

\section[Dobrushin's estimate]{Dobrushin's stability estimate and the mean field limit}

\subsection{The Monge-Kantorovich distance}

For each $r>1$, we denote by $\cP_r(\bR^d)$ the set of Borel probability measures on $\bR^d$ with a finite moment of order $r$, i.e. satisfying
$$
\int_{\bR^d}|z|^r\mu(dz)<\infty\,.
$$

Given $\mu,\nu\in\cP_r(\bR^d)$, we define $\Pi(\mu,\nu)$ to be the set of Borel probability measures $\pi$ on $\bR^d\times\bR^d$ with first and second
marginals $\mu$ and $\nu$ respectively. Equivalently, for each $\pi\in\cP(\bR^d\times\bR^d)$, 
$$
\pi\in\Pi(\mu,\nu)\Leftrightarrow\iint_{\bR^d\times\bR^d}(\phi(x)+\psi(y))\pi(dxdy)=\int_{\bR^d}\phi(x)\mu(dx)+\int_{\bR^d}\psi(y)\nu(dy)
$$
for each $\phi,\psi\in C(\bR^d)$ such that $\phi(z)=O(|z|^r)$ and $\psi(z)=O(|z|^r)$ as $|z|\to\infty$.

Probability measures belonging to $\Pi(\mu,\nu)$ are sometimes referred to as ``couplings of $\mu$ and $\nu$''\index{Coupling of two probability measures}.

\smallskip
\noindent
\textbf{Exercise:} Check that, if $\mu$ and $\nu\in\cP_r(\bR^d)$ for some $r>0$, then one has $\Pi(\mu,\nu)\subset\cP_r(\bR^d\times\bR^d)$.

With these elements of notation, we now introduce the notion of Monge-Kantorovich distance\index{Monge-Kantorovich distance}.

\begin{Def}
For each $r\ge 1$ and each $\mu,\nu\in\cP_r(\bR^d)$, the Monge-Kantoro\-vich distance $\Dist_{MK,r}(\mu,\nu)$  between $\mu$ and $\nu$ is defined by the
formula
$$
\Dist_{MK,r}(\mu,\nu)=\inf_{\pi\in\Pi(\mu,\nu)}\left(\iint_{\bR^d\times\bR^d}|x-y|^r\pi(dxdy)\right)^{1/r}\,.
$$
\end{Def}

\smallskip
These distances also go by the name of ``Kantorovich-Rubinstein distances'' or ``Wasserstein distances'' --- although the minimization problem in the right
hand side of the formula defining $\Dist_{MK,r}$ had been considered for the first time by Monge\footnote{Monge's original problem was to minimize over
the class of all Borel measurable transportation maps $T:\,\bR^d\to\bR^d$  such that $T\#\mu=\nu$ the transportation cost
$$
\int_{\bR^d}|x-T(x)|\mu(dx)\,.
$$ }
and systematically studied by Kantorovich.

We shall use the Monge-Kantorovich distances only as a convenient tool for studying the stability of the mean field characteristic flow. Therefore, we shall
not attempt to present the mathematical theory of these distances and refer instead to the C. Villani's books \cite{VillaniTOT,VillaniOTON} for a very detailed
discussion of this topic.

However, it is  useful to know the following property that is special to the case $r=1$.

\begin{Prop}\lb{P-MKDual}
The Monge-Kantorovich distance with exponent $1$ is also given by the formula
$$
\Dist_{MK,1}(\mu,\nu)=\sup_{\phi\in\Lip(\bR^d)\atop\Lip(\phi)\le 1}\left|\int_{\bR^d}\phi(z)\mu(dz)-\int_{\bR^d}\phi(z)\nu(dz)\right|\,,
$$
with the notation
$$
\Lip(\phi):=\sup_{x\not=y\in\bR^d}\frac{|\phi(x)-\phi(y)|}{|x-y|}\,.
$$
for the Lipschitz constant of $\phi$.
\end{Prop}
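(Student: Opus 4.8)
This Proposition is the Kantorovich--Rubinstein duality formula for the cost $c(z,z')=|z-z'|$. Denote by $\Psi(\mu,\nu)$ the right-hand side. The plan is to prove the two inequalities $\Psi(\mu,\nu)\le\Dist_{MK,1}(\mu,\nu)$ and $\Dist_{MK,1}(\mu,\nu)\le\Psi(\mu,\nu)$ separately. The first is elementary; the second rests on the Kantorovich duality theorem, for which I would refer to \cite{VillaniTOT,VillaniOTON} rather than reproving it from scratch.

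For the easy inequality, fix $\phi\in\Lip(\bR^d)$ with $\Lip(\phi)\le 1$ and an arbitrary coupling $\pi\in\Pi(\mu,\nu)$ (the set $\Pi(\mu,\nu)$ is nonempty, since $\mu\otimes\nu$ belongs to it, and is contained in $\cP_1(\bR^d\times\bR^d)$ by the exercise above, so all the integrals below are finite). Using the characterization of the marginals of $\pi$ and the bound $|\phi(x)-\phi(y)|\le|x-y|$,
$$
\left|\int_{\bR^d}\phi\,d\mu-\int_{\bR^d}\phi\,d\nu\right|=\left|\iint_{\bR^d\times\bR^d}\bigl(\phi(x)-\phi(y)\bigr)\pi(dxdy)\right|\le\iint_{\bR^d\times\bR^d}|x-y|\,\pi(dxdy)\,.
$$
Taking the infimum over $\pi\in\Pi(\mu,\nu)$ and then the supremum over admissible $\phi$ yields $\Psi(\mu,\nu)\le\Dist_{MK,1}(\mu,\nu)$.

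For the reverse inequality I would invoke the Kantorovich duality theorem, which for $\mu,\nu\in\cP_1(\bR^d)$ reads
$$
\Dist_{MK,1}(\mu,\nu)=\sup\left\{\int_{\bR^d}\varphi\,d\mu+\int_{\bR^d}\psi\,d\nu\ :\ \varphi\in L^1(\mu),\,\psi\in L^1(\nu),\,\varphi(x)+\psi(y)\le|x-y|\right\}\,,
$$
and then reduce the dual problem to $1$-Lipschitz functions. Given an admissible pair $(\varphi,\psi)$, set $\varphi^c(x):=\inf_{y\in\bR^d}\bigl(|x-y|-\psi(y)\bigr)$. Admissibility forces $\psi$ to be bounded above by an affine function, so $\varphi^c$ is finite everywhere; being a finite infimum of functions that are $1$-Lipschitz in $x$, it is itself $1$-Lipschitz, and hence lies in $L^1(\mu)\cap L^1(\nu)$ since $\mu,\nu\in\cP_1(\bR^d)$. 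Admissibility also gives $\varphi\le\varphi^c$ pointwise, while the pair $(\varphi^c,\psi)$ is again admissible, so setting $y=x$ in $\varphi^c(x)+\psi(y)\le|x-y|$ yields $\psi\le-\varphi^c$. Therefore
$$
\int_{\bR^d}\varphi\,d\mu+\int_{\bR^d}\psi\,d\nu\le\int_{\bR^d}\varphi^c\,d\mu+\int_{\bR^d}\psi\,d\nu\le\int_{\bR^d}\varphi^c\,d\mu-\int_{\bR^d}\varphi^c\,d\nu\le\Psi(\mu,\nu)\,.
$$
Taking the supremum over admissible $(\varphi,\psi)$ gives $\Dist_{MK,1}(\mu,\nu)\le\Psi(\mu,\nu)$, which together with the easy inequality proves the Proposition.

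The main obstacle is the Kantorovich duality theorem itself, which I am using as a black box. A self-contained route would be to establish the duality first for finitely supported $\mu$ and $\nu$, where it is the strong duality of finite-dimensional linear programming (equivalently a finite-dimensional minimax or Hahn--Banach statement), and then to pass to the limit: any $\mu,\nu\in\cP_1(\bR^d)$ can be approximated in $\Dist_{MK,1}$ by finitely supported measures, and the normalization $\phi\mapsto\phi-\phi(0)$ together with a tightness and equicontinuity argument lets one extract a limiting optimal $1$-Lipschitz function. Apart from the duality, the only delicate points are the integrability and measurability bookkeeping for the $c$-transform $\varphi^c$, all controlled by the affine bound $|\varphi^c(x)|\le|\varphi^c(0)|+|x|$ and the finiteness of the first moments of $\mu$ and $\nu$.
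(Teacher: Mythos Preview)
Your argument is correct, and in fact you give more detail than the paper does: the paper does not prove this Proposition at all but simply refers the reader to Theorems 1.14 and 7.3 (i) in \cite{VillaniTOT}. Your elementary proof of the inequality $\Psi(\mu,\nu)\le\Dist_{MK,1}(\mu,\nu)$ and your reduction of the Kantorovich dual problem to $1$-Lipschitz test functions via the $c$-transform are both standard and sound, so your write-up is a legitimate expansion of what the paper leaves as a citation.
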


\smallskip
The proof of this proposition is based on a duality argument in optimization: see for instance Theorems 1.14 and 7.3 (i) in \cite{VillaniTOT}.

\subsection{Dobrushin's estimate}\index{Dobrushin's estimate}

As explained in Proposition \ref{P-MFNbody}, the mean field characteristic flow contains all the relevant information about both the mean field PDE and
the $N$-particle ODE system.

Dobrushin's approach to the mean field limit is based on the idea of proving the stability of the mean field characteristic flow $Z(t,\zeta^{in},\mu^{in})$ in 
\textit{both} the initial position in phase space $\zeta^{in}$ and the initial distribution $\mu^{in}$. As we shall see, the Monge-Kantorovich distance is the
best adapted mathematical tool to measure this stability.

Dobrushin's idea ultimately rests on the following key computation. Let $\zeta_1^{in},\zeta_2^{in}\in\bR^d$, and let $\mu_1^{in},\mu_2^{in}\in\cP_1(\bR^d)$.
Then
$$
\ba
Z(t,\zeta_1,\mu_1^{in})&-Z(t,\zeta_2,\mu_2^{in})=\zeta_1-\zeta_2
\\
&+\int_0^t\int_{\bR^d}K(Z(s,\zeta_1,\mu_1^{in}),z')\mu_1(s,dz')ds
\\
&-\int_0^t\int_{\bR^d}K(Z(s,\zeta_2,\mu_2^{in}),z')\mu_2(s,dz')ds\,.
\ea
$$
Since $\mu_j(t)=Z(t,\cdot,\mu_j^{in})\#\mu_j^{in}$ for $j=1,2$, each inner integral on the right hand side of the equality above can be expressed as follows:
$$
\ba
\int_{\bR^d}K(Z(s,\zeta_j,\mu_j^{in}),z')&\mu_j(s,dz')
\\
&=\int_{\bR^d}K(Z(s,\zeta_j,\mu_j^{in}),Z(s,\zeta'_j,\mu_j^{in}))\mu_j^{in}(d\zeta'_j)
\ea
$$
for $j=1,2$. Therefore, for each coupling $\pi^{in}\in\cP_1(\mu_1^{in},\mu_2^{in})$, one has
$$
\ba
\int_{\bR^d}K(Z(s,\zeta_1,\mu_1^{in}),Z(s,\zeta'_1,\mu_1^{in}))\mu_1^{in}(d\zeta'_1)&
\\
-
\int_{\bR^d}K(Z(s,\zeta_2,\mu_2^{in}),Z(s,\zeta'_2,\mu_2^{in}))\mu_2^{in}(d\zeta'_2)&
\\
=
\iint_{\bR^d\times\bR^d}(K(Z(s,\zeta_1,\mu_1^{in}),Z(s,\zeta'_1,\mu_1^{in}))&
\\
-K(Z(s,\zeta_2,\mu_2^{in}),Z(s,\zeta'_2,\mu_2^{in})))&\pi^{in}(d\zeta'_1,d\zeta'_2)\,,
\ea
$$
so that
$$
\ba
Z(t,\zeta_1,\mu_1^{in})-Z(t,\zeta_2,\mu_2^{in})=\zeta_1-\zeta_2\qquad\qquad\qquad\qquad&
\\
+\int_0^t\iint_{\bR^d\times\bR^d}(K(Z(s,\zeta_1,\mu_1^{in}),Z(s,\zeta'_1,\mu_1^{in}))&
\\
-K(Z(s,\zeta_2,\mu_2^{in}),Z(s,\zeta'_2,\mu_2^{in})))&\pi^{in}(d\zeta'_1,d\zeta'_2)ds\,.
\ea
$$
This last equality is the key observation in Dobrushin's argument, which explains the role of couplings of $\mu_1^{in}$ and $\mu_2^{in}$ in this problem,
and therefore why it is natural to use the Monge-Kantorovich distance.
 
After this, the end of the argument is plain sailing. By assumption (HK2) on the interaction kernel $K$, for all $a,a',b,b'\in\bR^d$, one has
$$
\ba
|K(a,a')-K(b,b')|&\le|K(a,a')-K(b,a')|+|K(b,a')-K(b,b')|
\\
&\le L|a-b|+L|a'-b'|\,.
\ea
$$
Therefore
$$
\ba
{}&|Z(t,\zeta_1,\mu_1^{in})-Z(t,\zeta_2,\mu_2^{in})|
\\
&\qquad\le|\zeta_1-\zeta_2|+L\int_0^t|Z(s,\zeta_1,\mu_1^{in})-Z(s,\zeta_2,\mu_2^{in})|ds
\\
&\qquad+
L\int_0^t\iint_{\bR^d\times\bR^d}|Z(s,\zeta'_1,\mu_1^{in})-Z(s,\zeta'_2,\mu_2^{in})|\pi^{in}(d\zeta'_1d\zeta'_2)ds\,.
\ea
$$

It is convenient at this point to introduce the notation
$$
D[\pi](s):=\iint_{\bR^d\times\bR^d}|Z(s,\zeta'_1,\mu_1^{in})-Z(s,\zeta'_2,\mu_2^{in})|\pi(d\zeta'_1d\zeta'_2)
$$
for each $\pi\in\cP_1(\bR^d\times\bR^d)$. Thus, the previous inequality becomes
$$
\ba
{}&|Z(t,\zeta_1,\mu_1^{in})-Z(t,\zeta_2,\mu_2^{in})|\le|\zeta_1-\zeta_2|
\\
&\qquad+L\int_0^t|Z(s,\zeta_1,\mu_1^{in})-Z(s,\zeta_2,\mu_2^{in})|ds+L\int_0^tD[\pi^{in}](s)ds\,.
\ea
$$

Integrating both sides of the inequality above with respect to $\pi^{in}(d\zeta_1d\zeta_2)$ leads to
$$
\ba
D[\pi^{in}](t)&\le\iint_{\bR^d\times\bR^d}|\zeta_1-\zeta_2|\pi^{in}(d\zeta_1d\zeta_2)+2L\int_0^tD[\pi^{in}](s)ds
\\
&=D[\pi^{in}](0)+2L\int_0^tD[\pi^{in}](s)ds\,.
\ea
$$
By Gronwall's inequality, we conclude that, for all $t\in\bR$, one has
$$
D[\pi^{in}](t)\le D[\pi^{in}](0)e^{2L|t|}\,.
$$

\smallskip
Now we can state Dobrushin's stability theorem.

\begin{Thm}[Dobrushin]\lb{T-Dobru}\index{Dobrushin's estimate}
Assume that $K\in C^1(\bR^d\times\bR^d,\bR^d)$ satisfies (HK1-HK2). Let $\mu_1^{in},\mu_2^{in}\in\cP_1(\bR^d)$. 
For all $t\in\bR$, let 
$$
\left\{
\ba
\mu_1(t)=Z(t,\cdot,\mu_1^{in})\#\mu_1^{in}\,,
\\
\mu_2(t)=Z(t,\cdot,\mu_2^{in})\#\mu_2^{in}\,,
\ea
\right.
$$
where $Z$ is the mean field characteristic flow defined in Theorem \ref{T-MFChar}. 

Then, for all $t\in\bR$, one has
$$
\Dist_{MK,1}(\mu_1(t),\mu_2(t))\le e^{2L|t|}\Dist_{MK,1}(\mu_1^{in},\mu_2^{in})\,.
$$
\end{Thm}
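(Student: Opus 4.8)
The plan is to leverage the Gronwall estimate $D[\pi^{in}](t)\le D[\pi^{in}](0)e^{2L|t|}$ obtained just above: one transports a coupling of the initial data along the mean field characteristic flow and recognizes the transported measure as an admissible competitor in the variational definition of $\Dist_{MK,1}(\mu_1(t),\mu_2(t))$. Since the analytic core is already in hand, the remaining work is essentially measure-theoretic bookkeeping about push-forwards.

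First I would fix an arbitrary coupling $\pi^{in}\in\Pi(\mu_1^{in},\mu_2^{in})$. By the exercise on marginals, $\mu_1^{in},\mu_2^{in}\in\cP_1(\bR^d)$ implies $\pi^{in}\in\cP_1(\bR^d\times\bR^d)$, so $D[\pi^{in}](s)$ is well defined, and from its definition $D[\pi^{in}](0)=\iint_{\bR^d\times\bR^d}|\zeta_1-\zeta_2|\,\pi^{in}(d\zeta_1d\zeta_2)$. For $t\in\bR$ introduce $\Phi_t:\bR^d\times\bR^d\to\bR^d\times\bR^d$ given by $\Phi_t(\zeta_1,\zeta_2):=(Z(t,\zeta_1,\mu_1^{in}),Z(t,\zeta_2,\mu_2^{in}))$, and set $\pi(t):=\Phi_t\#\pi^{in}$. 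Since $Z(t,\cdot,\mu_j^{in})$ lies in the Banach space $X$ of the proof of Theorem \ref{T-MFChar} and hence grows at most linearly, $\pi(t)$ has a finite first moment. Its first marginal is the push-forward of the first marginal $\mu_1^{in}$ of $\pi^{in}$ under $Z(t,\cdot,\mu_1^{in})$, namely $Z(t,\cdot,\mu_1^{in})\#\mu_1^{in}=\mu_1(t)$ by Theorem \ref{T-MFChar}, and likewise its second marginal is $\mu_2(t)$; thus $\pi(t)\in\Pi(\mu_1(t),\mu_2(t))$.

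Then, by the change-of-variables formula for push-forwards,
$$
\iint_{\bR^d\times\bR^d}|x-y|\,\pi(t)(dxdy)=\iint_{\bR^d\times\bR^d}\big|Z(t,\zeta_1,\mu_1^{in})-Z(t,\zeta_2,\mu_2^{in})\big|\,\pi^{in}(d\zeta_1d\zeta_2)=D[\pi^{in}](t)\,,
$$
so that, using the definition of $\Dist_{MK,1}$ (an infimum over couplings, of which $\pi(t)$ is one) and the Gronwall estimate,
$$
\Dist_{MK,1}(\mu_1(t),\mu_2(t))\le D[\pi^{in}](t)\le e^{2L|t|}D[\pi^{in}](0)=e^{2L|t|}\iint_{\bR^d\times\bR^d}|\zeta_1-\zeta_2|\,\pi^{in}(d\zeta_1d\zeta_2)\,.
$$
The left-hand side is independent of $\pi^{in}$, so taking the infimum over all $\pi^{in}\in\Pi(\mu_1^{in},\mu_2^{in})$ on the right and recalling the definition of $\Dist_{MK,1}(\mu_1^{in},\mu_2^{in})$ gives the desired inequality for all $t\in\bR$. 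There is no deep obstacle here: the only points needing care are the identification of the marginals of $\Phi_t\#\pi^{in}$ (which follows from the fact that composing $\Phi_t$ with a coordinate projection equals composing that projection with $Z(t,\cdot,\mu_j^{in})$) and the finiteness of the first moment of $\pi(t)$, which is precisely where the at-most-linear growth of $Z(t,\cdot,\mu_j^{in})$, hence assumption (HK2), is used.
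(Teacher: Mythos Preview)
Your proof is correct and follows essentially the same approach as the paper: transport an arbitrary initial coupling $\pi^{in}$ by the map $\Phi_t(\zeta_1,\zeta_2)=(Z(t,\zeta_1,\mu_1^{in}),Z(t,\zeta_2,\mu_2^{in}))$, observe that $\Phi_t\#\pi^{in}\in\Pi(\mu_1(t),\mu_2(t))$, identify its cost as $D[\pi^{in}](t)$, apply the Gronwall bound $D[\pi^{in}](t)\le e^{2L|t|}D[\pi^{in}](0)$, and take the infimum over $\pi^{in}$. Your additional remarks on the finiteness of first moments are a welcome bit of extra care, but the argument is otherwise the paper's own.
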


\begin{proof}
We have seen that, for all $\mu_1^{in},\mu_2^{in}\in\cP_1(\bR^d)$ and all $\pi^{in}\in\Pi(\mu_1^{in},\mu_2^{in})$, one has
$$
D[\pi^{in}](t)\le D[\pi^{in}](0)e^{2L|t|}
$$
for all $t\in\bR$. 

Since $Z(t,\cdot,\mu_j^{in})\#\mu_j^{in}=\mu_j(t)$ for $j=1,2$, the map
$$
\Phi_t:\,(\zeta_1,\zeta_2)\mapsto(Z(t,\zeta_1,\mu_1^{in}),Z(t,\zeta_2,\mu_2^{in}))
$$
satisfies
$$
\Phi_t\#\pi^{in}=\pi(t)\in\Pi(\mu_1(t),\mu_2(t))
$$
for all $t\in\bR$, since $\pi^{in}\in\Pi(\mu_1^{in},\mu_2^{in})$.

Thus
$$
\ba
\Dist_{MK,1}&(\mu_1(t),\mu_2(t))
=\inf_{\pi\in\Pi(\mu_1(t),\mu_2(t))}\iint_{\bR^d\times\bR^d}|\zeta_1-\zeta_2|\pi(d\zeta_1d\zeta_2)
\\
&\le\inf_{\pi^{in}\in\Pi(\mu_1^{in},\mu_2^{in})}\iint_{\bR^d\times\bR^d}|Z(t,\zeta_1,\mu_1^{in})-Z(t,\zeta_2,\mu_2^{in})|\pi^{in}(d\zeta_1d\zeta_2)
\\
&=\inf_{\pi^{in}\in\Pi(\mu_1^{in},\mu_2^{in})}D[\pi^{in}](t)
\le e^{2L|t|}\inf_{\pi^{in}\in\Pi(\mu_1^{in},\mu_2^{in})}D[\pi^{in}](0)
\\
&=e^{2L|t|}\Dist_{MK,1}(\mu_1^{in},\mu_2^{in})
\ea
$$
which concludes the proof.
\end{proof}

\smallskip
The discussion in this section is inspired from \cite{Dobrushin}; see also \cite{MarchioPulvi}. The interested reader is also referred to the very interesting paper 
\cite{Loeper} where Monge-Kantorovich distances with exponents different from $1$ are used in the same context --- see also \cite{Hauray09}.

\subsection{The mean field limit}

The mean field limit of the $N$-particle system is a consequence of Dobrushin's stability theorem, as explained below.

\begin{Thm}\lb{T-ClassMFLimit}
Assume that the interaction kernel $K\in C^1(\bR^d\times\bR^d)$ and satisfies assumptions (HK1-HK2). Let $f^{in}$ be a probability density on $\bR^d$ 
such that
$$
\int_{\bR^d}|z|f^{in}(z)dz<\infty\,.
$$
Then the Cauchy problem for the mean field PDE
$$
\left\{
\ba
{}&\d_tf(t,z)+\Div_z(f(t,z)\cK f(t,z))=0\,,\quad z\in\bR^d\,,\,\,t\in\bR\,,
\\
&f\rstr_{t=0}=f^{in}
\ea
\right.
$$
has a unique weak solution $f\in C(\bR;L^1(\bR^d))$. 

For each $N\ge 1$, let $\fZ(N)=(z_{1,N}^{in},\ldots,z_{N,N}^{in})\in(\bR^d)^N$ be such that 
$$
\mu_{\fZ(N)}=\frac1N\sum_{=1}^N\de_{z_{j,N}^{in}}
$$
satisfies
$$
\Dist_{MK,1}(\mu_{\fZ(N)},f^{in})\to 0\quad\hbox{ as }N\to\infty\,.
$$

Let $t\mapsto T_t\fZ(N)=(z_{1,N}(t),\ldots,z_{N,N}(t))\in(\bR^d)^N$ be the solution of the $N$-particle ODE system with initial data $\fZ(N)$, i.e.
$$
\left\{
\ba
{}&\dot{z}_i(t)=\frac1N\sum_{j=1}^NK(z_i(t),z_j(t))\,,\quad i=1,\ldots,N,
\\
&z_i(0)=z_i^{in}\,.
\ea
\right.
$$

Then\footnote{The notation $\scrL^d$ designates the Lebesgue measure on $\bR^d$.} 
$$
\mu_{T_t\fZ(N)}\wto f(t,\cdot)\scrL^d\hbox{ as }N\to\infty
$$
in the weak topology of probability measures, with convergence rate 
$$
\Dist_{MK,1}(\mu_{T_t\fZ(N)},f(t,\cdot)\scrL^d)\le e^{2L|t|} \Dist_{MK,1}(\mu_{\fZ(N)},f^{in})\to 0
$$
as $N\to\infty$ for each $t\in\bR$.
\end{Thm}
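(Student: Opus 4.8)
The plan is to deduce everything from the mean field characteristic flow $Z$ of Theorem~\ref{T-MFChar}, from Proposition~\ref{P-MFNbody}, and from Dobrushin's estimate (Theorem~\ref{T-Dobru}); the mean field limit itself will then be essentially a one-line consequence.

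\textbf{Existence of the solution of the mean field PDE.} Set $\mu^{in}:=f^{in}\scrL^d\in\cP_1(\bR^d)$, let $Z(t,\cdot,\mu^{in})$ be the mean field characteristic flow given by Theorem~\ref{T-MFChar}, and define $\mu(t):=Z(t,\cdot,\mu^{in})\#\mu^{in}$. The vector field $b(t,z):=\cK\mu(t)(z)$ is continuous in $t$, is Lipschitz in $z$ with constant $L$ by (HK2), and grows at most linearly because $\int_{\bR^d}|z|\mu(t,dz)\le e^{2L|t|}C_1$ (the same Gronwall argument as in the proof of Theorem~\ref{T-MFChar}, with $C_1=\int|z|\mu^{in}(dz)$). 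Hence, by the exercise following Theorem~\ref{T-ExUnNbody} (push-forward under the characteristic flow is a weak solution of the associated transport equation), $\mu(t)$ is a weak solution of the mean field PDE with initial datum $\mu^{in}$. Since $z\mapsto b(t,z)$ is globally Lipschitz, the flow $Z(t,\cdot,\mu^{in})$ is a bi-Lipschitz homeomorphism of $\bR^d$ (with bi-Lipschitz constant $e^{L|t|}$), so it carries Lebesgue-null sets to Lebesgue-null sets; therefore $\mu(t)\ll\scrL^d$, i.e. $\mu(t)=f(t,\cdot)\scrL^d$ with $f(t,\cdot)$ a probability density. The change-of-variables formula $f(t,z)=f^{in}\big(Z(t,\cdot,\mu^{in})^{-1}(z)\big)\,\big|\det D\big(Z(t,\cdot,\mu^{in})^{-1}\big)(z)\big|$, together with the continuity of the flow in $t$ and dominated convergence, gives $f\in C(\bR;L^1(\bR^d))$.

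\textbf{Uniqueness.} Let $f\in C(\bR;L^1(\bR^d))$ be any weak solution with datum $f^{in}$. Testing the weak formulation against a smooth truncation of $|z|$ and applying Gronwall shows that $\int|z|f(t,z)\,dz<\infty$ for all $t$, so $b^f(t,z):=\cK f(t,z)$ is a well-defined vector field, continuous in $t$, $L$-Lipschitz in $z$, of linear growth; denote by $Y^f$ its characteristic flow. Then $f$ is in particular a weak solution of the \emph{linear} transport equation $\d_t g+\Div_z(g\,b^f)=0$ with datum $f^{in}$, whose weak solution in $C(\bR;L^1)$ is unique and equals $Y^f(t,0,\cdot)\#(f^{in}\scrL^d)$ (this is the content of parts 5--7 of the exercise following Theorem~\ref{T-ExUnNbody}, combined with the bi-Lipschitz property of $Y^f$). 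Consequently $f(t)=Y^f(t,0,\cdot)\#\mu^{in}$, so the pair $\big(Y^f(\cdot,0,\cdot),f(\cdot)\big)$ solves the mean field characteristic problem of Theorem~\ref{T-MFChar} for the datum $(\zeta^{in},\mu^{in})$, with $\zeta^{in}\in\bR^d$ arbitrary. By the uniqueness statement of Theorem~\ref{T-MFChar}, $Y^f(t,0,\zeta)=Z(t,\zeta,\mu^{in})$ for all $t,\zeta$, whence $f(t)=Z(t,\cdot,\mu^{in})\#\mu^{in}$ coincides with the solution constructed above.

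\textbf{The mean field limit.} Apply Proposition~\ref{P-MFNbody} with $Z_N^{in}=\fZ(N)$: the solution $T_t\fZ(N)=(z_{1,N}(t),\ldots,z_{N,N}(t))$ of the $N$-particle ODE system satisfies $z_{i,N}(t)=Z(t,z_{i,N}^{in},\mu_{\fZ(N)})$, hence
$$
\mu_{T_t\fZ(N)}=\frac1N\sum_{j=1}^N\de_{z_{j,N}(t)}=Z(t,\cdot,\mu_{\fZ(N)})\#\mu_{\fZ(N)}\,.
$$
Thus $\mu_{T_t\fZ(N)}$ and $f(t,\cdot)\scrL^d$ are precisely the measures $\mu_1(t)$ and $\mu_2(t)$ of Theorem~\ref{T-Dobru} for the choice $\mu_1^{in}=\mu_{\fZ(N)}$ and $\mu_2^{in}=f^{in}\scrL^d$, so Dobrushin's estimate yields
$$
\Dist_{MK,1}\big(\mu_{T_t\fZ(N)},f(t,\cdot)\scrL^d\big)\le e^{2L|t|}\Dist_{MK,1}\big(\mu_{\fZ(N)},f^{in}\big)\longrightarrow 0
$$
as $N\to\infty$, for each $t\in\bR$, which is the asserted convergence rate. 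Finally, $\Dist_{MK,1}$-convergence implies weak convergence of probability measures: for $\phi\in C_b(\bR^d)$ with $\Lip(\phi)\le 1$, Proposition~\ref{P-MKDual} gives $\big|\int\phi\,d\mu_{T_t\fZ(N)}-\int\phi(z)f(t,z)\,dz\big|\le\Dist_{MK,1}(\mu_{T_t\fZ(N)},f(t,\cdot)\scrL^d)\to 0$, and bounded Lipschitz functions are convergence-determining for the weak topology; hence $\mu_{T_t\fZ(N)}\wto f(t,\cdot)\scrL^d$.

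\textbf{Main obstacle.} The only genuinely delicate points are in the first two steps: showing that the push-forward solution actually lies in $C(\bR;L^1(\bR^d))$ --- i.e. absolute continuity with respect to $\scrL^d$ and \emph{strong} $L^1$-continuity in time, not merely narrow continuity in $\cP_1(\bR^d)$ --- and rigorously propagating the finiteness of the first moment for an \emph{a priori} weak solution, so that $\cK f(t,\cdot)$ is even defined and the reduction to linear transport is legitimate. Both are disposed of by the Lipschitz and linear-growth bounds coming from (HK2), Gronwall's lemma, and a routine mollification; once they are in place, the mean field limit and its rate are immediate from Proposition~\ref{P-MFNbody} and Theorem~\ref{T-Dobru}.
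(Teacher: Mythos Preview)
Your proof is correct and follows essentially the same route as the paper: express both $f(t,\cdot)\scrL^d$ and $\mu_{T_t\fZ(N)}$ as push-forwards of their initial data by the mean field characteristic flow (via Theorem~\ref{T-MFChar} and Proposition~\ref{P-MFNbody}), then apply Dobrushin's estimate directly. The only minor differences are cosmetic: you supply more detail on the $C(\bR;L^1)$ regularity (bi-Lipschitz flow, Jacobian formula) where the paper simply refers to the exercise on characteristics, and for the final weak-convergence step you invoke Proposition~\ref{P-MKDual} whereas the paper notes that the one-sided inequality needed here follows straight from the definition of $\Dist_{MK,1}$, then uses density of $C^1_c$ in $C_c$ plus the portmanteau theorem rather than the ``bounded Lipschitz functions are convergence-determining'' shortcut.
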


\begin{proof}
By Theorem \ref{T-MFChar} and questions 6 and 7 in the exercise on the method of characteristics before Theorem \ref{T-MFChar}, one has
$$
f(t,\cdot)\scrL^d=Z(t,\cdot,f^{in}\scrL^d)\#f^{in}\scrL^d
$$
for all $t\in\bR$. This implies in particular the uniqueness of the solution of the Cauchy problem in $C(\bR;L^1(\bR^d))$ for the mean field PDE.

By Proposition \ref{P-MFNbody},
$$
\mu_{T_t\fZ(N)}=Z(t,\cdot,\mu_{\fZ(N)})\#\mu_{\fZ(N)}
$$
for all $t\in\bR$. 

By Dobrushin's stability estimate,
$$
\Dist_{MK,1}(\mu_{T_t\fZ(N)},f(t,\cdot)\scrL^d)\le e^{2L|t|} \Dist_{MK,1}(\mu_{\fZ(N)},f^{in})
$$
for all $t\in\bR$, and since we have chosen $\fZ(N)$ so that
$$
\Dist_{MK,1}(\mu_{\fZ(N)},f^{in})\to 0
$$
as $N\to\infty$, we conclude that
$$
\Dist_{MK,1}(\mu_{T_t\fZ(N)},f(t,\cdot)\scrL^d)\to 0
$$
as $N\to\infty$ for each $t\in\bR$.

As for weak convergence, pick $\phi\in\Lip(\bR^d)$; then
$$
\ba
\left|\int_{\bR^d}\phi(z)\mu_{T_t\fZ(N)}(dz)-\int_{\bR^d}\phi(z)f(t,z)dz\right|
\\
=
\left|\iint_{\bR^d\times\bR^d}(\phi(x)-\phi(y))\pi(dxdy)\right|
\\
\le
\iint_{\bR^d\times\bR^d}|\phi(x)-\phi(y)|\pi(dxdy)
\\
\le
\Lip(\phi)\iint_{\bR^d\times\bR^d}|x-y|\pi(dxdy)
\ea
$$
for each $\pi\in\Pi(\mu_{T_t\fZ(N)},f(t,\cdot)\scrL^d)$. Thus
$$
\ba
\left|\int_{\bR^d}\phi(z)\mu_{T_t\fZ(N)}(dz)-\int_{\bR^d}\phi(z)f(t,z)dz\right|
\\
\le
\Lip(\phi)\inf_{\pi\in\Pi(\mu_{T_t\fZ(N)},f(t,\cdot)\scrL^d)}\iint_{\bR^d\times\bR^d}|x-y|\pi(dxdy)
\\
=\Lip(\phi)\Dist_{MK,1}(\mu_{T_t\fZ(N)},f(t,\cdot)\scrL^d)\to 0
\ea
$$
for each $t\in\bR$ as $N\to\infty$. (Notice that the inequality above is an obvious consequence of the definition of $\Dist_{MK,1}$, so that the equality in 
Proposition \ref{P-MKDual} is not needed here.)

This is true in particular for each $\phi\in C^1_c(\bR^d)$, and since $C^1_c(\bR^d)$ is dense in $C_c(\bR^d)$, we conclude that
$$
\int_{\bR^d}\phi(z)\mu_{T_t\fZ(N)}(dz)\to\int_{\bR^d}\phi(z)f(t,z)dz
$$
as $N\to\infty$ for each $\phi\in C_c(\bR^d)$. Since 
$$
\int_{\bR^d}\mu_{T_t\fZ(N)}(dz)=\int_{\bR^d}f(t,z)dz=1
$$
for all $t\in\bR$, we conclude that the convergence above holds for each $\phi\in C_b(\bR^d)$, which means that
$$
\mu_{T_t\fZ(N)}\to f(t,\cdot)\scrL^d
$$
as $N\to\infty$ in the weak topology of probability measures, by applying Theorem 6.8 in chapter II of \cite{Mallia}, sometimes referred to as the ``portmanteau 
theorem''.
\end{proof}

\smallskip
The theorem above is the main result on the mean field limit in \cite{NeunWick,BraunHepp,Dobrushin}.

\subsection{On the choice of the initial data}

In practice, using Theorem \ref{T-ClassMFLimit} as a rigorous justification of the mean field limit requires being able to generate $N$-tuples of the form
$\fZ(N)=(z_{1,N}^{in},\ldots,z_{N,N}^{in})\in(\bR^d)^N$ such that 
$$
\mu_{\fZ(N)}=\frac1N\sum_{=1}^N\de_{z_{j,N}^{in}}
$$
satisfies
$$
\Dist_{MK,1}(\mu_{\fZ(N)},f^{in})\to 0\quad\hbox{ as }N\to\infty\,.
$$

Assume that $f^{in}$ is a probability density on $\bR^d$ such that
$$
\int_{\bR^d}|z|^2f(z)dz<\infty\,.
$$

Let $\Om:=(\bR^d)^{\bN^*}$, the set of sequences of points in $\bR^d$ indexed by $\bN^*$. Let $\cF$ be the $\si$-algebra on $\Om$ generated by cylinders, 
i.e. by sets of the form
$$
\ba
\prod_{n\ge 1}B_n\quad\hbox{ with }&B_n\hbox{ Borel set in }\bR^d
\\
\hbox{ and }&B_n=\bR^d\hbox{ for all but finitely many }n\,.
\ea
$$
Finally, we endow the measurable space $(\Om,\cF)$ with the probability measure $\bP:=(f^{in})^{\otimes\infty}$, defined on the set of cylinders of $\Om$ 
by the formula
$$
\bP\left(\prod_{n\ge 1}B_n\right)=\prod_{n\ge 1}f^{in}(B_n)\,.
$$
(Notice that $f^{in}(B_n)=1$ for all but finitely many $n$, since $B_n=\bR^d$ except for finitely many $n$.)

\begin{Thm}\lb{T-LLN}
For each $\bz^{in}=(z_k^{in})_{k\ge 1}\in\Om$, let $Z_N^{in}=(z_1^{in},\ldots,z_N^{in})$. Then
$$
\Dist_{MK,1}(\mu_{Z_N^{in}},f^{in}\scrL^d)\to 0
$$
as $N\to\infty$ for $\bP$-a.e. $\bz^{in}\in\Om$.
\end{Thm}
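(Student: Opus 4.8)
The plan is to read Theorem~\ref{T-LLN} as a strong law of large numbers for the empirical measures of the i.i.d.\ sample $(z_k^{in})_{k\ge1}$: first in the sense of narrow convergence, and then, using the two-moment hypothesis, in the stronger sense of the Monge--Kantorovich distance $\Dist_{MK,1}$.

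\textbf{Step 1 (pointwise SLLN).} Under $\bP=(f^{in})^{\otimes\infty}$ the coordinate maps $\bz^{in}\mapsto z_k^{in}$ are i.i.d.\ $\bR^d$-valued random variables with common law $f^{in}\scrL^d$. By Kolmogorov's strong law of large numbers, for each $g\in L^1(f^{in}\scrL^d)$ there is a $\bP$-null set off which
$$
\int_{\bR^d}g\,d\mu_{Z_N^{in}}=\tfrac{1}{N}\sum_{k=1}^Ng(z_k^{in})\longrightarrow\int_{\bR^d}g(z)f^{in}(z)\,dz\,.
$$
I would apply this to: (i) a fixed countable family $(\phi_m)_{m\ge1}\subset C_b(\bR^d)$ that is convergence-determining for narrow convergence on $\cP(\bR^d)$ (such a family exists because $\bR^d$ is Polish); and (ii) the functions $z\mapsto|z|$ and $z\mapsto|z|^2$, both in $L^1(f^{in}\scrL^d)$ by hypothesis. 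Taking the union of these countably many null sets yields a single $\bP$-null set $\cN$ such that, for every $\bz^{in}\notin\cN$, the empirical measures satisfy $\mu_{Z_N^{in}}\wto f^{in}\scrL^d$ narrowly (Varadarajan's theorem), $\int|z|\,d\mu_{Z_N^{in}}\to\int|z|f^{in}\,dz$, and $\sup_N\int|z|^2\,d\mu_{Z_N^{in}}<\infty$.

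\textbf{Step 2 (upgrade to $\Dist_{MK,1}$).} Fix $\bz^{in}\notin\cN$. One route is to invoke the classical fact that on $\cP_1(\bR^d)$ the distance $\Dist_{MK,1}$ metrizes narrow convergence together with convergence of moments of order $1$ (see \cite{VillaniTOT,VillaniOTON}); both properties hold along our sequence by Step~1, so $\Dist_{MK,1}(\mu_{Z_N^{in}},f^{in}\scrL^d)\to0$. A self-contained route uses the dual formula of Proposition~\ref{P-MKDual}: after subtracting $\phi(0)$ one may assume a competitor $1$-Lipschitz $\phi$ satisfies $|\phi(z)|\le|z|$; truncating $\phi$ at height $R$ and multiplying by a Lipschitz cutoff supported in $\{|z|\le 2R\}$ produces a function $\tilde\phi$ that vanishes on $\{|z|\le R\}$ and satisfies $|\phi-\tilde\phi|\le 2|z|^2/R$ on $\{|z|>R\}$ (using $|z|\le|z|^2/R$ there), so that replacing $\phi$ by $\tilde\phi$ costs at most $\tfrac{2}{R}\bigl(\int|z|^2d\mu_{Z_N^{in}}+\int|z|^2f^{in}dz\bigr)$; for fixed $R$ the functions $\tilde\phi$ form a precompact subset (by Arzel\`a--Ascoli) of $C_c(\bR^d)$, hence are uniformly approximable within any $\eps>0$ by finitely many bounded Lipschitz functions, to which Step~1 applies. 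Letting $N\to\infty$, then $\eps\to0$, then $R\to\infty$ gives the conclusion for every $\bz^{in}\notin\cN$.

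\textbf{Main obstacle.} The only genuine content beyond bookkeeping is that narrow convergence is strictly weaker than convergence in $\Dist_{MK,1}$: a vanishingly small fraction of the sample could drift to infinity and corrupt the first moment. Ruling this out requires an almost-sure, uniform-in-$N$ integrability bound on $|z|$ against $\mu_{Z_N^{in}}$, and this is exactly what the hypothesis $\int|z|^2f^{in}\,dz<\infty$ provides through the SLLN applied to $|z|^2$ (finiteness of the first moment alone already yields convergence of first moments, but the tail bound needed in the self-contained argument is cleanest with two moments). A minor point is the measurability of $\bz^{in}\mapsto\Dist_{MK,1}(\mu_{Z_N^{in}},f^{in}\scrL^d)$, which follows from Proposition~\ref{P-MKDual} by restricting the supremum to a countable dense set of $1$-Lipschitz functions, or directly from the explicit bounds of Step~2.
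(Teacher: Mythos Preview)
Your proposal is correct and follows essentially the same route as the paper: apply the strong law of large numbers to a countable family of test functions together with $\phi(z)=|z|$ to obtain, on a single $\bP$-null set, weak convergence of $\mu_{Z_N^{in}}$ to $f^{in}\scrL^d$ plus convergence of first moments, and then invoke the characterization of $\Dist_{MK,1}$-convergence (the paper's Lemma~\ref{L-DMKWeakTopo}, your citation of \cite{VillaniTOT,VillaniOTON}). The only cosmetic differences are that the paper works with a dense sequence in $C_c(\bR^d)$ rather than a convergence-determining family in $C_b(\bR^d)$, and does not bother with $|z|^2$ separately; your optional ``self-contained'' alternative in Step~2 is not needed and contains a small slip (it is $\phi-\tilde\phi$, not $\tilde\phi$, that should vanish on $\{|z|\le R\}$).
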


\begin{proof}
For $\phi\in C_c(\bR^d)$ or $\phi(z)=|z|$, consider the sequence of random variables on $(\Om,\cF)$ defined by
$$
Y_n(\bz)=\phi(z_n)\,,
$$
where
$$
\bz:=(z_1,\ldots,z_n,\ldots)\in\Om\,.
$$

The random variables $Y_n$ are identically distributed, since
$$
\bP(Y_n\ge a)=\int_{\bR^d}\indc_{\phi(z)\ge a}f^{in}(z)dz
$$
is independent of $n$. 

The random variables $Y_n$ are also independent, since for all $N\ge 1$ and all $g_1,\ldots,g_N\in C_b(\bR)$, one has
$$
\bE^\bP(g_1(Y_1)\ldots g_N(Y_N))=\prod_{k=1}^N\int_{\bR^d}g_k(\phi(z))f^{in}(z)dz=\prod_{k=1}^N\bE^\bP(g_k(Y_k))\,.
$$

Finally, the random variables $Y_n$ have finite variance since
$$
\bE^\bP(|Y_n|^2)=\int_{\bR^d}|z|^2f^{in}(z)dz<\infty\,.
$$

By the strong law of large numbers (see Theorem 3.27 in \cite{Breiman}), one has
$$
\ba
\La\frac1N\sum_{k=1}^N\de_{z_k},\phi\Ra=\frac1N\sum_{k=1}^NY_k\to\bE^\bP(Y_1)=\int_{\bR^d}\phi(z)f^{in}(z)dz
\ea
$$
for $\bP$-a.e. $\bz$.

Since $C_c(\bR^d)$ is separable, one can assume that the $\bP$-negligible set is the same for all $\phi\in C_c(\bR^d)$, and take its union with the one 
corresponding to $\phi(z)=|z|$. This means precisely that
$$
\frac1N\sum_{k=1}^N\de_{z_k}\to f^{in}\scrL^d
$$
weakly in $\cP_1(\bR^d)$ for $\bP$-a.e. $\bz\in\Om$. One concludes the proof with the lemma below.
\end{proof}

\begin{Lem}\lb{L-DMKWeakTopo}
The Monge-Kantorovich distance $\Dist_{MK,1}$ metricizes the topology of weak convergence on $\cP_1(\bR^d)$. In other words, given a sequence
$(\mu_n)_{n\ge 1}$ of elements of $\cP_1(\bR^d)$ and $\mu\in\cP_1(\bR^d)$, the two following statements are equivalent:

\smallskip
\noindent
(1) $\Dist_{MK,1}(\mu_n,\mu)\to 0$ as $n\to\infty$;

\smallskip
\noindent
(2) $\mu_n\to\mu$ weakly in $\cP(\bR^d)$ as $n\to\infty$ and
$$
\sup_{n}\int_{\bR^d}|z|\indc_{|z|\ge R}\mu_n(dz)\to 0\quad\hbox{ as }R\to\infty\,.
$$
\end{Lem}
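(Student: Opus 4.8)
The plan is to prove the two implications separately, using the dual formula for $\Dist_{MK,1}$ from Proposition \ref{P-MKDual} together with elementary truncation arguments.

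\textbf{From (1) to (2).} Suppose $\Dist_{MK,1}(\mu_n,\mu)\to 0$. For weak convergence, take any $\phi\in C_b(\bR^d)\cap\Lip(\bR^d)$; then by Proposition \ref{P-MKDual}, $|\int\phi\,d\mu_n-\int\phi\,d\mu|\le\Lip(\phi)\Dist_{MK,1}(\mu_n,\mu)\to 0$, and since bounded Lipschitz functions are dense in $C_b(\bR^d)$ for the relevant (bounded-pointwise) sense — more cleanly, since testing against $C_c^1$ functions suffices and these are Lipschitz — we get $\mu_n\to\mu$ weakly. For the uniform-integrability tail condition, I would pick for each $n$ a near-optimal coupling $\pi_n\in\Pi(\mu_n,\mu)$ with $\iint|x-y|\,\pi_n(dxdy)\le\Dist_{MK,1}(\mu_n,\mu)+\tfrac1n=:\eps_n\to 0$. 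Then for any truncation level $R$, one estimates $\int_{|z|\ge R}|z|\,\mu_n(dz)=\iint_{|x|\ge R}|x|\,\pi_n(dxdy)$ by splitting $|x|\le|x-y|+|y|$ and using that on $\{|x|\ge R\}$ either $|x-y|\ge R/2$ or $|y|\ge R/2$; this bounds the tail by something like $2\iint|x-y|\,\pi_n+2\int_{|y|\ge R/2}|y|\,\mu(dy)+\text{(a term controlled by }\eps_n\text{)}$. Choosing $R$ large to kill the $\mu$-tail (using $\mu\in\cP_1$) and then $n$ large to kill $\eps_n$ gives the claim. Some care is needed to get the $\sup_n$ uniform statement rather than just a pointwise-in-$n$ limit: one handles $n\le N_0$ by finitely many individual $\cP_1$ tails and $n>N_0$ by the coupling estimate.

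\textbf{From (2) to (1).} This is the main obstacle. Given weak convergence plus the uniform integrability of the first moments, I would fix $\eps>0$ and, using the tail condition, choose $R$ so that $\sup_n\int_{|z|\ge R}|z|\,\mu_n(dz)<\eps$ and also $\int_{|z|\ge R}|z|\,\mu(dz)<\eps$. Define the radial truncation $\chi_R(z)=z\min(1,R/|z|)$, which is Lipschitz with constant $1$ and maps into the ball $\bar B(0,R)$, and let $\mu_n^R=\chi_R\#\mu_n$, $\mu^R=\chi_R\#\mu$. A triangle inequality for $\Dist_{MK,1}$ gives $\Dist_{MK,1}(\mu_n,\mu)\le\Dist_{MK,1}(\mu_n,\mu_n^R)+\Dist_{MK,1}(\mu_n^R,\mu^R)+\Dist_{MK,1}(\mu^R,\mu)$. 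The first and third terms are bounded using the obvious coupling $(\mathrm{id},\chi_R)\#\mu_n$, which has cost $\int|z-\chi_R(z)|\,\mu_n(dz)\le\int_{|z|\ge R}|z|\,\mu_n(dz)<\eps$, and similarly for $\mu$. For the middle term: the measures $\mu_n^R$ and $\mu^R$ are all supported in the fixed compact ball $\bar B(0,R)$, $\mu_n^R\to\mu^R$ weakly (continuous bounded test functions composed with $\chi_R$ are again continuous bounded), and on a compact set weak convergence implies $\Dist_{MK,1}$-convergence — this last fact I would get directly from the dual formula, since on a compact set $\Lip$-functions with constant $\le 1$ form an equicontinuous, uniformly bounded (after subtracting a constant) family, so weak convergence of $\mu_n^R$ upgrades to uniform convergence of $\int\phi\,d\mu_n^R$ over that family by an Arzelà–Ascoli / Stone–Weierstrass argument. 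Hence the middle term tends to $0$; taking $\limsup_n$ and then $\eps\to0$ finishes the proof.

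The delicate point, and where I expect to spend the most effort, is the compact-support step "weak convergence $\Rightarrow$ $\Dist_{MK,1}$ convergence", i.e. the uniformity of $|\int\phi\,d\mu_n^R-\int\phi\,d\mu^R|$ over all $1$-Lipschitz $\phi$ simultaneously; everything else is bookkeeping with truncations and the triangle inequality. An alternative route for that step would be to invoke directly that on a compact metric space $\Dist_{MK,1}$ metrizes weak-* convergence (a standard fact, e.g. Theorem 7.12 in \cite{VillaniTOT}), which the paper could simply cite rather than reprove.
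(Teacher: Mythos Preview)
The paper does not give its own proof of this lemma: immediately after stating it, the paper simply writes ``For a proof of Lemma \ref{L-DMKWeakTopo}, see \cite{VillaniTOT}.'' So there is nothing to compare your argument against except Villani's treatment, which you yourself invoke as the fallback for the compact-support step.

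Your proposal is a correct, self-contained proof along standard lines. The $(1)\Rightarrow(2)$ direction is fine once you note (as the paper itself does in the proof of Theorem \ref{T-ClassMFLimit}) that convergence against $C^1_c$ test functions plus the fact that all measures involved are probabilities upgrades to full weak convergence via the portmanteau theorem; your coupling-based tail estimate is the usual one. For $(2)\Rightarrow(1)$, the radial retraction $\chi_R$ onto $\bar B(0,R)$ is indeed $1$-Lipschitz (it is the metric projection onto a closed convex set in a Hilbert space), so your three-term triangle inequality works, and the compact-support step ``weak convergence $\Rightarrow$ $\Dist_{MK,1}$-convergence'' is exactly Theorem 7.12 in \cite{VillaniTOT}, or can be done by your Arzel\`a--Ascoli argument on the family of $1$-Lipschitz functions modulo additive constants. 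In short: you have supplied a proof where the paper only supplies a citation, and your route is essentially the route one finds in the cited reference.
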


\smallskip
For a proof of Lemma \ref{L-DMKWeakTopo}, see \cite{VillaniTOT}.

\medskip
\noindent
\textbf{Exercise:} The reader is invited to verify the fact that one can choose the $\bP$-negligible set that appears in the proof of Theorem \ref{T-LLN} to be the
same for all $\phi\in C_c(\bR^d)$ and for $\phi(z)=|z|$. Here is an outline of the argument.

\noindent
a) Let $R>0$; let $E_R$ be the set of real-valued continuous functions defined on $[-R,R]^d$ that vanish identically on $\d[-R,R]^d$, equipped with
the sup-norm
$$
\|\phi\|:=\sup_{x\in[-R,R]^d}|\phi(x)|\,.
$$
Prove that $E_R$ is a separable Banach space.

\smallskip
Denote by $\cN_\phi$ be the set of $\bz\in\Om$ such that
$$
\La\frac1N\sum_{k=1}^N\de_{z_k},\phi\Ra
$$
does not converge to 
$$
\int_{\bR^d}\phi(z)f^{in}(z)dz
$$
as $N\to\infty$. Let $R>0$ and let $(\phi_n)_{n\ge 1}$ be a dense sequence of elements of $E_R$, extended by $0$ to $\bR^d$. Define
$$
\cN_R:=\bigcup_{n\ge 1}\cN_{\phi_n}\,.
$$
b) Prove that 
$$
\La\frac1N\sum_{k=1}^N\de_{z_k},\phi\Ra\to\int_{\bR^d}\phi(z)f^{in}(z)dz
$$ 
as $N\to\infty$ for all $\phi\in E_R$ and all $\bz\notin\cN_R$. (Hint: pick $\phi\in E_R$ and $\eps>0$, and choose $m:=m(\phi,\eps)$ such that 
$\|\phi-\phi_m\|<\eps$. With the decomposition
$$
\ba
\La\frac1N\sum_{k=1}^N\de_{z_k},\phi\Ra-\int_{\bR^d}\phi(z)f^{in}(z)dz=\La\frac1N\sum_{k=1}^N\de_{z_k},\phi-\phi_m\Ra
\\
+\La\frac1N\sum_{k=1}^N\de_{z_k},\phi_m\Ra-\int_{\bR^d}\phi_m(z)f^{in}(z)dz
\\
+\int_{\bR^d}(\phi_m(z)-\phi(z))f^{in}(z)dz\,,
\ea
$$
prove that
$$
\left|\La\frac1N\sum_{k=1}^N\de_{z_k},\phi\Ra-\int_{\bR^d}\phi(z)f^{in}(z)dz\right|<3\eps
$$
for all $\bz\notin\cN_R$ provided that $N\ge N_0=N_0(\eps,\phi)$.)

\noindent
c) Complete the proof of Theorem \ref{T-LLN}.

\medskip
Thus, using Theorem \ref{T-ClassMFLimit} to prove the mean field limit requires choosing 
$$
\fZ(N)=(z_{1,N}^{in},\ldots,z_{N,N}^{in})\in(\bR^d)^N
$$
for each $N\ge 1$ so that
$$
\Dist_{MK,1}(\mu_{\fZ(N)},f^{in}\scrL^d)\to 0\quad\hbox{ as }N\to\infty\,.
$$

Theorem \ref{T-LLN} provides us with a strategy for making this choice, which is to draw an infinite sequence $z_j^{in}$ at random and independently with 
distribution $f^{in}\scrL^d$, and to set $z_{j,N}^{in}:=z_j^{in}$. This strategy avoids the unpleasant task of having to change the first terms in $\fZ(N)$ as 
$N\to\infty$. 

Since Dobrushin's estimate bounds $\Dist_{MK,1}(f(t,\cdot)\scrL^d,\mu_{T_t\fZ(N)})$ in terms of $\Dist_{MK,1}(f^{in}\scrL^d,\mu_{\fZ(N)})$, having an explicit 
bound on $\Dist_{MK,1}(f^{in}\scrL^d,\mu_{\fZ(N)})$ would provide us with a quantitative error estimate for the mean field limit. Such a bound will be given 
below --- see Theorem \ref{T-HoroKara}.

\smallskip
More details on the topics discussed in the present section are to be found in \cite{BraunHepp}, as well as a precise statement concerning the behavior
of fluctuations around the mean field limit --- in some sense, the asymptotic behavior at next order after the mean field limit (see Theorem 3.5 in 
\cite{BraunHepp}).

\section[Method based on BBGKY hierarchy]{The BBGKY hierarchy and the mean field limit}

In the previous derivation of the mean field limit of the $N$-particle system with interaction kernel $K$ satisfying assumptions (HK1)-(HK2), we benefited
from a happy circumstance, i.e. the fact that the empirical measure built on any solution of the $N$-particle ODE system is an \textit{exact} solution of the 
mean field PDE. This is why the mean field limit was reduced to the stability of the solution of the mean field PDE in terms of its initial data, which follows
from Dobrushin's stability estimate.

However, there are other situations in statistical mechanics where the empirical measure built on solutions of the $N$-particle ODE system may not be 
an exact solution of the target equation --- the best known example of this being the Boltzmann equation of the kinetic theory of gases. There are various
examples of such situations where the mean field limit can nevertheless be justified rigorously --- see for instance \cite{MischlerMouhotWenn}, and 
\cite{Sznitman} in the case of random dynamics.

There are also situations where there is no clear notion of empirical measure --- think for instance to the $N$-body problem in quantum mechanics: in that 
case, it is impossible to exactly localize any one of the $N$ particles in phase space, according to the Heisenberg uncertainty principle. 

In the present section, we present another approach to the mean field limit of $N$-particle systems, that is, in some sense, more systematic than the method based on empirical measure and that can be applied to a greater variety of situations (including quantum models, as we shall see later). 

\subsection{$N$-particle distributions}
The state at time $t$ of a system of $N$ identical particles located at the positions $z_1(t),\ldots,z_N(t))$ in the single-particle phase space $\bR^d$ was 
described  in the previous section by means of the empirical measure 
$$
\mu_{Z_N(t)}:=\frac1N\sum_{i=1}^N\de_{z_i(t)}\,,
$$
where $Z_N(t):=(z_1(t),\ldots,z_N(t))$. The empirical is a probability measure in the single-particle phase space $\bR^d$ as mentioned above. This measure
is parametrized by the element $Z_N(t)$ of the $N$-particle phase space $(\bR^d)^N$.

Another way of describing the state of the same system of $N$ particles at time $t$ is to use its $N$-particle \index{$N$-particle distribution function} 
distribution function, that is
$$
F_N(t,z_1,\ldots,z_N)\,.
$$
More generally, one could think of the $N$-particle distribution as being a probability measure on the $N$-body phase space $(\bR^d)^N$
$$
F_N(t,dz_1\ldots dz_N)\,.
$$
The meaning of this $N$-particle distribution is as follows. Let $A_j\subset\bR^d$ be Borel measurable sets for $j=1,\ldots,N$; then, the joint probability 
at time $t$ to have particle $1$ in $A_1$, particle $2$ in $A_2$\dots and particle $N$ in $A_N$ is
$$
\int_{A_1}\int_{A_2}\ldots\int_{A_N}F(t,dz_1dz_2\ldots dz_N)\,.
$$

Now, we are interested in situations where all the particles in the $N$-particle system considered are identical. (For example, all electrons in the universe
are identical; ions of any given species in a plasma are identical too.) Therefore, for any permutation $\si\in\fS_N$, the joint probability of having particle 
$1$ in $A_1$, particle 2 in $A_2$\ldots and particle $N$ in $A_N$ is equal to the joint probability of having particle $1$ in $A_{\si^{-1}(1)}$, particle 2 in 
$A_{\si^{-1}(2)}$\ldots and particle $N$ in $A_{\si^{-1}(N)}$. This is indeed obvious since it is impossible to distinguish particle $1$ from particle $\si^{-1}(1)$,
particle $2$ from particle $\si^{-1}(2)$\ldots and particle $N$ from particle $\si^{-1}(N)$. Thus\index{Indistinguishable particles}
$$
\ba
\int_{A_1}\int_{A_2}\ldots\int_{A_N}F(t,dz_1dz_2\ldots dz_N)&
\\
=
\int_{A_{\si^{-1}(1)}}\int_{A_{\si^{-1}(2)}}\ldots\int_{A_{\si^{-1}(N)}}F(t,dz_1dz_2\ldots dz_N)&\,.
\ea
$$
Equivalently
$$
\ba
\int_{(\bR^d)^N}\indc_{A_1\times A_2\times\ldots\times A_N}(z_1,z_2,\ldots,z_N)F(t,dz_1dz_2\ldots dz_N)&
\\
=
\int_{(\bR^d)^N}\indc_{A_1\times A_2\times\ldots\times A_N}(z_{\si(1)},z_{\si(2)},\ldots,z_{\si(N)})F(t,dz_1dz_2\ldots dz_N)&\,.
\ea
$$
 
For $\si\in\fS_N$, define 
$$
S_\si:\,(z_1,\ldots,z_N)\mapsto(z_{\si(1)},\ldots,z_{\si(N)})\,.
$$
The equality above is recast as
$$
\ba
\int_{(\bR^d)^N}\indc_{A_1\times A_2\times\ldots\times A_N}(z_1,z_2,\ldots,z_N)F(t,dz_1dz_2\ldots dz_N)
\\
=
\int_{(\bR^d)^N}\indc_{A_1\times A_2\times\ldots\times A_N}\circ S_\si(z_1,\ldots,z_N)F(t,dz_1dz_2\ldots dz_N)
\ea
$$
for all $A_1,A_2,\ldots,A_N$ Borel subsets of $\bR^d$. This is equivalent to the equality
$$
S_\si\#F(t,\cdot)=F(t,\cdot)
$$
for all $\si\in\fS_N$. 

Obviously, when $F$ is a probability density instead of a probability measure, the condition 
$$
S_\si\#F(t,\cdot)\scrL^{dN}=F(t,\cdot)\scrL^{dN}
$$
for all $\si\in\fS_N$ is equivalent to the condition
$$
F(t,z_{\si(1)},\ldots,z_{\si(N)})=F(t,S_\si(z_1,\ldots, z_N))=F(t,z_1,\ldots,z_N)
$$
for all $\si\in\fS_N$ and for all $z_1,\ldots,z_N\in\bR^d$. In other words, the function
$$
(z_1,\ldots,z_N)\mapsto F(t,z_1,\ldots,z_N)
$$
is symmetric.

\subsection{Marginal distributions of symmetric $N$-particle distributions}

There is however one serious difficulty in considering $N$-particle distributions in the context of the mean field limit. Indeed, this limit assumes that
$N\to\infty$, so that one would have to deal with ``functions of infinitely many variables'' in this limit, which does not make much sense at first 
sight\footnote{This last statement is not completely correct, as P.-L. Lions recently proposed a well defined mathematical object that would play the 
role of a ``symmetric function of infinitely many variables that is slowly varying in each variable'': see \cite{PLLionsCdF07} and section \ref{SS-Lions}
below.}. 

A traditional way of circumventing this difficulty is by considering the string of marginal distributions of the $N$-particle distribution. Before giving precise
definitions, let us explain the idea in simple geometrical terms.

Consider a sphere centered at the origin in the $3$ dimensional Euclidean space. The only missing information in order to completely define this 
sphere is its diameter. In other words, the sphere is completely determined as soon as one knows its orthogonal projection on any axis passing
through the origin. However, if one does not know a priori that the object is a sphere, its orthogonal projections on each axis passing through the 
origin is not enough in order to reconstruct completely the object, since they will not distinguish between a ball centered at the origin and its
boundary that is a sphere of equal radius. 

The situation that we consider here is slightly more complicated, since the group of symmetries is not the orthogonal group, but the group generated
by the reflections exchanging two coordinate axis in the $N$ dimensional Euclidean space. Knowing the orthogonal projection of a set that is invariant 
under the action of this group on the first coordinate axis is again not sufficient as it will not distinguish between a sphere of radius $r$ centered at the
origin and the (hyper)cube of side $2r$ centered at the origin with edges parallel to the coordinate axis. Knowing the orthogonal projection on any one
of the planes defined by two coordinate axis removes this ambiguity.

Considering marginals of an $N$-particle distribution is the analogous operation on probability measures. Denote by $\cP_{sym}((\bR^d)^N)$ the set 
of symmetric probability measures on the $N$-particle phase space, i.e.
$$
\cP_{sym}((\bR^d)^N):=\{P\in\cP((\bR^d)^N)\,|\,S_\si\#P=P\hbox{ for all }\si\in\fS_N\}\,,
$$
where we recall that $S_\si$ is the transformation on $(\bR^d)^N$ defined by 
$$
S_\si(z_1,\ldots,z_N)=(z_{\si(1)},\ldots,z_{\si(N)})
$$
for all $\si\in\fS_N$ and all $z_1,\ldots,z_N\in\bR^d$.

\begin{Def}
For each $N\in\bN^*$, each $P_N\in\cP_{sym}((\bR^d)^N)$ and each $k\in\{1,\ldots,N\}$, the $k$-particle marginal of $P_N$ is the element of 
$\cP_{sym}((\bR^d)^k)$ defined by the formula\index{Marginals of a $N$-particle distribution}
$$
\int_{(\bR^d)^k}\phi(z_1,\ldots,z_k)P_{N:k}(dz_1\ldots dz_k)=\int_{(\bR^d)^N}\phi(z_1,\ldots,z_k)P_N(dz_1\ldots dz_N)
$$
for each test function $\phi\in C_b((\bR^d)^k)$. We shall systematically use the convention
$$
P_{N:k}=0\quad\hbox{ whenever }j>N\,.
$$
\end{Def}

\smallskip
If
$$
P_N(dz_1\ldots dz_N)=F_N(z_1,\ldots,z_N)dz_1\ldots dz_N
$$
where $F_N$ is a symmetric probability density on $(\bR^d)^N$, then, for each $k=1,\ldots,N$, one has
$$
P_{N:k}(dz_1\ldots dz_k)=F_{N:k}(z_1,\ldots,z_k)dz_1\ldots dz_k
$$
with
$$
F_{N:k}(z_1,\ldots,z_k)=\int_{(\bR^d)^{N-k}}F_N(z_1,\ldots,z_N)dz_{k+1}\ldots dz_N\,.
$$
Obviously, $F_{N:k}$ is also a symmetric probability density on $(\bR^d)^k$.

The following elementary exercise confirms the analogy between the orthogonal projections of a subset of the Euclidean space on the subspaces generated
by the coordinate axis and the marginal distributions associated to a symmetric probability on the $N$-particle phase space.

\smallskip
\noindent
\textbf{Exercise:} Consider for each $N\in\bN^*$ and each $k=1,\ldots,N$ the orthogonal projection
$$
\bP_N^k:\,(\bR^d)^N\ni(z_1,\ldots,z_N)\mapsto(z_1,\ldots,z_k)\in(\bR^d)^k\,.
$$
Check that, for each $P_N\in\cP_{sym}((\bR^d)^N)$, one has 
$$
P_{N:k}=\bP_N^k\#P_N\,,
$$
and that
$$
(P_{N:k})_{:j}=P_{N:j}\quad\hbox{ for all }j,k\hbox{ such that }1\le j\le k\le N\,.
$$

\smallskip
An important example of symmetric $N$-particle distributions is the case of \textit{factorized distributions}. Given a probability density $f$ on $\bR^d$,
we consider for each $N\in\bN^*$ the $N$-particle probability density $F_N$ defined by the formula
$$
F_N(z_1,\ldots,z_N):=\prod_{j=1}^Nf(z_j)\,.
$$
This $N$-particle probability density is denoted as follows:
$$
F_N=f^{\otimes N}\,.
$$
Obviously $F_N=f^{\otimes N}$ is a symmetric $N$-particle probability distribution, and its marginals are also factorized distributions, since
$$
F_N=f^{\otimes N}\Rightarrow F_{N:k}=f^{\otimes k}
$$
for all $k=1,\ldots,N$.

There is a very nice characterization of factorized distributions in terms of entropy\index{Entropy}. We shall not use it in the sequel. Nevertheless, it is important 
to know it, and we leave it as an exercise.

\smallskip
\noindent
\textbf{Exercise:} For each probability density $f$ on $\bR^d$ and each $N\in\bN^*$, define
$$
E_N(f)=\{F_N\hbox{ symmetric probability density on }(\bR^d)^N\hbox{ s.t. }F_{N:1}=f\}\,.
$$
We want to prove that $F_N=f^{\otimes N}$ realizes
$$
\inf_{F_N\in E_N(f)}\int_{(\bR^d)^N}F_N\ln F_N(z_1,\ldots,z_N)dz_1\ldots dz_N\,.
$$
1) Let $a=(a_1,\ldots,a_d)\in(\bR_+^*)^d$ satisfy $a_1+\ldots+a_d=1$, and consider
$$
M(a):=\{A=A^T\in M_d(\bR_+^*)\hbox{ s.t. }(1,\ldots,1)\cdot A=a\}\,.
$$
Find the critical points of the function
$$
H:\,M(a)\ni A\mapsto\sum_{i,j=1}^dA_{ij}\ln A_{ij}\in\bR\,.
$$
2) Prove that, for each $x,y>0$
$$
\phi(x,y):=x\ln\left(\frac{x}{y}\right)-x+y\ge 0\,,
$$
with equality if and only if $x=y$.

\noindent
3) Express in terms of $\phi(A_{ij},a_ia_j)$ the quantity
$$
\sum_{i,j=1}^d(A_{ij}\ln A_{ij}-a_ia_j\ln(a_ia_j))\,.
$$
4) Find 
$$
\inf_{A\in M(a)}H(A)\,.
$$
5) Using the intuition provided by questions 1-4, solve the minimization problem
$$
\inf_{F_N\in E_N(f)}\int_{(\bR^d)^N}F_N\ln F_N(z_1,\ldots,z_N)dz_1\ldots dz_N\,.
$$

\smallskip
References for this and the previous section are chapter 3 in \cite{BouGolPul} and chapter 3 in \cite{CIP}.

\subsection{The $N$-particle Liouville equation}

We have explained above how the state of a system of $N$ identical particles is described by a symmetric probability measure on the $N$-particle phase 
space. Our next task is to define the evolution of such a probability measure, knowing that the positions of the particles in phase space are governed by
the system of $N$-particle ODEs
$$
\left\{
\ba
{}&\dot{z}_i(t)=\frac1N\sum_{j=1}^NK(z_i(t),z_j(t))\,,\quad i=1,\ldots,N\,,
\\
&z_i(0)=z_i^{in}\,.
\ea
\right.
$$
As explained in Theorem \ref{T-ExUnNbody}, whenever $K\in C^1(\bR^d\times\bR^d,\bR^d)$ satisfies assumptions (HK1-HK2), the system of ODEs above 
generates a flow on the $N$-particle phase space $(\bR^d)^N$ denoted by $T_t$ and defined by the formula
$$
T_t(z_1^{in},\ldots,z_N^{in}):=(z_1(t),\ldots,z_N(t))
$$
for all $t\in\bR$.

Given a $N$-particle symmetric probability measure $F_N^{in}\in\cP_{sym}((\bR^d)^N)$, we set
$$
F_N(t):=T_t\#F_N^{in}\,,\qquad t\in\bR\,.
$$
This formula defines $F_N(t)$ as the unique weak solution in $C(\bR;w-\cP((\bR^d)^N))$ of the Cauchy problem for the $N$-particle Liouville equation
$$
\left\{
\ba
{}&\d_tF_N+\frac1N\sum_{i,j=1}^N\Div_{z_i}(F_NK(z_i,z_j))=0\,,\quad z_1,\ldots,z_N\in\bR^d\,,\,\,t\in\bR\,,
\\
&F_N\rstr_{t=0}=F_N^{in}\,.
\ea
\right.
$$

\smallskip
\noindent
\textbf{Exercise:} Check this, by using the method of characteristics as in the exercise following the statement of Theorem \ref{T-ExUnNbody}.
(For the solution, see chapter 1 in \cite{BouGolPul}).

\smallskip
As explained above, it is important that the probability measure in the $N$-particle phase space describing the state of a system of identical particles should 
be symmetric. Whether this symmetry property is propagated by the flow of the $N$-particle Liouville equation is therefore a very natural question. The answer 
to that question is given by the following proposition.

\begin{Prop} 
Assume that the interaction kernel $K\in C^1(\bR^d\times\bR^d,\bR^d)$ satisfies the assumptions (HK1)-(HK2) so that the $N$-particle ODE system defines 
a unique flow $T_t$ on $(\bR^d)^N$ as proved in Theorem \ref{T-ExUnNbody}.  For each $\si\in\fS_N$ we denote as above by $S_\si$ the transformation on 
$(\bR^d)^N$ defined by 
$$
S_\si(z_1,\ldots,z_N)=(z_{\si(1)},\ldots,z_{\si(N)})
$$
for all $z_1,\ldots,z_N\in\bR^d$.

\smallskip
\noindent
1) For all $\si\in\fS_N$ and all $t\in\bR$, one has 
$$
T_tS_\si=S_\si T_t\,,
$$
viz.
$$
\ba
t\mapsto(z_{\si(1)}(t),\ldots,z_{\si(N)}(t))&\hbox{ is the solution of the $N$-particle ODE system}
\\
&\hbox{ with initial condition }(z_{\si(1)}^{in},\ldots,z_{\si(N)}^{in})\,;
\ea
$$
2) For each $F_N^{in}\in\cP((\bR^d)^N)$ and all $t\in\bR$, the probability measure 
$$
F_N(t):=T_t\#F_N^{in}
$$ 
is symmetric if $F_N^{in}$ is symmetric, i.e.
$$
S_\si\#F_N(t)=F_N(t)\quad\hbox{ for all }t\in\bR\,,
$$
if 
$$
S_\si\#F_N^{in}=F_N^{in}\,.
$$
\end{Prop}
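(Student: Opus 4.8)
The plan is to establish part 1) — the equivariance $T_tS_\si=S_\si T_t$ of the $N$-particle flow under coordinate permutations — by a direct uniqueness argument, and then to obtain part 2) as a purely formal consequence of the composition rule for push-forwards of measures.

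For part 1), I would fix $\si\in\fS_N$ and an arbitrary initial datum $Z_N^{in}=(z_1^{in},\ldots,z_N^{in})$, denote by $t\mapsto(z_1(t),\ldots,z_N(t))=T_tZ_N^{in}$ the corresponding solution of the $N$-particle ODE system furnished by Theorem \ref{T-ExUnNbody}, and set $w_i(t):=z_{\si(i)}(t)$, so that by construction $(w_1(t),\ldots,w_N(t))=S_\si T_tZ_N^{in}$. Differentiating gives
$$
\dot w_i(t)=\dot z_{\si(i)}(t)=\frac1N\sum_{j=1}^NK(z_{\si(i)}(t),z_j(t))\,,
$$
and the one computation that actually has content is the reindexing $j=\si(k)$: since $\si$ is a bijection of $\{1,\ldots,N\}$ and the sum runs over all $j$, it becomes $\frac1N\sum_{k=1}^NK(w_i(t),w_k(t))$. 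Hence $(w_1,\ldots,w_N)$ solves the same $N$-particle ODE system, with initial condition $w_i(0)=z_{\si(i)}^{in}$, i.e. with initial datum $S_\si Z_N^{in}$; by the uniqueness statement in Theorem \ref{T-ExUnNbody}, $(w_1(t),\ldots,w_N(t))=T_tS_\si Z_N^{in}$. Comparing the two expressions for $(w_1(t),\ldots,w_N(t))$ and using that $Z_N^{in}$ was arbitrary yields $S_\si T_t=T_tS_\si$ on $(\bR^d)^N$, for every $t\in\bR$ and every $\si\in\fS_N$; this is precisely the reformulation stated in 1).

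For part 2), I would simply invoke the elementary identity $(\Phi\circ\Psi)\#m=\Phi\#(\Psi\#m)$, valid for any measurable maps $\Phi,\Psi$ and any Borel measure $m$. Assuming $S_\si\#F_N^{in}=F_N^{in}$ for all $\si\in\fS_N$, one then computes, for each $t\in\bR$,
$$
S_\si\#F_N(t)=S_\si\#(T_t\#F_N^{in})=(S_\si\circ T_t)\#F_N^{in}=(T_t\circ S_\si)\#F_N^{in}=T_t\#(S_\si\#F_N^{in})=T_t\#F_N^{in}=F_N(t)\,,
$$
the third equality being exactly part 1). This shows that $F_N(t)$ is symmetric for all $t$, as claimed.

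The only real obstacle is the reindexing step in part 1), where one uses that $\si$ permutes $\{1,\ldots,N\}$ and that the sum in the ODE runs over all indices $j$; everything else is bookkeeping. I would also remark that the representation $F_N(t)=T_t\#F_N^{in}$ as the unique weak solution of the $N$-particle Liouville equation was already established just above (via the method of characteristics), so part 2) can equivalently be read as the statement that the Liouville flow propagates the symmetry of the $N$-particle distribution, and no further argument about the PDE itself is needed.
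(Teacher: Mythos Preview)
Your proof is correct and follows exactly the same approach as the paper's: part 1) by showing the permuted trajectory satisfies the same ODE and invoking uniqueness from Theorem \ref{T-ExUnNbody}, part 2) by the push-forward composition identity. The only difference is that you spell out the reindexing $j=\si(k)$ explicitly, whereas the paper leaves it as ``an elementary computation''.
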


\begin{proof}
An elementary computation shows that 
$$
t\mapsto(z_{\si(1)}(t),\ldots,z_{\si(N)}(t))
$$
satisfies the same $N$-particle ODE system as
$$
t\mapsto(z_1(t),\ldots,z_N(t))\,.
$$
By uniqueness of the solution of the Cauchy problem for this ODE system (Theorem \ref{T-ExUnNbody}), this is therefore the unique solution of that problem 
with initial data $(z_{\si(1)}^{in},\ldots,z_{\si(N)}^{in})$. In other words,
$$
T_tS_\si(z_1^{in},\ldots,z_N^{in})=(z_{\si(1)}(t),\ldots,z_{\si(N)}(t))=S_\si T_t(z_1^{in},\ldots,z_N^{in})
$$
for all $(z_1^{in},\ldots,z_N^{in})\in(\bR^d)^N$ and all $t\in\bR$, which proves statement 1).

As for statement 2), observe that
$$
\ba
S_\si\#F_N(t)&=S_\si\#(T_t\#F_N^{in})=(S_\si T_t)\#F_N^{in}
\\
&=(T_tS_\si)\#F_N^{in}=T_t\#(S_\si\#F_N^{in})=T_t\#F_N^{in}=F_N(t)
\ea
$$
for all $t\in\bR$ and all $\si\in\fS_N$, which is precisely the desired relation.
\end{proof}

\smallskip
Finally, we discuss the growth of $F_N(t,z_1,\ldots,z_N)$ as $|z_1|+\ldots+|z_N|\to\infty$. 

\begin{Lem}\lb{L-PropMom1}
Under the assumptions (HK1)-(HK2) on the interaction kernel $K$, one has
$$
\|T_t(z_1^{in},\ldots,z_N^{in})\|_1\le e^{2L|t|}\|(z_1^{in},\ldots,z_N^{in})\|_1
$$
with the notation
$$
\|(z_1,\ldots,z_N)\|_1=|z_1|+\ldots+|z_N|\,.
$$
In particular, if $F_N^{in}\in\cP_1((\bR^d)^N)$, then $T_t\#F_N^{in}\in\cP_1((\bR^d)^N)$, and one has
$$
\ba
\int_{(\bR^d)^N}&\|(z_1,\ldots,z_N)\|_1T_t\#F_N^{in}(dz_1\ldots dz_N)
\\
&\le e^{2L|t|}\int_{(\bR^d)^N}\|(z_1,\ldots,z_N)\|_1F_N^{in}(dz_1\ldots dz_N)
\ea
$$
for all $t\in\bR$.
\end{Lem}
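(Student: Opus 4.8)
The plan is to prove the phase-space bound by a direct Gronwall argument applied to the $N$-particle ODE system, and then transfer it to the pushforward measure by the change of variables formula.

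First I would fix $Z_N^{in}=(z_1^{in},\ldots,z_N^{in})$, write $T_tZ_N^{in}=(z_1(t),\ldots,z_N(t))=:Z_N(t)$ for the solution of the $N$-particle ODE system provided by Theorem \ref{T-ExUnNbody}, and treat $t\ge 0$ first (the case $t\le 0$ being handled identically, which is why $|t|$ appears in the final bound). Since each $s\mapsto z_i(s)$ is $C^1$, the triangle inequality gives $|z_i(t)|\le|z_i^{in}|+\int_0^t|\dot z_i(s)|\,ds$; working from this integral form rather than differentiating $t\mapsto\|Z_N(t)\|_1$ directly sidesteps the only delicate point, namely that $z\mapsto|z|$ fails to be differentiable at the origin, so no regularization is needed. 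Using the ODE together with the linear growth bound $|K(z,z')|\le L(|z|+|z'|)$ recorded after (HK2), one has
\be
|\dot z_i(s)|\le\frac1N\sum_{j=1}^N|K(z_i(s),z_j(s))|\le\frac1N\sum_{j=1}^N L(|z_i(s)|+|z_j(s)|)=L|z_i(s)|+\frac LN\|Z_N(s)\|_1\,.
\ee

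Summing this over $i=1,\ldots,N$ and integrating yields the closed inequality $\|Z_N(t)\|_1\le\|Z_N^{in}\|_1+2L\int_0^t\|Z_N(s)\|_1\,ds$, and Gronwall's inequality then gives $\|T_tZ_N^{in}\|_1\le e^{2Lt}\|Z_N^{in}\|_1$; running the same computation for negative times produces the stated bound with $e^{2L|t|}$ for all $t\in\bR$. For the second assertion I would invoke the change of variables formula for pushforwards, $\int g\,d(\Phi\#m)=\int g\circ\Phi\,dm$ for nonnegative measurable $g$, with $\Phi=T_t$, $m=F_N^{in}$ and $g(z_1,\ldots,z_N)=\|(z_1,\ldots,z_N)\|_1$, and then insert the pointwise bound just established:
\be
\ba
\int_{(\bR^d)^N}\|(z_1,\ldots,z_N)\|_1\,T_t\#F_N^{in}(dz_1\ldots dz_N)&=\int_{(\bR^d)^N}\|T_t(z_1,\ldots,z_N)\|_1\,F_N^{in}(dz_1\ldots dz_N)
\\
&\le e^{2L|t|}\int_{(\bR^d)^N}\|(z_1,\ldots,z_N)\|_1\,F_N^{in}(dz_1\ldots dz_N)\,.
\ea
\ee
In particular, if $F_N^{in}\in\cP_1((\bR^d)^N)$ the right-hand side is finite, so $T_t\#F_N^{in}\in\cP_1((\bR^d)^N)$, which finishes the proof.

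There is no real obstacle here: the argument is a textbook Gronwall estimate combined with the definition of the pushforward. The only thing I would be careful to state cleanly is the passage through the integral form of the estimate for each $|z_i(t)|$, so as not to differentiate a non-smooth function; everything else is routine and I would present it compactly.
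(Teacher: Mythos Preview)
Your proposal is correct and follows essentially the same route as the paper: a Gronwall argument on $\|Z_N(t)\|_1$ using the linear growth bound $|K(z,z')|\le L(|z|+|z'|)$, followed by the change-of-variables formula for the pushforward. The only difference is that the paper differentiates $\|Z_N(t)\|_1$ directly (writing $\frac{d}{dt}|z_i(t)|=\dot z_i(t)\cdot\frac{z_i(t)}{|z_i(t)|}$), whereas you work through the integral form $|z_i(t)|\le|z_i^{in}|+\int_0^t|\dot z_i(s)|\,ds$; your version is in fact slightly cleaner, since it avoids the non-differentiability of $|\cdot|$ at the origin that the paper's computation glosses over.
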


\begin{proof}
Observe that
$$
\ba
\frac{d}{dt}\|(z_1(t),\ldots,z_N(t))\|_1&=\frac1N\sum_{i,j=1}^NK(z_i(t),z_j(t))\cdot\frac{z_i(t)}{|z_i(t)|}
\\
&\le\frac1N\sum_{i,j=1}^N|K(z_i(t),z_j(t))|
\\
&\le\frac1N\sum_{i,j=1}^NL(|z_i(t)|+|z_j(t)|)
\\
&=\sum_{i=1}^NL\left(|z_i(t)|+\frac1N\|(z_1(t),\ldots,z_N(t))\|_1\right)
\\
&=2L\|(z_1(t),\ldots,z_N(t))\|_1\,,
\ea
$$
and conclude by the Gronwall inequality.

Moreover, if $F_N^{in}\in\cP_1((\bR^d)^N)$, then
$$
\ba
\int_{(\bR^d)^N}&\|(z_1,\ldots,z_N)\|_1T_t\#F_N^{in}(dz_1\ldots dz_N)
\\
&=
\int_{(\bR^d)^N}\|T_t(z_1,\ldots,z_N)\|_1F_N^{in}(dz_1\ldots dz_N)
\\
&\le
e^{2L|t|}\int_{(\bR^d)^N}\|(z_1,\ldots,z_N)\|_1F_N^{in}(dz_1\ldots dz_N)<\infty\,,
\ea
$$
so that $T_t\#F_N^{in}\in\cP_1((\bR^d)^N)$.
\end{proof}

\subsection{The BBGKY hierarchy}

The curious designation for this procedure finds its origin in the names of N.N. Bogoliubov, M. Born, H.S. Green, J.G. Kirkwood  and J. Yvon, who 
introduced it in various contexts.

Before presenting the BBGKY hierarchy in detail, we first discuss the main reason for considering it in the first place.

As explained above, the $N$-particle distribution $F_N$ is defined on the $N$-particle phase space $(\bR^d)^N$, whose dimension increases as 
$N\to\infty$. In other words, the number of variables in $F_N$ goes to infinity with $N$, so that the exact nature of the limiting object associated with 
$F_N$ is not entirely obvious. Therefore, we seek to describe the behavior of $F_N$ in the large $N$ limit by considering instead its first marginal 
$F_{N:1}$ in that limit. By doing so, we avoid the problem of having the number of variables going to infinity with $N$, at the expense of losing some
amount of information by reducing $F_N$ to $F_{N:1}$.

To fulfill this program, it would be desirable to know the evolution of $F_{N:1}$, typically by means of a PDE in the $1$-particle phase space to be 
satisfied by $F_{N:1}$. 

Unfortunately, because of the interaction modeled by the kernel $K$, it is impossible to find a closed equation for $F_{N:1}$. Instead, the equation 
for $F_{N:1}$ deduced from the $N$-particle Liouville equation involves $F_{N:2}$. Again it is impossible to find a closed equation governing the
evolution of $F_{N:2}$ when $N>2$, as the pairwise interaction between particles will involve $F_{N:3}$ in the equation for $F_{N:2}$ deduced 
from the $N$-particle Liouville equation. By the same token, one can check that it is impossible to derive from the $N$-particle Liouville equation
a closed system of equations for \textit{finitely many} --- i.e. $m<N$ --- of the marginals $F_{N:k}$.

Even though the system of PDEs governing the evolution of the marginal distributions $F_{N:k}$ cannot be put in closed form, these equations are
nevertheless interesting, and we explain how to derive them below.

We start from the $N$-particle Liouville equation satisfied by $F_N$:
$$
\d_tF_N+\frac1N\sum_{i,j=1}^N\Div_{z_i}(K(z_i,z_j)F_N)=0\,.
$$
That $t\mapsto F_N(t)$ is a weak solution of this equation means that, for each $\Phi\equiv\Phi(z_1,\ldots,z_N)$ in $C^1_b((\bR^d)^N)$, one has
$$
\ba
\d_t\int_{(\bR^d)^N}\Phi_N(z_1,\ldots,z_N)F_N(t,dz_1\ldots dz_N)
\\
=
\frac1N\sum_{i,j=1}^N\int_{(\bR^d)^N}K(z_i,z_j)\grad_{z_i}\Phi(z_1,\ldots,z_N)F_N(t,dz_1\ldots dz_N)
\ea
$$
in the sense of distributions on $\bR$.

We first seek an equation for the first marginal $F_{N:1}$. Let $\phi\in C^1_b(\bR^d)$; writing the weak formulation of the $N$-particle Liouville 
equation for the test function $\Phi(z_1,\ldots,z_N)=\phi(z_1)$, we see that
$$
\ba
\d_t\int_{(\bR^d)^N}\phi(z_1)&F_N(t,dz_1\ldots dz_N)
\\
&=\frac1N\sum_{j=2}^N\int_{(\bR^d)^N}K(z_1,z_j)\cdot\grad\phi(z_1)F_N(t,dz_1\ldots dz_N)\,.
\ea
$$
(Notice that the term associated to $j=1$ in the summation on the right hand side of the equality above vanishes identically since $K$ vanishes on
the diagonal by assumption (HK1).)

In the term
$$
\int_{(\bR^d)^N}K(z_1,z_j)\cdot\grad\phi(z_1)F_N(t,dz_1\ldots dz_N)
$$
we exchange the variables $z_2$ and $z_j$. Denoting by $\si_{2j}\in\fS_N$ the transposition exchanging $2$ and $j$, one has
$$
\ba
\int_{(\bR^d)^N}K(z_1,z_j)\cdot\grad\phi(z_1)F_N(t,dz_1\ldots dz_N)&
\\
=
\int_{(\bR^d)^N}K(z_1,z_2)\cdot\grad\phi(z_1)S_{\si_{2j}}F_N(t,dz_1\ldots dz_N)&
\\
=
\int_{(\bR^d)^N}K(z_1,z_2)\cdot\grad\phi(z_1)F_N(t,dz_1\ldots dz_N)&\,.
\ea
$$
by symmetry of $F_N(t)$. 

Therefore 
$$
\ba
\d_t\int_{(\bR^d)^N}\phi(z_1)&F_N(t,dz_1\ldots dz_N)
\\
&=\frac{N-1}N\int_{(\bR^d)^N}K(z_1,z_2)\cdot\grad\phi(z_1)F_N(dz_1\ldots dz_N)\,.
\ea
$$
The integral on the left hand side of the equality above is recast as follows:
$$
\int_{(\bR^d)^N}\phi(z_1)F_N(t,dz_1\ldots dz_N)=\int_{\bR^d}\phi(z_1)F_{N:1}(t,dz_1)\,.
$$
By the same token, the integral on the right hand side becomes
$$
\ba
\int_{(\bR^d)^N}K(z_1,z_2)\cdot\grad\phi(z_1)F_N(t,dz_1\ldots dz_N)&
\\
=\int_{(\bR^d)^2}K(z_1,z_2)\cdot\grad\phi(z_1)F_{N:2}(t,dz_1dz_2)&\,.
\ea
$$
Thus, for each $\phi\in C^1_b(\bR^d)$, one has
$$
\d_t\int_{\bR^d}\phi(z_1)F_{N:1}(t,dz_1)=\frac{N-1}{N}\int_{(\bR^d)^2}K(z_1,z_2)\cdot\grad\phi(z_1)F_{N:2}(t,dz_1dz_2)\,,
$$
which is the weak formulation of
$$
\d_tF_{N:1}+\frac{N-1}{N}\Div_{z_1}\int_{\bR^d}K(z_1,z_2)F_{N:2}(\cdot,dz_2)=0\,.
$$

Equivalently
$$
\d_tF_{N:1}+\frac{N-1}{N}\Div_{z_1}[K(z_1,z_2)F_{N:2}]_{:1}=0\,,
$$
where $K(z_1,z_2)F_{N:2}$ designates the Radon measure defined on $(\bR^d)^2$ as the linear functional on bounded continuous functions given 
by the formula
$$
\la K(z_1,z_2)F_{N:2},\psi\ra:=\int_{(\bR^d)^2}\psi(z_1,z_2)K(z_1,z_2)F_{N:2}(dz_1dz_2)\,,
$$
while $[K(z_1,z_2)F_{N:2}]_{:1}$ designates the Radon measure defined on $\bR^d$ by the formula
$$
\la [K(z_1,z_2)F_{N:2}]_{:1},\phi\ra:=\int_{(\bR^d)^2}\phi(z_1)K(z_1,z_2)F_{N:2}(dz_1dz_2)\,.
$$
(Equivalently, 
$$
[K(z_1,z_2)F_{N:2}]_{:1}:=\bP_2^1\#[K(z_1,z_2)F_{N:2}]
$$
where we recall that $\bP_2^1$ is the orthogonal projection defined by the formula $\bP_2^1(z_1,z_2):=z_1$.)

In any case, as anticipated, the equation for the first marginal distribution $F_{N:1}$ involves the second marginal distribution $F_{N:2}$. 

We next proceed to derive the equations satisfied by the sequence of marginal distributions $F_{N:j}$ for $j=2,\ldots,N$; this derivation will proceed
as in the case $j=1$, except for one additional term.

For $1<j<N$, we write the weak formulation of the $N$-particle Liouville equation with test function $\Phi(z_1,\ldots,z_N)=\phi(z_1,\ldots,z_j)$, where
$\phi\in C_b((\bR^d)^j)$. Thus
$$
\ba
\d_t\int_{(\bR^d)^N}\phi(z_1,\ldots,z_j)F_N(t,dz_1\ldots dz_N)&
\\
=\frac1N\sum_{l=1}^j\sum_{k=j+1}^N\int_{(\bR^d)^N}K(z_l,z_k)\cdot\grad_{z_l}\phi(z_1,\ldots,z_j)F_N(t,dz_1\ldots dz_N)&
\\
+\frac1N\sum_{l=1}^j\sum_{k=1}^j\int_{(\bR^d)^N}K(z_l,z_k)\cdot\grad_{z_l}\phi(z_1,\ldots,z_j)F_N(t,dz_1\ldots dz_N)&\,.
\ea
$$
Notice that the range of the index $l$ is limited to $\{1,\ldots,j\}$ since the test function $\phi$ does not depend on the variables $z_{j+1},\ldots,z_N$.
The range of the index $k$ remains $\{1,\ldots,N\}$, and we have decomposed it into $\{1,\ldots,j\}$ and $\{j+1,\ldots,N\}$. This decomposition is quite
natural, as the sum involving $k,l\in\{1,\ldots,j\}$ accounts for the pairwise interactions between the $j$ particles whose state is described by $F_{N:j}$,
while the sum  involving $l\in\{1,\ldots,j\}$ and $k\in\{j+1,\ldots,N\}$ accounts for the pairwise interactions between each one of the $j$ particles whose 
state is described by $F_{N:j}$ and the $N-j$ other particles in the system.

As in the case $j=1$,
$$
\int_{(\bR^d)^N}\phi(z_1,\ldots,z_j)F_N(t,dz_1\ldots dz_N)=\int_{(\bR^d)^j}\phi(z_1,\ldots,z_j)F_{N:j}(t,dz_1\ldots dz_j)\,,
$$
and by the same token, if $1\le k,l\le j$
$$
\ba
\int_{(\bR^d)^N}K(z_l,z_k)\cdot\grad_{z_l}\phi(z_1,\ldots,z_j)F_N(t,dz_1\ldots dz_N)&
\\
=\int_{(\bR^d)^j}K(z_l,z_k)\cdot\grad_{z_l}\phi(z_1,\ldots,z_j)F_{N:j}(t,dz_1\ldots dz_j)&\,.
\ea
$$

If $1\le l\le j<k\le N$, denote by $\si_{j+1,k}\in\fS_N$ the transposition exchanging $j+1$ and $k$. Then
$$
\ba
\int_{(\bR^d)^N}K(z_l,z_k)\cdot\grad_{z_l}\phi(z_1,\ldots,z_j)F_N(t,dz_1\ldots dz_N)
\\
=\int_{(\bR^d)^N}K(z_l,z_{j+1})\cdot\grad_{z_l}\phi(z_1,\ldots,z_j)S_{\si_{j+1,k}}F_N(t,dz_1\ldots dz_N)
\\
=\int_{(\bR^d)^N}K(z_l,z_{j+1})\cdot\grad_{z_l}\phi(z_1,\ldots,z_j)F_N(t,dz_1\ldots dz_N)
\ea
$$
by symmetry of $F_N(t)$. Then
$$
\ba
\int_{(\bR^d)^N}K(z_l,z_{j+1})\cdot\grad_{z_l}\phi(z_1,\ldots,z_j)F_N(t,dz_1\ldots dz_N)&
\\
=\int_{(\bR^d)^{j+1}}K(z_l,z_{j+1})\cdot\grad_{z_l}\phi(z_1,\ldots,z_j)F_{N:j+1}(t,dz_1\ldots dz_{j+1})&\,.
\ea
$$

Finally, we obtain the equality
$$
\ba
\frac{d}{dt}\int_{(\bR^d)^j}\phi(z_1,\ldots,z_j)F_{N:j}(t,dz_1\ldots dz_j)
\\
=
\frac1N\sum_{k,l=1}^j\int_{(\bR^d)^j}K(z_l,z_k)\cdot\grad_{z_l}\phi(z_1,\ldots,z_j)F_{N:j}(t,dz_1\ldots dz_j)
\\
+\frac{N-j}N\sum_{l=1}^j\int_{(\bR^d)^{j+1}}K(z_l,z_{j+1})\cdot\grad_{z_l}\phi(z_1,\ldots,z_j)F_{N:j+1}(t,dz_1\ldots dz_{j+1})
\ea
$$
to be verified for each $\phi\in C^1_b((\bR^d)^j)$. 

This is the weak formulation of the equation
$$
\ba
\d_tF_{N:j}+\frac{N-j}N\sum_{l=1}^j\Div_{z_l}\int_{\bR^d}K(z_l,z_{j+1})F_{N:j+1}(\cdot,dz_{j+1})&
\\
+\frac1N\sum_{k,l=1}^j\Div_{z_l}(K(z_l,z_k)F_{N:j})=0&\,.
\ea
$$
Equivalently
$$
\ba
\d_tF_{N:j}+\frac{N-j}N\sum_{l=1}^j\Div_{z_l}[K(z_l,z_{j+1})F_{N:j+1}]_{:j}&
\\
+\frac1N\sum_{k,l=1}^j\Div_{z_l}(K(z_l,z_k)F_{N:j})=0&\,,
\ea
$$
where $K(z_l,z_{j+1})F_{N:j+1}$ designates the Radon measure defined on $(\bR^d)^{j+1}$ as the linear functional on bounded continuous functions 
given by the formula
$$
\ba
\la K(z_l,z_{j+1})F_{N:j+1},\psi\ra&
\\
:=\int_{(\bR^d)^{j+1}}\psi(z_1,\ldots,z_{j+1})K(z_l,z_{j+1})F_{N:j+1}(dz_1\ldots dz_{j+1})&\,,
\ea
$$
while $[K(z_l,z_{j+1})F_{N:j+1}]_{:j}$ designates the Radon measure defined on $(\bR^d)^j$ by the formula
$$
\ba
\la [K(z_l,z_{j+1})F_{N:j+1}]_{:j},\phi\ra&
\\
:=\int_{(\bR^d)^j}\phi(z_1,\ldots,z_j)K(z_l,z_{j+1})F_{N:j+1}(dz_1\ldots dz_{j+1})&\,.
\ea
$$
Equivalently, 
$$
[K(z_l,z_{j+1})F_{N:j+1}]_{:j}:=\bP_{j+1}^j\#[K(z_l,z_{j+1})F_{N:j+1}]\,,
$$
where we recall that $\bP_{j+1}^j$ is the orthogonal projection defined by the formula 
$$
\bP_{j+1}^j(z_1,\ldots,z_{j+1}):=(z_1,\ldots,z_j)\,.
$$

The equation obtained in the case $j=N$ is nothing but the $N$-particle Liouville equation itself since $F_{N:N}=F_N$ and $F_{N:j}=0$ for all $j>N$: thus
$$
\d_tF_{N:N}+\frac1N\sum_{k,l=1}^N\Div_{z_l}(K(z_l,z_k)F_{N:N})=0\,.
$$

We summarize the above lengthy computations in the following theorem, where the notation $\cP_{1,sym}((\bR^d)^N)$ designates
$$
\cP_{1,sym}((\bR^d)^N)=\cP_1((\bR^d)^N)\cap \cP_{sym}((\bR^d)^N)\,.
$$

\begin{Thm}
Assume that the interaction kernel $K\in C^1(\bR^d\times\bR^d,\bR^d)$ satisfies (HK1-HK2).
Let $F_N^{in}\in\cP_{1,sym}((\bR^d)^N)$, and let $F_N(t)=T_t\#F_N^{in}$ for all $t\in\bR$, where $T_t$ is the flow defined on $(\bR^d)^N$ by the 
$N$-particle ODE system as in Theorem \ref{T-ExUnNbody}. The sequence of marginal distributions $F_{N:j}$ of $F_N$ with $j=1,\ldots,N$ is a
weak solution of the string of equations\index{BBGKY hierarchy}
$$
\left\{
\ba
{}&\d_tF_{N:1}+\frac{N-1}{N}\Div_{z_1}[K(z_1,z_2)F_{N:2}]_{:1}=0\,,
\\
&\d_tF_{N:j}+\frac{N-j}N\sum_{l=1}^j\Div_{z_l}[K(z_l,z_{j+1})F_{N:j+1}]_{:j}
\\
&\qquad\qquad\qquad+\frac1N\sum_{k,l=1}^j\Div_{z_l}(K(z_l,z_k)F_{N:j})=0\,,\qquad j=2,\ldots,N-1\,,
\\
&\d_tF_{N:N}+\frac1N\sum_{k,l=1}^N\Div_{z_l}(K(z_l,z_k)F_{N:N})=0\,,
\ea
\right.
$$
and satisfies the initial conditions
$$
F_{N:j}\rstr_{t=0}=F^{in}_{N:j}\,,\quad j=1,\ldots,N\,.
$$
\end{Thm}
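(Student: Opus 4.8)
The statement simply repackages the lengthy computations preceding it, so the plan is to present those computations as a clean derivation. The starting point is the $N$-particle Liouville equation satisfied by $F_N(t)=T_t\#F_N^{in}$: by Theorem~\ref{T-ExUnNbody} together with the method-of-characteristics exercise following it, $t\mapsto F_N(t)$ is the weak solution in $C(\bR;w\!-\!\cP((\bR^d)^N))$ of
$$
\d_tF_N+\tfrac1N\sum_{i,k=1}^N\Div_{z_i}(K(z_i,z_k)F_N)=0\,,
$$
meaning that for every $\Phi\in C^1_b((\bR^d)^N)$,
$$
\frac{d}{dt}\int_{(\bR^d)^N}\Phi\,F_N(t,dz_1\ldots dz_N)=\tfrac1N\sum_{i,k=1}^N\int_{(\bR^d)^N}K(z_i,z_k)\cdot\grad_{z_i}\Phi\,F_N(t,dz_1\ldots dz_N)
$$
in $\cD'(\bR)$. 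I would first record that every integral in sight is finite: by Lemma~\ref{L-PropMom1}, $F_N(t)\in\cP_{1,sym}((\bR^d)^N)$ for all $t$, and since (HK2) gives $|K(z,z')|\le L(|z|+|z'|)$ while $\grad_{z_i}\Phi$ is bounded, each integrand is dominated by a constant times $1+|z_i|+|z_k|$, which is $F_N(t)$-integrable; the symmetry $S_\si\#F_N(t)=F_N(t)$ used below is supplied by the preceding proposition on propagation of symmetry.

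Fix $j\in\{1,\ldots,N\}$ and insert the test function $\Phi(z_1,\ldots,z_N)=\phi(z_1,\ldots,z_j)$, with $\phi\in C^1_b((\bR^d)^j)$, which indeed belongs to $C^1_b((\bR^d)^N)$. Since $\grad_{z_i}\Phi=0$ for $i>j$, only $i\le j$ contributes (write $l$ for this index), and I would split the second index $k$ into $\{1,\ldots,j\}$ and $\{j+1,\ldots,N\}$. For the inner terms $1\le k,l\le j$ the integrand depends only on $(z_1,\ldots,z_j)$, so integrating out $z_{j+1},\ldots,z_N$ replaces $F_N(t)$ by $F_{N:j}(t)$ and yields $\tfrac1N\sum_{k,l=1}^j\Div_{z_l}(K(z_l,z_k)F_{N:j})$ in weak form. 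For each outer term with $1\le l\le j<k\le N$, I would apply the transposition $S_{\si_{j+1,k}}$ and use the symmetry of $F_N(t)$ to replace $z_k$ by $z_{j+1}$; the $N-j$ such terms then coincide, and integrating out $z_{j+2},\ldots,z_N$ turns $F_N(t)$ into $F_{N:j+1}(t)$, producing $\tfrac{N-j}N\sum_{l=1}^j\Div_{z_l}[K(z_l,z_{j+1})F_{N:j+1}]_{:j}$. Summing the two contributions is exactly the weak formulation of the $j$-th equation. The two endpoints are the evident degeneracies of this computation: for $j=1$ the inner sum reduces to the single term $k=l=1$, which vanishes since $K(z_1,z_1)=0$ by (HK1); for $j=N$ there is no outer term (the factor $N-j$ is $0$, consistently with the convention $F_{N:N+1}=0$) and $F_{N:N}=F_N$, so one recovers the Liouville equation itself.

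Finally, the initial conditions $F_{N:j}\rstr_{t=0}=F^{in}_{N:j}$ hold because taking the $j$-particle marginal commutes with evaluation at $t=0$: $F_N(0)=F_N^{in}$ forces $F_{N:j}(0)=(F_N^{in})_{:j}=F^{in}_{N:j}$. There is no genuinely hard step here; the only point requiring care is the symmetry reduction of the outer terms — verifying that the relabelling by $S_{\si_{j+1,k}}$ is legitimate inside the integral (this is precisely where $S_\si\#F_N(t)=F_N(t)$ enters) and that each marginalization identity $\int_{(\bR^d)^N}(\cdots)\,F_N=\int_{(\bR^d)^{j+1}}(\cdots)\,F_{N:j+1}$ is applied to the correct set of surviving variables. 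One should also note that restricting to $\phi\in C^1_b$ rather than $C^1_c$ is harmless, since $F_{N:j}$ and the measures $K(z_l,z_{j+1})F_{N:j+1}$ have finite first moments, so the pairings extend by density from compactly supported test functions.
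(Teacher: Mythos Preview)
Your proposal is correct and follows essentially the same route as the paper: test the weak Liouville equation against $\phi(z_1,\ldots,z_j)$, split the interaction sum into inner ($k,l\le j$) and outer ($l\le j<k$) indices, use the symmetry $S_\si\#F_N(t)=F_N(t)$ with the transposition $\si_{j+1,k}$ to collapse the outer terms, and read off the marginal equations. Your added remarks on integrability via Lemma~\ref{L-PropMom1} and on the vanishing of the diagonal term for $j=1$ are welcome pieces of bookkeeping that the paper leaves implicit.
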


This string of equations bears the name of \textit{BBGKY hierarchy} for the $N$-particle system whose dynamics is defined by the ODE system
$$
\dot{z}_i(t)=\frac1N\sum_{j=1}^NK(z_i(t),z_j(t))\,,\quad 1\le i\le N\,.
$$

In spite of all the (somewhat) technical computations involved in the derivation of the BBGKY hierarchy, the careful reader will notice that

\smallskip
\noindent
a) the BBGKY hierarchy is a consequence of the $N$-particle Liouville equation, but

\noindent
b) it contains the $N$-particle Liouville equation (which is the last equation in the hierarchy).

\smallskip
This observation might cast some doubts as to the interest of considering the BBGKY hierarchy instead of the Liouville equation itself, since both 
contain exactly the same amount of information.

\smallskip
References for this section are chapters 3-4 in \cite{CIP}, \cite{Spohn80,Spohn91}, chapter 3 in \cite{BouGolPul} and \cite{BGMMinneapolis}.

\subsection{The mean field hierarchy and factorized distributions}

In the present section, our discussion of the mean field limit becomes purely formal.

Our aim is to pass to the limit in each equation in the BBGKY hierarchy as $N\to\infty$, keeping $j\ge 1$ fixed. Assume that $F_{N:j}\to F_j$ as $N\to\infty$ 
(in some sense to be made precise) for all $j\ge 1$. Then, in the limit as $N\to\infty$
$$
\frac{N-j}N\int_{\bR^d}K(z_l,z_{j+1})F_{N:j+1}(dz_{j+1})\to\int_{\bR^d}K(z_l,z_{j+1})F_{j+1}(dz_{j+1})\,,
$$
while
$$
\frac1NK(z_l,z_k)F_{N:j}\to 0\,,
$$
so that
$$
\d_tF_j+\sum_{l=1}^j\Div_{z_l}\int_{\bR^d}K(z_l,z_{j+1})F_{j+1}(dz_{j+1})=0\,,\qquad j\ge 1\,.
$$

This hierarchy of equations is henceforth referred to as \textit{the mean field hierarchy}, or \textit{the Vlasov hierarchy}\index{Mean-field hierarchy}. The 
similarities between this mean field hierarchy and the BBGKY hierarchy are striking. Yet there is an important difference: the mean field hierarchy is an 
\textit{infinite} hierarchy of equations --- unlike the BBGKY hierarchy, which contains only $N$ equations, where $N$ is the total number of particles. The
physical meaning of this infinite hierarchy of equations will be explained in section \ref{SS-InfHierar}.

However, this infinite hierarchy is directly related to the mean field equation by the following observation.

\begin{Prop}\lb{P-FactSolInfHier}
Assume that the interaction kernel $K\in C^1(\bR^d\times\bR^d,\bR^d)$ satisfies assumptions (HK1)-HK2). Let $f^{in}$ be a smooth (at least $C^1$) 
probability density on $\bR^d$ such that
$$
\int_{\bR^d}|z|f^{in}(z)dz<\infty\,.
$$
Assume that the Cauchy problem for the mean field equation
$$
\left\{
\ba
{}&\d_tf(t,z)+\Div_z\left(f(t,z)\int_{\bR^d}K(z,z')f(t,z')dz'\right)=0\,,
\\
&f\rstr_{t=0}=f^{in}\,,
\ea
\right.
$$
has a classical (at least of class $C^1$) solution $f\equiv f(t,z)$. Set $f_j(t,\cdot)=f(t,\cdot)^{\otimes j}$, i.e.
$$
f_j(t,z_1,\ldots,z_j)=\prod_{k=1}^jf(t,z_k)
$$
for each $t\in\bR$ and each $z_1,\ldots,z_j\in\bR^d$.

Then the sequence $(f_j)_{j\ge 1}$ is a solution of the infinite mean field hierarchy
$$
\d_tf_j(z_1,\ldots,z_j)+\sum_{l=1}^j\Div_{z_l}\int_{\bR^d}K(z_l,z_{j+1})f_{j+1}(z_1,\ldots,z_{j+1})dz_{j+1}=0
$$
for all $j\ge 1$.
\end{Prop}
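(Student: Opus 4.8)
The plan is to verify the hierarchy equations directly, by differentiating the tensor-product ansatz and invoking the mean field equation for each factor. Introduce the shorthand $u(t,z):=\cK f(t,z)=\int_{\bR^d}K(z,z')f(t,z')dz'$ for the mean field velocity field; by assumption (HK2) and the fact that $f(t,\cdot)$ is a probability density with finite first moment (propagated in time, cf. the discussion preceding Theorem \ref{T-ClassMFLimit}), this integral converges and one may differentiate under the integral sign, so that $u(t,\cdot)\in C^1(\bR^d;\bR^d)$ with $\grad_zu(t,z)=\int_{\bR^d}\grad_zK(z,z')f(t,z')dz'$. In this notation the mean field equation reads $\d_tf+\Div_z(fu)=0$.

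First I would compute, for $f_j(t,z_1,\ldots,z_j)=\prod_{k=1}^jf(t,z_k)$,
$$
\d_tf_j(t,z_1,\ldots,z_j)=\sum_{l=1}^j(\d_tf)(t,z_l)\prod_{k\neq l}f(t,z_k)\,.
$$
Substituting $(\d_tf)(t,z_l)=-\Div_{z_l}(f(t,z_l)u(t,z_l))$ from the mean field equation, and noting that the factors $f(t,z_k)$ with $k\neq l$ do not depend on $z_l$ and may be moved inside $\Div_{z_l}$, this becomes
$$
\d_tf_j(t,z_1,\ldots,z_j)=-\sum_{l=1}^j\Div_{z_l}\Bigl(u(t,z_l)\prod_{k=1}^jf(t,z_k)\Bigr)=-\sum_{l=1}^j\Div_{z_l}\bigl(u(t,z_l)f_j(t,z_1,\ldots,z_j)\bigr)\,.
$$

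Then I would close the argument by identifying the flux: since $f_{j+1}(t,z_1,\ldots,z_{j+1})=f_j(t,z_1,\ldots,z_j)f(t,z_{j+1})$,
$$
\int_{\bR^d}K(z_l,z_{j+1})f_{j+1}(t,z_1,\ldots,z_{j+1})dz_{j+1}=f_j(t,z_1,\ldots,z_j)\int_{\bR^d}K(z_l,z_{j+1})f(t,z_{j+1})dz_{j+1}=u(t,z_l)f_j(t,z_1,\ldots,z_j)\,,
$$
and plugging this into the previous display yields exactly $\d_tf_j+\sum_{l=1}^j\Div_{z_l}\int_{\bR^d}K(z_l,z_{j+1})f_{j+1}dz_{j+1}=0$ for every $j\ge 1$.

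There is no genuine difficulty here; the only points requiring a little care are bookkeeping ones: (i) checking that $\cK f(t,\cdot)$ is of class $C^1$, so that the divergences above are classical (this uses (HK2) together with the finite first moment of $f$), and (ii) justifying the interchange of $\d_t$ with the finite product and of $\Div_{z_l}$ with the $z_{j+1}$-integral, both immediate under the standing regularity and integrability assumptions on $f$. If one prefers to avoid any differentiation-under-the-integral issue, the same computation can be carried out in the weak (distributional) formulation, testing against $\phi\in C^1_c((\bR^d)^j)$ and using the weak form of the mean field equation in each variable $z_l$ separately; the algebra is identical.
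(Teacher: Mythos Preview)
Your proof is correct and follows essentially the same route as the paper: differentiate the tensor product via the product rule, substitute the mean field equation for each factor, pull the $z_k$-independent factors inside $\Div_{z_l}$, and recognize the result as the $(j+1)$st marginal integral. Your added remarks on the $C^1$ regularity of $\cK f$ and on the weak-formulation alternative are not in the paper's proof but are reasonable side comments.
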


\begin{proof}
Since $f$ is of class $C^1$ at least, one has
$$
\ba
\d_tf_j(t,z_1,\ldots,z_j)&=\sum_{k=1}^j\prod_{l=1\atop l\not=k}^jf(t,z_l)\d_tf(t,z_k)
\\
&=-\sum_{k=1}^j\prod_{l=1\atop l\not=k}^jf(t,z_l)\Div_{z_k}\left(f(t,z_k)\int_{\bR^d}K(z_k,z')f(t,z')dz'\right)
\\
&=-\sum_{k=1}^j\Div_{z_k}\left(\prod_{l=1}^jf(t,z_l)\int_{\bR^d}K(z_k,z')f(t,z')dz'\right)
\\
&=-\sum_{k=1}^j\Div_{z_k}\left(\int_{\bR^d}K(z_k,z')f_{j+1}(t,z_1,\ldots,z_j,z')dz'\right)\,,
\ea
$$
which is precisely the $j$-th equation in the mean field hierarchy.
\end{proof}

\smallskip
This crucial observation suggests the following strategy to prove the mean field limit by the method of hierarchies.

Choose factorized initial data for $N$-particle Liouville equation: given $f^{in}$ a probability density on $\bR^d$ such that
$$
\int_{\bR^d}|z|f(t,z)dz<\infty\,,
$$
define
$$
F_N^{in}=(f^{in})^{\otimes N}\qquad\hbox{ for each }N\ge 1\,,
$$
i.e.
$$
F_N^{in}(t,z_1,\ldots,z_N)=\prod_{k=1}^Nf^{in}(t,z_k)\,.
$$
Let $F_N$ be the solution of the Cauchy problem for the $N$-particle Liouville equation
$$
\left\{
\ba
{}&\d_tF_N+\frac1N\sum_{k,l=1}^N\Div_{z_k}(F_NK(z_k,z_l))=0\,,
\\
&F_N\rstr_{t=0}=(f^{in})^{\otimes N}\,,
\ea
\right.
$$
with that initial data. 

Assume that one can prove

\smallskip
\noindent
a) that $F_{N:j}\to F_j$ (in some sense to be made precise) for each $j\ge 1$, where $F_j$ is a solution of the infinite hierarchy, and

\noindent
b) that the Cauchy problem for the infinite hierarchy has a unique solution. 

\smallskip
Let $f$ be a solution of the mean field PDE with initial data $f^{in}$.

Since the sequence $f_j:=f^{\otimes j}$ (for $j\ge 1$) is a solution of the infinite mean field hierarchy with initial data $(f^{in})^{\otimes j}$ by Proposition
\ref{P-FactSolInfHier}, statement b) implies that it is \textit{the} solution of the infinite hierarchy for that initial data. Therefore
$$
F_{N:j}\to F_j=f^{\otimes j}\qquad\hbox{ as }N\to\infty\hbox{ for all }j\ge 1\,.
$$

In particular, for $j=1$, one finds that the solution $F_N$ of the Liouville equation satisfies
$$
F_{N:1}\to f\qquad\hbox{ as }N\to\infty\,.
$$
In other words, the first marginal of the solution of the $N$-particle Liouville equation with factorized initial data converges to the solution of the mean
field PDE in the large $N$ limit.

This is precisely the strategy outlined by Cercignani \cite{CerciTTSP} for justifying rigorously the Boltzmann equation in the case of the hard sphere gas.

Notice that, in this approach, one needs to know that the Cauchy problem for the mean field PDE is well-posed (i.e. that it has a unique solution for each
initial data in some appropriate functional space). On the contrary, in the previous approach based on the notion of empirical measure, the existence of
a solution of the Cauchy problem for the mean field PDE is a consequence of the existence for finitely many particles and of the mean field limit itself, and
the uniqueness of that solution is a consequence of Dobrushin's estimate.

The reader might be under the impression that proving the uniqueness of the solution of the Cauchy problem for the infinite mean field hierarchy 
is a matter of pure routine, since the mean field hierarchy is a linear problem, at variance with the mean field PDE, which is nonlinear. This is obviously
wrong, since the uniqueness of the solution of the infinite mean field hierarchy implies the uniqueness of the solution of the mean field PDE. In fact,
the uniqueness property for the infinite mean field hierarchy is a very strong property and proving it is by no means obvious. See \cite{SpohnM2AS} and
section \ref{SS-InfHierar} for a precise discussion of this point --- as well as of the physical meaning of the infinite mean field hierarchy.

However, the strategy described in the present section (starting from the $N$-particle Liouville equation, deriving the BBGKY hierarchy, passing to the 
limit to arrive at an infinite hierarchy of equations and concluding with the uniqueness of the solution of the infinite hierarchy with given initial data) has 
been used successfully in a greater variety of problems than the mean field limit considered in this course. For instance, the only rigorous derivation of
the Boltzmann equation of the kinetic theory of gases known to this date (proposed by Lanford \cite{Lanford}) follows exactly these steps\footnote{For
the case of the Boltzmann-Grad limit for a system of $N$ hard spheres, the infinite hierarchy cannot be derived rigorously from the Liouville equation 
by passing to the limit in the sense of distributions in each equation of the (finite) BBGKY hierarchy: see the discussion on pp. 74--75 in \cite{CIP}. The
infinite Boltzmann hierarchy is derived by a different, more subtle procedure that is the core of the Lanford proof --- see section 4.4 in \cite{CIP}.} --- see 
also chapter 3 of \cite{BouGolPul} and chapters 2-4 of \cite{CIP} for an account of this fundamental result. The recent paper \cite{GalLSRTex} extends 
Lanford's result to short range potentials other than hard spheres, and gives a more detailed presentation of the Boltzmann-Grad limit than all previous 
references, even in the hard sphere case.

\section[Chaotic sequences]{Chaotic sequences, empirical measures and BBGKY hierarchies}\lb{S-CHAO}

Our discussion of BBGKY hierarchies shows the importance of the following property of symmetric $N$-particle probability measures $F_N$:
$$
F_{N:j}\to f^{\otimes j}\quad\hbox{ weakly as }N\to\infty\hbox{ for all }j\ge 1\hbox{ fixed.}
$$

Of course, if $\phi$ is a probability density on $\bR^d$, one has
$$
\Phi_N=\phi^{\otimes N}\Rightarrow\Phi_{N:j}=\phi^{\otimes j}\,.
$$

But if $\psi$ is another probability density on $\bR^d$, defining
$$
\tilde\Phi_N=\frac1N\sum_{k=1}^N\phi^{\otimes (k-1)}\otimes\psi\otimes\phi^{\otimes {N-k}}\,,
$$
which is in general a non-factorized symmetric probability density on $(\bR^d)^N$, then
$$
\tilde\Phi_{N:j}=\frac{N-j}{N}\phi^{\otimes j}+\frac1N\sum_{k=1}^j\phi^{\otimes (k-1)}\otimes\psi\otimes\phi^{\otimes {j-k}}\to\phi^{\otimes j}
$$
for all $j\ge 1$ as $N\to\infty$.

Thus, the property above can be verified, in the limit as $N\to\infty$, by sequences of $N$-particle probability measures that are not factorized exactly
for each finite $N\ge 1$.

\begin{Def}
Let $p$ be a probability measure on $\bR^d$. A sequence $P_N$ of symmetric $N$-particle probability measures on $(\bR^d)^N$ for all $N\ge 1$ is said 
to be chaotic, and more precisely $p$-chaotic, if\index{Chaotic sequence}
$$
P_{N:j}\to p^{\otimes j}\quad\hbox{Êweakly in }\cP((\bR^d)^j)
$$
as $N\to\infty$, for all $j\ge 1$ fixed.
\end{Def}

\smallskip
The notion of chaotic sequences appeared in the context of the derivation of kinetic equations from particle dynamics, for the first time in \cite{Kac56}. 
Perhaps the reason for this terminology is that this property corresponds to asymptotic independence of the $N$-particles in the large $N$ limit.

\subsection[Chaotic sequences and empirical measures]{Chaotic sequences and empirical measures}

We begin our discussion of chaotic sequences with a characterization of chaotic sequences in terms of empirical measures.

\begin{Thm}\lb{T-CharaChao}
Let $p\in\cP(\bR^d)$, and let $P_N\in\cP_{sym}((\bR^d)^N)$ for each $N\ge 1$. Then the two following properties are equivalent:

\smallskip
\noindent
(a) for each $j\ge 1$
$$
P_{N:j}\to p^{\otimes j}
$$
weakly in $\cP((\bR^d)^j)$ as $N\to\infty$;

\smallskip
\noindent
(b) for each $\phi\in C_b(\bR^d)$ and each $\eps>0$,
$$
P_N(\{Z_N\in(\bR^d)^N \hbox{ s.t. }|\la\mu_{Z_N}-p,\phi\ra|\ge\eps\})\to 0
$$
as $N\to\infty$, where we recall that
$$
\mu_{Z_N}:=\frac1N\sum_{k=1}^N\de_{z_k}
$$
with $Z_N=(z_1,\ldots,z_N)$.
\end{Thm}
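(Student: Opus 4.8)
The plan is to pass freely between the two formulations using bounded test functions and moments of the empirical measure. The key identity, valid for any $\phi\in C_b(\bR^d)$ and any $j\ge 1$, is
$$
\int_{(\bR^d)^N}\left(\frac1N\sum_{k=1}^N\phi(z_k)\right)^{\!j}P_N(dZ_N)=\int_{(\bR^d)^j}\phi^{\otimes j}(z_1,\ldots,z_j)\,P_{N:j}(dz_1\ldots dz_j)+O\!\left(\tfrac1N\right)\,,
$$
which follows by expanding the $j$-th power, separating the $N^j$ index tuples into those with pairwise distinct entries (which by symmetry of $P_N$ all contribute $\int\phi^{\otimes j}dP_{N:j}$, and there are $N(N-1)\cdots(N-j+1)=N^j(1+O(1/N))$ of them) and those with a repeated index (at most $O(N^{j-1})$ tuples, each contributing a quantity bounded by $\|\phi\|_\infty^j$). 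In short,
$$
\int_{(\bR^d)^N}\la\mu_{Z_N},\phi\ra^j\,P_N(dZ_N)\longrightarrow\int_{(\bR^d)^j}\phi^{\otimes j}\,dP_{N:j}\quad\text{has the same limit (if any) as}\quad \langle p,\phi\rangle^j
$$
under hypothesis (a). This moment computation, together with the fact that for a real-valued random variable $M_N:=\la\mu_{Z_N},\phi\ra$ taking values in the compact interval $[-\|\phi\|_\infty,\|\phi\|_\infty]$, convergence of all moments $\bE^{P_N}(M_N^j)\to\la p,\phi\ra^j$ is equivalent to $M_N\to\la p,\phi\ra$ in probability (the limit being a constant), is the engine of the whole argument.

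First I would prove (a) $\Rightarrow$ (b). Fix $\phi\in C_b(\bR^d)$; using the identity above with $j=1$ and $j=2$ gives $\bE^{P_N}(M_N)\to\la p,\phi\ra$ and $\bE^{P_N}(M_N^2)\to\la p,\phi\ra^2$, hence $\mathrm{Var}_{P_N}(M_N)\to 0$. By Chebyshev's inequality,
$$
P_N\big(\{Z_N:\,|\la\mu_{Z_N}-p,\phi\ra|\ge\eps\}\big)\le\frac{\bE^{P_N}\big((M_N-\la p,\phi\ra)^2\big)}{\eps^2}=\frac{\mathrm{Var}_{P_N}(M_N)+(\bE^{P_N}(M_N)-\la p,\phi\ra)^2}{\eps^2}\to 0\,,
$$
which is exactly (b). Conversely, for (b) $\Rightarrow$ (a), fix $j\ge1$ and $\psi\in C_b((\bR^d)^j)$; by density (Stone–Weierstrass plus a truncation argument, since finite sums of products $\phi_1\otimes\cdots\otimes\phi_j$ are dense in $C_b$ on compacts and tightness is automatic for probability measures) it suffices to treat $\psi=\phi^{\otimes j}$ with $\phi\in C_b(\bR^d)$. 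Then the moment identity rewrites $\int\phi^{\otimes j}dP_{N:j}$ as $\bE^{P_N}(M_N^j)+O(1/N)$, and hypothesis (b) says $M_N\to\la p,\phi\ra$ in $P_N$-probability; since $|M_N|\le\|\phi\|_\infty$, bounded convergence gives $\bE^{P_N}(M_N^j)\to\la p,\phi\ra^j=\int\phi^{\otimes j}dp^{\otimes j}$. Hence $\int\psi\,dP_{N:j}\to\int\psi\,dp^{\otimes j}$ for $\psi$ in a dense subclass, and boundedness ($P_{N:j}$ are probability measures) upgrades this to weak convergence $P_{N:j}\to p^{\otimes j}$.

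The main obstacle — really the only non-routine point — is the combinatorial bookkeeping in the moment identity: one must check carefully that the ``diagonal'' index tuples (those with at least one repeated index) number only $O(N^{j-1})$ and that each contributes a term bounded in absolute value by $\|\phi\|_\infty^{\,j}$, so that their total contribution, after division by $N^j$, is $O(1/N)$ uniformly in $N$; and that the ``off-diagonal'' tuples, by the symmetry $S_\sigma\#P_N=P_N$, each reduce to $\int_{(\bR^d)^j}\phi^{\otimes j}\,dP_{N:j}$. The passage from convergence of all moments to convergence in probability is elementary precisely because $M_N$ lives in a fixed compact interval, so no moment-problem subtlety (à la Carleman) arises; and the reduction of general $\psi\in C_b((\bR^d)^j)$ to tensor products $\phi^{\otimes j}$ in the direction (b)$\Rightarrow$(a) is standard, using that weak convergence of probability measures is tested on a convergence-determining class. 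I would present the moment identity as a separate displayed lemma at the start and then deduce both implications from it in a few lines each.
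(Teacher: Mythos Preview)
Your proposal is correct and follows essentially the same approach as the paper: both directions hinge on the combinatorial moment identity $\bE^{P_N}\la\mu_{Z_N},\phi\ra^j=\la P_{N:j},\phi^{\otimes j}\ra+O(1/N)$, with (a)$\Rightarrow$(b) via Chebyshev on the $j=1,2$ moments and (b)$\Rightarrow$(a) via bounded convergence plus a density argument. The paper's write-up differs only cosmetically---it carries out the ``bounded convergence'' step by an explicit $\eps$-splitting of the expectation over $\{|\la\mu_{Z_N}-p,\phi\ra|\gtrless\eps\}$ rather than invoking the theorem by name, and it treats $j=1$ separately before the general case---and it relegates the passage from test functions $\phi^{\otimes j}$ to general $\psi\in C_b((\bR^d)^j)$ to a follow-up exercise using the polarization identity $S_j(\phi_1\otimes\cdots\otimes\phi_j)=\tfrac1{j!}\tfrac{\d^j}{\d X_1\cdots\d X_j}(X_1\phi_1+\cdots+X_j\phi_j)^{\otimes j}$ together with Stone--Weierstrass on $C_0$ and the portmanteau theorem, which is exactly the content you allude to but leave implicit.
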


\smallskip
In very informal terms, 
$$
P_{N:j}\to p^{\otimes j}
$$
weakly in $\cP((\bR^d)^j)$ as $N\to\infty$ for all $j\ge 1$ if and only if
$$
P_N\to\de_p
$$
``weakly in $\cP(\cP(\bR^d))$'' as $N\to\infty$. In this statement, each $N$-tuples $Z_N$ is identified with the corresponding empirical measure $\mu_{Z_N}$
and $P_N$ is viewed as a probability measure on $\cP(\bR^d)$ that is concentrated on the set of $N$-particle empirical measures. This identification goes
back to Gr\"unbaum \cite{Grunbaum71} and is discussed in section \ref{SS-Lions} below.

\medskip
\textbf{Remark:} Observe that property (a) is equivalent to

\smallskip
\noindent
(a') $P_{N:1}\to p$ and $P_{N:2}\to p\otimes p$ weakly in $\cP(\bR^d)$ and $\cP((\bR^d)^2)$ as $N\to\infty$.

\smallskip
In fact, as we shall see, the proof given below establishes that (a')$\Rightarrow$(b)$\Rightarrow (a)$.

\bigskip

\begin{proof} First we prove that property (a) implies property (b).

Applying Bienaym\'e-Chebyshev's inequality shows that
$$
P_N(\{Z_N\in(\bR^d)^N \hbox{ s.t. }|\la\mu_{Z_N}-p,\phi\ra|\ge\eps\})\le\frac1{\eps^2}\bE^{P_N}|\la\mu_{Z_N}-p,\phi\ra|^2\,.
$$

Then we compute
$$
\ba
\bE^{P_N}|\la\mu_{Z_N}-p,\phi\ra|^2&
\\
=
\bE^{P_N}\left(\frac1N\sum_{j=1}^N\phi(z_j)\right)^2+\bE^{P_N}\la p,\phi\ra^2-2\bE^{P_N}\left(\la p,\phi\ra\frac1N\sum_{j=1}^N\phi(z_j)\right)&
\\
=\bE^{P_N}\left(\frac1N\sum_{j=1}^N\phi(z_j)\right)^2+\la p,\phi\ra^2-2\la p,\phi\ra\bE^{P_N}\frac1N\sum_{j=1}^N\phi(z_j)&\,.
\ea
$$
Observe that, by symmetry of $P_N$, one has
$$
\bE^{P_N}(\phi(z_j)\phi(z_k))=\left\{\ba
{}&\bE^{P_N}(\phi(z_1)\phi(z_2))&&\quad\hbox{ if }j\not=k\,,
\\
&\bE^{P_N}(\phi(z_1)^2)&&\quad\hbox{ if }j=k\,.
\ea
\right.
$$
Thus
$$
\ba
\bE^{P_N}\left(\frac1N\sum_{j=1}^N\phi(z_j)\right)^2=\frac1{N^2}\sum_{j,k=1}^N\bE^{P_N}(\phi(z_j)\phi(z_k))
\\
=\frac1N\bE^{P_N}(\phi(z_1)^2)+\frac{N-1}{N}\bE^{P_N}(\phi(z_1)\phi(z_2))
\\
=\frac1N\la P_{N:1},\phi^2\ra+\frac{N-1}{N}\la P_{N:2},\phi\otimes\phi\ra
\\
\to\la p^{\otimes 2},\phi^{\otimes 2}\ra=\la p,\phi\ra^2
\ea
$$
as $N\to\infty$ by (a), while
$$
\ba
\bE^{P_N}\left(\frac1N\sum_{j=1}^N\phi(z_j)\right)&=\frac1N\sum_{j=1}^N\bE^{P_N}(\phi(z_j))
\\
&=\frac1N\sum_{j=1}^N\bE^{P_N}(\phi(z_1))=\la P_{N:1},\phi\ra\to\la p,\phi\ra
\ea
$$
as $N\to\infty$, again by (a). 

Therefore, property (a) implies that
$$
\bE^{P_N}|\la\mu_{Z_N}-p,\phi\ra|^2\to\la p,\phi\ra^2+\la p,\phi\ra^2-2\la p,\phi\ra^2=0
$$
as $N\to\infty$, and therefore
$$
P_N(\{Z_N\in(\bR^d)^N \hbox{ s.t. }|\la\mu_{Z_N}-p,\phi\ra|\ge\eps\})\le\frac1{\eps^2}\cdot o(1)
$$
by Bienaym\'e-Chebyshev's inequality, which is precisely property (b).

\medskip
Next we prove that, conversely, property (b) implies property (a).

\smallskip
\noindent
\textit{Step 1: } let us prove first that property (b) implies that $P_{N:1}\to p$ weakly as $N\to\infty$.

Let $\phi\in C_b(\bR^d)$. Denote
$$
\ba
U_N^\eps(\phi)&:=\{Z_N\in(\bR^d)^N \hbox{ s.t. }|\la\mu_{Z_N}-p,\phi\ra|>\eps\}\,,
\\
V_N^\eps(\phi)&:=(\bR^d)^N\setminus U_N^\eps(\phi)\,.
\ea
$$

Then
$$
\ba
|\bE^{P_N}\la\mu_{Z_N},\phi\ra-\la p,\phi\ra|=|\bE^{P_N}\la\mu_{Z_N}-p,\phi\ra|
\\
\le
\bE^{P_N}(|\la\mu_{Z_N}-p,\phi\ra|\indc_{U_N^\eps(\phi)})+\bE^{P_N}(|\la\mu_{Z_N}-p,\phi\ra|\indc_{V_N^\eps(\phi)})\,.
\ea
$$

Obviously
$$
\bE^{P_N}(|\la\mu_{Z_N}-p,\phi\ra|\indc_{U_N^\eps(\phi)})\le 2\|\phi\|_{L^\infty}P_N(U_N^\eps(\phi))\,,
$$
and 
$$
\bE^{P_N}(|\la\mu_{Z_N}-p,\phi\ra|\indc_{V_N^\eps(\phi)})\le\eps\,.
$$

By (b), there exists $N(\eps,\phi)$ such that
$$
N>N(\eps,\phi)\Rightarrow P_N(U_N^\eps(\phi))<\eps\,,
$$
so that
$$
|\bE^{P_N}\la\mu_{Z_N},\phi\ra-\la p,\phi\ra|\le(2\|\phi\|_{L^\infty}+1)\eps\,.
$$

On the other hand
$$
\bE^{P_N}(\la\mu_{Z_N},\phi\ra)=\bE^{P_N}\left(\frac1N\sum_{j=1}^N\phi(z_j)\right)=\bE^{P_N}(\phi(z_1))=\la P_{N:1},\phi\ra\,,
$$
so that
$$
N>N(\eps,\phi)\Rightarrow|\la P_{N:1},\phi\ra-\la p,\phi\ra|\le(2\|\phi\|_{L^\infty}+1)\eps\,.
$$
which concludes step 1.

\smallskip
\noindent
\textit{Step 2: } next we prove that property (b) implies that $P_{N:j}\to p^{\otimes j}$ weakly as $N\to\infty$, for all $j>1$.

Let $\phi\in C_b(\bR^d)\setminus\{0\}$. Let $\cE_N^j:=\{1,\ldots,N\}^{\{1,\ldots,j\}}$ (the set of maps from $\{1,\ldots,j\}$ to $\{1,\ldots,N\}$) and let $\cA_N^j$
be the set of one-to-one elements of $\cE_N^j$. Then
$$
\ba
\bE^{P_N}(\la\mu_{Z_N}^{\otimes j},\phi^{\otimes j}\ra)
&=
\bE^{P_N}\left(\frac1{N^j}\sum_{s\in\cE_N^j}\phi(z_{s(1)})\ldots\phi(z_{s(j)})\right)
\\
&=
\frac1{N^j}\sum_{s\in\cA_N^j}\bE^{P_N}(\phi(z_{s(1)})\ldots\phi(z_{s(j)}))
\\
&+
\frac1{N^j}\sum_{s\in\cE_N^j\setminus\cA_N^j}\bE^{P_N}(\phi(z_{s(1)})\ldots\phi(z_{s(j)}))\,.
\ea
$$

For $s\in\cA_N^j$, one has
$$
\bE^{P_N}(\phi(z_{s(1)})\ldots\phi(z_{s(j)}))=\bE^{P_N}(\phi(z_1)\ldots\phi(z_j))=\la P_{N:j},\phi^{\otimes j}\ra
$$
by symmetry of $P_N$, while, for all $s\in\cE_N^j$
$$
|\bE^{P_N}(\phi(z_{s(1)})\ldots\phi(z_{s(j)}))|\le\|\phi\|^j_{L^\infty}\,.
$$

Now, for all $j$ fixed
$$
\left\{
\ba
{}&\#\cA_N^j=N(N-1)\ldots(N-j+1)\sim N^j\hbox{ as }N\to\infty\,,
\\
&\#(\cE_N^j\setminus\cA_N^j)=N^j-N(N-1)\ldots(N-j+1)=o(N^j)\,,
\ea
\right.
$$
so that
$$
\left|\bE^{P_N}\la\mu_{Z_N}^{\otimes j},\phi^{\otimes j}\ra-\la P_{N:j},\phi^{\otimes j}\ra\right|
\le
2\frac{N^j-\#\cA_N^j}{N^j}\|\phi\|_{L^\infty}^j\,.
$$

Introduce
$$
\left\{
\ba
X_N^\eps(j,\phi)&=\{Z_N\in(\bR^d)^N\hbox{ s.t. }|\la\mu_{Z_N}^{\otimes j}-p^{\otimes j},\phi^{\otimes j}\ra|>\eps\}\,,
\\
Y_N^\eps(j,\phi)&=(\bR^d)^N\setminus X_N^\eps(j,\phi)\,.
\ea
\right.
$$
Observe that
$$
\ba
\la\mu_{Z_N}^{\otimes j}-p^{\otimes j},\phi^{\otimes j}\ra
=
\sum_{k=1}^j\la\mu_{Z_N},\phi\ra^{k-1}\la\mu_{Z_N}-p,\phi\ra\la p,\phi\ra^{j-k}\,,
\ea
$$
so that
$$
|\la\mu_{Z_N}^{\otimes j}-p^{\otimes j},\phi^{\otimes j}\ra|\le j\|\phi\|_{L^{\infty}}^{j-1}|\la\mu_{Z_N}-p,\phi\ra|\,.
$$

Therefore, property (b) implies that
$$
P_N(X_N^\eps(j,\phi))\le P_N(U_N^{\eps/j\|\phi\|_{L^\infty}^{j-1}}(\phi))\to 0
$$
as $N\to\infty$ for all $j>1$, all $\eps>0$ and all $\phi\in C_b(\bR^d)\setminus\{0\}$. 

In particular, for each $j\ge 1$, there exists $N_j(\eps,\phi)\ge j$ such that
$$
N>N_j(\eps,\phi)\Rightarrow P_N(X_N^\eps(j,\phi))<\eps\,.
$$

Thus
$$
\ba
|\bE^{P_N}\la\mu_{Z_N}^{\otimes j}-p^{\otimes j},\phi^{\otimes j}\ra|
&\le
\bE^{P_N}(|\la\mu_{Z_N}^{\otimes j}-p^{\otimes j},\phi^{\otimes j}\ra|\indc_{X_N^\eps(j,\phi)})
\\
&+\bE^{P_N}(|\la\mu_{Z_N}^{\otimes j}-p^{\otimes j},\phi^{\otimes j}\ra|\indc_{Y_N^\eps(j,\phi)})
\\
&\le 2\|\phi\|_{L^\infty}^jP_N(X_N^\eps(j,\phi))+\eps
\ea
$$
so that
$$
N>N_j(\eps,\phi)\Rightarrow|\bE^{P_N}\la\mu_{Z_N}^{\otimes j}-p^{\otimes j},\phi^{\otimes j}\ra|<(2\|\phi\|_{L^\infty}^j+1)\eps\,.
$$

Therefore, for each $\eps>0$ and each $j>1$, and for all $N>N_j(\eps,\phi)$, one has
$$
\ba
\left|\la p^{\otimes j}-P_{N:j},\phi^{\otimes j}\ra\right|
&\le
|\bE^{P_N}(\la p^{\otimes j}-\mu_{Z_N}^{\otimes j},\phi^{\otimes j}\ra)|
\\
&+
|\bE^{P_N}(\mu_{Z_N}^{\otimes j},\phi^{\otimes j}\ra)-\la P_{N:j},\phi^{\otimes j}\ra|
\\
&\le(2\|\phi\|_{L^\infty}^j+1)\eps+2\frac{N^j-\#\cA_N^j}{N^j}\|\phi\|_{L^\infty}^j\,,
\ea
$$
and since $N^j-\#\cA_N^j=o(N^j)$ as $N\to\infty$,
$$
\varlimsup_{N\to\infty}\left|\la p^{\otimes j}-P_{N:j},\phi^{\otimes j}\ra\right|\le(2\|\phi\|_{L^\infty}^j+1)\eps\,.
$$
Since this holds for each $\eps>0$, we conclude that
$$
\la P_{N:j},\phi^{\otimes j}\ra\to\la p^{\otimes j},\phi^{\otimes j}\ra\quad\hbox{ as }N\to\infty\,.
$$
This property holds for each $\phi\in C_b(\bR^d)\setminus\{0\}$ so that, by a classical density argument, we conclude that the sequence $P_N$ is 
$p$-chaotic.
\end{proof}

\smallskip
\noindent
\textbf{Exercise:} The purpose of this exercise is to complete the ``classical density argument'' used at the end of the proof of Theorem \ref{T-CharaChao}.
For each $\Phi\in C((\bR^d)^j)$, denote by $S_j\Phi$ the element of $C((\bR^d)^j)$ defined by the formula
$$
S_j\Phi(z_1,\ldots,z_j):=\frac1{j!}\sum_{\si\in\fS_j}\Phi(z_{\si(1)},\ldots,z_{\si(j)})\,.
$$
Finally, for each $\phi_1,\ldots,\phi_j\in C(\bR^d)$, we denote by $\phi_1\otimes\ldots\otimes\phi_j$ the function
$$
\phi_1\otimes\ldots\otimes\phi_j:\,(z_1,\ldots,z_j)\mapsto\phi_1(z_1)\ldots\phi_j(z_j)\,,
$$
and by $\phi^{\otimes j}$ the $j$-fold tensor product of $\phi$ with itself.

\noindent
a) Prove that
$$
S_j(\phi_1\otimes\ldots\otimes\phi_j)=\frac1{j!}\frac{\d^j}{\d X_1\ldots\d X_j}(X_1\phi_1+\ldots+X_j\phi_j)^{\otimes j}\,.
$$
b) Prove that, for each $Q\in\cP_{sym}((\bR^d)^j)$ and each $\Phi\in C_b((\bR^d)^j)$, one has
$$
\la Q,\Phi\ra=\la Q,S_j\Phi\ra\,.
$$
c) Let\footnote{For each locally compact topological space $X$ and each finite dimensional vector space $E$ on $\bR$, we denote by $C_0(X,E)$ the set
of continuous functions on $X$ with values in $E$ that converge to $0$ at infinity. We set $C_0(X):=C_0(X,E)$.} $\Phi\in C_0((\bR^d)^j)$ and $\eps>0$. 
Prove that there exists $\a_1,\ldots,\a_n\in\bR$ and a $jn$-tuple $(\phi_{i,m})_{1\le i\le j,1\le m\le n}$ of elements of $C_0(\bR^d)$ such that
$$
\left\|\Phi-\sum_{m=1}^n\a_m\phi_{1,m}\otimes\ldots\otimes\phi_{j,m}\right\|<\eps\,,
$$
where
$$
\|\Psi\|=\sup_{z_1,\ldots,z_j\in\bR^d}|\Psi(z_1,\ldots,z_j)|\,.
$$
(Hint: apply the Stone-Weierstrass theorem.)

\noindent
d) Let $Q\in\cP_{sym}((\bR^d)^j)$ and $(Q_N)_{N\ge 1}$ be a sequence of elements of $\cP_{sym}((\bR^d)^j)$ such that
$$
\la Q_N,\phi^{\otimes j}\ra\to\la Q,\phi^{\otimes j}\ra
$$
as $N\to\infty$ for each $\phi\in C_0(\bR^d)$. Prove that
$$
\la Q_N,\Phi\ra\to\la Q,\Phi\ra
$$
as $N\to\infty$ for each $\Phi\in C_0((\bR^d)^j)$.

\noindent
e) Conclude that
$$
\la Q_N,\Phi\ra\to\la Q,\Phi\ra
$$
as $N\to\infty$ for each $\Phi\in C_b((\bR^d)^j)$. (Hint: apply Theorem 6.8 in chapter II of \cite{Mallia}.)

\smallskip
References for this section are chapter 1, section 2 in \cite{Sznitman}, and chapter 4, section 6 in \cite{CIP}.

\subsection[Dobrushin's theorem and BBGKY hierarchies]{From Dobrushin's theorem to the BBGKY hierarchy}\lb{SS-DobruBBGKY}

In the proof of the mean-field limit based on the notion of empirical measure, one proves the stability of the limiting, mean field PDE in the weak topology
of probability measures on the single-particle phase space. The method based on the BBGKY hierarchy involves the $N$-particle phase space, and the 
need for considering marginals of the $N$-particle distribution in the limit as $N\to\infty$. Perhaps the best way to understand the relation between these
two approaches of the same problem is to think of the nature of the $N$-particle empirical measure. Indeed
$$
\mu_{Z_N}(dz)
$$
is a symmetric \textit{function} of the $N$ variables $Z_N=(z_1,\ldots,z_N)$, defined on the \textit{$N$-particle phase space} $(\bR^d)^N$, and with values 
in the set of \textit{probability measures on the single-particle phase space} $\bR^d$ --- in the notation above, $z$ is the variable in the single-particle phase space.

Already the characterization of chaotic sequences in terms of empirical measures obtained in the previous section clarifies the respective roles of the
single particle phase space and of the $N$-particle phase space in this limit. The $N$-particle symmetric distribution function $F_N$ can be viewed 
as a probability on $\cP(\bR^d)$ concentrated on the set of empirical measures --- in other words, the $N$-tuple $Z_N$ is, up to permutations of its $N$
components, identified with the empirical measure $\mu_{Z_N}$. (This idea can be found in \cite{Grunbaum71} and will be discussed in more detail in
section \ref{SS-Lions} below.)

Although this is a static picture, it provides the right point of view in order to unify the two approaches of the mean field limit presented above, i.e. the 
approach based on empirical measures and Dobrushin's estimate, and the one based on the BBGKY hierarchy. More precisely, we shall prove that the 
sequence $F_N(t)$ of solutions of the $N$-particle Liouville equation with factorized initial data is $f(t)\scrL^d$-chaotic for each $t\in\bR$, where $f$ is 
the solution of the mean field PDE.

The proof of this result goes as follows.

Since $F_N^{in}$ is factorized, it is of course chaotic (factorized probability measures being the first example of chaotic measures). But in fact, $F_N^{in}$ 
satisfies a stronger estimate than property (b) in Theorem \ref{T-CharaChao}, in terms of the Monge-Kantorovich distance $\Dist_{MK,2}$ (see Theorem 
\ref{T-HoroKara} below). This estimate is propagated for all $t\in\bR$ by Dobrushin's inequality, involving the weaker distance $\Dist_{MK,1}$. The resulting 
bound implies that property (b) in Theorem \ref{T-CharaChao} holds for $F_N(t)$ for all $t$ with $p:=f(t)\scrL^d$. Applying Theorem \ref{T-CharaChao}, we 
conclude that $F_N(t)$ is $f(t)\scrL^d$-chaotic.

\smallskip
Our main result in this section is summarized in the following statement.

\begin{Thm}\lb{T-PropaChao}\index{Propagation of chaos}
Assume that the interaction kernel $K\in C^1(\bR^d\times\bR^d,\bR^d$ satisfies assumptions (HK1)-(HK2). Let $f^{in}$ be a probability density on $\bR^d$ 
such that
$$
\int_{\bR^d}|z|^{d+5}f(z)dz<\infty\,.
$$
Let $F_N^{in}=(f^{in}\scrL^d)^{\otimes N}$, and let $F_N(t)=T_t\#F_N^{in}$ be the solution of the $N$-particle Liouville equation
$$
\left\{
\ba
{}&\d_tF_N+\frac1N\sum_{j=1}^N\Div_{z_i}(K(z_i,z_j)F_N)=0\,,
\\
&F_N\rstr_{t=0}=F_N^{in}\,.
\ea
\right.
$$
Then, for each $j\ge 1$
$$
F_{N:j}(t)\to (f(t,\cdot)\scrL^d)^{\otimes j}
$$
weakly in $\cP((\bR^d)^j)$ as $N\to\infty$, where the probability density $f(t,\cdot)$ is the solution of the mean field PDE
$$
\left\{
\ba
{}&\d_tf(t,z)+\Div_z\left(f(t,z)\int_{\bR^d}K(z,z')f(t,z')dz'\right)=0\,,
\\
&f\rstr_{t=0}=f^{in}\,.
\ea
\right.
$$ 
\end{Thm}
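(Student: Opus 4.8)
The plan is to derive the chaoticity of $F_N(t)$ from Dobrushin's stability estimate (Theorem~\ref{T-Dobru}) combined with the characterization of chaotic sequences in terms of empirical measures (Theorem~\ref{T-CharaChao}), by transporting the question back to the initial time. First I would record that, by Theorem~\ref{T-ClassMFLimit}, the Cauchy problem for the mean field PDE has a unique solution $f\in C(\bR;L^1(\bR^d))$ with $f(t,\cdot)\scrL^d=Z(t,\cdot,f^{in}\scrL^d)\#(f^{in}\scrL^d)$ for all $t\in\bR$, where $Z$ is the mean field characteristic flow of Theorem~\ref{T-MFChar}; moreover $f(t,\cdot)\scrL^d\in\cP_1(\bR^d)$ for each $t$, and $F_N(t)=T_t\#F_N^{in}$ is symmetric since $F_N^{in}=(f^{in}\scrL^d)^{\otimes N}$ is (propagation of permutation symmetry under the $N$-body flow). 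By Theorem~\ref{T-CharaChao} (more precisely, the implication (b)$\Rightarrow$(a)) it then suffices to prove that for every $\phi\in C_b(\bR^d)$ and every $\eps>0$,
$$
F_N(t)\bigl(\{Z_N\in(\bR^d)^N:|\la\mu_{Z_N}-f(t,\cdot)\scrL^d,\phi\ra|\ge\eps\}\bigr)\to0\quad\hbox{ as }N\to\infty\,.
$$

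Next I would carry out the transport step. Since $F_N(t)=T_t\#F_N^{in}$, the probability above equals $F_N^{in}(\{Z_N^{in}:|\la\mu_{T_tZ_N^{in}}-f(t,\cdot)\scrL^d,\phi\ra|\ge\eps\})$, and by Proposition~\ref{P-MFNbody} one has the flow identity $\mu_{T_tZ_N^{in}}=Z(t,\cdot,\mu_{Z_N^{in}})\#\mu_{Z_N^{in}}$. Applying Dobrushin's estimate (Theorem~\ref{T-Dobru}) with $\mu_1^{in}=\mu_{Z_N^{in}}$ and $\mu_2^{in}=f^{in}\scrL^d$ yields
$$
\Dist_{MK,1}(\mu_{T_tZ_N^{in}},f(t,\cdot)\scrL^d)\le e^{2L|t|}\,\Dist_{MK,1}(\mu_{Z_N^{in}},f^{in}\scrL^d)\,.
$$
For a fixed $\phi\in C_b(\bR^d)$ and $\eps>0$, since $\Dist_{MK,1}$ metricizes weak convergence on $\cP_1(\bR^d)$ (Lemma~\ref{L-DMKWeakTopo}), there is $\eta=\eta(\eps,\phi,t)>0$ such that $\Dist_{MK,1}(\mu,f(t,\cdot)\scrL^d)<\eta$ implies $|\la\mu-f(t,\cdot)\scrL^d,\phi\ra|<\eps$ for every $\mu\in\cP_1(\bR^d)$ (argue by contradiction; each $\mu_{T_tZ_N^{in}}$ is finitely supported, hence in $\cP_1$). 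Combining the last two facts, the set of $Z_N^{in}$ with $|\la\mu_{T_tZ_N^{in}}-f(t,\cdot)\scrL^d,\phi\ra|\ge\eps$ is contained in $\{\Dist_{MK,1}(\mu_{Z_N^{in}},f^{in}\scrL^d)\ge\eta e^{-2L|t|}\}$, so that
$$
F_N(t)\bigl(\{|\la\mu_{Z_N}-f(t,\cdot)\scrL^d,\phi\ra|\ge\eps\}\bigr)\le F_N^{in}\bigl(\{\Dist_{MK,1}(\mu_{Z_N^{in}},f^{in}\scrL^d)\ge\eta e^{-2L|t|}\}\bigr)\,.
$$

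It then remains to control the right-hand side, which concerns only the initial, factorized data. Under $F_N^{in}=(f^{in}\scrL^d)^{\otimes N}$ the points $z_1^{in},\ldots,z_N^{in}$ are i.i.d.\ with common law $f^{in}\scrL^d$, so Theorem~\ref{T-LLN} gives $\Dist_{MK,1}(\mu_{Z_N^{in}},f^{in}\scrL^d)\to0$ for $\bP$-a.e.\ sample, hence $F_N^{in}(\{\Dist_{MK,1}(\mu_{Z_N^{in}},f^{in}\scrL^d)\ge\delta\})\to0$ for every $\delta>0$; taking $\delta=\eta e^{-2L|t|}$ closes the argument and gives property (b), hence the claimed chaoticity $F_{N:j}(t)\to(f(t,\cdot)\scrL^d)^{\otimes j}$ for every $j\ge1$. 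If in addition one wants an explicit rate, this last step is replaced by the quantitative bound of Theorem~\ref{T-HoroKara} together with the Bienaym\'e--Chebyshev inequality; this is precisely where the hypothesis $\int_{\bR^d}|z|^{d+5}f^{in}(z)\,dz<\infty$ and the dimension-dependent rate of convergence of empirical measures enter the picture.

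I expect the main obstacle not to lie in the dynamical part: once the flow identity of Proposition~\ref{P-MFNbody} and Dobrushin's contraction are available, the reduction above is essentially bookkeeping, the only care needed being to apply Theorem~\ref{T-Dobru} with the correct coupling (pairing the empirical measure $\mu_{Z_N^{in}}$ against $f^{in}\scrL^d$) so that the factor $e^{2L|t|}$ stays harmless as $N\to\infty$ with $t$ fixed. The genuinely nontrivial input is the (at least almost sure, ideally quantitative) statement that an $N$-point i.i.d.\ sample approximates its own law in Monge--Kantorovich distance, i.e.\ Theorem~\ref{T-LLN}/Theorem~\ref{T-HoroKara}, whose proof is where the dimension and the moment hypothesis really play a role.
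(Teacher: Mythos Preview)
Your proof is correct and follows essentially the same strategy as the paper's: transport the chaos question back to $t=0$ via $F_N(t)=T_t\#F_N^{in}$ and Dobrushin's estimate, then invoke i.i.d.\ convergence of empirical measures at the initial time together with the implication (b)$\Rightarrow$(a) of Theorem~\ref{T-CharaChao}. The only minor difference is that the paper works directly with Lipschitz test functions (so that $|\la\mu-\nu,\phi\ra|\le\Lip(\phi)\,\Dist_{MK,1}(\mu,\nu)$ is explicit) and uses the quantitative Horowitz--Karandikar bound plus Bienaym\'e--Chebyshev throughout, whereas you handle general $\phi\in C_b$ via the metrization Lemma~\ref{L-DMKWeakTopo} and appeal to the qualitative Theorem~\ref{T-LLN}; your route has the merit of making clear that the $(d+5)$-th moment hypothesis is needed only for the convergence rate, not for the convergence itself.
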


\smallskip
The rather stringent moment condition on the initial single-particle density $f^{in}$ comes from the following important result from statistics, that can be viewed
as a quantitative variant of the law of large numbers.

\begin{Thm}[Horowitz-Karandikar]\lb{T-HoroKara}\index{Horowitz-Karandikar theorem}
For all  $p\in\cP(\bR^d)$ such that
$$
a:=\la p,|z|^{d+5}\ra<\infty\,,
$$
one has
$$
\int_{(\bR^d)^N}\Dist_{MK,2}(\mu_{Z_N},p)^2p^{\otimes N}(dZ_N)\le\frac{C(a,d)^2}{N^{2/(d+4)}}\,,
$$
where $C(a,d)$ is a positive constant that depends only on $a$ and the space dimension $d$.
\end{Thm}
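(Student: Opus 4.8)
The plan is to reduce the estimate to a dyadic decomposition of $\bR^d$ and to control, on each dyadic cube, the deviation between the empirical measure of an i.i.d.\ sample and the underlying law $p$, exploiting the $L^1$-transport interpretation of $\Dist_{MK,1}$ together with a crude bound turning $\Dist_{MK,2}^2$ into $\Dist_{MK,1}$ times a diameter term. First I would cover $\bR^d$ by the cubes $Q_{k,m}$ of side $2^{-k}$ centered at the points of $2^{-k}\bZ^d$, intersected with a large ball $B(0,R_k)$ with $R_k\sim 2^{k}$ to be optimized; the complement $\bR^d\setminus B(0,R_k)$ is handled separately using the moment assumption $a=\la p,|z|^{d+5}\ra<\infty$ and Markov's inequality, which forces $p$ (and, in expectation, $\mu_{Z_N}$) to put very little mass at distance $\gtrsim R_k$, and one can transport that tail mass at cost controlled by $a R_k^{-(d+4)}$ or so. On the bounded region, I would build an explicit (suboptimal) coupling of $\mu_{Z_N}$ and $p$: within each small cube one matches the empirical mass to the true mass $p(Q_{k,m})$ as far as possible, incurring a transport distance at most $\mathrm{diam}(Q_{k,m})=\sqrt d\,2^{-k}$ for the matched part, and one defers the leftover mass $\big|\mu_{Z_N}(Q_{k,m})-p(Q_{k,m})\big|$ to the next coarser scale, telescoping over $k$. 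This is the standard ``layer cake / dyadic matching'' argument for empirical measures.

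Next I would take expectations. The key probabilistic input is that $N\mu_{Z_N}(Q_{k,m})$ is a $\mathrm{Binomial}(N,p(Q_{k,m}))$ random variable, so
$$
\bE^{p^{\otimes N}}\big|\mu_{Z_N}(Q_{k,m})-p(Q_{k,m})\big|\le\Big(\tfrac1N p(Q_{k,m})(1-p(Q_{k,m}))\Big)^{1/2}\le\Big(\tfrac{p(Q_{k,m})}{N}\Big)^{1/2}.
$$
Summing over the $\simeq (R_k2^k)^d$ cubes at scale $k$ and using Cauchy--Schwarz, $\sum_m p(Q_{k,m})^{1/2}\le (\#\text{cubes})^{1/2}(\sum_m p(Q_{k,m}))^{1/2}=(\#\text{cubes})^{1/2}$, one gets a bound of the shape $2^{-k}\cdot N^{-1/2}\cdot(R_k2^k)^{d/2}$ for the expected $\Dist_{MK,1}$-contribution of scale $k$. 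I would then convert to $\Dist_{MK,2}^2$: since the transport plan produced above moves mass at distances $\lesssim 2^{-k}$ at scale $k$ (and at distance $\lesssim R_k$ for the far tail), one has the pointwise bound $\Dist_{MK,2}(\mu_{Z_N},p)^2\lesssim \sum_k 2^{-k}\cdot\big(\text{mass moved at scale }k\big)+R_k^2\cdot(\text{tail mass})$, and taking expectations reduces everything to the scale-by-scale $L^1$ estimates already obtained, plus the moment tail. Summing the resulting geometric-type series in $k$ and $R_k$, and then choosing the truncation radius $R_k$ and the finest scale $k_{\max}$ as functions of $N$ to balance the ``sampling error'' term $\propto N^{-1/2}2^{(d/2-1)k}R_k^{d/2}$ against the ``discretization/tail'' terms, yields the exponent: optimizing gives a rate $N^{-2/(d+4)}$, matching the claimed $C(a,d)^2 N^{-2/(d+4)}$, with $C(a,d)$ absorbing the dimensional sums and the constant from the $|z|^{d+5}$-moment in the tail estimate.

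The main obstacle, in my view, is the bookkeeping of the two-parameter optimization — the interplay between the truncation radius $R_k$, the finest dyadic scale, and the moment exponent $d+5$ must be arranged so that the tail contribution and the discretization error both come out at the same order $N^{-2/(d+4)}$ as the binomial-fluctuation term; getting the precise moment $d+5$ (rather than something larger) requires being slightly careful about how the far tail is transported and how $R_k$ is allowed to depend on the scale $k$. A secondary technical point is ensuring the explicit coupling I describe is genuinely admissible (correct marginals) after the telescoping across scales, and that the crude inequality $\Dist_{MK,2}^2\le (\sup\text{jump length})\cdot(\text{total }L^1\text{ cost})$ is applied to a legitimate plan; both are routine once the decomposition is set up but need to be stated cleanly. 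For the fully detailed execution I would refer to the original Horowitz--Karandikar argument (and to the refinements in the transport literature, e.g.\ the references \cite{VillaniTOT,VillaniOTON}), since the theorem is quoted here only as an input to Theorem \ref{T-PropaChao}.
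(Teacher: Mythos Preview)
The paper does not prove this theorem at all: immediately after the statement it writes ``References for this result are the original article \cite{HoroKara}, and the monograph \cite{Rachev}'' and then proceeds with ``Taking this estimate for granted, we give the proof of Theorem \ref{T-PropaChao}.'' So there is no proof in the paper to compare your proposal against; the result is simply quoted as a black box input. You yourself correctly anticipated this in your final sentence.

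That said, your sketch is a recognizable outline of the standard argument for this class of results (dyadic partitioning of space, binomial fluctuation bounds on cells via Cauchy--Schwarz, tail control via the moment hypothesis, and optimization over the truncation radius and finest scale). This is indeed the strategy in \cite{HoroKara} and in later refinements of the same circle of ideas. One point that is genuinely loose in your write-up is the passage from the $\Dist_{MK,1}$-type cell-by-cell bookkeeping to a bound on $\Dist_{MK,2}^2$: the inequality you invoke, of the form $\Dist_{MK,2}^2\lesssim(\sup\text{ jump length})\cdot(\text{total }L^1\text{ cost})$, is not quite the right formulation, since the constructed coupling mixes jumps of many different lengths across scales. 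The clean way is to bound the squared cost directly by $\sum_k (\text{diameter at scale }k)^2\cdot(\text{expected mass moved at scale }k)$ for a single admissible plan, and then to sum; this is what makes the optimization yield the exponent $2/(d+4)$ for $\Dist_{MK,2}^2$ rather than the $\Dist_{MK,1}$ exponent. If you were to turn this into a self-contained proof, that step and the precise role of the moment order $d+5$ in the tail estimate are where the details need to be pinned down.
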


\smallskip
References for this result are the original article \cite{HoroKara}, and the monograph \cite{Rachev}.

\bigskip
Taking this estimate for granted, we give the proof of Theorem \ref{T-PropaChao}.

\begin{proof}[Proof of Theorem \ref{T-PropaChao}]
By the Cauchy-Schwarz inequality, for all $\mu,\nu\in\cP_2(\bR^d)$ and all $\pi\in\Pi(\mu,\nu)$, one has
$$
\iint_{\bR^d}|x-y|\pi(dxdy)\le\left(\iint_{\bR^d}|x-y|^2\pi(dxdy)\right)^{1/2}\,.
$$
Taking the infimum of both sides of the inequality above as $\pi$ runs through $\Pi(\mu,\nu)$ shows that
$$
\Dist_{MK,1}(\mu,\nu)\le\Dist_{MK,2}(\mu,\nu)\,.
$$
Hence
$$
\int_{(\bR^d)^N}\Dist_{MK,1}(\mu_{Z_N},f^{in}\scrL^d)^2(f^{in})^{\otimes N}(Z_N)(dZ_N)\le\frac{C(a,d)}{N^{2/(d+4)}}\,.
$$

Since $F_N(t)=T_t\#F_N^{in}$, one has
$$
\ba
F_N(t)(\{Z_N\in(\bR^d)^N\hbox{ s.t. }|\la\mu_{Z_N}-f(t)\scrL^d,\phi\ra|\ge\eps\})
\\
=
F_N^{in}(\{Z_N^{in}\in(\bR^d)^N\hbox{ s.t. }|\la\mu_{T_tZ_N^{in}}-f(t)\scrL^d,\phi\ra|\ge\eps\})\,.
\ea
$$

For each $\pi\in\Pi(\mu_{T_tZ_N^{in}},f(t)\scrL^d)$, 
$$
\ba
|\la\mu_{T_tZ_N^{in}}-f(t)\scrL^d,\phi\ra|=\left|\iint_{\bR^d\times\bR^d}(\phi(x)-\phi(y))\pi(dxdy)\right|
\\
\le\Lip(\phi)\iint_{\bR^d\times\bR^d}|x-y|\pi(dxdy)\,.
\ea
$$
Taking the inf of both sides of this inequality as $\pi$ runs through the set of couplings $\Pi(\mu_{T_tZ_N^{in}},f(t)\scrL^d)$ shows that
$$
|\la\mu_{T_tZ_N^{in}}-f(t)\scrL^d,\phi\ra|\le\Lip(\phi)\Dist_{MK,1}(\mu_{T_tZ_N^{in}},f(t)\scrL^d)\,.
$$

By Proposition \ref{P-MKDual} and Dobrushin's inequality (Theorem  \ref{T-Dobru}), 
$$
\ba
|\la\mu_{T_tZ_N^{in}}-f(t)\scrL^d,\phi\ra|&\le\Lip(\phi)\Dist_{MK,1}(\mu_{T_tZ_N^{in}},f(t)\scrL^d)
\\
&\le \Lip(\phi)e^{2L|t|}\Dist_{MK,1}(\mu_{Z_N^{in}},f^{in}\scrL^d)\,.
\ea
$$

Therefore
$$
\ba
F_N(t)(\{Z_N\in(\bR^d)^N\hbox{ s.t. }|\la\mu_{Z_N}-f(t)\scrL^d,\phi\ra|\ge\eps\})&
\\
\le
F_N^{in}(\{Z_N^{in}\in(\bR^d)^N\hbox{ s.t. }\Dist_{MK,1}(\mu_{Z_N^{in}},f^{in}\scrL^d)\ge e^{-2L|t|}\eps/\Lip(\phi)\})&\,.
\ea
$$

Applying the Horowitz-Karandikar theorem recalled above and the Bienaym\'e-Chebyshev inequality shows that
$$
\ba
F_N^{in}(\{Z_N^{in}\in(\bR^d)^N\hbox{ s.t. }\Dist_{MK,1}(\mu_{Z_N^{in}},f^{in}\scrL^d)\ge e^{-2L|t|}\eps/\Lip(\phi)\})&
\\
\le\frac{e^{4L|t|}\Lip(\phi)^2}{\eps^2}\int_{(\bR^d)^N}\Dist_{MK,1}(\mu_{Z_N^{in}},f^{in}\scrL^d)^2(f^{in})^{\otimes N}(Z_N)dZ_N&
\\
\le
\frac{e^{4L|t|}\Lip(\phi)^2}{\eps^2}\frac{C(a,d)^2}{N^{2/(d+4)}}&\,.
\ea
$$

Hence
$$
F_N(t)(\{Z_N\in(\bR^d)^N\hbox{ s.t. }|\la\mu_{Z_N}-f(t)\scrL^d,\phi\ra|\ge\eps\})\to 0
$$
as $N\to\infty$ for each $\phi\in L^\infty\cap\Lip(\bR^d)$. By density of $L^\infty\cap\Lip(\bR^d)$ in $C_b(\bR^d)$ and the implication (b) $\Rightarrow$ (a)
in Theorem \ref{T-CharaChao}, we conclude that 
$$
F_{N:j}(t)\to (f(t)\scrL^d)^{\otimes j}
$$
weakly in $\cP((\bR^d)^j)$ as $N\to\infty$ for all $j\ge 1$.

In particular
$$
F_{N:1}(t)\to f(t)\scrL^d
$$
weakly in $\cP(\bR^d)$ as $N\to\infty$, where $f(t,\cdot)$ is the solution of the mean field PDE, and this concludes the proof.
\end{proof}

\section[Further results on mean field limits]{Further results on mean field limits in classical mechanics}\lb{S-MFClassFurther}

\subsection{Propagation of chaos and quantitative estimates}\lb{SS-QuantPropaChao}

Following carefully our proof in the previous section, the convergence result stated in Theorem \ref{T-PropaChao} could obviously have been formulated 
as a quantitative estimate of the form
$$
\Dist_{MK,1}(P_{N:j}(t),(f(t)\scrL^d)^{\otimes j})\le\eps_j(N)\,,
$$
where $\eps_j(N)\to 0$ for each $j\ge 1$ in the limit as $N\to\infty$. Such an estimate would obviously imply Theorem \ref{T-PropaChao} since the 
Monge-Kantorovich distance $\Dist_{MK,1}$ metricizes the weak topology on $\cP_1(\bR^n)$ for all $n\ge 1$ --- see Theorem 6.9 in \cite{VillaniOTON},
although what is needed here is a consequence of Proposition \ref{P-MKDual}.

In fact, there is another approach of Theorem \ref{T-PropaChao} which is perhaps worth mentioning, since it provides additional information on the
relation between the approach with the empirical measure and the approach based on BBGKY hierarchies.

Specifically, one can prove that, if $F_N^{in}\in\cP_{1,sym}((\bR^d)^N)$ and if $t\mapsto F_N(t)$ is the solution of the Cauchy problem for the $N$-particle 
Liouville equation with initial data $F_N^{in}$, assuming that the interaction kernel $K$ satisfies the assumptions (HK1-HK2), then
$$
\int_{(\bR^d)^N}\mu_{T_tZ_N^{in}}^{\otimes m}F_N^{in}(dZ_N^{in})=\frac{N!}{(N-m)!N^m}F_{N:m}(t)+R_{N,m}(t)
$$
where $T_t$ is the flow defined by the $N$-particle ODE system, while $R_{N,m}(t)$ is a positive Radon measure on $(\bR^d)^m$ whose total mass satisfies
$$
\la R_{N,m}(t),1\ra=1-\frac{N!}{(N-m)!N^m}\le\frac{m(m-1)}{2N}\,.
$$

This explicit formula, which can be found in \cite{FGVRicci} expresses the $m$-th marginal of the $N$-particle distribution $F_N(t)$ in terms of the empirical 
measure of the $N$-particle system at time $t$, i.e. $\mu_{T_tZ_N^{in}}$, up to an error that vanishes as $N\to\infty$.

With this formula, one easily arrives at the following quantitative estimate for the propagation of chaos in the mean field problem.  Assume that the initial
data is factorized, i.e.
$$
P_N^{in}=(f^{in}\scrL^d)^{\otimes N}
$$ 
and that
$$
a:=\int_{\bR^d}|z|^{d+5}f^{in}(z)dz<\infty\,.
$$
Then\index{Propagation of chaos (quantitative estimates)}
$$
\|P_{N:m}(t)-(f(t)\scrL^d)^{\otimes m}\|_{W^{-1,1}((\bR^d)^m)}\le m\left(\frac{m-1}{N}+e^{2L|t|}\frac{C(a,d)}{N^{1/(d+4)}}\right)\,.
$$
for all $t\in\bR$ and all $N\ge m\ge 1$. In particular, for $m=1$, one has
$$
\Dist_{MK,1}(P_{N:1}(t),f(t)\scrL^d)\le C(a,d)e^{2L|t|}/N^{1/(d+4)}\,.
$$

This estimate can be found in \cite{FGVRicci}; it is established independently and in a slightly different manner in \cite{MischlerMouhotWenn}.

In fact, the idea of obtaining quantitative estimates for the propagation of chaos in various situations of non equilibrium classical statistical mechanics has 
been systematically pursued in the reference \cite{MischlerMouhotWenn}, which bears on a more general class of models than the one considered so far
in the present notes. In particular, the discussion in \cite{MischlerMouhotWenn} applies to situations where the empirical measure of the $N$-particle system
is \textit{not an exact weak solution} of the limiting mean field equation. 

\subsection{Infinite hierarchies and statistical solutions}\lb{SS-InfHierar}

While the physical content of the BBGKY hierarchy is transparent (it is the string of equations satisfied by the marginals of the $N$-particle distribution), the 
physical meaning of the infinite mean field hierarchy is somewhat less obvious. We discuss this point in the present section.

Consider the Cauchy problem for the mean field PDE:
$$
\left\{
\ba
{}&\d_tf+\Div_z(f\cK f)=0\,,
\\
&f\rstr_{t=0}=f^{in}\,,
\ea
\right.
$$
where we recall that 
$$
\cK f(t,z):=\int_{\bR^d}K(z,z')f(t,dz')\,,
$$
and where the interaction kernel $K\in C^1(\bR^d\times\bR^d,\bR^d)$ is assumed to satisfy conditions (HK1-HK2). 

Denote by $G_t:\,\cP_1(\bR^d)\mapsto\cP_1(\bR^d)$ the nonlinear 1-parameter group defined by
$$
G_tf^{in}:=f(t,\cdot)
$$
where $f$ is the unique solution of the Cauchy problem above.

The notion of statistical solutions of the mean field PDE\index{Statistical solution of the mean field PDE} is defined by analogy with the case of an ODE. 
Consider the Cauchy problem for an ODE with unknown $t\mapsto x(t)\in\bR^n$ in the form
$$
\left\{
\ba
{}&\dot x(t)=v(x(t))\,,
\\
&x(0)=x_0\,.
\ea
\right.
$$
Assuming that $v\in\Lip(\bR^n,\bR^n)$, the Cauchy-Lipschitz theorem provides the existence of a global solution flow $X:\bR\times\bR^n\to\bR^n$, so that
the map 
$$
t\mapsto X(t,x_0)
$$ 
is the solution of the ODE above satisfying $X(0,x_0)=x_0$. The flow $X$ corresponds with a completely deterministic notion of solution: knowing the initial
condition $x_0$ exactly determines the solution $t\mapsto X(t,x_0)$ for all times. Suppose that, instead of knowing exactly the initial condition $x_0$, one is 
given a probability distribution $\mu_0$ on the set $\bR^n$ of initial data $x_0$. In other words, $\mu_0$ can be regarded as a ``statistical initial condition'' 
for the Cauchy problem above. With the flow $X$ and the statistical initial condition $\mu_0$, one defines 
$$
\mu(t):=X(t,\cdot)\#\mu_0\,,\qquad t\in\bR\,.
$$
Applying the method of characteristics (see the exercise following Theorem \ref{T-ExUnNbody}) shows that 
$$
\left\{
\ba
{}&\d_t\mu(t)+\Div(\mu(t)v)=0\,,
\\
&\mu(0)=\mu_0\,.
\ea
\right.
$$
In other words, we recover the well-known fact that solutions of the transport equation (a 1st order PDE) can be viewed as statistical solutions of the ODE
defined by the characteristic field of the transport operator. The idea of considering the time-dependent probability $\mu(t)$ (or distribution function, when
$\mu(t)$ is absolutely continuous with respect to the Lebesgue measure on the single-particle phase space) instead of the deterministic solution $X(t,x_0)$
for each initial data $x_0$ lies at the core of the kinetic theory of gases.

Let us return to the problem of defining a notion of statistical solution of the mean field PDE recalled above. First we need a probability measure $\nu_0$
on the space of initial conditions $f^{in}$, in this case on $\cP_1(\bR^d)$: this probability $\nu_0$ will be the statistical initial condition for the mean field
PDE. By analogy with the case of the simple ODE presented above, we define
$$
\nu(t):=G_t\#\nu_0\,,\qquad t\in\bR\,.
$$

The next question is to find the analogue of the transport equation satisfied by $\mu(t)$. 

First we need to have a better grasp on $\nu(t)$. One way to understand the formula above expressing $\nu(t)$ as the push-forward of $\nu_0$ under the
map $G_t$ defined by the mean field evolution is to write
$$
\int_{\cP_1(\bR^d)}\cF(p)\nu(t,dp)=\int_{\cP_1(\bR^d)}\cF(G_tp)\nu_0(dp)
$$
for some appropriate class of continuous (in some sense to be defined) functions $\cF$ on $\cP_1(\bR^d)$. Certainly this class of functions should contain
``polynomials'' on $\cP_1(\bR^d)$, i.e. linear combinations of ``monomials''. A monomial of degree $k$ on $\cP(\bR^d)$ is a expression of the form
$$
M_{k,\phi}(p):=\int_{(\bR^d)^k}\phi(z_1,\ldots,z_k)p(dx_1)\ldots p(dx_k)=\la p^{\otimes k},\phi\ra
$$
where $\phi\in C_b((\bR^d)^k)$ --- without loss of generality one can assume that $\phi$ is symmetric in the variables $z_1,\ldots,z_k$. 

Specializing the formula above to the case $\cF=M_{j,\phi}$ as $\phi$ runs through $C_b((\bR^d)^j)$ results in the equality
$$
\int_{\cP_1(\bR^d)}p^{\otimes j}\nu(t,dp)=\int_{\cP_1(\bR^d)}(G_tp)^{\otimes j}\nu_0(dp)\,.
$$
Defining
$$
F_j(t):=\int_{\cP_1(\bR^d)}(G_tp)^{\otimes j}\nu_0(dp)\,,\quad j\ge 1\,,
$$
we claim that the sequence $(F_j)_{j\ge 1}$ is a solution of the infinite mean field hierarchy. Indeed, for each initial single particle probability distribution
$p\in\cP_1(\bR^d)$, the sequence $((G_tp)^{\otimes j})_{j\ge 1}$ is a solution of the infinite mean field hierarchy, which is a sequence of linear equations. 
Therefore $(F_j)_{j\ge 1}$ is also a solution of the infinite mean field hierarchy, being an average under $\nu_0$ of solutions of this hierarchy.

There is another formulation of this observation. Set $\Om:=(\bR^d)^{\bN^*}$, equipped with its product topology and the associated Borel $\si$-algebra
$\cB(\Om)$. For each $p\in\cP_1(\bR^d)$, we denote by $p^{\otimes\infty}$ the unique Borel probability measure defined on $\Om$ by the formula
$$
p^{\otimes\infty}\left(\prod_{k\ge 1}E_k\right)=\prod_{k\ge 1}p(E_k)
$$
for each sequence $(E_k)_{k\ge 1}$ of Borel subsets of $\bR^d$ such that $E_k=\bR^d$ for all but finitely many $k$s. Notice that
$$
\left(p^{\otimes\infty}\right)_{:j}=p^{\otimes j}\,,\qquad j\ge 1\,.
$$
Define
$$
\bF(t):=\int_{\cP_1(\bR^d)}p^{\otimes\infty}\nu(t,dp)=\int_{\cP_1(\bR^d)}(G_tp)^{\otimes\infty}\nu_0(dp)\,;
$$
then, for each $j\ge 1$ and each $t\in\bR$,
$$
\bF(t)_{:j}=F_j(t)
$$
so that $(\bF(t)_{:j})_{j\ge 1}$ is a solution of the infinite mean field hierarchy. Let us write the $j$th equation of the infinite mean field hierarchy in terms 
of $\bF(t)$:
$$
\d_t\bF(t)_{:j}+\sum_{i=1}^j\Div_{z_i}(\bF(t)_{:j+1}K(z_i,z_{j+1}))_{:j}=0\,.
$$
This equality is equivalent to the following weak formulation: for each test function $\psi_j\in C^1_c((\bR^d)^j)$
$$
\ba
\d_t\la\bF(t),\psi_j\ra=\d_t\la\bF(t)_{:j},\psi_j\ra=\sum_{i=1}^j\la\bF(t)_{:j+1},K(z_i,z_{j+1})\cdot\grad_{z_i}\psi_j\ra
\\
=\La\bF(t),\sum_{i=1}^jK(z_i,z_{j+1})\cdot\grad_{z_i}\psi_j\Ra
\ea
$$

This suggests the following definition. We denote by $\cP_{sym}(\Om)$ the set of Borel probability measures $\mu$ on $\Om$ such that
$$
U_\si\#\mu=\mu
$$
for each $N>1$ and each $\si\in\fS_N$, where
$$
U_\si(z_1,z_2,\ldots):=(z_{\si(1)},\ldots,z_{\si(N)},z_{N+1},z_{N+2},\ldots)\,.
$$

\begin{Def}[Spohn \cite{SpohnM2AS}]
Let $I\subset\bR $ be an interval. A map 
$$
I\ni t\mapsto\bP(t)\in\cP_{sym}(\Om)
$$ 
is a measure-valued solution of the mean field hierarchy if and only if, for each $j\in\bN^*$ and each\footnote{For each $n,k\ge 1$ and each finite dimensional 
vector space $E$ on $\bR$, we denote by $C^k_0(\bR^n,E)$ the set of functions of class $C^k$ defined on $\bR^n$ with values in $E$ all of whose partial 
derivatives converge to $0$ at infinity. In other words,
$$
C^k_0(\bR^n,E):=\{f\in C^k(\bR^n,E)\hbox{ s.t. }\d^\a f(x)\to 0\hbox{ as }|x|\to\infty\hbox{Ê for each }\a\in\bN^n\}\,.
$$
We denote $C^k_0(\bR^n):=C^k_0(\bR^n,\bR)$.} $\psi_j\in C^1_0((\bR^d)^j)$, the map $t\mapsto\la\bP(t),\psi_j\ra$ is of class $C^1$ on the interval $I$ and
$$
\d_t\la\bP(t),\psi_j\ra=\d_t\la\bP(t)_{:j},\psi_j\ra=\La\bP(t),\sum_{i=1}^jK(z_i,z_{j+1})\cdot\grad_{z_i}\psi_j\Ra
$$
for all $t\in I$.
\end{Def}

\smallskip
With this definition, the map $\bR\ni t\mapsto\bF(t)\in\cP_{sym}(\Om)$ defined above is a measure-valued solution of the mean-field hierarchy satisfying
the initial condition
$$
\bF(0)=\int_{\cP_1(\bR^d)}p^{\otimes\infty}\nu_0(dp)\,.
$$

\medskip
This observation raises two natural questions:

\smallskip
a) are measure-valued solutions of the mean-field hierarchy uniquely determined by their initial data?

b) are all the measure-valued solutions of the mean-field hierarchy defined by statistical solutions of the mean field PDE?

\medskip
A first useful tool in answering these questions is the following result.

\begin{Thm}[Hewitt-Savage \cite{HewSav}]\index{Hewitt-Savage theorem}
For each $\bP\in\bP_{sym}(\Om)$, there exists a unique Borel probability measure $\pi$ on $\cP(\bR^d)$ such that 
$$
\bP=\int_{\cP(\bR^d)}p^{\otimes\infty}\pi(dp)\,.
$$
\end{Thm}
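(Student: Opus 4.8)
This is the classical de Finetti--Hewitt--Savage representation theorem, and the plan is to prove it by the empirical--measure / reverse--martingale method, reusing the combinatorics that already appears in the proof of Theorem~\ref{T-CharaChao}.

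For existence, given $\bP\in\cP_{sym}(\Om)$, set for $\bz=(z_1,z_2,\ldots)\in\Om$ and $n\ge 1$
$$
\mu_n(\bz):=\frac1n\sum_{k=1}^n\de_{z_k}\,,
$$
and let $\cE_n\subset\cB(\Om)$ be the $\si$-algebra of Borel sets invariant under $U_\si$ for every $\si\in\fS_n$, so that $\cE_1\supset\cE_2\supset\cdots$. Since $\bP$ is invariant under each $U_\si$, one has $\bE^\bP[g\,|\,\cE_n]=\tfrac1{n!}\sum_{\si\in\fS_n}g\circ U_\si$ for every bounded Borel $g$ on $\Om$; in particular $\la\mu_n(\cdot),\phi\ra=\bE^\bP[\phi(z_1)\,|\,\cE_n]$ for $\phi\in C_b(\bR^d)$, so $n\mapsto\la\mu_n,\phi\ra$ is a reverse martingale for the decreasing filtration $(\cE_n)_n$ and therefore converges $\bP$-a.s. and in $L^1(\bP)$ to $\bE^\bP[\phi(z_1)\,|\,\cE_\infty]$, where $\cE_\infty:=\bigcap_n\cE_n$. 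Choosing a countable convergence-determining family in $C_0(\bR^d)$, one obtains a single $\bP$-negligible set off which $\mu_n(\bz)$ converges vaguely to a sub-probability measure $\mu_\infty(\bz)$ on $\bR^d$; and since $\bE^\bP\la\mu_n,\phi\ra=\la\bP_{:1},\phi\ra$ for all $n$ with $\bP_{:1}\in\cP(\bR^d)$, letting $\phi\uparrow\indc$ through $C_0(\bR^d)$ gives $\bE^\bP\la\mu_\infty,\indc\ra=1$, whence $\mu_\infty(\bz)\in\cP(\bR^d)$ for $\bP$-a.e. $\bz$. Define $\pi:=\mu_\infty\#\bP\in\cP(\cP(\bR^d))$.

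Next, fix $m\ge 1$ and $\phi_1,\ldots,\phi_m\in C_b(\bR^d)$. Exactly as in the proof of Theorem~\ref{T-CharaChao} (sum over injective maps $s\colon\{1,\ldots,m\}\to\{1,\ldots,n\}$, and bound the contribution of the non-injective ones by $O(1/n)$), one gets
$$
\bE^\bP\Big[\prod_{i=1}^m\phi_i(z_i)\,\Big|\,\cE_n\Big]=\frac1{n!}\sum_{\si\in\fS_n}\prod_{i=1}^m\phi_i(z_{\si(i)})=\prod_{i=1}^m\la\mu_n,\phi_i\ra+O(1/n)
$$
uniformly on $\Om$. Passing to the limit (reverse martingale convergence on the left, $\bP$-a.s. convergence of $\mu_n$ on the right, all quantities bounded) yields $\bE^\bP[\prod_{i=1}^m\phi_i(z_i)\,|\,\cE_\infty]=\prod_{i=1}^m\la\mu_\infty,\phi_i\ra$ $\bP$-a.s., and taking $\bP$-expectations,
$$
\la\bP_{:m},\phi_1\otimes\cdots\otimes\phi_m\ra=\bE^\bP\Big[\prod_{i=1}^m\la\mu_\infty,\phi_i\ra\Big]=\int_{\cP(\bR^d)}\la p^{\otimes m},\phi_1\otimes\cdots\otimes\phi_m\ra\,\pi(dp)\,.
$$
Finite linear combinations of functions $\phi_1\otimes\cdots\otimes\phi_m$ are dense in $C_0((\bR^d)^m)$ by the Stone--Weierstrass theorem, and both sides are finite measures, so $\bP_{:m}=\int_{\cP(\bR^d)}p^{\otimes m}\,\pi(dp)$ for every $m\ge 1$; since the cylinder sets generate $\cB(\Om)$ and $(p^{\otimes\infty})_{:m}=p^{\otimes m}$, this is exactly $\bP=\int_{\cP(\bR^d)}p^{\otimes\infty}\,\pi(dp)$.

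For uniqueness, suppose $\bP=\int_{\cP(\bR^d)}p^{\otimes\infty}\,\pi'(dp)$ for some $\pi'\in\cP(\cP(\bR^d))$. By the strong law of large numbers (as in the proof of Theorem~\ref{T-LLN}), for each $p\in\cP(\bR^d)$ one has $\mu_n\to p$ weakly, $p^{\otimes\infty}$-a.s.; hence by Fubini the limit $\mu_\infty(\bz):=\lim_n\mu_n(\bz)$ exists $\bP$-a.s. and, for $\pi'$-a.e. $p$, equals $p$ for $p^{\otimes\infty}$-a.e. $\bz$. Consequently, for every bounded Borel $G$ on $\cP(\bR^d)$,
$$
\int_{\cP(\bR^d)}G\,d(\mu_\infty\#\bP)=\int_{\cP(\bR^d)}\int_\Om G(\mu_\infty(\bz))\,p^{\otimes\infty}(d\bz)\,\pi'(dp)=\int_{\cP(\bR^d)}G(p)\,\pi'(dp)\,,
$$
so $\pi'=\mu_\infty\#\bP$; since $\mu_\infty$ is a $\bP$-a.s. defined function of $\bz$ not depending on the chosen representation, $\pi'=\pi$. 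The one genuinely non-routine point is the $\bP$-a.s. \emph{weak} convergence of $\mu_n$ to a probability measure: it rests on the reverse martingale convergence theorem, on choosing the exceptional null set independently of the test function via a countable convergence-determining family, and on the mass argument ($\bE^\bP\la\mu_\infty,\indc\ra=1$) ruling out escape of mass to infinity. Once that is in hand, everything else parallels Theorem~\ref{T-CharaChao} and standard measure theory.
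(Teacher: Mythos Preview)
Your proof is correct and complete, but it follows a genuinely different route from the one the paper sketches. The paper presents Lions' argument (in section \ref{SS-Lions}): working on a compact state space $Q$ so that $\cP(Q)$ is compact, one defines the linear functional $L(U):=\lim_{N\to\infty}\int_{Q^N}U(\mu_{Z_N})P_N(dZ_N)$ on $C(\cP(Q))$, checks the limit exists on monomials $M_{k,\phi}$, extends by Stone--Weierstrass, and then invokes the Riesz representation theorem to produce $\pi$. Your argument is instead probabilistic: you show that the empirical measures $\mu_n(\bz)$ themselves converge $\bP$-a.s.\ via reverse martingale convergence, define $\pi$ as the law of the limit, and identify the marginals through the same injective/non-injective combinatorics as in Theorem~\ref{T-CharaChao}. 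What your approach buys is that it works directly on $\bR^d$ without any compactification (you handle possible escape of mass explicitly), and it gives the extra information that $\pi$ is concretely the distribution of the a.s.\ limiting empirical measure --- which also makes uniqueness immediate via the strong law of large numbers. Lions' argument, by contrast, is shorter and more self-contained once compactness is available, and avoids any appeal to martingale theory; its cost is that one must first reduce to (or work on) a compact space.
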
 

\smallskip
The Hewitt-Savage theorem is often quoted and used as follows. Given $(P_j)_{j\ge 1}$ a sequence of probability measures such that 
$P_j\in\cP_{sym}((\bR^d)^j)$ satisfying the compatibility condition
$$
P_{k:j}=P_j\quad\hbox{ for each }k>j\ge 1\,,\leqno{(CC)}
$$
by the Kolmogorov extension theorem \cite{StroockVarad}, there exists a unique probability measure $\bP\in\bP_{sym}(\Om)$ such that 
$$
\bP_{:j}=P_j\quad\hbox{ for each }j\in\bN^*\,.
$$
Therefore, by the Hewitt-Savage theorem, there exists a unique Borel probability measure $\pi$ on $\cP(\bR^d)$ such that
$$
P_j=\int_{\cP(\bR^d)}p^{\otimes j}\pi(dp)\,.
$$

\smallskip
With this, we can answer the questions a) and b) above. 

First we consider the problem of admissible initial data. In view of the discussion above, the initial data for the mean field hierarchy can be equivalently
either a sequence $(F_j^{in})_{j\ge 1}$ such that $F_j^{in}\in\cP_{1,sym}((\bR^d)^j)$ for $j\ge 1$ and satisfying the compatibility conditions (CC), or a 
unique element $\bF^{in}\in\cP(\Om)$ such that
$$
\bF^{in}_{:j}=F_j\,,\quad j\ge 1\,,
$$
or a unique Borel probability measure $\nu^{in}$ on $\cP_1(\bR^d)$ such that
$$
F^{in}_j=\int_{\cP_1(\bR^d)}p^{\otimes j}\nu^{in}(dp)\,,\quad j\ge 1\,,
$$
or equivalently
$$
\bF^{in}=\int_{\cP_1(\bR^d)}p^{\otimes\infty}\nu^{in}(dp)\,.
$$

The same reasoning applies for each instant of time $t\not=0$. A solution at time $t$ of the infinite mean field hierarchy is a sequence $(P_j(t))_{j\ge 1}$ 
that satisfies in particular $P_j(t)\in\cP_{1,sym}((\bR^d)^j)$ for $j\ge 1$ together with the compatibility condition (CC). Equivalently, this defines a unique
probability measure $\bP(t)\in\cP_{sym}(\Om)$ such that $\bP(t)_{:j}=P_j(t)$ for each $j\ge 1$, or a unique Borel probability measure $\pi(t)$ on $\cP(\bR^d)$ such that
$$
P_j(t)=\int_{\cP(\bR^d)}p^{\otimes j}\pi(t,dp)\,,\qquad j\ge 1\,,
$$
or equivalently
$$
\bP(t)=\int_{\cP(\bR^d)}p^{\otimes\infty}\pi(t,dp)\,.
$$

The following important result was proved by H. Spohn \cite{SpohnM2AS}. It answers questions a) and b) above.

\begin{Thm}\index{Spohn's uniqueness theorem}
Under the assumptions (HK1-HK2) on the interaction kernel $K\in C^1(\bR^d\times\bR^d,\bR^d)$, for each Borel probability measure $\nu^{in}$ on 
$\cP_1(\bR^d)$, the only measure-valued solution of the infinite mean field hierarchy with initial data 
$$
\bF^{in}=\int_{\cP_1(\bR^d)}p^{\otimes\infty}\nu^{in}(dp)
$$
is 
$$
\bF(t)=\int_{\cP_1(\bR^d)}(G_tp)^{\otimes\infty}\nu^{in}(dp)\,,\quad t\in\bR\,.
$$
\end{Thm}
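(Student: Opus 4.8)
The plan is to use the Hewitt--Savage theorem to turn the infinite hierarchy, which involves all orders $j$ at once, into a single transport (Liouville) equation on the ``phase space'' $\cP_1(\bR^d)$, and to solve that equation by the method of characteristics, the characteristic flow being the nonlinear group $G_t$ of the mean field PDE. All the hard analysis is already contained in Theorem~\ref{T-MFChar} (which yields $G_t$ and the mean field characteristic flow $Z$ as a globally defined $C^1$ object) and in Dobrushin's Theorem~\ref{T-Dobru} (which makes $p\mapsto G_tp$ Lipschitz for $\Dist_{MK,1}$, uniformly on bounded time intervals). So let $t\mapsto\bP(t)\in\cP_{sym}(\Om)$ be a measure-valued solution with $\bP(0)=\bF^{in}$. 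Testing the $j=1$ equation of the hierarchy against a smooth truncation of $z_1\mapsto|z_1|$, then letting the truncation go to infinity, and using (HK2) and the exchangeability of $\bP(t)$ as in Lemma~\ref{L-PropMom1}, one obtains $\la\bP(t)_{:1},|z_1|\ra\le e^{2L|t|}\la\bP(0)_{:1},|z_1|\ra$, hence (by exchangeability) a bound on $\la\bP(t)_{:j},|z_1|\ra$ uniform in $j$ and locally uniform in $t$. Writing $\bP(t)=\int_{\cP(\bR^d)}p^{\otimes\infty}\pi(t,dp)$ by Hewitt--Savage, this moment bound makes $\pi(t)$ a tight probability measure on $\cP_1(\bR^d)$, and $\pi(0)=\nu^{in}$ by the uniqueness part of Hewitt--Savage.

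\textbf{The transported Liouville equation (Step~2).} For $j\ge1$ and symmetric $\psi_j\in C^1_0((\bR^d)^j)$ write $M_{j,\psi_j}(p):=\la p^{\otimes j},\psi_j\ra$ and define $\mathcal{G}M_{j,\psi_j}(p):=\sum_{i=1}^j\la p^{\otimes j},(\cK p)(z_i)\cdot\grad_{z_i}\psi_j\ra$. Substituting $\bP(t)_{:j}=\int p^{\otimes j}\pi(t,dp)$ and $\bP(t)_{:j+1}=\int p^{\otimes(j+1)}\pi(t,dp)$ into the $j$-th hierarchy equation and integrating the last variable out of the coupling term gives, for all $j$ and all such $\psi_j$,
\begin{equation*}
\frac{d}{dt}\la\pi(t),M_{j,\psi_j}\ra=\la\pi(t),\mathcal{G}M_{j,\psi_j}\ra .
\end{equation*}
Moreover, differentiating $s\mapsto M_{j,\psi_j}(G_sp)$ at $s=0$ --- using $\partial_s(G_sp)|_{s=0}=-\Div(p\,\cK p)$, which is just the mean field PDE at $t=0$, and integrating by parts --- one gets $\mathcal{G}M_{j,\psi_j}(p)=\tfrac{d}{ds}\big|_{s=0}M_{j,\psi_j}(G_sp)$. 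Thus $\pi(t)$ is a \emph{statistical solution of the mean field PDE}, in precisely the sense in which, in the ODE discussion above, $\mu(t)=X(t,\cdot)\#\mu_0$ solves the transport equation attached to its characteristic field --- here with ``phase space'' $\cP_1(\bR^d)$ and characteristic flow $G_t$.

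\textbf{Characteristics and conclusion (Step~3).} The candidate $\pi(t):=G_t\#\nu^{in}$ solves the equation of Step~2: one has $\la G_t\#\nu^{in},M_{j,\psi_j}\ra=\int M_{j,\psi_j}(G_tp)\nu^{in}(dp)$, and differentiating under the integral (valid by the $C^1$-in-$t$ regularity of $G_tp$ from Theorem~\ref{T-MFChar}, Dobrushin's estimate providing a uniform-in-$p$ domination) and using the group law $G_{t+s}=G_t\circ G_s$ yields $\tfrac{d}{dt}=\int\mathcal{G}M_{j,\psi_j}(G_tp)\nu^{in}(dp)=\la G_t\#\nu^{in},\mathcal{G}M_{j,\psi_j}\ra$. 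For uniqueness, fix $T$ and a monomial $M_{j,\psi_j}$ and set $\Phi_s:=M_{j,\psi_j}\circ G_s$; then $\Phi_s$ is $\Dist_{MK,1}$-Lipschitz on $\cP_1(\bR^d)$ and $s\mapsto\Phi_s$ is $C^1$ with $\tfrac{d}{ds}\Phi_s=\mathcal{G}\Phi_s$ (since $\mathcal{G}\Phi_s=\tfrac{d}{d\tau}\big|_{\tau=0}M_{j,\psi_j}\circ G_{s+\tau}$). For any solution $\pi$ of the equation of Step~2,
\begin{equation*}
\frac{d}{dt}\la\pi(t),\Phi_{T-t}\ra=\la\pi(t),\mathcal{G}\Phi_{T-t}\ra-\la\pi(t),\mathcal{G}\Phi_{T-t}\ra=0,
\end{equation*}
so $\la\pi(T),M_{j,\psi_j}\ra=\la\pi(0),\Phi_T\ra=\int M_{j,\psi_j}(G_Tp)\nu^{in}(dp)=\la G_T\#\nu^{in},M_{j,\psi_j}\ra$. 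Since this holds for all $j$ and all $\psi_j\in C^1_0$, the moment measures $\int p^{\otimes j}\pi(T,dp)$ coincide with those of $G_T\#\nu^{in}$, whence $\pi(T)=G_T\#\nu^{in}$ by Hewitt--Savage; unwinding Step~1, $\bP(t)=\int p^{\otimes\infty}(G_t\#\nu^{in})(dp)=\int(G_tp)^{\otimes\infty}\nu^{in}(dp)=\bF(t)$.

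\textbf{The main obstacle.} The delicate point in Step~3 is that the identity $\tfrac{d}{dt}\la\pi(t),\Phi\ra=\la\pi(t),\mathcal{G}\Phi\ra$ has only been established for \emph{monomials} $\Phi=M_{j,\psi_j}$, while the backward computation applies it with $\Phi=\Phi_{T-t}=M_{j,\psi_j}\circ G_{T-t}$, a genuinely nonlinear functional of $p$ because the characteristic map $Z(T-t,\cdot,p)$ depends on the measure $p$. One must approximate $\Phi_{T-t}$, \emph{together with} $\mathcal{G}\Phi_{T-t}$, by finite linear combinations of monomials $M_{k,\chi}$ with $\chi\in C^1_0$, uniformly on the weakly compact sets carrying the family $\{\pi(t):t\in[0,T]\}$ (available from Step~1): density of the algebra of monomials is a Stone--Weierstrass argument, but since $\mathcal{G}$ is first order the approximation must hold in a topology fine enough to pass to the limit in $\mathcal{G}$, and this is where one uses once more the $\Dist_{MK,1}$-stability of $G_s$ (Theorem~\ref{T-Dobru}) and the $C^1$ regularity in time of the mean field characteristic flow (Theorem~\ref{T-MFChar}) --- writing $\mathcal{G}\Phi_{T-t}=\partial_s\Phi_s|_{s=T-t}$ reduces the matter to uniform convergence in $p$ on compacts. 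Everything else --- the moment propagation of Step~1 (testing the weak hierarchy against an unbounded weight by truncation) and the differentiations under integral signs --- is routine given Theorems~\ref{T-MFChar} and~\ref{T-Dobru}.
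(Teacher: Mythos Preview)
The paper does not actually prove this theorem: it is stated with the attribution ``The following important result was proved by H.~Spohn \cite{SpohnM2AS}'' and no proof is given. So there is no ``paper's own proof'' to compare against; what I can do is assess your argument on its own terms and against Spohn's strategy.

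Your overall architecture is the right one, and it is essentially Spohn's: use Hewitt--Savage to trade the infinite hierarchy on $\Om=(\bR^d)^{\bN^*}$ for a single Liouville-type equation on $\cP_1(\bR^d)$ with characteristic flow $G_t$, and then prove uniqueness by characteristics. Steps~1 and~2 are fine, and you are right to flag the passage from the hierarchy to the transport equation tested against $\Phi_{T-t}=M_{j,\psi_j}\circ G_{T-t}$ as the crux.

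However, your proposed resolution of that obstacle does not close. Stone--Weierstrass gives you uniform approximation of $\Phi_{T-t}$ by polynomials in $p$ on weak-$*$ compacta, but $\mathcal G$ is a genuine first-order operator (it raises the degree of a monomial by one and involves $\nabla\psi_j$), so $C^0$ density of the monomial algebra does \emph{not} imply that the approximants also converge after applying $\mathcal G$. Rewriting $\mathcal G\Phi_{T-t}=\partial_s\Phi_s$ does not help here: that identity tells you what the target is, but you still need the transport equation $\tfrac{d}{dt}\langle\pi(t),\Phi\rangle=\langle\pi(t),\mathcal G\Phi\rangle$ for the \emph{non-monomial} test function $\Phi=\Phi_{T-t}$, and that is exactly what you have not established. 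In short, the backward-characteristics computation $\tfrac{d}{dt}\langle\pi(t),\Phi_{T-t}\rangle=0$ is circular as written.

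The standard way out --- and the one Spohn uses --- is to exploit the algebraic fact you already noted implicitly: $\mathcal G$ maps monomials to monomials, $\mathcal G M_{j,\psi_j}=M_{j+1,\tilde\psi_{j+1}}$. One then iterates the \emph{integral} form of the transport equation (Duhamel), obtaining for the difference of two solutions a remainder after $n$ steps that lives at degree $j+n$, is integrated over the $n$-simplex of volume $t^n/n!$, and carries a combinatorial factor of order $j(j+1)\cdots(j+n-1)$ coming from the $n$ successive applications of $\mathcal G$. For $\psi_j$ smooth and compactly supported (so that all iterates make sense and stay bounded), the remainder tends to $0$ on a short time interval, and one globalizes by iteration. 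This is exactly the mechanism behind the abstract Cauchy--Kovalevskaya uniqueness (Theorem~\ref{T-UniqHier}) that the paper uses later in the quantum case; the ``loss of one derivative'' there corresponds precisely to your operator $\mathcal G$ raising the degree by one. If you want to keep the characteristics flavour of your Step~3, you can equivalently phrase this as a convergent Taylor expansion of $s\mapsto M_{j,\psi_j}\circ G_s$ in monomials --- but either way, the missing ingredient is a quantitative control of the iterated action of $\mathcal G$, not a Stone--Weierstrass density statement.
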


\smallskip
In other words, Spohn's theorem proves that the only measure valued solution of the infinite mean field hierarchy is the statistical solution of the mean field
PDE
$$
\nu(t):=G_t\#\nu^{in}\,,\quad t\in\bR\,,
$$
where $\nu^{in}$ is the initial probability measure on the space $\cP_1(\bR^d)$ of initial data for the mean field PDE such that the initial condition for the
mean field hierarchy is the element of $\cP_{sym}(\Om)$ given by
$$
\bF^{in}=\int_{\cP_1(\bR^d)}p^{\otimes\infty}\nu^{in}(dp)\,,
$$
or equivalently the sequence
$$
F_j^{in}=\int_{\cP_1(\bR^d)}p^{\otimes j}\nu^{in}(dp)\in\cP_{sym}((\bR^d)^j)\,,\qquad j\ge 1\,.
$$

Notice that, in the case where $\nu^{in}=\de_{f^{in}}$, one has
$$
\bF^{in}=(f^{in})^{\otimes\infty}\,,\hbox{ or equivalently }F_j^{in}=(f^{in})^{\otimes j}\hbox{ for all }j\ge 1\,,
$$
and
$$
\bF(t)=(G_tf^{in})^{\otimes\infty}\,,\hbox{ or equivalently }F_j(t)=(G_tf^{in})^{\otimes j}\hbox{ for all }j\ge 1\,.
$$
In other words the statistical solution $\nu(t)$ of the mean field PDE at time $t$ is given by the formula
$$
\nu(t)=\de_{f(t)}=\de_{G_tf^{in}}=G_t\#\de_{f^{in}}\,.
$$
That the statistical solution $\nu(t)$ is a Dirac measure for each $t\in\bR$, knowing that $\nu(0)=\nu^{in}$ is a Dirac measure, is equivalent to the propagation
of chaos in the context of the mean field limit.

Therefore, Spohn's theorem contains as a particular case the uniqueness theorem with factorized initial data which justifies the validity of the mean field limit 
in the approach with the BBGKY hierarchy.

But more generally, Spohn's theorem shows that solutions of the infinite mean field hierarchy coincide with the notion of statistical solutions of the mean field 
PDE. This important piece of information clarifies the meaning of the infinite hierarchy.

Spohn's uniqueness theorem can also be combined with the following uniform stability result on the BBGKY hierarchy to produce a quantitative stability 
estimate on statistical solutions of the mean field PDE. The following quantitative stability estimate uses the following variant of Monge-Kantorovich distance. 
For each $P\in\cP_{1,sym}((\bR^d)^M)$ and $Q\in\cP_{1,sym}((\bR^d)^N)$, consider 
$$
\DDist_{MK,1}(P,Q)=\inf_{\rho\in\Pi(P,Q)}\iint_{\bR^{dM}\times\bR^{dN}}\Dist_{MK,1}(\mu_{X_M},\mu_{Y_N})\rho(dX_M,dY_N)\,.
$$
In this formula, $P$ and $Q$ are viewed as Borel probability measures on $\cP_1(\bR^d)$ concentrated on the set of $M$- and $N$-particle empirical 
measures respectively.

\begin{Thm}\index{Quantitative stability estimate for statistical solutions}\lb{T-UnifStabBBGKY}
Let $M,N\ge 1$, and let $P^{in}_M\in\cP_{1,sym}((\bR^d)^M)$ and $Q^{in}_N\in\cP_{1,sym}((\bR^d)^N)$. Assume that the interaction kernel $K$ satisfies 
the conditions (HK1-HK2). Let $t\mapsto P_M(t)$ and $t\mapsto Q_N(t)$ be respectively the solutions of the $M$-particle and the $N$-particle Liouville 
equations with initial data $P^{in}_M$ and $Q^{in}_N$ respectively. Then

\smallskip
\noindent
a) for each $t\in\bR$, one has
$$
\DDist_{MK,1}(P_M(t),Q_N(t))\le e^{2L|t|}\DDist_{MK,1}(P_M^{in},Q_N^{in})\,.
$$

\noindent
b) for each $t\in\bR$, each $m,M,N\in\bN^*$ such that $M,N\ge m$, and for each bounded and Lipschitz continuous function $\phi_m$ defined on 
$(\bR^d)^m$, one has
$$
\ba
{}&|\la P_{M:m(t)}-Q_{N:m}(t),\phi_m\ra|
\\
&\le m\left(e^{2L|t|}\Lip(\phi_m)\DDist_{MK,1}(P_M^{in},Q_N^{in})+(m-1)\|\phi_m\|_{L^\infty}\left(\frac1M+\frac1N\right)\right)\,.
\ea
$$
\end{Thm}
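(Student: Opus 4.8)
The plan is to reduce the whole statement to Dobrushin's estimate (Theorem~\ref{T-Dobru}) applied to \emph{empirical measures}, supplemented by two elementary lemmas comparing the $m$-th marginal of a symmetric $n$-particle probability with the $m$-fold tensor power of the associated empirical measure. For part a), recall that $P_M(t)=T_t\#P_M^{in}$ and $Q_N(t)=T_t\#Q_N^{in}$, where $T_t$ denotes the $M$- (resp.\ $N$-) particle flow of Theorem~\ref{T-ExUnNbody}. Given any coupling $\rho^{in}\in\Pi(P_M^{in},Q_N^{in})$, I would push it forward under the product map $\Phi_t:(X_M,Y_N)\mapsto(T_tX_M,T_tY_N)$; since the two factors of $\Phi_t$ carry the marginals of $\rho^{in}$ onto $P_M(t)$ and $Q_N(t)$, the measure $\rho(t):=\Phi_t\#\rho^{in}$ lies in $\Pi(P_M(t),Q_N(t))$. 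By Proposition~\ref{P-MFNbody}, $\mu_{T_tX_M}=Z(t,\cdot,\mu_{X_M})\#\mu_{X_M}$ and $\mu_{T_tY_N}=Z(t,\cdot,\mu_{Y_N})\#\mu_{Y_N}$, so Theorem~\ref{T-Dobru} applied to the two empirical measures $\mu_{X_M},\mu_{Y_N}\in\cP_1(\bR^d)$ gives the pointwise bound $\Dist_{MK,1}(\mu_{T_tX_M},\mu_{T_tY_N})\le e^{2L|t|}\Dist_{MK,1}(\mu_{X_M},\mu_{Y_N})$. Integrating this against $\rho^{in}$, using $\rho(t)=\Phi_t\#\rho^{in}$, and taking the infimum over $\rho^{in}$ then gives $\DDist_{MK,1}(P_M(t),Q_N(t))\le e^{2L|t|}\DDist_{MK,1}(P_M^{in},Q_N^{in})$; Lemma~\ref{L-PropMom1} keeps all measures in $\cP_1$ so that every transport cost is finite, and continuity of $Z$ and of $X_M\mapsto\mu_{X_M}$ handles measurability.

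For part b) I would isolate two lemmas. First: for $R\in\cP_{sym}((\bR^d)^n)$ with marginal $R_{n:m}$, $m\le n$, and bounded $\phi_m$ on $(\bR^d)^m$, one has $|\la R_{n:m},\phi_m\ra-\bE^{R}[\la\mu_{X_n}^{\otimes m},\phi_m\ra]|\le\frac{m(m-1)}{n}\|\phi_m\|_{L^\infty}$. This follows by expanding $\la\mu_{X_n}^{\otimes m},\phi_m\ra$ as the average of $\phi_m(x_{s(1)},\dots,x_{s(m)})$ over all $s\in\{1,\dots,n\}^m$: the injective $s$ contribute exactly $\la R_{n:m},\phi_m\ra$ by symmetry of $R$, the contribution of the non-injective $s$ is bounded in modulus by their proportion times $\|\phi_m\|_{L^\infty}$, and both that proportion and $1-n!/((n-m)!\,n^m)$ are at most $\binom m2/n$. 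Second: for $\mu,\nu\in\cP_1(\bR^d)$, $|\la\mu^{\otimes m}-\nu^{\otimes m},\phi_m\ra|\le m\,\Lip(\phi_m)\,\Dist_{MK,1}(\mu,\nu)$; this comes from telescoping $\mu^{\otimes m}-\nu^{\otimes m}$ into $m$ terms, each of which pairs $\mu-\nu$ with a function Lipschitz of constant $\le\Lip(\phi_m)$ in the remaining free variable, together with the Kantorovich--Rubinstein duality of Proposition~\ref{P-MKDual}.

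With these two lemmas in hand, I would split $\la P_{M:m}(t)-Q_{N:m}(t),\phi_m\ra$ into three pieces by the triangle inequality: the piece $\la P_{M:m}(t),\phi_m\ra-\bE^{P_M(t)}[\la\mu_{X_M}^{\otimes m},\phi_m\ra]$, bounded by $\frac{m(m-1)}{M}\|\phi_m\|_{L^\infty}$ via the first lemma with $R=P_M(t)$; the symmetric piece for $Q_N(t)$, bounded by $\frac{m(m-1)}{N}\|\phi_m\|_{L^\infty}$; and the middle piece $\bE^{P_M(t)}[\la\mu_{X_M}^{\otimes m},\phi_m\ra]-\bE^{Q_N(t)}[\la\mu_{Y_N}^{\otimes m},\phi_m\ra]$, which I would evaluate against a coupling $\rho(t)\in\Pi(P_M(t),Q_N(t))$ and bound, using the second lemma pointwise with $\mu=\mu_{X_M}$, $\nu=\mu_{Y_N}$, by $m\,\Lip(\phi_m)\,\DDist_{MK,1}(P_M(t),Q_N(t))$, hence by $m\,e^{2L|t|}\Lip(\phi_m)\,\DDist_{MK,1}(P_M^{in},Q_N^{in})$ thanks to part a). Adding the three bounds and factoring out $m$ yields exactly the claimed inequality.

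There is no genuine conceptual obstacle once part a) is recognised as ``Dobrushin for empirical measures''; the delicate points are purely quantitative. One must keep the constant $\binom m2/n$ sharp in the multi-index count, noting that it bounds \emph{both} $1-n!/((n-m)!\,n^m)$ and the proportion of non-injective multi-indices, which is what produces the coefficient $m(m-1)$ rather than $\tfrac12 m(m-1)$; one must check that the partial Lipschitz constant of $\phi_m$ in a single variable is indeed $\le\Lip(\phi_m)$ for the norm used on $(\bR^d)^m$; and one must verify the mild measurability and finiteness facts (empirical measures are compactly supported, hence in $\cP_1$, and $X_M\mapsto\la\mu_{X_M}^{\otimes m},\phi_m\ra$ is bounded and continuous, so all the expectations and couplings above make sense).
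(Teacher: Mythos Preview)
Your proposal is correct. The paper itself does not give a proof of this theorem; it simply refers the reader to \cite{FGVRicci} (listed as ``in preparation''). That said, your approach is precisely the one the paper gestures at in section~\ref{SS-QuantPropaChao}: the formula
$$
\int_{(\bR^d)^N}\mu_{Z_N}^{\otimes m}F_N(dZ_N)=\frac{N!}{(N-m)!N^m}F_{N:m}+R_{N,m}\,,\qquad \la R_{N,m},1\ra\le\frac{m(m-1)}{2N}\,,
$$
is exactly your first lemma in a slightly different guise, and the paper explicitly attributes the resulting estimate to this identity combined with Dobrushin. Your telescoping argument for the second lemma and the three-term decomposition are standard and give the stated constants on the nose. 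The only point worth flagging is the factor of $2$ you correctly account for: the difference $\la R_{n:m},\phi_m\ra-\bE^R[\la\mu_{X_n}^{\otimes m},\phi_m\ra]$ picks up the non-injective proportion $1-n!/((n-m)!n^m)\le\binom{m}{2}/n$ \emph{twice} (once from the missing injective mass, once from the non-injective remainder), which is why the final bound carries $m(m-1)$ rather than $\tfrac12 m(m-1)$.
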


For a proof of this result, see \cite{FGVRicci}. 

\smallskip
Notice however that Spohn's uniqueness theorem for the infinite mean field hierarchy, even in the particular case of factorized initial data, is more than
what is needed to justify the mean field limit. It would be enough to prove the uniqueness of those solutions of the infinite hierarchy that are limits of 
the sequence of marginals of $N$-particle distributions as $N\to\infty$. This weaker uniqueness property follows from Theorem \ref{T-UnifStabBBGKY},
without using Spohn's uniqueness theorem.

\subsection{Symmetric functions of infinitely many variables}\lb{SS-Lions}

In various places in this notes --- and especially in the last theorem --- we encountered the idea of viewing elements of $\cP_{sym}((\bR^d)^N)$ as Borel 
probability measures on $\cP(\bR^d)$ concentrated on the set of empirical measures. 

In fact, the identification
$$
(\bR^d)^N/\fS_N\ni(z_1,\ldots,z_N)\mapsto\mu_{Z_N}:=\frac1N\sum_{k=1}^N\de_{z_k}\in\cP_{sym}(\bR^d)
$$
can be found in \cite{Grunbaum71} (see especially p. 330 there). 

This point of view was pushed much further by P.-L. Lions. He constructed a complete mathematical framework for handling continuous symmetric functions 
of infinitely many variables that depend weakly on each variable, and used it systematically in his 2007-2008 lectures at the Coll\`ege de France on mean field 
games \cite{PLLionsCdF07}. This remarkable circle of ideas also appears in the recent work of Mischler, Mouhot and Wennberg (see \cite{MischlerMouhot}
and \cite{MischlerMouhotWenn}) on the mean field limit and on quantitative estimates on the propagation of chaos in classical statistical mechanics.

We introduce, for each $X,Y\in Q^N$, the notation
$$
d_{LP}(X,Y):=\inf\{\eps>0\hbox{ s.t. }\#\{k=1,\ldots,N\,|\,|x_k-y_k|>\eps\}<N\eps\}\,.
$$
This quantity is related to the Levy-Prokhorov distance $\Dist_{LP}$ on Borel probability measures on $Q$ in the following manner:
$$
\Dist_{LP}(\mu_X,\mu_Y)=\inf_{\si\in\fS_N}d_{LP}(X,Y_\si)\,,
$$
where $Y_\si:=(y_{\si(1)},\ldots,y_{\si(N)})$. We recall the definition of the Levy-Prokhorov distance $\Dist_{LP}$ on $\cP(Q)$:
$$
\Dist_{LP}(P_1,P_2):=\inf\left\{\eps>0\,|\,\inf_{\pi\in\Pi(P_1,P_2)}\iint_{Q\times Q}\indc_{|x-y|>\eps}\pi(dxdy)<\eps\right\}\,.
$$
We also recall that the Levy-Prokhorov distance metricizes the weak topology on $\cP(Q)$, so that $(\cP(Q),\Dist_{LP})$ is a compact metric space --- see for
instance \cite{BillingCvgProba}. 

Lions' key observation is summarized in the following theorem.

\begin{Thm}[P.-L. Lions \cite{PLLionsCdF07}]\index{Symmetric function of ``infinitely many variables''}
Let $Q$ be a compact metric space and consider, for each $N\ge 1$, a symmetric function $u_N\in C(Q^N)$. Assume that
$$
\sup_{N\ge 1}\sup_{(z_1,\ldots,z_N)\in Q^N}|u_N(z_1,\ldots,z_N)|<\infty\,,
$$
and that
$$
\sup\{|u_N(X)-u_N(Y)|\hbox{ s.t. }X,Y\in Q^N\hbox{ and }d_{LP}(X,Y)<\eps\}\to 0
$$
uniformly in $N\ge 1$ as $\eps\to 0$. Then there exists $U\in C(\cP(Q))$ and a subsequence $u_{N_k}$ of $u_N$ such that
$$
\sup_{Z\in Q^{N_k}}|u_{N_k}(Z)-U(\mu_{Z})|\to 0\hbox{ as }N_k\to\infty\,.
$$
\end{Thm}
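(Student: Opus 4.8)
The plan is to transfer the problem to the compact metric space $(\cP(Q),\Dist_{LP})$ and then invoke the Arzel\`a--Ascoli theorem; the two hypotheses on the $u_N$ are exactly what is needed to manufacture, out of the $u_N$, a sequence of real-valued functions on $\cP(Q)$ that is uniformly bounded and uniformly equicontinuous, and the function $U$ together with the estimate will come from a uniformly convergent subsequence of that sequence — which is also why the conclusion can only be asserted along a subsequence. Throughout, write $M_0:=\sup_{N\ge1}\sup_{Q^N}|u_N|<\infty$ and $\omega(\eps):=\sup_{N\ge1}\sup\{|u_N(X)-u_N(Y)|:X,Y\in Q^N,\ d_{LP}(X,Y)\le\eps\}$; by hypothesis $\omega$ is nondecreasing, finite ($\le 2M_0$), and $\omega(\eps)\to0$ as $\eps\to0$. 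Recall also that $(\cP(Q),\Dist_{LP})$ is a compact metric space, of finite diameter $D$.

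First I would introduce, for each $N\ge1$, the set $\cE_N:=\{\mu_{Z}:Z\in Q^N\}\subset\cP(Q)$ of $N$-particle empirical measures and define $v_N:\cE_N\to\bR$ by $v_N(\mu_{Z}):=u_N(Z)$. This is well defined precisely because $u_N$ is symmetric: if $X,Y\in Q^N$ satisfy $\mu_X=\mu_Y$, then $X$ and $Y$ have the same points with the same multiplicities, so $Y=(x_{\si(1)},\ldots,x_{\si(N)})$ for some $\si\in\fS_N$, whence $u_N(Y)=u_N(X)$. Clearly $|v_N|\le M_0$. The crucial point is that $v_N$ has $\omega$ as a modulus of continuity with respect to $\Dist_{LP}$, uniformly in $N$: given $\mu_X,\mu_Y\in\cE_N$, the identity $\Dist_{LP}(\mu_X,\mu_Y)=\inf_{\si\in\fS_N}d_{LP}(X,Y_\si)$ together with the fact that the infimum is attained on the finite set $\fS_N$ furnishes $\si\in\fS_N$ with $d_{LP}(X,Y_\si)=\Dist_{LP}(\mu_X,\mu_Y)$, and then $|v_N(\mu_X)-v_N(\mu_Y)|=|u_N(X)-u_N(Y_\si)|\le\omega(\Dist_{LP}(\mu_X,\mu_Y))$, again using the symmetry of $u_N$.

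Next I would extend each $v_N$ to a function on all of $\cP(Q)$ without increasing either the bound or the modulus. Let $\hat\omega$ be the least concave majorant of $\omega$ on $[0,D]$; it is continuous, nondecreasing, vanishes at $0$, and being concave with $\hat\omega(0)=0$ it is subadditive. Put
$$
\tilde v_N(\mu):=\inf_{\nu\in\cE_N}\bigl(v_N(\nu)+\hat\omega(\Dist_{LP}(\mu,\nu))\bigr)\,,\qquad\mu\in\cP(Q)\,.
$$
The standard McShane verification then gives: $\tilde v_N$ restricts to $v_N$ on $\cE_N$ (because $\hat\omega$ dominates the modulus of $v_N$); $\tilde v_N$ has modulus $\hat\omega$ on $\cP(Q)$ (because $\hat\omega$ is subadditive); and $-M_0\le\tilde v_N\le M_0+\hat\omega(D)$. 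In particular $\tilde v_N\in C(\cP(Q))$, and the family $(\tilde v_N)_{N\ge1}$ is uniformly bounded and uniformly equicontinuous on the compact metric space $\cP(Q)$. By Arzel\`a--Ascoli, some subsequence $(\tilde v_{N_k})_{k\ge1}$ converges uniformly on $\cP(Q)$ to a limit $U\in C(\cP(Q))$. For any $Z\in Q^{N_k}$ one has $\mu_{Z}\in\cE_{N_k}$, hence $u_{N_k}(Z)=v_{N_k}(\mu_{Z})=\tilde v_{N_k}(\mu_{Z})$, so that
$$
\sup_{Z\in Q^{N_k}}\bigl|u_{N_k}(Z)-U(\mu_{Z})\bigr|\ \le\ \|\tilde v_{N_k}-U\|_{C(\cP(Q))}\ \longrightarrow\ 0
$$
as $k\to\infty$, which is the desired conclusion.

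I expect the only genuinely delicate step to be the passage from the $d_{LP}$-equicontinuity hypothesis on $(u_N)$ to a $\Dist_{LP}$-equicontinuity statement for $(v_N)$ on empirical measures — this is where the identity $\Dist_{LP}(\mu_X,\mu_Y)=\inf_{\si}d_{LP}(X,Y_\si)$ and the symmetry of $u_N$ must be used in tandem — closely followed by the modulus-preserving extension, which forces one to replace $\omega$ by its concave (hence subadditive) majorant so that the McShane inf-convolution behaves well. Everything else is routine; note in particular that no compatibility relation between $u_N$ and $u_M$ for $N\neq M$ is assumed or used, which is exactly why the statement gives convergence only along a subsequence.
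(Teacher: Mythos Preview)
Your proof is correct. Note, however, that the paper does not actually prove this theorem: it is stated with attribution to P.-L.~Lions' Coll\`ege de France lectures \cite{PLLionsCdF07}, and the surrounding discussion only uses the same circle of ideas to give Lions' short proof of the Hewitt--Savage theorem. So there is no ``paper's own proof'' to compare against.

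That said, your argument is exactly the one the paper's setup is pointing toward. The identity $\Dist_{LP}(\mu_X,\mu_Y)=\inf_{\si\in\fS_N}d_{LP}(X,Y_\si)$ is recorded in the text immediately before the theorem precisely so that one can pass from the $d_{LP}$-equicontinuity hypothesis on the $u_N$ to $\Dist_{LP}$-equicontinuity of the induced functions $v_N$ on empirical measures; your use of the symmetry of $u_N$ both to make $v_N$ well defined and to absorb the permutation in the infimum is the right move. The McShane extension with the concave majorant $\hat\omega$ of $\omega$ is the standard way to push $v_N$ from $\cE_N$ to all of $\cP(Q)$ while keeping a common modulus of continuity, and the Arzel\`a--Ascoli step on the compact space $(\cP(Q),\Dist_{LP})$ then finishes the job. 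One small point worth making explicit in a final write-up is that $\hat\omega(0)=0$ and $\hat\omega$ is continuous at $0$: this follows from $\omega(\eps)\to 0$ by majorizing $\omega$ with affine functions of arbitrarily small value at $0$, but it is what guarantees that the extended family is genuinely equicontinuous.
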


\smallskip
This point of view is obviously dual of the Hewitt-Savage theorem quoted above. In fact, as noticed by P.-L. Lions, it leads to a very simple proof of the
Hewitt-Savage theorem. Lions' argument \cite{PLLionsCdF07} is sketched below.

Consider a sequence $(P_j)_{j\ge 1}$ of probability measures satisfying the compatibility condition (CC) above. Lions' idea is to consider
$$
C(\cP(Q))\ni U\mapsto L(U):=\lim_{N\to\infty}\int_{Q^N}U(\mu_{Z_N})P_N(dZ_N)\in\bR\,.
$$
That the limit on the right hand side of the equality above exists follows from considering the case of a monomial. Indeed, when $U=M_{k,\phi}$ with the 
notation used in the previous section, i.e.
$$
U(\mu)=M_{k,\phi}(\mu):=\int_{Q^k}\phi(x_1,\ldots,x_k)\mu^{\otimes k}(dx_1\ldots dx_k)
$$
where $\phi\in C(Q^k)$, a straightforward computation shows that 
$$
\int_{Q^N}M_{k,\phi}(\mu_{Z_N})P_N(dZ_N)=\int_{Q^k}\phi(z_1,\ldots,z_k)P_k(dz_1\ldots dz_k)+O(1/N)\,.
$$
Hence
$$
L(M_{k,\phi})=\int_{Q^k}\phi(z_1,\ldots,z_k)P_k(dz_1\ldots dz_k)
$$
for all $k\ge 1$ and all $\phi\in C(Q^k)$.

The set of polynomials, i.e. of linear combinations of monomials, is a subalgebra of $C(\cP(Q))$ since
$$
M_{k,\phi}(\mu)M_{l,\psi}(\mu)=M_{k+l,\phi\otimes\psi}(\mu)
$$
with 
$$
\phi\otimes\psi(x_1,\ldots,x_{k+l}):=\phi(x_1,\ldots,x_k)\psi(x_{k+1},\ldots,x_{k+l})\,.
$$
We also use the convention
$$
M_{0,1}=1\,.
$$
This subalgebra is dense in $C(\cP(Q))$ for the topology of uniform convergence by the Stone-Weierstrass theorem. 

Since 
$$
\left|\int_{Q^N}U(\mu_{Z_N})P_N(dZ_N)\right|\le\sup_{\mu\in\cP(Q)}|U(\mu)|\,,
$$
for each\footnote{Since $(\cP(Q),\Dist_{LP})$ is compact, any element $U$ of $C(\cP(Q))$ is bounded on $\cP(Q)$, so that
$$
\sup_{\mu\in\cP(Q)}|U(\mu)|<\infty\,.
$$} $U\in C(\cP(Q))$, and since the limit 
$$
L(M_{k,\phi}):=\lim_{N\to\infty}\int_{Q^N}M_{k,\phi}(\mu_{Z_N})P_N(dZ_N)=\int_{Q^k}\phi(z_1,\ldots,z_k)P_k(dz_1\ldots dz_k)
$$
exists for each $k\ge 1$ and each $\phi\in C(Q^k)$, we conclude that this limit exists for each $U\ni C(\cP(Q))$. 

Since obviously $L\ge 0$ (being the limit of linear functionals defined by $P_N$ which is a positive measure), and since
$$
L(1)=\int_{Q^N}P_N(dZ_N)=1\quad\hbox{ for each }N\ge 1
$$
we conclude that the linear functional $L$ is represented by a unique probability measure $\pi\in\cP(\cP(Q))$, i.e.
$$
L(U)=\int_{\cP(Q)}U(\mu)\pi(d\mu)\,.
$$
Specializing to monomials
$$
\ba
L(M_{k,\phi})&=\int_{Q^k}\phi(z_1,\ldots,z_k)P_k(dz_1\ldots dz_k)=\int_{\cP(Q)}M_{k,\phi}(\mu)\pi(d\mu)
\\
&=\int_{\cP(Q)}\left(\int_{Q^k}\phi(z_1,\ldots,z_k)\mu^{\otimes k}(dz_1\ldots dz_k)\right)\pi(d\mu)
\\
&=\int_{Q^k}\phi(z_1,\ldots,z_k)\left(\int_{\cP(Q)}\mu^{\otimes k}(dz_1\ldots dz_k)\pi(d\mu)\right)
\ea
$$
and this means that
$$
P_k=\int_{\cP(Q)}\mu^{\otimes k}\pi(d\mu)
$$
with the unique probability measure $\pi$ defined above. This is precisely the representation formula in the Hewitt-Savage theorem.

\subsection{The case of singular interaction kernels}\index{Singular interaction kernels}

The method for proving the mean field limit presented above is based on Dobrushin's estimate and, as such, is limited to cases where the interaction kernel 
$k$ is Lipschitz continuous in both its arguments. This is most annoying since many interaction kernels of physical interest are singular on the diagonal. All
the examples presented in the first section (i.e. the Vlasov-Poisson system and the vorticity formulation of the Euler equation for incompressible fluids in two
space dimensions) involve the fundamental solution of the Laplacian, leading to interaction kernels that become singular as the distance between the two 
interacting particles vanishes. Obtaining rigorous derivations of both models as the mean field limit of large particle systems remains an important open
problem.

However, some remarkable results have been obtained in this direction.

In the case of the vorticity formulation of the two-dimensional Euler equation for incompressible fluids, the mean field limit for the dynamics of a large number 
of vortex centers is analogous to the convergence problem for vortex methods in the numerical analysis of the Euler equation. These methods approximate
the vorticity field $\om\equiv\om(t,x)$ as follows:
$$
\om(t,\cdot)\simeq\sum_{k=1}^N\om_k\de_{x_k(t)}\,,
$$
where $\om_k$ is the (constant) intensity of the vortex centered at $x_k(t)$. (Notice the slight difference with the mean-field limit discussed above, where
each vortex would have the same intensity $\om_k=1/N$.) The motion of the vortex centers $x_k(t)$ is governed by the following ODE system:
$$
\dot{x}_k(t)=\sum_{l=1\atop l\not=k}^N\om_lK_\eps(x_k(t)-x_l(t))\,,\quad k=1,\ldots,N\,,
$$
where $K_\eps$ is an approximation of the vortex interaction kernel
$$
K(x)=-\tfrac1{2\pi}\frac{Jx}{|x|^2}\,,
$$
(with $J$ standing for the rotation of an angle $-\tfrac{\pi}2$).

In one variant of these methods, called ``the vortex blob method''\index{Vortex blob method}, the interaction potential is truncated near the singularity at the 
origin so as to remain smooth as the distance between interacting vortices vanishes. The method based on Dobrushin's estimate presented above for proving 
the mean field limit applies without modification to the convergence of the vortex blob method. See chapter 5 in \cite{MarchioPulvi} for more details on this topic.

In another variant of these methods, called ``the vortex point method''\index{Vortex point method}, there is no regularization of the interaction kernel, i.e. one 
takes $K_\eps=K$ in the system of ODEs above governing the motion of vortices. The vorticity field $\om\equiv\om(t,x)$ is approximated as follows
$$
\om(t,\cdot)\simeq\om^h(t,\cdot):=\sum_{k\in\bZ^2}\om^h_k\de_{x^h_k(t)}\,,
$$
where $h>0$, and the vortex centers satisfy
$$
\left\{
\ba
{}&\dot{x}^h_k(t)=\sum_{l\in\bZ^2\atop l\not=k}^N\om^h_lK(x^h_l(t)-x^h_k(t))\,,
\\
&x^h_k(0)=hk\,,\quad k\in\bZ^2\,,
\ea
\right.
$$
while the vortex intensities are chosen so that
$$
\om^h_k=\om(0,hk)\,,\qquad k\in\bZ^2\,.
$$
The convergence of the vortex point method for initial data in the Schwartz class $\cS(\bR^2)$ of infinitely differentiable functions with rapidly decaying 
derivatives of all orders has been proved in \cite{GoodHou1} --- see also \cite{GoodHou2,Schochet96,Hauray09}. The key argument is a control of the
minimal distance between vortex centers in Proposition 1 of \cite{Hauray09}. 

One should also mention recent attempts to justify the derivation of the Vlasov-Poisson system as the mean field limit of the Liouville equation for a large
number of identical point particles with (unmollified) Coulomb interaction: see \cite{HaurayJabin07,HaurayJabin11}. These papers prove the mean field 
limit and the propagation of chaos for large systems of point particles with singular interaction force field of order $O(d^{-\a})$ for $\a\le 1$, where $d$
designates the distance between the two interacting particles.

\subsection{From particle systems to the Vlasov-Maxwell system}

There are several difficulties in adapting the method for proving the mean field limit presented above to the case of the Vlasov-Maxwell system; see 
\cite{GolseCMP12} Êfor a detailed discussion of this problem.

First, the source term in the system of Maxwell's equations is not a probability distribution --- as in the case of the Vlasov-Poisson system, where the
electric field $E\equiv E(t,x)$ is given by
$$
E(t,x)=-\grad_x\phi_f(t,x)\,,\quad\hbox{ with }-\Dlt_x\phi_f(t,x)=\rho_f(t,x)\,.
$$
(We recall that
$$
\rho_f(t,x):=\int_{\bR^3}f(t,x,v)dv
$$
so that $\rho_f(t,\cdot)$ is a probability distribution on $\bR^3$ if $f(t,\cdot,\cdot)$ is a probability distribution on $\bR^3\times\bR^3$, which can be
assumed without loss of generality since the total particle number is invariant under the Vlasov-Poisson dynamics.) 

In the case of the Vlasov-Maxwell system, the source term in Maxwell's equations is the 4-vector $(\rho_f,j_f)$, defined as follows:
$$
\rho_f(t,x):=\int_{\bR^3}f(t,x,\xi)d\xi\,,\qquad j_f(t,x):=\int_{\bR^3}v(\xi)f(t,x,\xi)d\xi\,.
$$

This difficulty is handled by an idea introduced in earlier works on Vlasov-Maxwell type systems \cite{BouGolPal04,BouGolPal03}. The idea is to 
represent the solution of Maxwell's equations for the electromagnetic field in terms of a \textit{single momentum distribution} of electromagnetic
potential\index{Distribution of electromagnetic potential}, as follows. Consider the Cauchy problem for the wave equation
$$
\left\{
\ba
{}&\Box_{t,x}u_f(t,x,\xi)=f(t,x,\xi)\,,
\\
&u_f\rstr_{t=0}=\d_tu_f\rstr_{t=0}=0\,,
\ea
\right.
$$
where $\Box_{t,x}:=\d_t^2-\Dlt_x$ is the d'Alembert operator in the variables $t$ and $x$. In other words, the momentum variable is a simple parameter
in the wave equation above. The self-consistent electromagnetic field in the Vlasov-Maxwell system is represented as
$$
\ba
E(t,x)=&-\d_t\int_{\bR^d}v(\xi)u_f(t,x,\xi)d\xi-\grad_x\int_{\bR^d}u_f(t,x,\xi)d\xi
\\
&-\d_tA_0(t,x)-\grad_x\phi_0(t,x)\,,
\\
B(t,x)=&\Rot_x\int_{\bR^d}v(\xi)u_f(t,x,\xi)d\xi+\Rot_xA_0(t,x)\,,
\ea
$$
where $\phi_0\equiv\phi_0(t,x)\in\bR$ and $A_0\equiv A_0(t,x)\in\bR^3$ are respectively a scalar and a vector potential satisfying
$$
\Box_{t,x}\phi_0=0\,,\qquad\hbox{Êand }\Box_{t,x}A_0=0\,,
$$
together with appropriate initial conditions so that the formulas above for $E$ and $B$ match the prescribed initial conditions in the Cauchy problem
for the Vlasov-Maxwell system.

With this representation for the electromagnetic field in the Vlasov-Maxwell system, the source term in the field equation is now $f$ itself, a probability
distribution  in the single particle phase space $\bR^3_x\times\bR^3_\xi$ whenever $f\rstr_{t=0}$ is a probability distribution, since the integral
$$
\iint_{\bR^3\times\bR^3}f(t,x,\xi)dxd\xi
$$
is an invariant of the dynamics defined by the Vlasov-Maxwell equations.

A second difficulty is that the solution of the Cauchy problem for the wave equation defining $u_f$ involves the space-time convolution with Kirchoff's
kernel (the forward fundamental solution of the d'Alembert operator): see for instance \cite{ShaStru}. This formula is equivalent to the retarded potential 
formula for solving the system of Maxwell's equations. At variance with the solution of the Poisson equation
$$
-\Dlt_x\phi_f(t,x)=\rho_f(t,x)\,,
$$
the formula giving the solution $u_f$ of the wave equation above in terms of $f$ is not local in the time variable $t$. Physically, this is due to the fact that 
the electromagnetic field is propagated at the speed of light $c>0$ in the Vlasov-Maxwell system, while the electrostatic field in the Vlasov-Poisson 
system is propagated instantaneously --- in other words, the speed of light is considered as infinite in the Vlasov-Poisson system. It remains to check that 
this difference in structure between the Vlasov-Poisson and the Vlasov-Maxwell systems does not rule out the possibility of an estimate 
\textit{\`a la} Dobrushin. 

This second difficulty was partially handled in an earlier work \cite{ElsKieRicCMP09} for the simpler\index{Vlasov-d'Alembert system} Vlasov-d'Alembert 
system
$$
\left\{
\ba
{}&(\d_t+v(\xi)\cdot\grad_x)f(t,x,\xi)-\grad_x\phi_f(t,x)\cdot\grad_\xi f(t,x,\xi)=0\,,
\\
&\Box_{t,x,}\phi_f(t,x)=\rho_f(t,x)\,,
\ea
\right.
$$
where $\rho_f$ is defined in terms of $f$ as above, and $v(\xi)=\grad_\xi\sqrt{1+|\xi|^2}$ as in the Vlasov-Maxwell system.

A third difficulty --- albeit a less essential one --- is to choose a regularization procedure for the interaction potential that does not destroy the delicate
invariance properties of the Vlasov-Maxwell system. Let $\chi_\eps\equiv \chi_\eps(x)$ be a regularizing sequence in $\bR^3$, chosen so that the
function $\chi_\eps$ is even for each $\eps>0$. The Vlasov-Maxwell system is regularized by replacing the momentum distribution of electromagnetic
potential $u_f$ with the solution of the Cauchy problem
$$
\left\{
\ba
{}&\Box_{t,x}u_f^\eps(t,x,\xi)=\chi_\eps\star_x\chi_\eps\star_xf(t,x,\xi)\,,
\\
&u_f\rstr_{t=0}=\d_tu_f\rstr_{t=0}=0\,,
\ea
\right.
$$
where $\star_x$ designates the convolution product in the $x$ variable. This regularization procedure, originally due to E. Horst, is such that both the
conservation of the total particle number and the conservation of some variant of the total energy are satisfied by the mollified system (see \cite{ReinCMS}).

The interested reader is referred to \cite{GolseCMP12} for a complete discussion of the material presented in this section.

\section[Mean field limits for quantum models]{The mean field problem in quantum mechanics}

\subsection{The $N$-body problem in quantum mechanics}

At variance with classical mechanics, the $N$-body problem in quantum mechanics is a PDE, and not a system of ODEs. The state at time $t$ of a system of 
$N$ identical point particles is defined by its $N$-body wave function\index{$N$-particle wave function}
$$
\Psi_N\equiv\Psi_N(t,x_1,\ldots,x_N)\in\bC\,,\qquad x_1,\ldots,x_N\in\bR^d\,.
$$
We assume that the reader is more or less familiar with the formalism of quantum mechanics, and we shall not attempt to recall more than a few basic facts.
An excellent introduction to quantum mechanics can be found in \cite{BasdDali}.

The meaning of the wave function is such that $|\Psi(x_1,\ldots,x_N)|^2$ is the (joint) probability density of having particle $1$ at the position $x_1$, particle 
$2$ at the position $x_2$,\dots,and particle $N$ at the position $x_N$ at time $t$. This implies the normalization\footnote{This normalization condition is not
satisfied by ``generalized eigenfunctions'' of an operator with continuous spectrum. Consider the two following examples, where $\fH=L^2(\bR)$. 

\smallskip
\noindent
(a) Let $\cH=-\tfrac12\frac{d^2}{dx^2}+\tfrac12x^2$ (the quantum harmonic oscillator), which has discrete spectrum only. The sequence of eigenvalues of
$\cH$ is $n+\tfrac12$ with $n\in\bN$. Besides $\Ker(H-(n+\tfrac12)I)=\bC h_n$ for each $n\in\bN$, with
$$
h_n(x):=\frac{1}{\sqrt{2^nn!}\pi^{1/4}}e^{-x^2/2}H_n(x)\,,\quad\hbox{ where }H_n(x)=(-1)^ne^{x^2}\frac{d^n}{dx^n}e^{-x^2}\,.
$$
The function $H_n$ is the $n$th Hermite polynomial, and the sequence $(h_n)_{n\ge 0}$ is a Hilbert basis of $\fH$. In particular one has the orthonormality
condition
$$
\int_\bR h_n(x)h_m(x)dx=\de_{mn}\,,\quad\hbox{ for all }m,n\ge 0
$$
where $\de_{mn}$ is the Kronecker symbol (i.e. $\de_{mn}=0$ if $m\not=n$ and $\de_{mn}=1$ if $m=n$).

\smallskip
\noindent
(b) Let $P=-i\frac{d}{dx}$ (the momentum operator), which has continuous spectrum only. The spectrum of $P$ is the real line $\bR$, and the generalized 
eigenfunctions of $P$ are the functions $e_k:\,x\mapsto e_k(x):=e^{i2\pi kx}$. For each $k\in\bR$, one has $Pe_k=2\pi ke_k$ but $e_k\notin\fH$. However 
one has the ``formula'' analogous to the orthonormality condition in case (a):
$$
\int_{\bR}\overline{e_k(x)}e_l(x)dx=\de_0(k-l)
$$
where $\de_0$ is the Dirac mass at the origin. The integrand on the left hand side of the equality above is not an element of $L^1(\bR)$, and the integral is 
not a Lebesgue integral. The identity above should be understood as the Fourier inversion formula on the class $\cS'(\bR)$ of tempered distributions on the
real line $\bR$.

In the case (a), if the system is in an eigenstate corresponding with the eigenvalue $n+\tfrac12$ of the operator $\cH$, its wave function is of the form
$\psi=\om h_n$ with $|\om|=1$, so that $\|\psi\|_{\fH}=1$.

In the case (b), if the system is in an eigenstate corresponding with the element $k$ of the spectrum of the operator $P$, it cannot be described by any
wave function in $\fH$, but only by a generalized eigenfunction of $P$, that does not belong to $\fH$.

In the discussion below, we shall never  consider quantum states described by generalized eigenfunctions as in (b), but only quantum states corresponding 
with normalized wave functions.}
$$
\int_{(\bR^d)^N}|\Psi_N(t,x_1,\ldots,x_N)|^2dx_1\ldots dx_N=1\,,
$$
that is satisfied for all $t\in\bR$.

We assume that the interaction between a particle at position $x$ and a particle at position $y$ is given by a $2$-body potential $V(x-y)\in\bR$. Henceforth,
we assume that $V$ is even, so that the force exerted by the particle at position $y$ on the particle at position $x$, i.e. $-\grad V(x-y)$, exactly balances the
force exerted by the particle at position $x$ on the particle at position $y$, i.e. $-\grad V(y-x)$. (Indeed, $\grad V$ is odd since $V$ is even.)

With these data, we can write the Schr\"odinger equation governing the $N$-body wave function for a system of $N$ identical particles of mass $m$ with
$2$-body interaction given by the potential $V$\index{$N$-particle Schr\"odinger equation}:
$$
i\hbar\d_t\Psi_N=-\tfrac{\hbar^2}{2m}\sum_{k=1}^N\Dlt_{x_k}\Psi_N+\sum_{1\le k<l\le N}V(x_k-x_l)\Psi_N\,,
$$
where $\hbar$ is the reduced Planck constant.

The $N$-body Schr\"odinger equation above is a PDE which is the analogue in quantum mechanics of Newton's second law
$$
\left\{
\ba
{}&m\dot{x}_k=\xi_k\,,
\\
&\dot{\xi}_k=-\sum_{l=1\atop l\not=k}^N\grad V(x_k-x_l)\,,\qquad k=1,\ldots,N
\ea
\right.
$$
in classical mechanics.

The question of existence and uniqueness of a solution of the Schr\"odinger equation of $N$-particle quantum dynamics is settled by the following
remarkable result. (Without loss of generality, we assume that $\hbar^2/m=1$.)

\begin{Thm} [Kato] \lb{T-Kato}\index{Kato's theorem}
Assume that the space dimension is $d=3$. If, for some $R>0$, 
$$
V\rstr_{B(0,R)}\in L^2(B(0,R))\qquad\hbox{ and }\quad V\rstr_{\bR^3\setminus B(0,R)}\in L^\infty(\bR^3\setminus B(0,R))\,,
$$
then, for all $N\ge 1$, the unbounded operator
$$
\cH_N:=-\tfrac12\sum_{k=1}^N\Dlt_{x_k}+\sum_{1\le k<l\le N}V(x_k-x_l)
$$
has a self-adjoint extension on $\fH_N:=L^2((\bR^3)^N)$ and generates a unitary group $e^{-it\cH_N}$ on $\fH_N$.
\end{Thm}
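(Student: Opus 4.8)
The plan is to write $\cH_N$ as a Kato--Rellich perturbation of the free Laplacian and then apply Stone's theorem. First I would introduce the free Hamiltonian $A:=-\tfrac12\sum_{k=1}^N\Dlt_{x_k}$ on $\fH_N=L^2((\bR^3)^N)$. Since $A$ is a constant coefficient operator, the Fourier transform on $(\bR^3)^N$ shows at once that $A$ is self-adjoint on $\Dom(A)=H^2((\bR^3)^N)$, essentially self-adjoint on $C^\infty_c((\bR^3)^N)$, and nonnegative. The same Fourier computation yields, for every $\Psi\in\Dom(A)$ and every $k$, the inequality $\|\Dlt_{x_k}\Psi\|_{L^2}\le\|\textstyle\sum_j\Dlt_{x_j}\Psi\|_{L^2}=2\|A\Psi\|_{L^2}$, because $|\xi_k|^2\le|\xi_1|^2+\cdots+|\xi_N|^2$ pointwise in the dual variables; more generally, after any fixed linear change of a pair $(x_k,x_l)$ into difference and sum variables, one still controls the second derivatives in the new variables by $\|A\Psi\|_{L^2}$ in the same way, since the new dual variables are linear combinations of $\xi_k,\xi_l$.

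The heart of the argument is a one-particle relative boundedness lemma: every $W\in L^2(\bR^3)+L^\infty(\bR^3)$ is infinitesimally $(-\Dlt)$-bounded on $\bR^3$, i.e. for each $a>0$ there is $b(a)$ with $\|W\phi\|_{L^2(\bR^3)}\le a\|\Dlt\phi\|_{L^2(\bR^3)}+b(a)\|\phi\|_{L^2(\bR^3)}$ for all $\phi\in H^2(\bR^3)$. I would prove this by splitting $W=W_2+W_\infty$ with $W_2\in L^2$, $W_\infty\in L^\infty$, estimating $\|W_\infty\phi\|\le\|W_\infty\|_{L^\infty}\|\phi\|$ and $\|W_2\phi\|\le\|W_2\|_{L^2}\|\phi\|_{L^\infty}$, and then using $\|\phi\|_{L^\infty}\le C\|\widehat\phi\|_{L^1}\le C\,\|(1+|\xi|^2)^{-1}\|_{L^2(\bR^3)}\,\|(1+|\xi|^2)\widehat\phi\|_{L^2}$, which is finite exactly because $d=3<4$; the coefficient in front of $\|\Dlt\phi\|$ is then made as small as desired by the scaling $\phi\mapsto\phi(\lambda\cdot)$. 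This is the only place where the hypothesis $d=3$ is used, and it is also where the assumptions $V\indc_{B(0,R)}\in L^2$ and $V\indc_{\bR^3\setminus B(0,R)}\in L^\infty$ enter, applied to $W=V$.

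Next I would lift this to the $N$-body setting pair by pair. For a fixed pair $k<l$, change variables so that $x_k-x_l$ becomes one $\bR^3$-coordinate, regard $\Psi$ as an $L^2$-valued function of the remaining $3N-3$ variables with values in $H^2$ of the difference variable, apply the one-particle lemma fibrewise, integrate in the spectator variables, and finally bound the resulting second derivatives in the difference variable by $\|A\Psi\|_{L^2}$ using the Fourier-side inequality from the first step. This produces, for every $a>0$, a constant $b$ with $\|V(x_k-x_l)\Psi\|_{L^2((\bR^3)^N)}\le a\|A\Psi\|_{L^2}+b\|\Psi\|_{L^2}$. Since $V$ is real-valued, $\cV_N:=\sum_{1\le k<l\le N}V(x_k-x_l)$ is a symmetric multiplication operator on $\Dom(A)$, and summing the $\binom N2$ estimates above, with $a$ chosen so small that the total coefficient of $\|A\Psi\|$ is $\tfrac12$, shows that $\cV_N$ is $A$-bounded with relative bound $<1$ (indeed with relative bound $0$).

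Finally, the Kato--Rellich theorem gives that $\cH_N=A+\cV_N$ is self-adjoint on $\Dom(A)=H^2((\bR^3)^N)$ (hence, in particular, is a self-adjoint extension of the operator initially defined on $C^\infty_c$), essentially self-adjoint on that core, and bounded below; Stone's theorem then furnishes the strongly continuous one-parameter unitary group $t\mapsto e^{-it\cH_N}$ on $\fH_N$. I expect the genuine obstacle to be the third step: turning the $3$-dimensional one-body relative bound into an \emph{infinitesimal} bound for each pair potential relative to the full $3N$-dimensional Laplacian, being careful that integrating over the many spectator variables does not destroy the smallness of the constant. This is the $N$-body analogue of the classical Reed--Simon estimate, and it is where all the real analysis resides; the self-adjointness of $A$ and the invocations of Kato--Rellich and of Stone's theorem are standard.
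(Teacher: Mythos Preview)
Your proposal is correct and is exactly the classical Kato--Rellich argument; the paper does not give its own proof of this theorem but simply refers to Kato's original paper \cite{Kato51} and to chapter V, \S 5.3 of \cite{KatoBk}, where precisely this strategy is carried out. One minor simplification: in the third step you do not need the change to difference/sum variables --- it is enough to freeze $x_l$ and apply the one-body lemma in the $x_k$ variable alone (the shift $x\mapsto x-x_l$ preserves the $L^2+L^\infty$ decomposition of $V$), then use $\|\Dlt_{x_k}\Psi\|\le 2\|A\Psi\|$ directly.
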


\smallskip
See chapter V, \S 5.3 in \cite{KatoBk} and \cite{Kato51} for a proof of this result.

\smallskip
This result is the analogue in quantum mechanics of the existence and uniqueness of the solution of the Cauchy problem for the motion equations in classical 
mechanics that follow from the Cauchy-Lipschitz theorem. Observe that applying the Cauchy-Lipschitz theorem to the system of ODEs resulting from Newton's 
second law applied to each particle would require $V$ to have Lipschitz continuous first order derivatives. The assumptions on the regularity of the potential in 
Kato's result are obviously much less stringent --- for instance, the Coulomb potential $V(z)=C/|z|$ obviously satisfies these assumptions. (That the Coulomb 
potential satisfies the assumptions in Kato's theorem is of course very satisfying since most computations in atomic physics involve the Coulomb interaction 
between the electrons and the nuclei in atoms and molecules.) 

\smallskip
As in the case of classical mechanics, our goal is to study the behavior of $\Psi_N(t)=e^{-it\cH_N}\Psi_N^{in}$ in the limit as $N\to\infty$, under appropriate scaling assumptions involving the interaction potential $V$ and the particle number $N$.

First we need to define the analogue of the mean field scaling used in the case of classical mechanics. Our argument for choosing this scaling is based on 
considering the energy of the particle system.

\smallskip
In quantum mechanics, the energy of a system of $N$ identical point particles of mass $m$ with pairwise interaction described in terms of the real-valued
$2$-body potential $V$ is 
$$
\ba
\sum_{k=1}^N\int_{(\bR^d)^N}&\tfrac{\hbar^2}{2m}|\grad_{x_k}\Psi_N(x_1,\ldots,x_N)|^2dx_1\ldots,dx_N
\\
&+
\sum_{1\le k<l\le N}\int_{(\bR^d)^N}V(x_k-x_l)|\Psi_N(x_1,\ldots,x_N)|^2dx_1\ldots,dx_N\,,
\ea
$$
where the first term is the kinetic energy, while the second term is the potential energy.

Pick a typical length scale $L$ and energy scale $E$ in the system of $N$ particles under consideration. We introduce the dimensionless space variables
and potential as follows\index{Mean field scaling (quantum case)}:
$$
\hat x_k=\frac{x_k}{L}\,,\qquad\hat V(\hat x_k-\hat x_l)=\frac1E V(x_k-x_l)\,.
$$
Likewise, the $N$-particle wave function is scaled as 
$$
\hat\Psi_N(\hat t,\hat x_1,\ldots,\hat x_N)=L^{dN/2}\Psi_N(t,x_1,\ldots,x_N)\,,
$$
where $T$ is a time scale to be defined later, and where
$$
\hat t=\frac{t}{T}
$$
is the dimensionless time variable. 

Observe that 
$$
\int|\hat\Psi_N(\hat t,\hat x_1,\ldots,\hat x_N)|^2d\hat x_1\ldots d\hat x_N
=
\int|\Psi_N(t,x_1,\ldots,x_N)|^2dx_1\ldots dx_N=1
$$
with this scaling.

With these dimensionless quantities, the kinetic and potential energies become respectively
$$
\ba
\hbox{Kinetic energy}&=\sum_{k=1}^N\int_{(\bR^d)^N}\tfrac{\hbar^2}{2mL^2}|\grad_{\hat x_k}\hat\Psi_N|^2d\hat x_1\ldots d\hat x_N\,,
\\
\hbox{Potential energy}&=\sum_{1\le k<l\le N}\int_{(\bR^d)^N}E\hat V(\hat x_k-\hat x_l)|\hat\Psi_N|^2d\hat x_1\ldots d\hat x_N\,.
\ea
$$

Observe that there are $N$ terms in the kinetic energy, while the potential energy involves $\tfrac12N(N-1)$ terms. Therefore, we scale the system so that 
the kinetic energy and the potential energy are of same order of magnitude, by choosing $E$ and $L$ so that
$$
\frac{\hbar^2}{mL^2}=NE\,.
$$
Thus
$$
\ba
-\tfrac{\hbar^2}{2m}&\sum_{k=1}^N\Dlt_{x_k}\Psi_N+\sum_{1\le k<l\le N}V(x_k-x_l)\Psi_N
\\
&=
L^{-dN/2}\frac{\hbar^2}{mL^2}\left(-\tfrac12\sum_{k=1}^N\Dlt_{\hat x_k}\hat\Psi_N+\frac1N\sum_{1\le k<l\le N}\hat V(\hat x_k-\hat x_l)\hat\Psi_N\right)\,.
\ea
$$

At this point, we set
$$
\hat H_N\hat\Psi_N:=-\tfrac12\sum_{k=1}^N\Dlt_{\hat x_k}\hat\Psi_N+\frac1N\sum_{1\le k<l\le N}\hat V(\hat x_k-\hat x_l)\hat\Psi_N\,,
$$
and define the time scale $T$ as follows:
$$
T=\frac{mL^2}{\hbar}\,.
$$

With the rescaled time variable $\hat t=t/T$, the $N$-body Schr\"odinger equation in mean-field scaling becomes
$$
i\d_{\hat t}\hat\Psi_N=-\tfrac12\sum_{k=1}^N\Dlt_{\hat x_k}\hat\Psi_N+\frac1N\sum_{1\le k<l\le N}\hat V(\hat x_k-\hat x_l)\hat\Psi_N\,.
$$

Henceforth we drop all hats on variables, and consider as our starting point the scaled equation
$$
 i\d_t\Psi_N=-\tfrac12\sum_{k=1}^N\Dlt_{x_k}\Psi_N+\frac1N\sum_{1\le k<l\le N}V(x_k-x_l)\Psi_N\,.
$$

A formal computation based on the fact that $V$ is real-valued shows that\index{Global conservation of mass (quantum case)}
$$
\frac{d}{dt}\int_{(\bR^d)^N}|\Psi_N(t,x_1,\ldots,x_N)|^2dx_1\ldots dx_N=0\,,
$$
so that
$$
\int_{(\bR^d)^N}|\Psi_N(t,x_1,\ldots,x_N)|^2dx_1\ldots dx_N=1\,.
$$
for all $t\in\bR$.

The rigorous argument is based on Kato's theorem (Theorem \ref{T-Kato}) stated above in space dimension $d=3$: with the notation
$$
H_N:=-\tfrac12\sum_{k=1}^N\Dlt_{x_k}+\frac1N\sum_{1\le k<l\le N}V(x_k-x_l)
$$
the one-parameter group $e^{-itH_N}$ is unitary on $\fH_N=L^2((\bR^3)^N)$, so that
$$
\int_{(\bR^3)^N}|\Psi_N(t,x_1,\ldots,x_N)|^2dx_1\ldots dx_N=\|e^{-itH_N}\Psi_N^{in}\|_{\fH_N}^2=1
$$
for all $t\in\bR$.

\subsection{The target mean-field equation}

The target quantum mean-field equation is the quantum analogue of the Vlasov-Poisson equation. 

Its unknown is the single-particle wave function $\psi\equiv\psi(t,x)\in\bC$, with $x\in\bR^d$

The basic idea in the mean field approximation is the same as in the classical case. Since the probability of finding a particle in an infinitesimal volume 
element $dx$ at time $t$ is $|\psi(t,x)|^2dx$, one expects that the action on the $k$-th particle located at $x_k$ is defined in terms of the potential $V$ by
the formula
$$
\frac1N\sum_{l=1\atop k\not=l}^NV(x_k-x_l)\sim\int_{\bR^d}V(x_k-z)|\psi(t,z)|^2dz\,.
$$

This suggests that the target mean-field equation for single-particle wave function obtained as the limit\index{Hartree's equation} of the $N$-body Schr\"odinger 
equation is 
$$
i\d_t\psi(t,x)=-\tfrac12\Dlt_x\psi(t,x)+\psi(t,x)\int_{\bR^d}V(x-y)|\psi(t,y)|^2dy\,.
$$
This equation is known as the Hartree equation --- and also sometimes referred to as the\index{Schr\"odinger-Poisson equation} Schr\"odinger-Poisson 
equation when $V(z)$ is the Coulomb potential in space dimension $3$. Indeed, if 
$$
V(z)=\frac1{4\pi|z|}\,,\qquad z\in\bR^3\setminus\{0\}\,,
$$
the equation above is equivalent to the system
$$
\left\{
\ba
{}&i\d_t\psi(t,x)=-\tfrac12\Dlt_x\psi(t,x)+U(t,x)\psi(t,x)\,,
\\
&-\Dlt_xU(t,x)=|\psi(t,x)|^2\,.
\ea
\right.
$$
This system is the quantum analogue of the Vlasov-Poisson system.

The Hartree equation has been studied extensively by various authors: see \cite{Bove1,GinVeloHartree}. The following statement is a consequence of
Theorem 3.1 in \cite{GinVeloHartree}.

\begin{Prop}\lb{P-ExUnHartree}
Assume that $V$ is a real-valued, even element of $L^\infty(\bR^d)$. For each $\psi^{in}\in H^k(\bR^d)$, there exists a unique mild solution\footnote{A mild
solution of the Cauchy problem
$$
\left\{
\ba
{}&\dot{u}(t)=Au(t)+F[u(t)]\,,
\\
&u\rstr_{t=0}=u^{in}\,,
\ea
\right.
$$
where $A$ generates a strongly continuous contraction semigroup on a Banach space $X$ and $F:\,X\to X$ is a continuous map, is an $X$-valued continuous
function $I\ni t\mapsto u(t)\in X$ defined on $I=[0,\tau]$ with $\tau\in[0,+\infty]$ that is a solution of the integral equation 
$$
u(t)=e^{tA}u^{in}+\int_0^te^{(t-s)A}F[u(s)]ds
$$
for each $t\in I$.} of the Cauchy problem for Hartree's equation
$$
\left\{
\ba
{}&i\d_t\psi(t,x)=-\tfrac12\Dlt_x\psi(t,x)+(V\star_x|\psi|^2)\psi(t,x)\,,\quad x\in\bR^d
\\
\psi\rstr_{t=0}=\psi^{in}
\ea
\right.
$$
such that $\psi\in C_b(\bR;H^1(\bR^d))\cap C(\bR;H^k(\bR^2))$. Besides, this solution satisfies the conservation laws of mass and energy, viz.
$$
\|\psi(t,\cdot)\|_{L^2(\bR^d)}=\hbox{Const.}
$$
and
$$
\tfrac12\|\grad_x\psi(t,\cdot)\|^2_{L^2(\bR^d)}+\tfrac12\iint_{\bR^d\times\bR^d}V(x-y)|\psi(t,x)|^2|\psi(t,y)|^2dxdy=\hbox{Const.}
$$
for all $t\in\bR$.
\end{Prop}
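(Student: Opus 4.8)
The plan is to regard Hartree's equation as a semilinear perturbation of the free Schr\"odinger flow, solve it locally in time by a Picard fixed point argument in $C([0,\tau];H^1(\bR^d))$, promote the local solution to an $H^k$ solution, establish the conservation laws, and finally globalize. Everything rests on the mild (Duhamel) formulation
\[
\psi(t)=e^{\frac{it}{2}\Dlt_x}\psi^{in}-i\int_0^te^{\frac{i(t-s)}{2}\Dlt_x}\big(V\star_x|\psi(s)|^2\big)\psi(s)\,ds,
\]
where $e^{\frac{it}{2}\Dlt_x}$ is the strongly continuous \emph{unitary} group generated on every $H^s(\bR^d)$ by $A=\tfrac{i}{2}\Dlt_x$; in particular we actually obtain two-sided solutions, which is what makes $t\in\bR$ (rather than $t\ge0$ as in the footnote on mild solutions) possible. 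Because $V\in L^\infty(\bR^d)$, no Strichartz estimate is needed: plain Sobolev and energy bounds suffice.

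First I would record the nonlinear estimates. By Young's inequality $\|V\star_x|\psi|^2\|_{L^\infty}\le\|V\|_{L^\infty}\|\psi\|_{L^2}^2$, and differentiating under the convolution and using Cauchy--Schwarz gives $\|\d^\beta(V\star_x|\psi|^2)\|_{L^\infty}\le C_\beta\|V\|_{L^\infty}\|\psi\|_{H^{|\beta|}}^2$ for every multi-index $\beta$; hence for $\psi\in H^m(\bR^d)$ the potential $W:=V\star_x|\psi|^2$ lies in $W^{m,\infty}(\bR^d)$ and the Leibniz rule shows that $F:\psi\mapsto W\psi$ maps $H^m$ into $H^m$ with $\|F[\psi]\|_{H^m}\le C\|\psi\|_{H^m}^3$. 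Writing $F[u]-F[v]=(V\star_x|u|^2)(u-v)+\big(V\star_x(\bar u(u-v)+v\,\overline{u-v})\big)v$ shows that $F$ is Lipschitz on bounded subsets of $H^m$, for every integer $m\ge1$ (the fractional case is identical with the Kato--Ponce inequality). With $m=1$, the contraction principle on $C([0,\tau];H^1(\bR^d))$ with $\tau=\tau(\|\psi^{in}\|_{H^1})>0$ produces a unique local mild solution; uniqueness on the whole interval of existence and Lipschitz continuous dependence on $\psi^{in}$ follow from these bounds and Gr\"onwall's lemma. Running the same argument with $m=k$ shows the $H^1$ solution is, whenever $\psi^{in}\in H^k$, an $H^k$ solution on a possibly shorter but reopenable interval, so $H^k$ global existence will follow once $\|\psi(t)\|_{H^k}$ is controlled on bounded time intervals.

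Next I would prove the conservation laws, first for smooth data. If $\psi^{in}\in H^2(\bR^d)$ the mild solution is strong ($\psi\in C^1(\bR;L^2)\cap C(\bR;H^2)$ locally), so one may pair the equation with $\bar\psi$ and take imaginary parts: the nonlinear term contributes $\IM\int(V\star_x|\psi|^2)|\psi|^2=0$, whence $\tfrac{d}{dt}\|\psi(t)\|_{L^2}^2=0$. Pairing instead with $\d_t\bar\psi$ and taking real parts, and using that $V$ is even so that the two terms produced by differentiating $\iint V(x-y)|\psi(x)|^2|\psi(y)|^2$ coincide, one gets $\tfrac{d}{dt}E[\psi]=2\RE\langle i\d_t\psi,\d_t\psi\rangle_{L^2}=0$, where $E[\psi]=\tfrac12\|\grad_x\psi\|_{L^2}^2+\tfrac12\iint V(x-y)|\psi(x)|^2|\psi(y)|^2\,dxdy$ is precisely the Hamiltonian of the equation (this is how the constants $\tfrac12,\tfrac12$ are fixed). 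Both identities then extend to arbitrary $\psi^{in}\in H^1(\bR^d)$ by approximating $\psi^{in}$ in $H^1$ by $H^2$ data and invoking the continuous dependence already established, using also that $E$ is continuous on $H^1$ (the quartic term because $V\in L^\infty$).

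Finally I would globalize and address the main obstacle. From $\|\psi(t)\|_{L^2}=\|\psi^{in}\|_{L^2}$ and $|\iint V(x-y)|\psi(x)|^2|\psi(y)|^2\,dxdy|\le\|V\|_{L^\infty}\|\psi^{in}\|_{L^2}^4$, energy conservation yields $\tfrac12\|\grad_x\psi(t)\|_{L^2}^2\le E[\psi^{in}]+\tfrac12\|V\|_{L^\infty}\|\psi^{in}\|_{L^2}^4$, so $\sup_{t\in\bR}\|\psi(t)\|_{H^1}<\infty$; since the $H^1$ local existence time depends only on the $H^1$ norm, this precludes blow-up and gives $\psi\in C_b(\bR;H^1(\bR^d))$. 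For the higher norm, Duhamel's formula together with $\|F[\psi]\|_{H^k}\le C(\|\psi\|_{H^1})\|\psi\|_{H^k}$ (two of the three factors in the cubic bound being absorbed into the now-controlled $H^1$ norm) and Gr\"onwall's lemma give $\|\psi(t)\|_{H^k}\le\|\psi^{in}\|_{H^k}\exp(C(\|\psi^{in}\|_{H^1})|t|)<\infty$, hence $\psi\in C(\bR;H^k(\bR^d))$. The only genuinely delicate point is the strong-to-mild passage in the conservation laws: the $H^1\to H^2$ approximation has to be carried out \emph{before} globalization, using the local continuous-dependence statement, since a priori one only has time-local existence — getting this ordering of the steps right is the part I would be most careful about.
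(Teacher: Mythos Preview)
Your argument is correct and is essentially the standard proof. Note, however, that the paper does not actually prove this proposition: it is stated as ``a consequence of Theorem 3.1 in \cite{GinVeloHartree}'' and left at that. What you have written is precisely a self-contained version of the Ginibre--Velo argument specialized to $V\in L^\infty(\bR^d)$, where no Strichartz machinery is needed and the contraction in $C([0,\tau];H^1)$ together with energy conservation suffices; your handling of the $H^k$ propagation via the tame estimate $\|F[\psi]\|_{H^k}\le C(\|\psi\|_{H^1})\|\psi\|_{H^k}$ and Gr\"onwall is also standard and correct. The only cosmetic remark is that your caution about the ``strong-to-mild'' passage is well placed but not a real obstacle here: the local-in-time continuous dependence in $H^1$ is already enough to pass the conservation laws from $H^2$ data to $H^1$ data on each bounded interval, and globalization then follows exactly as you say.
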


\subsection{The formalism of density matrices}

Before studying the $N$-particle Schr\"odinger equation in the large $N$ limit, we need another formulation of the quantum $N$-body problem.

Henceforth we denote by $\la\cdot|\cdot\ra$ (or by $\la\cdot|\cdot\ra_{\fH_N}$ to avoid ambiguity if needed) the inner product of the Hilbert space 
$\fH_N:=L^2((\bR^d)^N)$, and by $|\cdot|$ or $|\cdot|_{\fH_N}$ the associated norm, defined by the formula
$$
|\Phi|_{\fH_N}:=\la\Phi|\Phi\ra_{\fH_N}^{1/2}\,.
$$

To the $N$-particle wave function $\Psi_N\equiv\Psi_N(x_1,\ldots,x_N)$ that is an element of $\fH_N:=L^2((\bR^d)^N)$, one associates the bounded linear 
operator $D_N$ on $\fH$  defined as follows:
$$
D_N:=\hbox{ orthogonal projection on }\bC\Psi_N\hbox{ in }\fH_N\,,
$$
i.e.
$$
D_N\Phi_N=\Psi_N\int_{(\bR^d)^N}\overline{\Psi_N}\Phi_N(x_1,\ldots,x_N)dx_1\ldots dx_N=\la\Psi_N|\Phi_N\ra\Psi_N\,.
$$
This linear operator is called the ``density matrix'' of the $N$-particle system\index{Density matrix}.

Whenever convenient, we use the notation common in the physics literature
$$
D_N:=|\Psi_N\ra\la\Psi_N|
$$
which is the equivalent of 
$$
D_N:=\Psi_N\otimes L_{\Psi_N}\,,\quad\hbox{ with }L_{\Psi_N}:=\la\Psi_N|\cdot\ra\in\fH_N^*\,.
$$
Obviously, $D_N$ is an integral operator, whose integral kernel is
$$
D_N(x_1,\ldots,x_N,y_1,\ldots,y_N)=\Psi_N(x_1,\ldots,x_N)\overline{\Psi_N(y_1,\ldots,y_N)}\,.
$$

In the sequel, we shall sometimes abuse the notation and use the same letter to designate an integral operator and its integral kernel.

Being an orthogonal projection in $\fH_N$, the operator $D_N$ is self-adjoint and nonnegative:
$$
D_N=D_N^*=D_N^2\ge 0\,.
$$
This property is the quantum analogue of the positivity of the $N$-particle distribution function in classical statistical mechanics.

The density matrix $D_N$ for a system of identical, indistinguishable particles satisfies the following\index{Indistinguishable particles}
symmetry: for all $\si\in\fS_N$, one has
$$
S_\si D_N=D_N\,,
$$
where $S_\si$ is the transformation on integral kernels defined by the formula
$$
S_\si D_N(x_1,\ldots,x_N,y_1,\ldots,y_N)\!:=\!D_N(x_{\si^{-1}\!(1)},\ldots,x_{\si^{-1}\!(N)},y_{\si^{-1}\!(1)},\ldots,y_{\si^{-1}\!(N)})\,.
$$

This assumption is not to be confused with the condition
$$
U_\si\Psi_N=\Psi_N\,,
$$
where $U_\si$ is the unitary operator defined on $\fH_N$ by the formula
$$
U_\si\Psi_N(x_1,\ldots,x_N):=\Psi_N(x_{\si^{-1}(1)},\ldots,x_{\si^{-1}(N)})\,.
$$
This assumption is an assumption on the \textit{statistics} of particles, i.e. on whether the particles under consideration follow the Bose-Einstein or the 
Fermi-Dirac statistics. Particles following the Bose-Einstein statistics are called bosons, and the wave function of any system of $N$ identical bosons
satisfies the condition\index{Bose-Einstein statistics}\index{Bosons}
$$
U_\si\Psi_N=\Psi_N\,.
$$
Particles following the Fermi-Dirac statistics are called fermions, and the wave function of any system of $N$ identical fermions satisfies the condition
\index{Fermi-Dirac statistics}\index{Fermions}
$$
U_\si\Psi_N=(-1)^{\Sign(\si)}\Psi_N\,.
$$

\smallskip
\noindent
\textbf{Exercise:} Express $U_{\si\si'}$ and $U^*_\si$ in terms of $U_\si$ and $U_{\si'}$ for all $\si,\si'\in\fS_N$, and check that $U_\si$ is a unitary operator
on $\fH_N$. Express the transformation $S_\si$ in terms of $U_\si$ for all $\si\in\fS_N$.

\smallskip
In fact, whenever
$$
U_\si\Psi_N=\om(\si)\Psi_N\quad\hbox{ with }\om(\si)\in\bC\hbox{ and }|\om(\si)|=1
$$
one has
$$
S_\si D_N=D_N\,.
$$

\smallskip
In particular, the density matrix of a system of $N$ identical bosons ($\om(\si)=1$) or $N$ identical fermions ($\om(\si)=(-1)^{\Sign(\si)}$) satisfies
$$
S_\si D_N=D_N\,.
$$

\smallskip
\noindent
\textbf{Exercise:} Assume that, for some $\Psi_N\in\fH_N$ such that $|\Psi_N|=1$ and for all $\si\in\fS_N$, one has
$$
U_\si\Psi_N=\om(\si)\Psi_N\quad\hbox{ with }\om(\si)\in\bC\hbox{ and }|\om(\si)|=1\,.
$$
Compute $\om(\si\si')$ in terms of $\om(\si)$ and $\om(\si')$ for all $\si,\si'\in\fS_N$. Prove that $\om(\si)=\pm1$ for all $\si\in\fS_N$.

\smallskip
Assuming that the time-dependent wave function $\Psi_N(t,\cdot)$ of a system of $N$ identical particles satisfies the $N$-body Schr\"odinger equation 
with initial data $\Psi_N^{in}$, i.e. 
$$
\left\{
\ba
{}&i\d_t\Psi_N=H_N\Psi_N:=-\tfrac12\sum_{k=1}^N\Dlt_{x_k}\Psi_N+\frac1N\sum_{1\le k<l\le N}V(x_k-x_l)\Psi_N\,,
\\
&\Psi_N\rstr_{t=0}=\Psi_N^{in}\,,
\ea
\right.
$$
our next task is to find the equation satisfied by $D_N:=|\Psi_N\ra\la\Psi_N|$.

Since $e^{-itH_N}$ is a unitary operator on $\fH_N$ provided that the interaction potential $V$ satisfies the assumptions of Kato's theorem above,
one has
$$
\ba
D_N(t)&=|e^{-itH_N}\Psi_N^{in}\ra\la e^{-itH_N}\Psi_N^{in}|
\\
&=e^{-itH_N}D_N^{in}(e^{-itH_N})^*=e^{-itH_N}D_N^{in}e^{itH_N}\,,
\ea
$$
where
$$
D_N^{in}:=|\Psi_N^{in}\ra\la\Psi_N^{in}|\,.
$$

It is instructive to do the analogous computation in finite dimension (i.e. with matrices). Assume that $A,B\in M_n(\bC)$, and set
$$
M(t):=e^{tA}Be^{-tA}\,.
$$ 
Then the function $\bR\ni t\mapsto M(t)\in M_n(\bC)$ is obviously of class $C^\infty$ (and even real analytic) and satisfies\footnote{For linear operators
$A,B$ on the Hilbert space $\fH$, we designate by $[A,B]$ their commutator $[A,B]:=AB-BA$. There is obviously a difficulty with the domain of $[A,B]$
viewed as an unbounded operator on $\fH$ if $A$ and $B$ are unbounded operators on $\fH$. This difficulty will be deliberately left aside, as we shall
mostly consider $B\mapsto[A,B]$ as an unbounded operator on $\cL(\fH)$, which is different matter.}
$$
\ba
\frac{d}{dt}M(t)=\frac{d}{dt}(e^{tA}Be^{-tA})&=Ae^{tA}Be^{-tA}-e^{tA}BAe^{-tA}
\\
&=Ae^{tA}Be^{-tA}-e^{tA}Be^{-tA}A
\\
&=AM(t)-M(t)A=[A,M(t)]\,.
\ea
$$
since $A$ commutes with $e^{-tA}$. 

In view of this elementary computation, we leave it to the reader to verify the following statement.

\begin{Prop}
Assume that the potential $V$ satisfies the assumptions in Kato's theorem (Theorem \ref{T-Kato}), and let $\Psi_N^{in}\in\fH_N=L^2((\bR^d)^N)$. 

Let $\Psi_N\equiv\Psi_N(t,x_1,\ldots,x_N)$ be the solution of the $N$-particle Schr\"odinger equation
$$
\left\{
\ba
{}&i\d_t\Psi_N=-\tfrac12\sum_{k=1}^N\Dlt_{x_k}\Psi_N\!+\!\!\!\sum_{1\le k<l\le N}\!\!\!V(x_k\!-\!x_l)\Psi_N\,,\quad x_1,\ldots,x_N\in\bR^d\,,\,\,t\in\bR\,,
\\
&\Psi_N\rstr_{t=0}=\Psi_N^{in}\,,
\ea
\right.
$$
i.e.
$$
\Psi_N(t,\cdot)=e^{-itH_N}\Psi_N^{in}\,,
$$
with scaled Hamiltonian
$$
H_N:=-\tfrac12\sum_{k=1}^N\Dlt_k+\frac1N\sum_{1\le k<l\le N}V_{kl}\,,
$$
where $\Dlt_k$ designates the Laplacian in the $k$th variable, while $V_{kl}$ is the operator defined as the multiplication by $V(x_k-x_l)$.

Then the density matrix
$$
D_N(t):=|\Psi_N(t,\cdot)\ra\la\Psi_N(t,\cdot)|=e^{-itH_N}D_N(0)e^{itH_N}
$$
satisfies the von Neumann equation\index{Von Neumann equation}
$$
i\dot D_N(t)=[H_N,D_N(t)]=-\tfrac12\sum_{k=1}^N[\Dlt_k,D_N(t)]+\frac1N\sum_{1\le k<l\le N}[V_{kl},D_N]\,.
$$

In terms of integral kernels
$$
\ba
i\d_tD_N(t,x_1,\ldots,x_N,y_1,\ldots,y_N)&
\\
=-\tfrac12\sum_{k=1}^N(\Dlt_{x_k}-\Dlt_{y_k})D_N(t,x_1,\ldots,x_N,y_1,\ldots,y_N)&
\\
+\frac1N\sum_{1\le k<l\le N}(V(x_k-x_l)-V(y_k-y_l))D_N(t,x_1,\ldots,x_N,y_1,\ldots,y_N)&\,.
\ea
$$
\end{Prop}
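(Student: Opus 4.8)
The plan is to make rigorous the elementary computation with matrix exponentials displayed just above, the only new ingredient being that $H_N$ is an unbounded self-adjoint operator, so that Stone's theorem must replace the naive differentiation of $e^{tA}$ and one must keep track of operator domains throughout.

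First I would deal with the wave function itself. By Kato's theorem (Theorem \ref{T-Kato}), $H_N$ is self-adjoint on $\fH_N=L^2((\bR^d)^N)$ and generates the unitary group $e^{-itH_N}$. Assuming $\Psi_N^{in}\in\Dom(H_N)$ --- the natural and generic setting, for instance $\Dom(H_N)=H^2((\bR^d)^N)$ when $V\in L^\infty$ --- Stone's theorem gives that $t\mapsto\Psi_N(t)=e^{-itH_N}\Psi_N^{in}$ is a $C^1$ curve in $\fH_N$ taking values in $\Dom(H_N)$, with $i\dot\Psi_N(t)=H_N\Psi_N(t)$, because $e^{-itH_N}$ commutes with its generator on $\Dom(H_N)$. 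If $\Psi_N^{in}$ is merely an element of $\fH_N$, the same identities survive in a weak sense, after pairing with vectors in $\Dom(H_N)$ and a density argument.

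Next I would differentiate $D_N(t)$. Using the formula $D_N(t)=e^{-itH_N}D_N^{in}e^{itH_N}$ already obtained, equivalently $D_N(t)\Phi=\la\Psi_N(t)|\Phi\ra\Psi_N(t)$ for $\Phi\in\fH_N$, I would observe that this is the product of the scalar $C^1$ function $t\mapsto\la\Psi_N(t)|\Phi\ra$ and the $\fH_N$-valued $C^1$ function $t\mapsto\Psi_N(t)$, apply the Leibniz rule, and substitute $i\dot\Psi_N=H_N\Psi_N$ from the previous step. For $\Phi\in\Dom(H_N)$, self-adjointness of $H_N$ turns $\la H_N\Psi_N(t)|\Phi\ra$ into $\la\Psi_N(t)|H_N\Phi\ra$, and since $D_N(t)\Phi\in\bC\Psi_N(t)\subset\Dom(H_N)$ the outcome is precisely $i\frac{d}{dt}(D_N(t)\Phi)=H_N(D_N(t)\Phi)-D_N(t)(H_N\Phi)=[H_N,D_N(t)]\Phi$, which is the von Neumann equation, the commutator being understood as an operator with domain $\Dom(H_N)$. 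The explicit expansion then follows by writing $H_N=-\tfrac12\sum_{k=1}^N\Dlt_k+\tfrac1N\sum_{1\le k<l\le N}V_{kl}$ and using bilinearity of $[\cdot,\cdot]$.

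Finally, for the integral-kernel form, I would use that conjugating a differential or multiplication operator through an integral operator acts on the kernel in the obvious way: $\Dlt_k$ applied on the left differentiates the kernel in $x_k$, while applied on the right (via its transpose, i.e. an integration by parts in the $k$th variable) it differentiates in $y_k$, so $[\Dlt_k,D_N(t)]$ has kernel $(\Dlt_{x_k}-\Dlt_{y_k})D_N(t,\cdot,\cdot)$; and multiplication by $V(x_k-x_l)$ on the left and on the right produces the kernel $(V(x_k-x_l)-V(y_k-y_l))D_N(t,\cdot,\cdot)$. Collecting terms gives the stated kernel equation. The one point requiring genuine care is the bookkeeping of domains of the unbounded operators --- interpreting $[H_N,D_N(t)]$ correctly as an operator on $\Dom(H_N)$, or equivalently always testing against vectors of $\Dom(H_N)$ --- but beyond Stone's theorem and the Leibniz rule no real analytic difficulty arises.
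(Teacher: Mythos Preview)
Your proposal is correct and matches the paper's intent exactly: the paper only displays the finite-dimensional computation $M(t)=e^{tA}Be^{-tA}$, $\dot M=[A,M]$, and explicitly ``leave[s] it to the reader to verify'' the proposition. Your verification via Stone's theorem, the Leibniz rule applied to $D_N(t)\Phi=\la\Psi_N(t)|\Phi\ra\Psi_N(t)$, and the domain bookkeeping for the unbounded $H_N$ is precisely the expected fleshing-out of that sketch.
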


The fundamental properties of the density matrix are propagated under the flow associated to the von Neumann equation.

\begin{Prop}
Assume that the potential $V$ satisfies the assumptions in Kato's theorem, and let 
$$
D_N^{in}=(D_N^{in})^*=(D_N^{in})^2\ge 0\,.
$$
Then for each $t\in\bR$, one has
$$
D_N(t):=e^{-itH_N}D_N^{in}e^{itH_N}=D_N(t)^*=D_N(t)^2\ge 0\,,
$$
with scaled Hamiltonian
$$
H_N:=-\tfrac12\sum_{k=1}^N\Dlt_k+\frac1N\sum_{1\le k<l\le N}V_{kl}
$$
where $\Dlt_k$ designates the Laplacian in the $k$th variable, while $V_{kl}$ is the operator defined as the multiplication by $V(x_k-x_l)$.

Likewise, if particles are indistinguishable initially, i.e. if
$$
S_\si D_N^{in}=D_N^{in}\,,
$$
then
$$
S_\si D_N(t)=D_N(t)
$$
for all $t\in\bR$.
\end{Prop}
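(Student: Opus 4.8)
The plan is to reduce every assertion to the single fact, guaranteed by Kato's theorem (Theorem \ref{T-Kato}) under the stated hypotheses on $V$, that $H_N$ has a self-adjoint extension and hence $U(t):=e^{-itH_N}$ is a strongly continuous one-parameter \emph{unitary} group on $\fH_N=L^2((\bR^d)^N)$, so that $U(t)^*=U(t)^{-1}=U(-t)$. Since $D_N^{in}$ is bounded and the $U(t)$ are unitary, $D_N(t)=U(t)D_N^{in}U(-t)$ is a bounded operator for every $t$, and no domain subtleties arise (in particular we never differentiate, so the commutator issues raised for the von Neumann equation are irrelevant here).

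First I would check the three algebraic properties at fixed $t$, each being a one-line manipulation using unitarity. For self-adjointness,
$$D_N(t)^*=\big(U(t)D_N^{in}U(-t)\big)^*=U(-t)^*(D_N^{in})^*U(t)^*=U(t)D_N^{in}U(-t)=D_N(t)\,.$$
For idempotency, using $U(-t)U(t)=I$,
$$D_N(t)^2=U(t)D_N^{in}U(-t)U(t)D_N^{in}U(-t)=U(t)(D_N^{in})^2U(-t)=U(t)D_N^{in}U(-t)=D_N(t)\,.$$
For nonnegativity, for every $\Phi\in\fH_N$ one has $\la D_N(t)\Phi|\Phi\ra=\la D_N^{in}U(-t)\Phi|U(-t)\Phi\ra\ge 0$ because $D_N^{in}\ge 0$ (alternatively, $D_N(t)\ge 0$ is automatic once $D_N(t)=D_N(t)^*=D_N(t)^2$).

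Next I would treat the permutation symmetry. Recall from the exercise preceding the proposition that, as operators on $\fH_N$, one has $S_\si D=U_\si D U_\si^{-1}$ for every bounded operator $D$, where $U_\si$ is the unitary defined by $U_\si\Psi_N(x_1,\ldots,x_N):=\Psi_N(x_{\si^{-1}(1)},\ldots,x_{\si^{-1}(N)})$; thus the hypothesis $S_\si D_N^{in}=D_N^{in}$ means $U_\si D_N^{in}U_\si^{-1}=D_N^{in}$. The key point is that $U_\si$ commutes with $H_N$: the kinetic term $\sum_k\Dlt_k$ is manifestly invariant under relabeling of the variables, and conjugation by $U_\si$ carries the multiplication operator $V(x_k-x_l)$ into $V(x_{\si(k)}-x_{\si(l)})$, so that $\tfrac1N\sum_{1\le k<l\le N}V_{kl}$ is invariant as well, being a sum over all unordered pairs. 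Hence $U_\si H_N U_\si^{-1}=H_N$ as self-adjoint operators ($U_\si$ preserving $\Dom(H_N)$), and by the functional calculus (uniqueness of the unitary group generated by a self-adjoint operator, i.e. Stone's theorem) $U_\si e^{-itH_N}U_\si^{-1}=e^{-itH_N}$, that is $U_\si U(t)=U(t)U_\si$. Therefore
$$S_\si D_N(t)=U_\si U(t)D_N^{in}U(-t)U_\si^{-1}=U(t)\big(U_\si D_N^{in}U_\si^{-1}\big)U(-t)=U(t)D_N^{in}U(-t)=D_N(t)\,.$$

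The only genuinely non-formal step is the commutation $U_\si U(t)=U(t)U_\si$: because $H_N$ is merely self-adjoint (not everywhere defined), one cannot manipulate it algebraically but must argue that $U_\si$ preserves the operator domain and conjugates $H_N$ to itself, and then invoke the spectral theorem; this is precisely the content of the exercise already assigned by the author, so I would simply cite it. Everything else is a direct consequence of unitarity of $e^{-itH_N}$.
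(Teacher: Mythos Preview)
Your proof is correct and follows essentially the same approach as the paper: the algebraic properties $D_N(t)^*=D_N(t)=D_N(t)^2$ are obtained by the same one-line unitary manipulations, and the symmetry part is deduced from the commutation $U_\si e^{-itH_N}=e^{-itH_N}U_\si$, which the paper isolates as a separate lemma (Lemma~\ref{L-Sym}) with the same justification you give (invariance of $\sum_k\Dlt_k$ and of $\sum_{k<l}V_{kl}$ under relabeling). Your version is slightly more explicit about nonnegativity and about invoking Stone's theorem to pass from commutation with $H_N$ to commutation with $e^{-itH_N}$, but the substance is identical.
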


\begin{proof}
First
$$
D_N(t)^*=(e^{-itH_N}D_N^{in}e^{itH_N})^*=e^{-itH_N}(D_N^{in})^*e^{itH_N}
$$
since $H_N$ is self adjoint, so that $(e^{itH_N})^*=e^{-itH_N}$. Then
$$
\ba
D_N(t)^2&=(e^{-itH_N}D_N^{in}e^{itH_N})^2
\\
&=e^{-itH_N}(D_N^{in})^2e^{itH_N}=(e^{-itH_N}D_N^{in}e^{itH_N})^2=D_N(t)\,.
\ea
$$
That $S_\si D_N(t)=D_N(t)$ for all $t\in\bR$ if $S_\si D_N^{in}=D_N^{in}$ is a straightforward consequence of the following lemma.

\begin{Lem}\lb{L-Sym}
Let $U_\si$ be the unitary operator defined on $\fH_N$ for each $\si\in\fS_N$ by
$$
U_\si\Psi_N(x_1,\ldots,x_N):=\Psi_N(x_{\si^{-1}(1)},\ldots,x_{\si^{-1}(N)})\,.
$$
Then
$$
U_\si e^{itH_N}=e^{it H_N}U_\si
$$
for each $t\in\bR$ and each $\si\in\fS_N$.
\end{Lem}

\smallskip
Thus, for each $t\in\bR$ and each $\si\in\fS_N$, one has
$$
\ba
S_\si D_N(t)&=U_\si D_N(t)U_\si^*=U_\si e^{-itH_N}D_N^{in}e^{it H_N}U_\si^*
\\
&=e^{-itH_N}U_\si D_N^{in}U_\si^*e^{it H_N}=e^{-itH_N}D_N^{in}e^{it H_N}=D_N(t)\,,
\ea
$$
provided that
$$
S_\si D_N^{in}=U_\si D_N^{in}U_\si^*=D_N^{in}\,.
$$
\end{proof}

\begin{proof}[Proof of Lemma \ref{L-Sym}]
If $\Phi\in C^\infty_c((\bR^d)^N)$, one has
$$
\Dlt_kU_\si\Phi=U_\si\Dlt_{\si(k)}\Phi
$$
and 
$$
V_{kl}U_\si\Phi=U_\si V_{\si(k),\si(l)}\Phi\,,
$$
so that
$$
H_NU_\si\Phi=U_\si H_N\Phi\,.
$$
Therefore
$$
U_\si e^{itH_N}=e^{it H_N}U_\si
$$
for each $t\in\bR$ and each $\si\in\fS_N$. 
\end{proof}

\section[Elements of operator theory]{Elements of operator theory}\lb{S-OPTH}

We recall that  $\fH$ designates a complex, separable Hilbert space $\fH$, with inner product denoted by $\la\cdot|\cdot\ra$ and norm denoted by $|\cdot|$. 
(To avoid ambiguity, we also use the notation $\la\cdot|\cdot\ra_\fH$ and $|\cdot|_\fH$ whenever needed.) We always assume that the inner product is 
antilinear in its first argument and linear in its second argument. The set of Hilbert basis of $\fH$ --- i.e. the set of orthonormal and total families in $\fH$ --- is 
denoted by $HB(\fH)$.

The set of bounded operators on $\fH$ is denoted by $\cL(\fH)$, with operator norm\index{Operator norm}
$$
\|A\|=\sup_{|x|=1}|Ax|\,.
$$
We recall that $\cL(\fH)$ endowed with the operator norm $\|\cdot\|$ is a Banach algebra --- and even a $C^*$-algebra, since 
$$
\|A^*A\|=\|A\|^2\quad\hbox{Êfor each }A\in\cL(\fH)\,.
$$

A bounded operator $A$ on $\fH$ is said to be compact\index{Compact operators} if $A(\overline{B(0,1)})$ is relatively compact in $\fH$. The set of 
compact operators on $\fH$ is denoted by $\cK(\fH)$; one easily checks that $\cK(\fH)$ is a closed (for the operator norm) two-sided ideal in $\cL(\fH)$.

We recall the following fundamental facts about compact operators:

\smallskip
\bu if $A$ is a compact operator on $\fH$, its adjoint $A^*$ is also a compact operator;

\smallskip
\bu $\cK(\fH)$ is the operator-norm closure of the set of finite rank operators on $\fH$.

\smallskip
Next we introduce the trace norm\index{Trace norm}: for each $A\in\cL(\fH)$
$$
\|A\|_1:=\sup_{e,f\in HB(\fH)}\sum_{k\ge 0}|\la Ae_k|f_k\ra|\in[0,\infty]\,.
$$
A bounded operator $A$ on $\fH$ is said to be a trace-class operator\index{Trace-class operators} if and only if it has finite trace norm. The set of trace-class 
operators on $\fH$ is denoted by $\cL^1(\fH)$; in other words,
$$
\cL^1(\fH):=\{A\in\cL(\fH)\hbox{ s.t. }\|A\|_1<\infty\}\,.
$$
One easily checks that $\cL^1(\fH)$ is a two-sided ideal in $\cL(\fH)$, that $A\mapsto\|A\|_1$ defines a norm on $\cL^1(\fH)$, and that $\cL^1(\fH)$ equipped
with the trace norm $\|\cdot\|_1$ is a separable Banach space. Besides, the map
$$
\cL^1(\fH)\ni A\mapsto A^*\in\cL^1(\fH)
$$ 
is an isometry for the trace norm. The following properties can be checked easily:

\smallskip
\noindent
(a) $\cL^1(\fH)\subset\cK(\fH)\subset\cL(\fH)$ with $\|A\|\le\|A\|_1$ for all $A\in\cL^1(\fH)$, and

\noindent
(b) for all $A\in\cL^1(\fH)$ and $B\in\cL(\fH)$
$$
\|AB\|_1\hbox{ and }\|BA\|_1\le\|A\|_1\|B\|\,,\quad A\in\cL^1(\fH)\hbox{ and }B\in\cL(\fH)\,.
$$

There is a natural notion of trace for operators in infinite dimensional spaces, that can be defined for each trace-class operator. For each $A\in\cL^1(\fH)$,
\index{Trace of a trace-class operator}
$$
\Tr(A):=\sum_{k\ge 0}\la Ae_k|e_k\ra\,,
$$
for each $(e_k)_{k\ge 0}\in HB(\fH)$. We leave it to the reader to check that the expression on the right hand side of the previous equality is independent of 
the choice of $(e_k)_{k\ge 0}\in HB(\fH)$. 

Perhaps the most important property of the trace is the following identity: for all $A\in\cL^1(\fH)$ and $B\in\cL(\fH)$, the operators $AB$ and $BA$ are both
trace-class operators on $\fH$, and one has
$$
\Tr(AB)=\Tr(BA)\,.
$$

Let us recall the notion of polar decomposition of an operator\index{Polar decomposition of an operator}. For each $A\in\cL(\fH)$, there exists $|A|,U\in\cL(\fH)$ 
such that
$$
UU^*=U^*U=I\,,\quad|A|=|A|^*\ge 0\hbox{ and }A=|A|U\,.
$$
In this decomposition
$$
|A|=\sqrt{AA^*}\,.
$$
Then
$$
A\in\cL^1(\fH)\Leftrightarrow|A|\in\cL^1(\fH)\hbox{ and }\|A\|_1=\Tr(|A|)\,.
$$
The polar decomposition is the analogue of polar coordinates in operator theory: $|A|$ is the analogue of $|z|$ for $z\in\bC$, while $U$ is the analogue of 
$z/|z|$ for $z\in\bC^*$.

\smallskip
\noindent
\textbf{Exercise:} Let $A\in\cL^1(\fH)$. Prove that $\Tr(\sqrt{AA^*})=\Tr(\sqrt{A*A})=\|A\|_1$.

\smallskip
The two following facts are important:

\bu $\cL^1(\fH)$ is the (topological) dual of $\cK(\fH)$, with duality defined by the trace as follows
$$
\cL^1(\fH)\times\cK(\fH)\ni(A,K)\mapsto\Tr(AK)\in\bC\,;
$$

\bu $\cL(\fH)$ is the (topological) dual of $\cL^1(\fH)$, with duality defined by the trace as follows
$$
\cL(\fH)\times\cL^1(\fH)\ni(B,A)\mapsto\Tr(BA)\in\bC\,.
$$

Another important class of bounded operators on $\fH$ is the class of Hilbert-Schmidt operators\index{Hilbert-Schmidt operators}, denoted by $\cL^2(\fH)$, 
defined as follows:
$$
A\in\cL^2(\fH)\Leftrightarrow A^*A\in\cL^1(\fH)\Leftrightarrow AA^*\in\cL^1(\fH)\,.
$$
The class $\cL^2(\fH)$ is a closed two-sided ideal in $\cL(\fH)$. It is a Banach space for the Hilbert-Schmidt norm defined as follows:
$$
\|A\|_2:=\|A^*A\|_1^{1/2}=\|AA^*\|_1^{1/2}\,.
$$
One easily checks that
$$
\cL^1(\fH)\subset\cL^2(\fH)\subset\cK(\fH)\,.
$$
 
An important particular case is $\fH=L^2(\cX)$, where $\cX$ is a measurable subset of $\bR^d$, equipped with the Lebesgue measure. In that case, a 
bounded operator $A$ on $L^2(\cX)$ is a Hilbert-Schmidt operator if and only if it is an integral operator of the form
$$
Af(x)=\int_{\cX}a(x,y)f(y)dy\quad\hbox{ for all }f\in L^2(\cX)\,,
$$
with 
$$
a\in L^2(\cX\times\cX)\,.
$$
In that case
$$
\|A\|_2=\left(\iint_{\cX\times\cX}|a(x,y)|^2dxdy\right)^{1/2}=\|a\|_{L^2(\cX\times\cX)}\,.
$$

\smallskip
References for the material presented so far in this section are chapter VI of \cite{Brezis} and chapter XIX, section 1 of \cite{Horm3}.

\smallskip
According to the discussion above, each $A\in\cL^1(L^2(\bR^n))$ is an integral operator with integral kernel $a\in L^2(\bR^n\times\bR^n)$. The integral 
kernel of a trace-class operator on $L^2(\bR^n)$ has an additional interesting property, recalled below.

\begin{Lem}\lb{L-ContTrCl}
If $A\in\cL^1(L^2(\bR^n))$, its integral kernel $a\equiv a(x,y)$ is such that the map\footnote{For each topological space $X$ and each vector space
$E$ on $\bR$ equipped with the norm $\|\cdot\|_E$, we designate by $C(X,E)$ the set of continuous maps from $X$ to $E$, and we denote
$$
C_b(X,E):=\{f\in C(X,E)\hbox{ s.t. }\sup_{x\in X}\|f(x)\|_E<\infty\}\,.
$$
In other words, $C_b(X,E)$ is the set of bounded continuous functions defined on $X$ with values in $E$.}
$$
z\mapsto a(x,x+z)\hbox{ belongs to }C_b(\bR^n_z;L^1(\bR^n_x))\,.
$$
\end{Lem}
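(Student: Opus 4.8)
The plan is to exploit the singular value decomposition of $A$: reduce to finite rank operators by approximation in trace norm, and treat those by elementary Cauchy--Schwarz estimates together with the strong continuity of translations on $L^2(\bR^n)$.

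First I would recall that $|A|:=\sqrt{AA^*}$ is a nonnegative trace-class (hence compact) operator, so the spectral theorem provides an orthonormal system $(\psi_k)_{k\ge1}$ in $L^2(\bR^n)$ and nonnegative reals $(s_k)_{k\ge1}$ with $\sum_{k}s_k=\Tr|A|=\|A\|_1$ such that, writing the polar decomposition $A=|A|U$ with $U$ unitary and setting $\phi_k:=U^*\psi_k$ (so that $(\phi_k)_{k\ge1}$ is again orthonormal),
\[
A=\sum_{k\ge1}s_k|\psi_k\ra\la\phi_k|\,,
\]
the series converging in $\cL^1(L^2(\bR^n))$, hence also in $\cL^2(L^2(\bR^n))$ (since $\|\cdot\|_2\le\|\cdot\|_1$, as one sees by comparing singular values). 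Consequently the integral kernels $a_m(x,y):=\sum_{k\le m}s_k\psi_k(x)\overline{\phi_k(y)}$ of the partial sums converge in $L^2(\bR^n\times\bR^n)$ to the kernel $a$ of $A$, and
\[
a(x,y)=\sum_{k\ge1}s_k\psi_k(x)\overline{\phi_k(y)}
\]
is the representative of $a$ for which the statement is to be understood; the series is absolutely convergent for a.e. $(x,y)$, by Cauchy--Schwarz in $k$ using that $\sum_k s_k|\psi_k(x)|^2<\infty$ for a.e. $x$ and $\sum_k s_k|\phi_k(y)|^2<\infty$ for a.e. $y$ (both having integral $\|A\|_1$). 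Two applications of Cauchy--Schwarz --- one in $k$, one in $x$ --- then give, for each fixed $z\in\bR^n$,
\[
\int_{\bR^n}\sum_{k\ge1}s_k|\psi_k(x)|\,|\phi_k(x+z)|\,dx\le\sum_{k\ge1}s_k\|\psi_k\|_{L^2}\|\phi_k\|_{L^2}=\|A\|_1\,,
\]
so $x\mapsto a(x,x+z)=\sum_k s_k\psi_k(x)\overline{\phi_k(x+z)}$ is a well-defined element of $L^1(\bR^n_x)$ of norm at most $\|A\|_1$; this already yields the boundedness asserted in $C_b(\bR^n_z;L^1(\bR^n_x))$, uniformly in $z$.

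For continuity I would first note that the same estimate applied to the trace-class operator $A-\sum_{k\le m}s_k|\psi_k\ra\la\phi_k|$ gives $\sup_{z\in\bR^n}\|a(\cdot,\cdot+z)-a_m(\cdot,\cdot+z)\|_{L^1(\bR^n_x)}\le\sum_{k>m}s_k\to0$, so it suffices to prove that $z\mapsto a_m(\cdot,\cdot+z)$ is continuous from $\bR^n_z$ into $L^1(\bR^n_x)$ for each $m$, a uniform limit of such maps being again one. But $a_m$ is a finite sum, and by Cauchy--Schwarz in $x$
\[
\|a_m(\cdot,\cdot+z)-a_m(\cdot,\cdot+z')\|_{L^1(\bR^n_x)}\le\sum_{k\le m}s_k\,\|\phi_k(\cdot+z)-\phi_k(\cdot+z')\|_{L^2(\bR^n)}\,,
\]
and each of the finitely many terms tends to $0$ as $z'\to z$ by the strong continuity of the translation group on $L^2(\bR^n)$. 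This completes the argument. The only genuinely delicate point is the one isolated at the start --- that the a.e.-defined kernel of a trace-class operator possesses a representative admitting meaningful restrictions to the shifted diagonals $\{y=x+z\}$ --- and it is precisely the singular value decomposition, rather than mere membership of the kernel in $L^2(\bR^n\times\bR^n)$, that makes this possible; everything after it is routine.
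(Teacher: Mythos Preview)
Your proof is correct and follows essentially the same route as the paper's outlined argument: singular value decomposition of $A$ (the paper's step 1), strong continuity of translations on $L^2$ (step 2), and Cauchy--Schwarz to pass from $L^2\times L^2$ to $L^1$ on the shifted diagonal (step 3), with your finite-rank approximation being the natural way to execute that last step. One small quibble: in the polar decomposition $A=|A|U$ the operator $U$ is in general only a partial isometry, not unitary, but this is harmless here since the $\psi_k$ lie in $\overline{\mathrm{Ran}\,|A|}$ where $U^*$ is isometric, so the $\phi_k$ are indeed orthonormal.
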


\smallskip
\noindent
\textbf{Exercise.} Here is an outline of the proof of Lemma \ref{L-ContTrCl}. Let $A\in \cL^1(\fH)$ with $\fH:=L^2(\bR^n)$.

\noindent
1) Prove that the integral kernel $a$ of $A$ can be represented as
$$
a(x,y)=\sum_{n\ge 1}\l_ne_n(x)\overline{f_n(y)}
$$
with $e_n,f_n\in\fH$ such that $|e_n|_{\fH}=|f_n|_{\fH}=1$ and with
$$
\sum_{n\ge 1}|\l_n|<\infty\,.
$$
2) Prove that, for each $\phi\in\fH$, one has
$$
\int_{\bR^n}|\phi(x+h)-\phi(x)|^2dx\to 0
$$
as $|h|\to 0$.

\noindent
3) Prove that
$$
\int_{\bR^n}|a(x,x+z)-a(x,x+z')|dx\to 0\quad\hbox{ as }|z-z'|\to 0\,,
$$
and conclude.

\smallskip
In particular, if $A\in\cL^1(L^2(\bR^n))$, then its integral kernel $a\equiv a(x,y)$ is such that the map $x\mapsto a(x,x)$ is well-defined as an element of
$L^1(\bR^n)$, and
$$
\Tr(A)=\int_{\bR^n}a(x,x)dx\,.
$$

\smallskip
The converse of this last statement is obviously false: if $A\in\cL(\fH)$ is an integral operator with integral kernel $a(x,y)$, that the map $x\mapsto a(x,x)$ 
belongs to $L^1(\bR^n)$ does not imply in general that $A\in\cL^1(L^2(\bR^n))$.

\smallskip
\noindent
\textbf{Exercise:} Construct an example of operator that belongs to $\cL^2(L^2(\bR^n))$ but not to $\cL^1(L^2(\bR^n))$.

\smallskip
In the sequel, we shall need the notion of ``partial trace'' for a trace-class operator defined on the tensor product of two Hilbert spaces. Let $\fH_1$ and
$\fH_2$ be two complex, separable Hilbert spaces. 

\begin{Def}\index{Partial trace}
For each $A\in\cL^1(\fH_1\otimes\fH_2)$, the first partial trace of $A$, denoted
$$
A_{:1}=\Tr_1(A)\in\cL^1(\fH_1)\,,
$$
is the unique element of $\cL^1(\fH_1)$ such that
$$
\Tr_{\fH_1}(A_{:1}B)=\Tr_{\fH_1\otimes\fH_2}(A(B\otimes I_{\fH_2}))
$$
holds  for each $B\in\cL(\fH_1)$, where $I_{\fH_2}$ is the identity on $\fH_2$.
\end{Def}

\smallskip
The second partial trace of $A$ is the element of $\cL^1(\fH_2)$ defined analogously.

\smallskip
Let $A$ be an integral operator with integral kernel $a\equiv a(x_1,x_2,y_1,y_2)$ in the case where $\fH_j=L^2(\bR^{n_j})$ for $j=1,2$. In that case, 
$\fH_1\otimes\fH_2=L^2(\bR^{n_1+n_2})$, and one easily checks that $A_{:1}$ is the integral operator with integral kernel 
$$
a_{:1}(x_1,y_1)=\int_{\bR^{n_2}}a(x_1,z,y_1,z)dz
$$
on $\fH_1=L^2(\bR^{n_1})$.

\smallskip
There is an easy generalization of the notion of partial trace to the case of trace-class operators defined on $N$-fold tensor products of separable Hilbert 
spaces. Given $N$ complex, separable Hilbert spaces $\fH_1,\ldots,\fH_N$ and a trace-class operator $A\in\cL^1(\fH_1\otimes\ldots\otimes\fH_N)$, for 
each $1\le k\le N$, we define
$$
A_{:k}=\Tr_k(A)
$$
to be the first partial trace $\Tr_1(A)$ in the decomposition
$$
\fH_1\otimes\ldots\otimes\fH_N\simeq(\fH_1\otimes\ldots\otimes\fH_k)\otimes(\fH_{k+1}\otimes\ldots\otimes\fH_N)\,.
$$

\smallskip
If one views $A\in\cL^1(\fH_1\otimes\ldots\otimes\fH_N)$ such that $A=A^*\ge 0$ and $\Tr(A)=1$ as the operator analogue of an $N$-particle probability 
measure, then $A_{:k}$ is the analogue of its (first) $k$-particle marginal considered above in our description of the BBGKY hierarchy for the mean field
limit in classical statistical mechanics.

\smallskip
More details on partial traces can be found in \cite{BaGoMau,BGGM}.

\bigskip
\noindent
\textbf{Remark:} All the quantum states of the $N$-particle system considered so far are defined by an $N$-particle wave function $\Psi_N(t,\cdot)$ that
belongs to $L^2((\bR^d)^N)$ and satisfies the normalization condition $\|\Psi_N(t,\cdot)\|_{L^2((\bR^d)^N)}=1$ for each $t\in\bR$. An $N$-particle 
density matrix $D_N(t)$ is associated to this $N$-particle wave function by the formula $D_N(t):=|\Psi_N(t,\cdot)\ra\la\Psi_N(t,\cdot)|$, and we have seen
that this $N$-particle density matrix $D_N(t)$ satisfies
$$
D_N(t)=D_N(t)^*=D_N(t)^2\,,\quad\hbox{ and }\Tr(D_N(t))=1\,.
$$
Conversely, if $D_N(t)\in\cL^1(L^2((\bR^d)^N))$ satisfies the conditions above, it is an orthogonal projection with rank $1$, and therefore there exists
$\Phi_N(t,\cdot)\in L^2((\bR^d)^N)$ such that $\|\Phi_N(t,\cdot)\|_{L^2((\bR^d)^N)}=1$ and $D_N(t):=|\Phi_N(t,\cdot)\ra\la\Phi_N(t,\cdot)|$. Besides, the
function $\Phi(t,\cdot)$ is defined uniquely up to multiplication by a complex number of modulus $1$. 

Such quantum states are referred to as ``pure states''.

However, there also exist more general quantum states, that are described by an $N$-particle density matrix $D_N(t)\in\cL^1(L^2((\bR^d)^N))$ satisfying
$$
D_N(t)=D_N(t)^*\ge 0\,,\quad\hbox{ and }\Tr(D_N(t))=1\,.
$$

The condition $D_N(t)=D_N(t)^*\ge 0$ is equivalent to the fact that 
$$
\la\Phi_N|D_N(t)\Phi_N\ra_{L^2((\bR^d)^N)}\ge 0\quad\hbox{ for all }\Phi_N\in L^2((\bR^d)^N)\,.
$$
Equivalently, the spectrum of $D_N(t)$ consists of nonnegative eigenvalues (since $D_N(t)\in\cL^1(L^2((\bR^d)^N))$, we already know that $D_N(t)$ 
is compact, so that its spectrum consists of eigenvalues only).

With the normalization condition $\Tr(D_N(t))=1$, one sees that $D_N(t)$ is of the form
$$
D_N(t)=\sum_{n\ge 1}\l_n(t)|\psi_n(t,\cdot)\ra\la\psi_n(t,\cdot)|
$$
where $(\psi_n(t,\cdot))_{n\ge 1}$ is a complete orthonormal system in $L^2(\bR^d)^N)$ and
$$
\l_n(t)\ge 0\,,\quad\sum_{n\ge 1}\l_n(t)=1\,.
$$
In other words, $D_N(t)$ is a (possibly infinite) convex combination of the density matrices $|\psi_n(t,\cdot)\ra\la\psi_n(t,\cdot)|$ corresponding with pure 
states. 

While the Schr\"odinger equation governs the evolution of quantum states described by wave functions, the von Neumann equation governs the evolution
of mixed as well as pure states in quantum mechanics. If $D_N(t)$ satisfies the von Neumann equation
$$
i\dot{D}_N(t)=[H_N,D_N(t)]
$$
where
$$
H_N:=-\tfrac12\sum_{k=1}^N\Dlt_{x_k}+\frac1N\sum_{1\le k<l\le N}V(x_k-x_l)
$$
and $V$ is such that $iH_N$ is the generator of a unitary group on $L^2(\bR^d)^N)$ (as in Kato's theorem for instance), then the eigenvalues of $D_N(t)$
satisfy
$$
\l_n(t)=\l_n(0)\,,\quad\hbox{ for each }n\ge 1\,.
$$
The eigenfunctions of $D_N(t)$ are transformed as
$$
\psi_n(t,\cdot)=e^{-itH_N}\psi_n(0,\cdot)\,,\quad\hbox{ for each }n\ge 1\,.
$$

Throughout the following sections, we shall be dealing with pure states, although most of our arguments could easily be extended to mixed states. 

The notion of partial trace discussed above leads to a very natural way of constructing mixed states, discussed in the exercise below.  

\smallskip
\noindent
\textbf{Exercise.} Let $\Psi_M\in L^2((\bR^d)^M)$ be an $M$-particle wave function such that $\|\Psi_M\|_{L^2((\bR^d)^M)}=1$, and let $N\ge 1$ be such 
that $N<M$. Set $D_M:=|\Psi_M\ra\la\Psi_M|$ and consider the operator $D_N:=D_{M:N}$.

\noindent
1) Check that $D_N=D_N^*\ge 0$, and that $\Tr(D_N)=1$. 

\noindent
2) Assume that $D_N=|\psi\ra\la\psi|$ with $\psi\in L^2((\bR^d)^N)$ such that $\|\psi\|_{L^2((\bR^d)^N)}=1$ and compute $\Tr(D_N|\psi\ra\la\psi|)$ in terms 
of $\Psi_M$.

\noindent
3) Under which condition on $\Psi_M$ does one have $D_N=D_N^2$? (Hint: use the result of question 2 and the equality case in the Cauchy-Schwarz
inequality to find that $\Psi_M$ is of the form $\Psi_M(x_1,\ldots,x_M)=\psi(x_1,\ldots,x_N)\phi(x_{N+1},\ldots,x_M)$.)

\smallskip
This exercise is inspired from the second paragraph in \S 14 of \cite{LL3}, where it is explained that mixed states arise for instance as quantum states for
subsystems of a larger system in some pure state. The result obtained in the exercise above suggests that the state of such a subsystem (particles $1$
to $N$) is pure if and only if the subsystem considered (the $N$ first particles are independent from the M-N remaining particles.)

\section[BBGKY hierarchy in quantum mechanics]{The BBGKY hierarchy in quantum mechanics}

In this section, we outline the BBGKY method presented above in the case of classical statistical mechanics, and explain how it leads to a rigorous derivation
of the mean-field Hartree equation from the $N$-particle Schr\"odinger equation.

First we introduce some elements of notation. 

For all $N\ge 1$, we set $X_N:=(x_1,\ldots,x_N)$ and $Y_N:=(y_1,\ldots,y_N)$; likewise for all $N\ge 1$ and $1\le k< N$, we adopt the notation
$Z_N^k=(z_{k+1},\ldots,z_N)$.

Given an integral operator $D_N$ acting on $L^2((\bR^d)^N)$ with integral kernel $D_N(X_N,Y_N)$ we denote by $D_{N:k}$ the operator with integral 
kernel
$$
D_{N:k}(X_k,Y_k):=\int_{(\bR^d)^{N-k}}D_N(X_k,Z_N^k;Y_k,Z_N^k)dZ_N^k\,.
$$
In terms of operators,
$$
D_{N:k}=\Tr_k(D_N)\,.
$$ 
Here again, we use the same letter to designate an integral operator and its integral kernel. This obvious abuse of notation is deliberately chosen to avoid
the unnecessary multiplication of mathematical symbols.

\subsection{The quantum BBGKY hierarchy}

First we explain how to deduce the BBGKY hierarchy from the $N$-particle Schr\"odinger equation.

\begin{Thm}\index{Quantum BBGKY hierarchy}\lb{T-QBBGKY}
Assume that $V$ is a real-valued, even function belonging to $L^\infty(\bR^d)$. Let $D_N^{in}=(D_N^{in})^*\ge 0$ be a trace-class operator with
$\Tr(D_N^{in})=1$ on $\fH_N=\fH^{\otimes N}=L^2((\bR^d)^N)$ with $\fH=L^2(\bR^d)$, satisfying the indistinguishability symmetry, i.e.
$$
S_\si D_N^{in}=D_N^{in}\quad\hbox{ for each }\si\in\fS_N\,.
$$
Let
$$
D_N(t):=e^{-itH_N}D_N^{in}e^{itH_N}\,,\qquad t\in\bR\,,
$$
where
$$
H_N=-\tfrac12\sum_{k=1}^N\Dlt_k+\frac1N\sum_{1\le k<l\le N}V_{kl}
$$
is the $N$-particle Schr\"odinger operator. Here $\Dlt_k$ designates the Laplacian acting on the $k$-th variable $x_k$, while $V_{kl}$ designates the
multiplication by $V(x_k-x_l)$.

Then the sequence of partial traces of $D_N$ satisfies the BBGKY hierarchy of differential equations
$$
\ba
i\dot D_{N:k}=&-\tfrac12\sum_{j=1}^k[\Dlt_j,D_{N:k}]+\frac{N-k}N\sum_{j=1}^k[V_{j,k+1},D_{N:k+1}]_{:k}
\\
&+\frac1N\sum_{1\le l<m\le k}[V_{lm},D_{N:k}]
\ea
$$
for all $k\ge 1$, with the convention 
$$
D_{N:N}=D_N\,,\quad\hbox{ and }D_{N:k}=0\hbox{ whenever }k>N\,.
$$

In terms of integral kernels, the BBGKY hierarchy reads
$$
\ba
i\d_tD_{N:k}(t,X_k,Y_k)=-\tfrac12\sum_{j=1}^k(\Dlt_{x_j}-\Dlt_{y_j})D_{N:k}(t,X_k,Y_k)
\\
+\frac{N-k}{N}\sum_{j=1}^k\int_{\bR^d}(V(x_j-z)-V(y_j-z))D_{N:k+1}(t,X_k,z,Y_k,z)dz
\\
+\frac1N\sum_{1\le l<m\le k}(V(x_l-x_m)-V(y_l-y_m))D_{N:k}(t,X_k,Y_k)
\ea
$$
for all $k\ge 1$.
\end{Thm}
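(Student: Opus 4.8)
The plan is to imitate, almost line for line, the derivation of the classical BBGKY hierarchy given above, replacing the $N$-particle Liouville equation by the von Neumann equation, push-forwards by conjugations, marginals by partial traces, and the pairing of the equation against a test function $\phi(z_1,\dots,z_k)$ by the pairing against an operator of the form $B\otimes I_{\fH^{\otimes(N-k)}}$ with $B\in\cL(\fH_k)$. We may use freely what was established in the previous section: since $V$ satisfies the hypotheses of Kato's theorem, $D_N(t)=e^{-itH_N}D_N^{in}e^{itH_N}$ is a well-defined trace-class operator solving the von Neumann equation $i\dot D_N=[H_N,D_N]$, with $\Tr(D_N(t))=1$, and the indistinguishability symmetry $U_\si D_N(t)U_\si^*=D_N(t)$ holds for all $\si\in\fS_N$ and all $t\in\bR$.

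First I would write the defining identity of the $k$-th partial trace, $\Tr_{\fH_k}(D_{N:k}(t)B)=\Tr_{\fH_N}(D_N(t)(B\otimes I_{\fH^{\otimes(N-k)}}))$, differentiate it in $t$, insert the von Neumann equation, and use the cyclicity of the trace to move the commutator onto the test operator:
\[
i\frac{d}{dt}\Tr_{\fH_k}(D_{N:k}(t)B)=-\Tr_{\fH_N}\bigl(D_N(t)\,[H_N,\,B\otimes I_{\fH^{\otimes(N-k)}}]\bigr).
\]
Then I would expand $[H_N,B\otimes I_{\fH^{\otimes(N-k)}}]$ using $H_N=-\tfrac12\sum_{j=1}^N\Dlt_j+\tfrac1N\sum_{1\le l<m\le N}V_{lm}$ and sort the resulting terms according to the position of the indices relative to $\{1,\dots,k\}$. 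Because $B$ acts only on the first $k$ tensor factors, $[\Dlt_j,B\otimes I_{\fH^{\otimes(N-k)}}]=0$ and $[V_{lm},B\otimes I_{\fH^{\otimes(N-k)}}]=0$ as soon as the indices involved all lie in $\{k+1,\dots,N\}$; these are the quantum counterparts of the terms that simply drop out in the classical computation. For $j\le k$ one has $[\Dlt_j,B\otimes I_{\fH^{\otimes(N-k)}}]=[\Dlt_j,B]\otimes I_{\fH^{\otimes(N-k)}}$, and tracing back, using the definition of the partial trace and cyclicity once more, this produces the term $-\tfrac12\sum_{j=1}^k[\Dlt_j,D_{N:k}]$; the interactions with both indices in $\{1,\dots,k\}$ produce in the same way the self-interaction term $\tfrac1N\sum_{1\le l<m\le k}[V_{lm},D_{N:k}]$.

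The only step requiring genuine care --- exactly as in the classical case --- is the mixed interaction term, with $l\le k<m$. Fixing such $l$ and $m$, let $\si$ be the transposition exchanging $k+1$ and $m$: it fixes $\{1,\dots,k\}$, so $U_\si(B\otimes I_{\fH^{\otimes(N-k)}})U_\si^*=B\otimes I_{\fH^{\otimes(N-k)}}$, while $U_\si V_{lm}U_\si^*=V_{l,k+1}$, and $U_\si D_N U_\si^*=D_N$ by indistinguishability. Cyclicity of the trace then yields $\Tr(D_N[V_{lm},B\otimes I_{\fH^{\otimes(N-k)}}])=\Tr(D_N[V_{l,k+1},B\otimes I_{\fH^{\otimes(N-k)}}])$ for every $m\in\{k+1,\dots,N\}$, so the $N-k$ such terms coalesce, producing the factor $\tfrac{N-k}{N}$. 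Tracing out first the variables $k+2,\dots,N$ and then the variable $k+1$ --- i.e.\ using $\Tr_{\fH_N}([V_{l,k+1},D_N](B\otimes I_{\fH^{\otimes(N-k)}}))=\Tr_{\fH_k}([V_{l,k+1},D_{N:k+1}]_{:k}\,B)$ --- turns this into the cross term $\tfrac{N-k}{N}\sum_{l=1}^k[V_{l,k+1},D_{N:k+1}]_{:k}$. Collecting the three groups of terms, and letting $B$ range over $\cL(\fH_k)$, gives the announced operator identity, with the conventions $D_{N:N}=D_N$ and $D_{N:k}=0$ for $k>N$ absorbed into the indexing; the integral-kernel form then follows by writing out each operator as an integral operator, which is routine.

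The main obstacle is analytic, not algebraic: since $\Dlt_j$ is unbounded, neither $[\Dlt_j,B\otimes I_{\fH^{\otimes(N-k)}}]$ nor $[\Dlt_j,D_{N:k}]$ is automatically bounded --- let alone trace-class --- for a general trace-class $D_N^{in}$, so the time-differentiation above is only formal at that level of generality. I would deal with this by first restricting $B$ to a dense subclass for which all commutators above make classical sense (for instance integral operators whose kernels lie in $\cS((\bR^d)^k\times(\bR^d)^k)$), for which $t\mapsto\Tr(D_N(t)(B\otimes I_{\fH^{\otimes(N-k)}}))$ is genuinely $C^1$ by Kato's theorem; this already gives the hierarchy in the weak sense (distributional in $t$, tested against $B$). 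Promoting it to the stated strong form, with each $D_{N:k}$ lying in $C(\bR;\cL^1(\fH_k))$ and solving the equation there, requires the extra assumption that $D_N^{in}$ has finite kinetic energy, a property propagated by $e^{-itH_N}$ when $V\in L^\infty$. I would flag this point explicitly and otherwise present the derivation at the formal/weak level, consistently with the earlier treatment of the classical BBGKY hierarchy.
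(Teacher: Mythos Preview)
Your algebraic strategy is correct and essentially identical to the paper's: test against $B\otimes I$, split the interaction sum according to whether $l,m$ lie in $\{1,\dots,k\}$ or $\{k+1,\dots,N\}$, kill the purely external terms, and use the transposition $\si=(k{+}1,m)$ together with $U_\si D_N U_\si^*=D_N$ to collapse the $N-k$ mixed terms into a single one with coefficient $\tfrac{N-k}{N}$. That part matches the paper step for step.

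Where you differ is in how you handle the kinetic term. You work directly in the Schr\"odinger picture, confront the unboundedness of $\Dlt_j$ head-on, and propose to rescue the differentiation by restricting to Schwartz-kernel test operators $B$ and, for the strong statement, by imposing a finite kinetic-energy hypothesis on $D_N^{in}$. The paper instead passes to the \emph{interaction picture}: it sets $W_N(t)=\exp\bigl(\tfrac{it}{2}\sum_j\Dlt_j\bigr)$ and $F_N(t)=W_N(t)D_N(t)W_N(t)^*$, so that the von Neumann equation becomes
\[
i\dot F_N(t)=\frac1N\sum_{1\le l<m\le N}\bigl[W_N(t)V_{lm}W_N(t)^*,\,F_N(t)\bigr],
\]
in which every commutator involves only \emph{bounded} operators (since $V\in L^\infty$ and $W_N$ is unitary). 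All partial-trace manipulations are then rigorously justified for arbitrary trace-class $D_N^{in}$, with no extra regularity needed; the free part $-\tfrac12\sum_{j\le k}[\Dlt_j,D_{N:k}]$ reappears only at the very end, when the conjugation by $W_k(t)$ is undone. The payoff of the paper's route is that it proves the hierarchy at the stated operator level without your caveats about weak formulations or kinetic-energy assumptions; the payoff of yours is conceptual transparency and a tighter parallel with the classical derivation. If you want to match the paper's generality, the interaction-picture trick is the missing idea.
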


\begin{proof}
Consider the free $N$-body Schr\"odinger group 
$$
W_N(t):=\exp\left(-it\tfrac12(\Dlt_1+\ldots+\Dlt_N)\right)\,,
$$
and set
$$
F_N(t):=W_N(t)D_N(t)W_N(-t)=W_N(t)D_N(t)W_N(t)^*\,.
$$
Starting from the von Neumann equation, one deduces that
$$
i\dot F_N(t)=\frac1N\sum_{1\le l<m\le N}[W_N(t)V_{lm}W_N(t)^*,F_N(t)]\,.
$$
Since $W_N(t)$ is a unitary operator on $L^2((\bR^d)^N)$, the operator $F_N(t)$ is trace-class on $L^2((\bR^d)^N)$. Since $V\in L^\infty(\bR^d)$, all
the operators $V_{lm}$ are bounded on $L^2((\bR^d)^N)$, so that $[W_N(t)V_{lm}W_N(t)^*,F_N(t)]$ is trace-class on $L^2((\bR^d)^N)$.Ta- king partial
 traces of both sides of the equality above leads to
$$
i\dot F_{N:k}(t)=\frac1N\sum_{1\le l<m\le N}[W_N(t)V_{lm}W_N(t)^*,F_N(t)]_{:k}
$$
for all $k\ge 1$.

\smallskip
Then we must distinguish three cases.

\smallskip
\noindent
a) If $k< l<m\le N$, then
$$
[W_N(t)V_{lm}W_N(t)^*,F_N(t)]_{:k}=[\tilde V_{lm}(t),F_{N:m}(t)]_{:k}=0\,,
$$
with the notation
$$
\tilde V_{lm}(t):=W_N(t)V_{lm}W_N(t)^*=e^{-it\tfrac12(\Dlt_l+\Dlt_m)}V_{lm}e^{it\tfrac12(\Dlt_l+\Dlt_m)}\,.
$$
Indeed, let $A\in\cL(L^2((\bR^d)^k))$, and denote by $I_{p}$ the identity in $\cL(L^2((\bR^d)^p))$. Then
$$
[A\otimes I_{m-k},\tilde V_{lm}(t)]=0\,,
$$ 
so that
$$
\ba
\Tr_{\fH_k}([\tilde V_{lm}(t),F_{N:m}(t)]_{:k}A)&=\Tr_{\fH_m}([\tilde V_{lm}(t),F_{N:m}(t)]A\otimes I_{m-k})
\\
&=\Tr_{\fH_k}([A\otimes I_{m-k},\tilde V_{lm}(t)]F_{N:m}(t))=0\,,
\ea
$$
with the notation $\fH_k:=L^2((\bR^d)^k)$.

\smallskip
\noindent
b) If $1\le l<m\le k\le N$, then
$$
[W_N(t)V_{lm}W_N(t)^*,F_N(t)]_{:k}=[W_k(t)V_{lm}W_k(t)^*,F_{N:k}(t)]\,.
$$
Indeed, with the same notation as above, for each $A\in\cL(L^2((\bR^d)^k))$,
$$
\ba
\Tr_{\fH_N}&([W_N(t)V_{lm}W_N(t)^*,F_N(t)]A\otimes I_{N-k})
\\
&=\Tr_{\fH_N}([\tilde V_{lm}(t),F_N(t)]A\otimes I_{N-k})
\\
&=\Tr_{\fH_N}(\tilde V_{lm}(t)F_N(t)A\otimes I_{N-k})
\\
&\quad-\Tr_{\fH_N}(F_N(t)\tilde V_{lm}(t)A\otimes I_{N-k})
\\
&=\Tr_{\fH_N}(F_N(t)A\otimes I_{N-k}\tilde V_{lm}(t))
\\
&\quad-\Tr_{\fH_N}(F_N(t)\tilde V_{lm}(t)A\otimes I_{N-k})
\\
&=\Tr_{\fH_N}(F_N(t)(A\tilde V_{lm}(t))\otimes I_{N-k})
\\
&\quad-\Tr_{\fH_N}(F_N(t)(\tilde V_{lm}(t)A)\otimes I_{N-k})
\\
&=\Tr_{\fH_k}(F_{N:k}(t)[A,\tilde V_{lm}(t)])
\\
&=\Tr_{\fH_k}([\tilde V_{lm}(t),F_{N:k}(t)]A)
\\
&=\Tr_{\fH_k}([W_k(t)V_{lm}W_k(t)^*,F_{N:k}(t)]A)\,.
\ea
$$

\smallskip
\noindent
c) It remains to treat the case $1\le l\le k<m\le N$. Denote by $\si\in\fS_N$ the transposition exchanging $k+1$ and $m$. At this point we use the symmetry of 
both operators $D_N$ and $W_N$, i.e. the fact that
$$
S_\si D_N(t)=D_N(t)\quad\hbox{ and }S_\si W_N(t)=W_N(t)
$$
for each $t\in\bR$. First, one can check by direct inspection that
$$
S_\si[\tilde V_{l,k+1}(t),F_N(t)]=[\tilde V_{l,m}(t),S_\si F_N(t)]=[\tilde V_{l,m}(t),F_N(t)]\,.
$$
Then, for each $A\in\cL(L^2((\bR^d)^k))$ and with the same notation as above,
$$
\ba
\Tr_{\fH_k}([\tilde V_{l,m}(t),F_N(t)]_{:k}A)&=\Tr_{\fH_m}([\tilde V_{l,m}(t),F_N(t)]A\otimes I_{m-k})&
\\
&=\Tr_{\fH_m}([\tilde V_{l,m}(t),F_N(t)]S_\si(A\otimes I_{m-k}))&
\\
&=\Tr_{\fH_m}(S_\si[\tilde V_{l,m}(t),F_N(t)]S^2_\si(A\otimes I_{m-k}))&
\\
&=\Tr_{\fH_m}(S_\si[\tilde V_{l,m}(t),F_N(t)](A\otimes I_{m-k}))&
\\
&=\Tr_{\fH_m}([\tilde V_{l,k+1}(t),F_N(t)](A\otimes I_{m-k}))&
\\
&=\Tr_{\fH_k}([\tilde V_{l,k+1}(t),F_N(t)]_{:k}A)&\,.
\ea
$$
(The second equality above follows from the fact that $S_\si(A\otimes I_{m-k})=A\otimes I_{m-k}$ since $\si$ acts as the identity on $\{1,\ldots,k\}$; the third
equality follows from the identities $\Tr(S_\si T)=\Tr(U_\si TU^*_\si)=\Tr(T)$ and $S_\si(BA)=U_\si BU^*_\si U_\si AU_\si=S_\si(B)S_\si(A)$, while the fourth
equality is based on the relation $S^2_\si=\hbox{Id}$.)

Thus
$$
[\tilde V_{l,m}(t),F_N(t)]_{:k}=[\tilde V_{l,k+1}(t),F_N(t)]_{:k}=[\tilde V_{l,k+1}(t),F_{N:k+1}(t)]_{:k}
$$
whenever $1\le l\le k<m\le N$ --- the last equality being obvious.

\bigskip
Therefore
$$
\ba
i\dot F_{N:k}(t)&=\frac1N\sum_{l=1}^k\sum_{m=k+1}^N[\tilde V_{l,m}(t),F_N(t)]_{:k}+\frac1N\sum_{1\le l<m\le k}[\tilde V_{lm}(t),F_{N:k}(t)]
\\
&=\frac{N-k}N\sum_{l=1}^k[\tilde V_{l,k+1}(t),F_{N:k+1}(t)]_{:k}+\frac1N\sum_{1\le l<m\le k}[\tilde V_{lm},F_{N:k}(t)]\,,
\ea
$$

Undoing the conjugation with $W_k(t)$, we next arrive at the equality
$$
\ba
i\dot D_{N:k}=&-\tfrac12\sum_{j=1}^k[\Dlt_j,D_{N:k}]+\frac1N\sum_{1\le l<m\le k}[V_{lm},D_{N:k}]
\\
&+\frac{N-k}N\sum_{j=1}^k\left(e^{-it\tfrac12\Dlt_{k+1}}[V_{j,k+1},D_{N:k+1}]e^{it\tfrac12\Dlt_{k+1}}\right)_{:k}\,.
\ea
$$

It remains to check that
$$
\left(e^{-it\tfrac12\Dlt_{k+1}}[V_{j,k+1},D_{N:k+1}]e^{it\tfrac12\Dlt_{k+1}}\right)_{:k}=[V_{j,k+1},D_{N:k+1}]_{:k}\,.
$$
For each $A\in\cL(L^2((\bR^d)^k))$ and with the same notation as above, one has
$$
\ba
{}&\Tr\left(\left(e^{-it\tfrac12\Dlt_{k+1}}[V_{j,k+1},D_{N:k+1}]e^{it\tfrac12\Dlt_{k+1}}\right)_{:k}A\right)
\\
&=
\Tr\left(\left(e^{-it\tfrac12\Dlt_{k+1}}[V_{j,k+1},D_{N:k+1}]e^{it\tfrac12\Dlt_{k+1}}\right)A\otimes I_1\right)
\\
&=
\Tr\left([V_{j,k+1},D_{N:k+1}]e^{it\tfrac12\Dlt_{k+1}}(A\otimes I_1)e^{-it\tfrac12\Dlt_{k+1}}\right)
\\
&=
\Tr([V_{j,k+1},D_{N:k+1}]A\otimes I_1)
\\
&=
\Tr([V_{j,k+1},D_{N:k+1}]_{:k}A)
\ea
$$
since
$$
e^{it\tfrac12\Dlt_{k+1}}(A\otimes I_1)e^{-it\tfrac12\Dlt_{k+1}}=A\otimes I_1\,,
$$
and this concludes the proof.
\end{proof}

\smallskip
References for this section are \cite{Spohn80,BaGoMau,BGMMinneapolis}.

\subsection{The infinite quantum mean field hierarchy}

From the quantum BBGKY hierarchy, we formally deduce an infinite hierarchy that is the quantum analogue of the infinite mean field hierarchy in classical
statistical mechanics. The formal argument is as follows: assume that
$$
D_{N:k}(t)\to D_k(t)
$$
for each fixed $k\ge 1$ as $N\to\infty$ in some sense to be made precise, so that
$$
[V_{lm},D_{N:k}(t)]\to[V_{lm},D_k(t)]\,.
$$
Then we pass to the limit as $N\to\infty$ in both terms
$$
\frac1N\sum_{1\le l<m\le k}[V_{lm},D_{N:k}]
$$
and
$$
\frac{N-k}N\sum_{j=1}^k[V_{j,k+1},D_{N:k+1}]_{:k}\,.
$$
The first term vanishes as $N\to\infty$ since
$$
\frac1N\sum_{1\le l<m\le k}[V_{lm},D_{N:k}]=O\left(\frac{k^2}{N}\right)\to 0\,,
$$
while the second term satisfies
$$
\frac{N-k}N\sum_{j=1}^k[V_{j,k+1},D_{N:k+1}]_{:k}\to\sum_{j=1}^k[V_{j,k+1},D_{k+1}]_{:k}
$$
for each $k\ge 1$ as $N\to\infty$.

Therefore, we expect that the sequence of operators $(D_k(t))_{k\ge 1}$ should satisfy the \index{Infinite hierarchy (quantum case)} infinite hierarchy of 
differential equations \index{Mean field hierarchy (quantum case)}
$$
i\dot D_k=-\tfrac12\sum_{j=1}^k[\Dlt_j,D_k]+\sum_{j=1}^k[V_{j,k+1},D_{k+1}]_{:k}\,,\qquad k\ge 1\,.
$$

In terms of integral kernels
$$
D_{N:k}(t,X_k,Y_k)\to D_k(t,X_k,Y_k)\,,
$$
for each $k\ge 1$ as $N\to\infty$, where $D_k(t,X_k,Y_k)$ is the integral kernel of the operator $D_k(t)$. Then, the sequence of functions 
$(D_k(t,X_k,Y_k))_{k\ge 1}$ satisfies the infinite hierarchy of integro-differential equations indexed by $k\ge 1$:
$$
\ba
i\d_tD_k(t,X_k,Y_k)&=-\tfrac12\sum_{j=1}^k(\Dlt_{x_j}-\Dlt_{y_j})D_k(t,X_k,Y_k)
\\
&+\sum_{j=1}^k\int_{\bR^d}(V(x_j-z)-V(y_j-z))D_{k+1}(t,X_k,z,Y_k,z)dz\,.
\ea
$$

Of course, the argument above is purely formal and remains to be justified rigorously. This is achieved by the following theorem.

\begin{Thm}\lb{T-QuantHier}
Assume that the potential $V$ is an even, real-valued function such that
$$
V\in C(\bR^d\setminus\{0\})\cap L^2_{loc}(\bR^d)\,,\quad V\to 0\hbox{ at }\infty\,,\quad\hbox{ and }V\ge -V_m
$$
for some $V_m>0$. Assume that the initial density matrix
$$
D_N^{in}=|\Psi_N^{in}\ra\la\Psi_N^{in}|\quad\hbox{ with }\int_{(\bR^d)^N}|\Psi_N^{in}(X_N)|^2dX_N=1
$$
satisfies the indistinguishability symmetry, i.e.
$$
S_\si D_N^{in}=D_N^{in}\quad\hbox{ for each }\si\in\fS_N\,,
$$
and the energy bound
$$
\ba
\cE_N^{in}:=&\tfrac12\sum_{j=1}^N\int_{(\bR^d)^N}|\grad_{x_j}\Psi_N^{in}(X_N)|^2dX_N
\\
&+
\frac1N\sum_{1\le l<m\le N}\int_{(\bR^d)^N}V(x_l-x_m)|\Psi_N^{in}(X_N)|^2dX_N=O(N)\,.
\ea
$$
Assume that, for each $k\ge 1$ and in the limit $N\to\infty$,
$$
D_{N:k}^{in}\to D_k^{in}\hbox{ in }\cL^1(L^2((\bR^d)^k)\hbox{ weak-*}\,.
$$

Then

\smallskip
\noindent
(a) the sequence $(D_{N:k})_{k\ge 1}$ indexed by $N$ is relatively compact in the infinite product space $\prod_{k\ge 1}L^\infty(\bR;\cL^1(L^2((\bR^d)^k)))$, 
endowed with the product of weak-* topologies --- $L^\infty(\bR;\cL^1(L^2((\bR^d)^k)))$ being the dual of the Banach space $L^1(\bR;\cK(L^2((\bR^d)^k)))$ 
equipped with the norm
$$
||| K |||:=\int_{\bR}\|K(t)\|_1dt\,;
$$

\noindent
(b) each limit point $(D_k)_{k\ge 1}$ of this sequence as $N\to\infty$ solves the infinite quantum mean field hierarchy written above in the sense of distributions, 
with initial data
$$
D_k(0)=D_k^{in}\,,\qquad k\ge 1\,.
$$
\end{Thm}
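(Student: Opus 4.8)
The plan is to derive Theorem \ref{T-QuantHier} by the standard compactness-plus-passage-to-the-limit scheme for BBGKY hierarchies, exactly as in the Boltzmann-Grad / Hartree literature. Throughout, the scaled energy bound $\cE_N^{in} = O(N)$ will be the only quantitative input besides $V \in L^2_{loc} \cap C(\bR^d \setminus\{0\})$, $V \to 0$ at infinity, $V \ge -V_m$, and these are used precisely to control the potential term $\sum_{j=1}^k [V_{j,k+1},D_{N:k+1}]_{:k}$ uniformly in $N$.

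\medskip
\noindent\textbf{Step 1: a priori bounds.}
First I would record the two invariants that come for free from the fact that $D_N(t) = e^{-itH_N}D_N^{in}e^{itH_N}$: one has $D_{N}(t) = D_N(t)^* \ge 0$, $\Tr(D_N(t)) = 1$, and $S_\si D_N(t) = D_N(t)$ for all $t\in\bR$ and all $\si\in\fS_N$ (using the propositions in the preceding section). Taking partial traces, $D_{N:k}(t) = D_{N:k}(t)^* \ge 0$, $\Tr(D_{N:k}(t)) = 1$, and $S_\si D_{N:k}(t) = D_{N:k}(t)$ for all $\si\in\fS_k$; in particular $\|D_{N:k}(t)\|_1 = 1$ for all $t$ and all $N \ge k$. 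This already gives, for each fixed $k$, a sequence $(D_{N:k})_N$ bounded in $L^\infty(\bR;\cL^1(L^2((\bR^d)^k)))$ by $1$. Since $L^\infty(\bR;\cL^1(L^2((\bR^d)^k)))$ is the dual of the separable Banach space $L^1(\bR;\cK(L^2((\bR^d)^k)))$, the unit ball is weak-$*$ sequentially compact, and a diagonal extraction over $k\ge 1$ yields a subsequence along which $D_{N:k} \to D_k$ weak-$*$ for every $k$ simultaneously. This proves (a). The second a priori bound, which I would extract from the energy identity $\Tr(H_N D_N(t)) = \cE_N^{in}$ combined with $V \ge -V_m$, is a uniform bound on the one-particle kinetic energy $\Tr((-\Dlt_1) D_{N:1}(t))$, hence on $\Tr((-\Dlt_j) D_{N:k}(t))$ for all $j \le k$; this will be the tool to make sense of $V_{j,k+1} D_{N:k+1}$ and to pass to the limit in it, because $V\in L^2_{loc}$ is form-bounded with respect to $-\Dlt$ (Sobolev / Kato-type estimate in $d = 3$), so $\|V_{j,k+1} D_{N:k+1}\|_1$ is controlled by the kinetic energy bound.

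\medskip
\noindent\textbf{Step 2: passing to the limit in the weak formulation.}
Next I would write the quantum BBGKY hierarchy of Theorem \ref{T-QBBGKY} in weak (Duhamel or test-operator) form: for a compactly supported smooth test operator $A_k$ on $L^2((\bR^d)^k)$ and a scalar test function $\chi \in C^1_c(\bR)$,
$$
\ba
-\int_\bR \dot\chi(t)\Tr(A_k D_{N:k}(t))\,dt
&= -\tfrac{i}{2}\int_\bR \chi(t)\sum_{j=1}^k \Tr([\Dlt_j,A_k]D_{N:k}(t))\,dt
\\
&\quad - i\frac{N-k}{N}\int_\bR \chi(t)\sum_{j=1}^k \Tr([V_{j,k+1},A_k\otimes I_1]D_{N:k+1}(t))\,dt
\\
&\quad - \frac{i}{N}\int_\bR \chi(t)\sum_{1\le l<m\le k}\Tr([V_{lm},A_k]D_{N:k}(t))\,dt\,,
\ea
$$
plus the contribution of the initial datum. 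In the last line, $\|[V_{lm},A_k]\| = O(1)$ (since $V$ is $\Dlt$-form bounded and $A_k$ is nice) and $\|D_{N:k}(t)\|_1 = 1$, so the whole term is $O(k^2/N) \to 0$. The kinetic term passes to the limit directly by weak-$*$ convergence since $[\Dlt_j,A_k]$ is a fixed bounded operator (choosing $A_k$ to absorb the two derivatives). The factor $\tfrac{N-k}{N}\to 1$ is harmless. The genuinely delicate term is $\Tr([V_{j,k+1},A_k\otimes I_1]D_{N:k+1}(t))$: here $V_{j,k+1}$ is unbounded-ish, and one must combine the weak-$*$ convergence $D_{N:k+1}\to D_{k+1}$ with the uniform kinetic-energy bound of Step 1 to conclude $\Tr([V_{j,k+1},A_k\otimes I_1]D_{N:k+1}(t)) \to \Tr([V_{j,k+1},A_k\otimes I_1]D_{k+1}(t))$ in the sense of distributions in $t$. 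The standard trick is to regularize $V$ by truncation, $V = V^R + (V - V^R)$ with $V^R$ bounded, handle $V^R$ by plain weak-$*$ convergence, and control the remainder uniformly in $N$ by the kinetic bound (so that the error is $o(1)$ as $R\to\infty$ uniformly in $N$), then let $N\to\infty$ and $R\to\infty$. Assembling all terms, the limit $(D_k)_{k\ge 1}$ satisfies the infinite quantum mean field hierarchy in the sense of distributions, and $D_{N:k}^{in}\to D_k^{in}$ weak-$*$ by hypothesis fixes the initial data $D_k(0) = D_k^{in}$, which is (b).

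\medskip
\noindent\textbf{Main obstacle.}
The hard part will be Step 2 for the interaction term: justifying that the weak-$*$ limit commutes with multiplication by the singular potential $V_{j,k+1}$ and that the $(k+1)$-particle marginal genuinely governs the limit of the $k$-th equation. This requires (i) the uniform kinetic-energy a priori bound to give tightness/compactness preventing mass escape to infinity or concentration at the singularity of $V$, and (ii) the truncation argument above to trade the singularity against that bound. A secondary technical point is continuity in time of $t\mapsto \Tr(A_k D_k(t))$ — needed to give meaning to $D_k(0) = D_k^{in}$ — which follows from an equicontinuity estimate: the right-hand side of the weak formulation is bounded in $N$, so $t\mapsto \Tr(A_k D_{N:k}(t))$ is uniformly Lipschitz, hence the limit is (at least) continuous, by Arzelà–Ascoli applied on each compact time interval for a countable dense family of test operators $A_k$. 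Everything else — the invariants, the diagonal extraction, the vanishing of the $O(k^2/N)$ term — is routine.
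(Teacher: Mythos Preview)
Your overall architecture --- uniform trace bounds for compactness, then pass to the limit in the weak BBGKY hierarchy --- is the right one and matches the paper. Step 1 is fine. The gap is in Step 2, specifically in your treatment of the interaction term, and it is not where you think it is.

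You write ``handle $V^R$ by plain weak-$*$ convergence''. But even with $V^R$ bounded, the test operator $[V^R_{j,k+1},\,A_k\otimes I_1]$ is \emph{not} compact on $\fH_{k+1}=L^2((\bR^d)^{k+1})$: in the $(k+1)$-th slot it acts only through multiplication by a bounded function, and $A_k\otimes I_1$ is the identity there. Weak-$*$ convergence of $D_{N:k+1}$ in $\cL^1(\fH_{k+1})$ is by definition tested against $\cK(\fH_{k+1})$, so it gives you nothing for $\Tr\big(D_{N:k+1}\,[V^R_{j,k+1},A_k\otimes I_1]\big)$. Your truncation $V=V^R+(V-V^R)$ therefore does not isolate the difficulty; the obstruction is already present for bounded potentials. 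In kernel language, the interaction term involves $\int D_{N:k+1}(t,X_k,z,Y_k,z)\,\chi(z)\,dz$, i.e.\ the restriction of the $(k+1)$-particle kernel to the diagonal $x_{k+1}=y_{k+1}=z$, and weak-$*$ convergence of operators does not in general imply convergence of the diagonal trace.

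What the paper does instead is prove a lemma of the following type: if $A_n\to 0$ weak-$*$ in $\cL^1(L^2(\bR^d))$ and the kernels satisfy the uniform equicontinuity $\sup_n\int|a_n(z,z+h)-a_n(z,z)|\,dz\to 0$ as $|h|\to 0$, then $\int a_n(z,z)\chi(z)\,dz\to 0$ for every $\chi\in C_c(\bR^d)$. The equicontinuity hypothesis is exactly what your $H^1$ bound from Step~1 provides: the kinetic energy estimate $\int|\nabla_{x_{k+1}}\Psi_N|^2\le C$ translates, after pairing with test functions $\Theta(X_k)\Theta(Y_k)$, into the bound $\int|a_{N,k}(t,z,z+h)-a_{N,k}(t,z,z)|\,dz\le C|h|$ uniformly in $N$. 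So your kinetic-energy bound is the right ingredient, but its role is to supply this diagonal equicontinuity, not to tame a singularity of $V$ via truncation. Once the diagonal convergence is established for $\chi\in C_c$, the assumptions $V\in C(\bR^d\setminus\{0\})\cap L^2_{loc}$ and $V\to 0$ at infinity let you approximate $V$ by such $\chi$ and close the argument.
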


\smallskip
Although the proof of this result relies on relatively soft functional analytic techniques, we shall not give it in detail. We just sketch the key arguments and
refer the interested reader to \cite{BaGoMau} for the complete proof. 

\smallskip
The general strategy is to use the uniform bounds on the operator $D_N(t)$ deduced from the condition $\Tr(D_N(t))=1$ and the conservation of energy
to obtain the compactness property in statement (a) of the theorem above. 

Next, one writes each equation in the BBGKY hierarchy\footnote{The BBGKY hierarchy has been derived from the $N$-particle Schr\"odinger equation in
the case where the potential $V$ belongs to $L^\infty(\bR^d)$ --- see Theorem \ref{T-QBBGKY}. However its validity in the sense of distributions under the
assumptions of Theorem \ref{T-QuantHier} results from the same arguments as those used in the derivation of the infinite hierarchy and presented below.}
in the sense of distributions and passes to the limit as $N\to\infty$ for each $k\ge 1$ fixed. This strategy uses in particular various observations listed below.

\smallskip
A first important ingredient in the proof is the Cauchy-Schwarz inequality
$$
|D_{N:k}(t,X_k,Y_k)|^2\le D_{N:k}(t,X_k,X_k)D_{N:k}(t,Y_k,Y_k)
$$
since $D_N(t)$ is a nonnegative operator. It implies in particular the bound
$$
\iint_{(\bR^d)^k}|D_{N:k}(t,X_k,Y_k)|^2dX_kdY_k\le 1
$$
that is uniform in $t\in\bR$ and $N,k\ge 1$. We recall the convention $D_{N:k}=0$ whenever $k>N$  and the equality
$$
\int_{(\bR^d)^{k}}D_{N:k}(t,Z_k,Z_k)dZ_k=1\,,
$$
since 
$$
\int_{(\bR^d)^N}D_N(t,Z_N,Z_N)dZ_N=1\,.
$$

\smallskip
A second crucial ingredient in the proof is the $H^1$ bound
$$
\tfrac12\int_{(\bR^d)^N}|\grad_{x_l}\Psi_N(t,X_N)|^2dX_N\le\frac{\cE_N^{in}}{N}+\frac{N-1}{2N}V_m
$$
for each $l=1,\ldots,N$. This estimate is a consequence of the initial energy bound postulated on $\Psi_N^{in}$, together with the conservation of energy
under the Schr\"odinger group, implying that
$$
\ba
\tfrac12\sum_{l=1}^N\int_{(\bR^d)^N}|\grad_{x_l}\Psi_N(t,X_N)|^2dX_N
\\
=\cE_N-\frac1N\sum_{1\le k<l\le N}\int_{(\bR^d)^N}|V(x_k-x_l)||\Psi_N(t,X_N)|^2dX_N
\\
\le\cE_N+\tfrac12(N-1)V_m\int_{(\bR^d)^N}|\Psi_N(t,X_N)|^2dX_N=\cE_N+\tfrac12(N-1)V_m
\ea
$$
and of the assumption of indistinguishable particles, so that one has, for each pair $k,l$ such that $1\le k\not=l\le N$,
$$
\int_{(\bR^d)^N}|\grad_{x_k}\Psi_N(t,X_N)|^2dX_N=\int_{(\bR^d)^N}|\grad_{x_l}\Psi_N(t,X_N)|^2dX_N\,.
$$

\smallskip
A third key step in the proof is based on the following result bearing on trace-class operators.

\begin{Lem}
Consider a sequence of integral operators $A_n\in\cL^1(L^2(\bR^d))$ with integral kernels $a_n\equiv a_n(x,y)$ satisfying
$$
A_n\to 0\hbox{ in }\cL^1(L^2(\bR^d))\hbox{ weak-*}
$$
as $n\to\infty$, and
$$
\sup_{n\ge 1}\int_{\bR^d}|a_n(z,z+h)-a_n(z,z)|dz\to 0\hbox{ as }|h|\to 0\,.
$$
Then, for each $\chi\in C_c(\bR^d)$
$$
\int_{\bR^d}a_n(z,z)\chi(z)dz\to 0\quad\hbox{ as }n\to\infty\,.
$$
\end{Lem}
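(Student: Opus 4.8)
The plan is to use the equicontinuity hypothesis to replace the diagonal value $a_n(z,z)$ by a convolution of $a_n$ in its second variable: this smoothing turns the quantity $\int a_n(z,z)\chi(z)\,dz$ into the trace of $A_n$ against a \emph{compact} (in fact Hilbert--Schmidt) operator, at the price of an error that is small \emph{uniformly in} $n$. Once this is done, the weak-$*$ convergence $A_n\to 0$ in $\cL^1(L^2(\bR^d))$ — i.e.\ in the duality with $\cK(L^2(\bR^d))$ given by the trace — finishes the proof.

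Concretely, first I would fix a mollifier $\rho\in C^\infty_c(\bR^d)$ with $\rho\ge 0$, $\int\rho=1$, $\Supp\rho\subset B(0,1)$, and set $\rho_\eps(h):=\eps^{-d}\rho(h/\eps)$. Introduce the modulus
$$
\om(\de):=\sup_{n\ge 1}\ \sup_{|h|\le\de}\int_{\bR^d}|a_n(z,z+h)-a_n(z,z)|\,dz\,,
$$
which makes sense by Lemma \ref{L-ContTrCl} (so that $z\mapsto a_n(z,z)$ is a well-defined element of $L^1(\bR^d)$) and satisfies $\om(\de)\to 0$ as $\de\to 0$ by hypothesis. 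Writing $a_n(z,z)-\int\rho_\eps(h)a_n(z,z+h)\,dh=\int\rho_\eps(h)(a_n(z,z)-a_n(z,z+h))\,dh$ and applying Fubini yields
$$
\int_{\bR^d}\left|a_n(z,z)-\int_{\bR^d}\rho_\eps(h)a_n(z,z+h)\,dh\right|dz\le\om(\eps)\,,
$$
uniformly in $n$, hence
$$
\left|\int a_n(z,z)\chi(z)\,dz-\int\chi(z)\left(\int\rho_\eps(h)a_n(z,z+h)\,dh\right)dz\right|\le\|\chi\|_{L^\infty}\,\om(\eps)\,.
$$

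Next I would recognize the smoothed integral as a trace. Changing variables $y=z+h$,
$$
\int\chi(z)\left(\int\rho_\eps(h)a_n(z,z+h)\,dh\right)dz=\iint a_n(z,y)\,\rho_\eps(y-z)\chi(z)\,dy\,dz=\Tr(A_nK_\eps)\,,
$$
where $K_\eps$ is the integral operator on $L^2(\bR^d)$ with kernel $k_\eps(y,z):=\rho_\eps(y-z)\chi(z)$. Since $\rho_\eps$ and $\chi$ are continuous and compactly supported, $k_\eps\in L^2(\bR^d\times\bR^d)$, so $K_\eps\in\cL^2(L^2(\bR^d))\subset\cK(L^2(\bR^d))$. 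Therefore, given $\eta>0$, choosing $\eps$ so that $\|\chi\|_{L^\infty}\om(\eps)<\eta/2$ and then $N$ so that $|\Tr(A_nK_\eps)|<\eta/2$ for $n>N$ (possible because $K_\eps$ is compact and $A_n\to 0$ weak-$*$), the two displays above give $|\int a_n(z,z)\chi(z)\,dz|<\eta$ for $n>N$, which is the assertion.

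The argument is entirely soft; the only genuine idea — and hence the main point to get right — is the smoothing step: the multiplication operator $M_\chi$ is not compact, so weak-$*$ convergence of $A_n$ does not control $\Tr(A_nM_\chi)$ directly, and the equicontinuity in $L^1$ of the family of diagonals $\{a_n(\cdot,\cdot)\}$ is precisely what allows one to trade $M_\chi$ for the regularized compact operator $K_\eps$ with a uniformly small error. No real obstacle beyond bookkeeping the change of variables and the trace identity $\Tr(A_nK_\eps)=\iint a_n(x,y)k_\eps(y,x)\,dx\,dy$, valid since $A_n$ and $K_\eps$ are both Hilbert--Schmidt.
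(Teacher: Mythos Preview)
Your argument is correct. The paper does not give its own proof of this lemma but refers to section~2 of \cite{BaGoMau}; the mollification approach you use --- trading the diagonal restriction for a convolution in the second variable at a cost controlled uniformly in $n$ by the equicontinuity hypothesis, and then recognizing the smoothed quantity as $\Tr(A_nK_\eps)$ with $K_\eps$ Hilbert--Schmidt --- is the natural route and matches the spirit of the argument in that reference. Your identification of the key point (that $M_\chi$ is not compact, so the equicontinuity is exactly what allows the passage to a compact test operator) is on target, and the bookkeeping is clean.
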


\smallskip
See section 2 of \cite{BaGoMau} for the proof of this lemma. 

\smallskip
This result is applied in the following context. For each $\Theta\in C_c((\bR^d)^k)$, consider
$$
a_{N,k}(t,z,w):=\iint_{(\bR^d)^{k}\times(\bR^d)^{k}}\Theta(X_k)\Theta(Y_k)D_{N:k+1}(t,X_n,z,Y_k,w)dX_kdY_k
$$
Then by the Sobolev estimate above, one has
$$
\int_{\bR^d}|a_{N,k}(t,z,z+h)-a_{N,k}(t,z,z)|dz\le |h|\|\Theta\|_{L^\infty}^2\left(\frac{2\cE_N^{in}}N+\frac{N-1}{N}V_m\right)\,.
$$
Thus
$$
\int_{\bR^d}a_{N,k}(z,z)\chi(z)dz\to\int_{\bR^d}a_k(z,z)\chi(z)dz
$$
as $N\to\infty$, where
$$
a_k(t,z,w):=\iint_{(\bR^d)^{k}\times(\bR^d)^{k}}\Theta(X_k)\Theta(Y_k)D_{k+1}(t,X_n,z,Y_k,w)dX_kdY_k\,,
$$
and where $\chi\in C^\infty(\bR)$ is such that $0\le\chi\le 1$, with $\chi(z)=1$ for $1/R\le z\le R$ and $\Supp(\chi)\subset[1/2R,R+1]$. This is the key
step in obtaining the limit of the interaction term
$$
\int_{\bR^d}(V(x_k-z)-V(y_k-z))D_{N:k+1}(t,X_k,z,Y_k,z)dz\,.
$$
 
\bigskip
Some remarks are in order before going further.

\smallskip
Observe first that the assumptions on the potential $V$ include the (repulsive) Coulomb potential in space dimension $d=3$
$$
V(z)=\tfrac1{4\pi}\frac1{|z|}
$$
between identical charged particles. This case is obviously of fundamental importance in view of the (many) applications to atomic physics and quantum
chemistry, in which the interacting particles are electrons.

Our second observation is of a more mathematical nature. Under the assumptions on the potential used in the theorem, the interaction integrand
$$
(V(x_j-z)-V(y_j-z))D_{k+1}(t,X_k,z,Y_k,z)
$$
may fail to belong to $L^1(\bR^d_z)$ for each (or almost every) $X_k,Y_k$. Yet our analysis defines the interaction integral
$$
\int_{\bR^d}(V(x_j-z)-V(y_j-z))D_{k+1}(t,X_k,z,Y_k,z)dz
$$
as a Radon \textit{measure} in $(X_n,Y_n)$ --- instead of a \textit{function} of these same variables.

\smallskip
A general reference for the material presented in this section is \cite{BaGoMau}.

\section[Derivation of Hartree's equation]{The mean field limit in quantum mechanics and and Hartree's equation}

\subsection{Mathematical statement of the mean field limit}

The quantum analogue of the mean field limit obtained in classical statistical mechanics is the following result.

\begin{Thm}\lb{T-QMFHier}
Let $V$ be a real-valued, even, bounded measurable function defined on $\bR^d$, such that $V\to 0$ at infinity. Let the initial single-particle wave function 
$\psi^{in}\in H^1(\bR^d)$ satisfy $\|\psi^{in}\|_{L^2}=1$, and let $\psi\equiv\psi(t,x)$ be the solution of the Cauchy problem for the Hartree equation
$$
\left\{
\ba
{}&(i\d_t+\tfrac12\Dlt_x)\psi(t,x)+\psi(t,x)\int_{\bR^d}V(x-z)|\psi(t,z)|^2dz=0\,,
\\
&\psi(0,x)=\psi^{in}(x)\,.
\ea
\right.
$$
Let $\Psi_N^{in}=(\psi^{in})^{\otimes N}$ and let $\Psi_N(t)=e^{-itH_N}\Psi_N^{in}$ be the solution of $N$-particle Schr\"odinger equation in the mean
field scaling, i.e. with
$$
H_N=-\tfrac12\sum_{j=1}^N\Dlt_{x_j}+\frac1N\sum_{1\le k<l\le N}V(x_k-x_l)\,.
$$

Then, for each $k\ge 1$, one has
$$
D_{N:k}(t):=|\Psi_N(t)\ra\la\Psi_N(t)|_{:k}\to D_k(t)
$$
in $L^\infty(\bR;\cL^1(L^2((\bR^d)^k)))$ weak-* as $N\to\infty$, where
$$
D_k(t)=|\psi(t)^{\otimes k}\ra\la\psi(t)^{\otimes k}|\,;
$$
--- in other words
$$
D_k(t,X_k,Y_k)=\prod_{j=1}^k\psi(t,x_j)\overline{\psi(t,y_j)}\,.
$$
\end{Thm}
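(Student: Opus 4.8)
The plan is to run the BBGKY--hierarchy scheme, exactly as in the classical case treated above. First I would invoke Theorem~\ref{T-QuantHier} for the factorized initial data $\Psi_N^{in}=(\psi^{in})^{\otimes N}$. Its hypotheses hold here: the indistinguishability symmetry is clear for a tensor power; for every $N\ge k$ one has $D_{N:k}^{in}=|(\psi^{in})^{\otimes k}\ra\la(\psi^{in})^{\otimes k}|=:D_k^{in}$, so the sequence $(D_{N:k}^{in})_N$ is stationary and a fortiori converges to $D_k^{in}$ in $\cL^1(L^2((\bR^d)^k))$ weak-*; and, by factorization together with $\|\psi^{in}\|_{L^2}=1$,
$$\cE_N^{in}=\tfrac N2\|\grad\psi^{in}\|_{L^2}^2+\tfrac{N-1}{2}\iint_{\bR^d\times\bR^d}V(x-y)|\psi^{in}(x)|^2|\psi^{in}(y)|^2\,dxdy=O(N)\,,$$
since $V\in L^\infty(\bR^d)$. (As $V$ is bounded here, the interaction terms in the hierarchy are honest functions, so the estimates in the proof of Theorem~\ref{T-QuantHier} apply unchanged.) Theorem~\ref{T-QuantHier} then yields that $(D_{N:k})_{k\ge1}$ is relatively compact in $\prod_{k\ge1}L^\infty(\bR;\cL^1(L^2((\bR^d)^k)))$ for the product of weak-* topologies, that every limit point $(D_k)_{k\ge1}$ of this sequence solves the infinite quantum mean field hierarchy with initial data $(D_k^{in})_{k\ge1}$, and that $\sup_t\|D_k(t)\|_1\le1$ for each $k\ge1$.

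Second --- the quantum analogue of Proposition~\ref{P-FactSolInfHier} --- I would check that the explicit family $\widehat D_k(t):=|\psi(t)^{\otimes k}\ra\la\psi(t)^{\otimes k}|$, where $\psi\in C_b(\bR;H^1(\bR^d))$ is the Hartree solution supplied by Proposition~\ref{P-ExUnHartree}, is itself a solution of the same hierarchy with the same initial data. Writing $V_\psi(t,x):=\int_{\bR^d}V(x-z)|\psi(t,z)|^2dz$, a direct computation from the Hartree equation gives, at the level of integral kernels,
$$i\d_t\widehat D_k=-\tfrac12\sum_{j=1}^k(\Dlt_{x_j}-\Dlt_{y_j})\widehat D_k+\sum_{j=1}^k\big(V_\psi(t,x_j)-V_\psi(t,y_j)\big)\widehat D_k\,,$$
and, because $\widehat D_{k+1}(t,X_k,z,Y_k,z)=|\psi(t,z)|^2\,\widehat D_k(t,X_k,Y_k)$, the last sum equals $\sum_{j=1}^k[V_{j,k+1},\widehat D_{k+1}(t)]_{:k}$ written in kernel form; moreover $\widehat D_k(0)=D_k^{in}$. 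Since $V\in L^\infty$ and $\psi(t)\in H^1$, every term makes sense and the identity holds in the sense of distributions.

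Third --- and this is the step I expect to be the main obstacle --- I would prove that the Cauchy problem for the infinite quantum mean field hierarchy admits at most one solution in the class of families $(D_k)_{k\ge1}$ with $D_k\in L^\infty(\bR;\cL^1(L^2((\bR^d)^k)))$ and $\sup_t\|D_k(t)\|_1\le1$, the quantum counterpart of Spohn's uniqueness theorem. The argument I have in mind is a Duhamel--Dyson iteration (cf.\ \cite{Spohn80,BaGoMau}): conjugating the $k$-th equation by the free Schr\"odinger group $W_k(t):=e^{-it\frac12(\Dlt_1+\cdots+\Dlt_k)}$ and putting $\widetilde D_k(t):=W_k(t)D_k(t)W_k(t)^*$, $\widetilde V_{j,k+1}(s):=W_{k+1}(s)V_{j,k+1}W_{k+1}(s)^*$, the hierarchy becomes the family of integral equations
$$\widetilde D_k(t)=D_k^{in}-i\int_0^t\sum_{j=1}^k\big[\widetilde V_{j,k+1}(s),\widetilde D_{k+1}(s)\big]_{:k}\,ds\,,\qquad k\ge1\,.$$
Iterating $n$ times expresses $\widetilde D_k(t)$ through $\widetilde D_{k+n}$ and an $n$-fold time-ordered integral; using $\|[\widetilde V_{j,k+1}(s),A]\|_1\le2\|V\|_{L^\infty}\|A\|_1$, the uniform trace bound $\|\widetilde D_{k+n}(s)\|_1\le1$, and the combinatorial count $k(k+1)\cdots(k+n-1)$ of the terms generated, the difference of two solutions with identical data is bounded in trace norm at level $k$ by $2\binom{k+n-1}{n}(2\|V\|_{L^\infty}|t|)^n$, which tends to $0$ as $n\to\infty$ as soon as $|t|<t_0:=(2\|V\|_{L^\infty})^{-1}$, uniformly in $k$. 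Hence uniqueness holds on $[-t_0,t_0]$, and since $t_0$ is independent of the base time while the bound $\|D_k(t)\|_1\le1$ is preserved, a continuation argument gives uniqueness on all of $\bR$ --- that $V$ is merely bounded is exactly what makes this global step painless. Granting this, the proof closes at once: by the second step $(\widehat D_k)_{k\ge1}$ is \emph{the} solution of the infinite hierarchy with data $(D_k^{in})_{k\ge1}$, so every limit point of $(D_{N:k})_N$ equals $(\widehat D_k)$; as every subsequence of $(D_{N:k})_N$ then has a further subsequence converging (in the product of weak-* topologies) to the same limit, the whole sequence converges, and for each $k\ge1$
$$D_{N:k}(t)\longrightarrow\widehat D_k(t)=|\psi(t)^{\otimes k}\ra\la\psi(t)^{\otimes k}|$$
in $L^\infty(\bR;\cL^1(L^2((\bR^d)^k)))$ weak-* as $N\to\infty$, i.e.\ $D_k(t,X_k,Y_k)=\prod_{j=1}^k\psi(t,x_j)\overline{\psi(t,y_j)}$, which is the assertion.
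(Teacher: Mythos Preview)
Your proof is correct and follows the same three-stage architecture as the paper: compactness via Theorem~\ref{T-QuantHier}, identification of the factorized Hartree solution as a solution of the infinite hierarchy, and uniqueness for the infinite hierarchy to pin down the limit. The difference lies entirely in how uniqueness is established.

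You prove uniqueness by a direct Duhamel--Dyson expansion: iterating the mild formulation $n$ times, bounding each commutator-plus-partial-trace by $2\|V\|_{L^\infty}$ in trace norm, and counting $k(k+1)\cdots(k+n-1)$ branches against the simplex volume $|t|^n/n!$. This is Spohn's original argument \cite{Spohn80}. The paper instead packages the same estimates into the abstract Nirenberg--Ovcyannikov framework (Theorem~\ref{T-UniqHier}): it sets $E_j=\cL^1(\fH_j)$, verifies that the interaction operator $L_{j,j+1}$ loses one derivative in the scale sense with $\|L_{j,j+1}\|\le 2j\|V\|_{L^\infty}$, and invokes the abstract Cauchy--Kovalevska uniqueness theorem. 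The two arguments are essentially equivalent in content --- the combinatorial growth $\binom{k+n-1}{n}$ in your series is exactly what the Cauchy estimates in the abstract theorem encode --- but the paper's route makes the analyticity structure explicit and isolates the two hypotheses (HL) and (Hu) that must be checked in any given model, which is why it transfers cleanly to the Schr\"odinger--Poisson and cubic NLS limits treated afterwards. Your approach is more elementary and self-contained, at the cost of redoing the series estimate each time. The paper's Afterword in fact acknowledges both options and explains the author's preference for the Cauchy--Kovalevska presentation.
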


\smallskip
As already observed in the case of the mean field limit in classical mechanics, using the hierarchy approach requires knowing that the mean field PDE is well-
posed: see Proposition \ref{P-ExUnHartree} above.

\smallskip
Here again, we only sketch the key arguments in the proof, and refer the interested reader to \cite{Spohn80} or \cite{BaGoMau} for a complete account.

By Theorem \ref{T-QMFHier}, we know that the sequence $(D_{N:k})_{k\ge 1}$ indexed by $N$ is relatively compact in 
$$
\prod_{k\ge 1}L^\infty(\bR;\cL^1(L^2((\bR^d)^k)))
$$
equipped with the product topology, where $L^\infty(\bR;\cL^1(L^2((\bR^d)^k)))$ is endowed with the weak-* topology for each $k\ge 1$. 

Besides, Theorem \ref{T-QMFHier} implies that any limit point $(D_k)_{k\ge 1}$ of that sequence is a solution of the infinite quantum mean field hierarchy
with initial data
$$
D_k^{in}:=|(\psi^{in})^{\otimes k}\ra\la(\psi^{in})^{\otimes k}|\,,\qquad k\ge 1\,.
$$

\begin{Prop}
Let $V$ be a real-valued, even, bounded measurable function defined on $\bR^d$, such that $V\to 0$. Let the initial single-particle wave function 
$\psi^{in}\in H^1(\bR^d)$ satisfy $\|\psi^{in}\|_{L^2}=1$, and let $\psi\in C(\bR;H^1(\bR^d))\cap C^1(\bR;L^2(\bR^d))$ be the solution of the Cauchy problem
for the Hartree equation
$$
\left\{
\ba
{}&(i\d_t+\tfrac12\Dlt_x)\psi(t,x)+\psi(t,x)\int_{\bR^d}V(x-z)|\psi(t,z)|^2dz=0\,,
\\
&\psi(0,x)=\psi^{in}(x)\,.
\ea
\right.
$$
Then the sequence
$$
\rho_k(t)=|\psi(t,\cdot)^{\otimes k}\ra\la\psi(t,\cdot)^{\otimes k}|
$$ 
is a particular solution of the Cauchy problem for the infinite quantum mean field hierarchy with initial data 
$$
\rho_k^{in}:=|(\psi^{in})^{\otimes k}\ra\la(\psi^{in})^{\otimes k}|\,,\qquad k\ge 1\,.
$$
\end{Prop}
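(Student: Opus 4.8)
The plan is to reproduce, in the quantum setting, the direct computation used in the proof of Proposition~\ref{P-FactSolInfHier} (the classical counterpart): differentiate $\rho_k(t)$ in $t$ and feed in the Hartree equation. The first step is to put the Hartree equation in operator form. Set
$$
V_\psi(t,x):=\int_{\bR^d}V(x-z)|\psi(t,z)|^2\,dz\,,
$$
which is a real-valued function of $x$, bounded by $\|V\|_{L^\infty}\|\psi^{in}\|_{L^2}^2$ (the $L^2$ norm of $\psi$ being conserved). In the convention of Proposition~\ref{P-ExUnHartree} the Hartree equation reads $i\d_t\psi(t)=h_\psi(t)\psi(t)$, where $h_\psi(t):=-\tfrac12\Dlt_x+V_\psi(t,\cdot)$ is a time-dependent self-adjoint single-particle Hamiltonian. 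Hence $t\mapsto\psi(t)^{\otimes k}$ solves the \emph{linear} $k$-body Schr\"odinger equation $i\d_t\psi(t)^{\otimes k}=H^\psi_k(t)\psi(t)^{\otimes k}$ with $H^\psi_k(t):=\sum_{j=1}^kh_\psi(t)^{(j)}=-\tfrac12\sum_{j=1}^k\Dlt_j+\sum_{j=1}^kV_\psi(t,x_j)$, where $h_\psi^{(j)}$ acts on the $j$th factor; the product rule $\d_t\psi^{\otimes k}=\sum_j\psi\otimes\cdots\otimes\d_t\psi\otimes\cdots\otimes\psi$ is legitimate in $C(\bR;L^2((\bR^d)^k))$ since $\psi\in C^1(\bR;L^2(\bR^d))$.

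From this, the von Neumann equation $i\dot\rho_k(t)=[H^\psi_k(t),\rho_k(t)]$ follows exactly as in the elementary matrix computation recalled just before the statement on the von Neumann equation, i.e. by differentiating $\rho_k=|\psi^{\otimes k}\rangle\langle\psi^{\otimes k}|$ and using $i\d_t|\psi^{\otimes k}\rangle=H^\psi_k|\psi^{\otimes k}\rangle$ together with $-i\d_t\langle\psi^{\otimes k}|=\langle\psi^{\otimes k}|H^\psi_k$ (self-adjointness of $H^\psi_k$). Splitting $H^\psi_k$ into kinetic and potential parts gives
$$
i\dot\rho_k=-\tfrac12\sum_{j=1}^k[\Dlt_j,\rho_k]+\sum_{j=1}^k[V_\psi(t,x_j),\rho_k]\,,
$$
so, comparing with the infinite quantum mean field hierarchy, the proof reduces to the single algebraic identity
$$
[V_\psi(t,x_j),\rho_k]=[V_{j,k+1},\rho_{k+1}]_{:k}\,,\qquad j=1,\dots,k\,.
$$

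This I would check at the level of integral kernels. Since $\rho_{k+1}$ has kernel $\prod_{l=1}^{k+1}\psi(t,x_l)\overline{\psi(t,y_l)}$, the commutator $[V_{j,k+1},\rho_{k+1}]$ has kernel $\bigl(V(x_j-x_{k+1})-V(y_j-y_{k+1})\bigr)\prod_{l=1}^{k+1}\psi(t,x_l)\overline{\psi(t,y_l)}$; contracting the $(k+1)$st variable, i.e. setting $x_{k+1}=y_{k+1}=z$ and integrating $dz$, factors out $|\psi(t,z)|^2$ and produces $\bigl(V_\psi(t,x_j)-V_\psi(t,y_j)\bigr)\prod_{l=1}^{k}\psi(t,x_l)\overline{\psi(t,y_l)}$, which is precisely the kernel of $[V_\psi(t,x_j),\rho_k]$. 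Summing over $j$, and then over $k\ge1$, gives the infinite quantum mean field hierarchy, and the initial condition $\rho_k(0)=|(\psi^{in})^{\otimes k}\rangle\langle(\psi^{in})^{\otimes k}|$ is immediate.

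The one place that needs genuine care — and the closest thing to an obstacle — is the functional-analytic bookkeeping: $\psi(t)$ is only assumed $H^1$, so $H^\psi_k(t)\psi(t)^{\otimes k}$ lies a priori in $H^{-1}$ rather than $L^2$. I would therefore read $i\dot\rho_k=[H^\psi_k,\rho_k]$, and the whole hierarchy, in the sense of distributions: test against $\Phi\in C^\infty_c((\bR^d)^k\times(\bR^d)^k)$ (equivalently against a dense subalgebra of bounded operators), differentiate under the integral using the continuity of $(\phi,\chi)\mapsto|\phi\rangle\langle\chi|$ from $L^2\times L^2$ into $\cL^1(L^2((\bR^d)^k))$, and transfer each $\Dlt_j$ onto the smooth test kernel by integration by parts; the potential terms cause no trouble because $V$ and $V_\psi$ are bounded. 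Alternatively one may first note that the Hartree equation bootstraps $\psi$ to $C(\bR;H^2(\bR^d))$, since $(V\star_x|\psi|^2)\psi\in C(\bR;L^2)$, after which every operation above is classical. Either way no new idea is needed beyond those already used in deriving the BBGKY and infinite hierarchies.
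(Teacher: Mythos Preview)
Your proof is correct and is essentially the same as the paper's: both differentiate the factorized density matrix via the product rule, feed in the Hartree equation, and identify the interaction term with the partial trace $[V_{j,k+1},\rho_{k+1}]_{:k}$ by the kernel computation you wrote out. The paper carries out the whole argument directly at the level of integral kernels (without first naming the effective one-body Hamiltonian $h_\psi$ or the intermediate von Neumann equation), and simply notes that the first step holds ``in the sense of distributions'', which matches your regularity discussion.
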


\begin{proof}
Since  $\psi\in C(\bR;H^1(\bR^d))\cap C^1(\bR;L^2(\bR^d))$ satisfies the Hartree equation, one has
$$
\ba
i\d_t(\psi(t,x)\overline{\psi(t,y)})&=-\tfrac12(\Dlt_x-\Dlt_y)\psi(t,x)\overline{\psi(t,y)}
\\
&+\psi(t,x)\overline{\psi(t,y)}\int_{\bR^d}(V(x-z)-V(y-z))|\psi(t,z)|^2dz
\ea
$$
in the sense of distributions on $\bR_t\times\bR^d_x\times\bR^d_y$. Next, an elementary computation shows that, for each $k>1$,
$$
\ba
i\d_t\rho_k(t,X_k,Y_k)&=i\sum_{j=1}^k\prod_{l=1\atop l\not=j}^k\psi(t,x_l)\overline{\psi(t,y_l)}\d_t(\psi(t,x_j)\overline{\psi(t,y_j)})
\\
&=-\tfrac12\sum_{j=1}^k\prod_{l=1\atop l\not=j}^k\psi(t,x_l)\overline{\psi(t,y_l)}(\Dlt_{x_j}-\Dlt_{y_j})\psi(t,x_j)\overline{\psi(t,y_j)}
\\
&+\sum_{j=1}^k\rho_k(t,X_k,Y_k)\int_{\bR^d}(V(x_j-z)-V(y_j-z))|\psi(t,z)|^2dz
\\
&=-\tfrac12\sum_{j=1}^k(\Dlt_{x_j}-\Dlt_{y_j})\prod_{l=1}^k\psi(t,x_l)\overline{\psi(t,y_l)}
\\
&+\sum_{j=1}^k\int_{\bR^d}(V(x_j-z)-V(y_j-z))\rho_k(t,X_k,Y_k)|\psi(t,z)|^2dz
\\
&=-\tfrac12\sum_{j=1}^k(\Dlt_{x_j}-\Dlt_{y_j})\rho_k(t,X_k,Y_k)
\\
&+\sum_{j=1}^k\int_{\bR^d}(V(x_j-z)-V(y_j-z))\rho_{k+1}(t,X_k,z,Y_k,z)dz\,.
\ea
$$
Therefore, the sequence $(\rho_k(t,X_k,Y_k))_{k\ge 1}$ is a solution of the infinite quantum mean field hierarchy.
\end{proof}

\smallskip
It remains to prove that all limit points $(D_k)_{k\ge 1}$ of the sequence $(D_{N:k})_{k\ge 1}$ as $N\to\infty$ fall in some uniqueness class for the infinite 
quantum mean field hierarchy. If this is the case, each limit point $(D_k)_{k\ge 1}$, being a solution of the infinite quantum mean field hierarchy with 
the same initial data as $(\rho_k)_{k\ge 1}$, must coincide with it for all $t\in\bR$. By compactness and uniqueness of the limit point, one concludes that
$$
D_{N:k}(t)\to\rho_k(t):=|\psi(t,\cdot)^{\otimes k}\ra\la\psi(t,\cdot)^{\otimes k}|\,,\qquad\hbox{ for each }k\ge 1
$$
in the weak-* topology of $L^\infty(\bR;\cL^1(L^2((\bR^d)^k)))$ as $N\to\infty$.

Since the integral kernels of the operators $\rho_k$ are of factorized form, i.e.
$$
\rho_k(t,X_k,Y_k)=\prod_{j=1}^k\psi(t,x_j)\overline{\psi(t,y_j)}\,,\qquad k\ge 1\,,
$$
this last statement is the quantum analogue of the propagation of chaos studied above in the context of the mean field limit in classical statistical mechanics.

\smallskip
In order to carry out this program, two things need to be checked:

\smallskip
\noindent
a) one needs an estimate for the interaction terms 
$$
D_{k+1}\to[V_{j,k+1},D_{k+1}]_{:k}
$$
for $j=1,\ldots,k$, and

\noindent
b) one needs an estimate for the growth of
$$
D_k\quad\hbox{ as }k\to\infty\,.
$$

\smallskip
In the next section, we present an abstract framework explaining the role of each estimates.

\subsection{A tool for studying infinite hierarchies}

We first review a seemingly unrelated result, namely the Nirenberg-Ovcyannikov abstract analogue of the Cauchy-Kovalevska theorem. More precisely, 
we recall below a variant of this theorem due to Nishida.

\subsubsection{The Nirenberg-Ovcyannikov abstract Cauchy-Kovalevska theorem}

The Nirenberg-Ovcyannikov abstract Cauchy-Kovalevska theory\index{Abstract Cauchy-Kovalevska theorem} bears on the Cauchy problem for differential equations of the form
$$
\dot u(t)=F(t,u(t))\,,
$$
in situations where one can apply neither the Cauchy-Lipschitz theorem nor the Peano existence theorem (see chapter 1 in \cite{HormNL}). Such situations 
arise typically when the solution $u(t)$ belongs to function spaces that are not invariant under the mapping $v\mapsto F(t,v)$. We first introduce the functional 
setting and present the main result obtained independently by Nirenberg \cite{Niren} and Ovcyannikov \cite{Ovcya}. 

The setting involves a family (a scale) of Banach spaces $(B_r)_{r>0}$ indexed by the positive real numbers. The norm in $B_r$ is denoted by $\|\cdot\|_r$.
This scale of Banach spaces is nonincreasing, in the sense that, for each $r,r'>0$, one has
$$
r'\le r\Rightarrow B_r\subset B_{r'}\quad\hbox{ with }\quad\|v\|_{r'}\le\|v\|_r\quad\hbox{Êfor each }v\in B_r\,.\leqno{(HB_r)}
$$

Next, we specify how the time-dependent ``vector field'' $F$ behaves with respect to the scale of Banach spaces $(B_r)_{r>0}$. 

Our first assumption bears on the joint dependence of $F$ in both of its arguments:

\smallskip
\noindent
(a) there exists $r_0>0$ s.t. 
$$
\ba
0<r'<r<r_0&\Rightarrow F\in C(\bR\times B_r,B_{r'})
\\
&\hbox{and }B_r\ni v\mapsto F(t,v)\in B_{r'}\hbox{ is linear for all }t\in\bR\,.
\ea
\leqno{(HF_a)}
$$

\smallskip
A second assumption bears on the dependence of $F$ in its second argument, uniformly in the time variable $t$:

\smallskip
\noindent
(b) there exists $C>0$ s.t.
$$
0<r'<r<r_0\Rightarrow\|F(t,v)\|_{r'}\le C\frac{\|v\|_r}{r-r'}\,.\leqno{(HF_b)}
$$

We state only the uniqueness part of the Nirenberg-Ovcyannikov theorem, and refer the interested reader to \cite{Nishida} for a the complete statement ---
that is an existence theorem as well, and moreover bears on possibly nonlinear ``vector fields''  $F$.

\begin{Thm}\lb{T-CKNO}
For all $r\in(0,r_0)$ and all $\a>0$, the only solution $u$ of the Cauchy problem
$$
\left\{
\ba
{}&\dot u(t)=F(t,u(t))\,,
\\
&u(0)=0\,,
\ea
\right.
$$
such that $u\in C^1((-\a,\a),B_r)$ is $u=0$.
\end{Thm}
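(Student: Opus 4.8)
The plan is to prove that $u$ vanishes identically by a quantitative iteration of the bound $(HF_b)$ along a telescoping chain of intermediate radii — the standard argument behind the abstract Cauchy--Kovalevska theorem — and then to upgrade this local vanishing to all of $(-\a,\a)$ by a connectedness argument. The key structural observation that makes the whole thing work is that the length of the time interval on which one obtains vanishing will depend only on $r$ and the structural constant $C$, and \emph{not} on any a priori bound on $u$; this is precisely what allows the bootstrap.

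First I would fix $\beta\in(0,\a)$. Since $u\in C^1((-\a,\a),B_r)$, in particular $u\in C([-\beta,\beta],B_r)$, so $M:=\sup_{|s|\le\beta}\|u(s)\|_r<\infty$. Because $u$ is $C^1$ into $B_r$ and $\dot u(s)=F(s,u(s))$, one has $u(t)-u(t_0)=\int_{t_0}^t F(s,u(s))\,ds$ as an identity in $B_{r'}$ for every $r'\le r$ and every $t_0,t\in[-\beta,\beta]$, using the continuous inclusions $(HB_r)$. Now suppose $u(t_0)=0$ for some $t_0\in[-\beta,\beta]$. Given $\rho\in(0,r)$ and $n\ge 1$, set $\rho_j:=\rho+j(r-\rho)/n$ for $j=0,\dots,n$, so that consecutive radii differ by $(r-\rho)/n$ and all lie in $(0,r_0)$. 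Applying $(HF_b)$ at each level and iterating the integral inequality $n$ times gives, after computing the iterated integral over the simplex $\{t_0\le s_n\le\cdots\le s_1\le t\}$,
$$
\|u(t)\|_\rho\le\left(\frac{Cn}{r-\rho}\right)^n\frac{|t-t_0|^n}{n!}\sup_{|s|\le\beta}\|u(s)\|_r\le M\left(\frac{eC|t-t_0|}{r-\rho}\right)^n,
$$
where the last inequality uses $n^n/n!\le e^n$. Choosing $\rho=r/2$, we conclude that $\|u(t)\|_{r/2}=0$ whenever $|t-t_0|<\tau:=r/(2eC)$; since $B_r\subset B_{r/2}$ as sets, this forces $u(t)=0$ in $B_r$ for all such $t$ in $[-\beta,\beta]$. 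Note $\tau$ is independent of $M$ and of $t_0$.

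Next I would globalise. Starting from $u(0)=0$, the local step gives $u\equiv0$ on $[-\tau,\tau]\cap[-\beta,\beta]$. Let $b:=\sup\{t\in[0,\beta]:u\equiv0\text{ on }[0,t]\}$; then $b>0$, and $u(b)=0$ by continuity of $u$ into $B_r$. If $b<\beta$, applying the local step with $t_0=b$ yields $u\equiv0$ on $[b,b+\tau]\cap[-\beta,\beta]$, contradicting the definition of $b$. Hence $b=\beta$, i.e. $u\equiv0$ on $[0,\beta]$; the symmetric argument gives $u\equiv0$ on $[-\beta,0]$. As $\beta\in(0,\a)$ was arbitrary, $u\equiv0$ on $(-\a,\a)$.

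I expect the only delicate point to be the bookkeeping in the iterated estimate — verifying that after $n$ passes through $(HF_b)$ along the chain $\rho_0<\rho_1<\cdots<\rho_n=r$ one recovers the factor $(Cn/(r-\rho))^n\,|t-t_0|^n/n!$ with $M=\sup\|u\|_r$ appearing at the top level — together with the (easy but essential) remark that sending $n\to\infty$ is legitimate exactly because the radius $\tau$ of the interval of vanishing does not see the constant $M$. The remaining ingredients (finiteness of $M$, the integral form of the equation, continuity of $s\mapsto F(s,u(s))$ into $B_{r'}$, and the injectivity of the scale inclusions read off from $B_r\subset B_{r'}$) are routine consequences of the hypotheses $(HB_r)$, $(HF_a)$, $(HF_b)$.
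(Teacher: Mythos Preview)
Your argument is correct and is precisely the standard Nirenberg--Ovcyannikov--Nishida iteration: telescoping the bound $(HF_b)$ along an equispaced chain of radii to obtain a factor $(Cn/(r-\rho))^n|t-t_0|^n/n!$, invoking $n^n/n!\le e^n$ to send $n\to\infty$, and then propagating the local vanishing by a step-size that is independent of any bound on $u$. The bookkeeping is fine, and your remark that $\|u(t)\|_{r/2}=0$ forces $u(t)=0$ in $B_r$ via the inclusion $B_r\subset B_{r/2}$ is the right way to close.

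Note, however, that the paper does \emph{not} actually prove Theorem~\ref{T-CKNO}: it only states the uniqueness part and refers the reader to Nishida \cite{Nishida} for the proof (and for the full existence statement in the nonlinear case). So there is no in-paper proof to compare against; your proof is the expected one and would serve perfectly well as the omitted argument.
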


\smallskip
In order to fully appreciate the meaning of assumption (HF$_b$), we briefly explain how this theorem is related to the classical Cauchy-Kovalevska theorem.

Consider the PDE with unknown $u\equiv u(t,z)\in\bC$,
$$
\d_tu(t,z)=P(t,z,\d_z)u(t,z)\,,
$$
where $t\in\bR$ and $z\in\bC$. It is assumed that $P$ is a 1st order linear differential operator of the form
$$
P(t,z,\d_z)u(t,z):=A(t,z)\d_zu(t,z)+B(t,z)u(t,z)\,,
$$
with coefficients $A$ and $B$ bounded and analytic on $(-T,T)\times\Om_{r_0}$, where $\Om_r\subset\bC$ is the open strip defined as follows:
$$
\Om_r:=\{z\in\bC\hbox{ s.t. }|\IM(z)|<r\}\,.
$$

Denote by $B_r$ the space of bounded holomorphic functions on $\Om_r$ with
$$
\|v\|_r:=\sup_{z\in\Om_r}|v(z)|\,.
$$

We recall that, for each function $f$ that is holomorphic and bounded in the strip $\Om_r$, one has
$$
|f^{(k)}(z)|\le\frac{\sup_{z\in\Om_r}|f(z)|}{(r-r')^k}\qquad\hbox{ for all }z\in\Om_{r'}\hbox{ and }k\ge 1\,,
$$
because of\index{Cauchy's estimates} Cauchy's estimates for holomorphic functions (see for instance chapter 4, section 2.3 in \cite{Ahlfors}).

Therefore, one has
$$
\ba
\|P(t,\cdot,\d_z)u(t,\cdot)\|_{r'}&\le\sup_{z\in\Om_{r_0}}|A(t,z)|\sup_{z\in\Om_{r'}}|\d_zu(t,z)|
\\
&+\sup_{z\in\Om_{r_0}}|B(t,z)|\sup_{z\in\Om_{r'}}|u(t,z)|
\\
&\le\sup_{z\in\Om_{r_0}}|A(t,z)|\frac{\sup_{z\in\Om_{r}}|u(t,z)|}{r-r'}
\\
&+\sup_{z\in\Om_{r_0}}|B(t,z)|\sup_{z\in\Om_{r}}|u(t,z)|
\\
&\le C\frac{\sup_{z\in\Om_{r}}|u(t,z)|}{r-r'}
\ea
$$
with
$$
C:=\sup_{z\in\Om_{r_0}}|A(t,z)|+r_0\sup_{z\in\Om_{r_0}}|B(t,z)|\,.
$$
In other words, the 1st order differential operator $P(t,z,\d_z)$ satisfies assumption (HF$_b$) as a consequence of the Cauchy inequalities for bounded
holomorphic functions in the strip $\Om_r$.

Observe that differential operators $Q(t,z,\d_z)$ of order higher than $1$ do not satisfy assumption (HF$_b$), but an inequality of the form
$$
\|Q(t,\cdot,\d_z)u(t,\cdot)\|_{r'}\le C\frac{\sup_{z\in\Om_{r}}|u(t,z)|}{(r-r')^k}\,,\qquad\hbox{Êwith }k=\hbox{Êorder of }Q\,.
$$
This is a weaker inequality that the one in (HF$_b$), which follows again from Cauchy's estimates, this time on higher order derivatives of holomorphic 
functions.

In fact, this is not surprising: the Cauchy-Kovalevska theorem guarantees the local existence and uniqueness of a local analytic solution of the Cauchy 
problem for the PDE
$$
\left\{
\ba
{}&\d_tu(t,z)=Q(t,z,\d_z)u(t,z)
\\
&u\rstr_{t=0}=u^{in}
\ea
\right.
$$
defined for $|t|<T$ and $|\IM(z)|<r^*$ for some $T>0$ and some $r^*>0$. This is obviously impossible in general if $Q$ is of order higher than $1$.
Consider indeed the simplest possible example where $Q(t,z,\d_z)=\d_z^2$. It is well known that the Cauchy problem for the backward heat equation 
is not well-posed in any one of the class $B_r$. Indeed, for each $t>0$, the semigroup $e^{t\d^2_z}$ maps the set of distributions with compact support 
on $\bR$ into the set of functions that are entire holomorphic (i.e. holomorphic on $\bC$). 

\subsubsection{Application of the abstract Cauchy-Kovalevska theorem to infinite hierarchies}\index{Application of the Cauchy-Kovalevska theory to hierarchies}

In this section, we give an abstract theorem for proving the uniqueness of the solution of infinite hierarchies based on the abstract Cauchy-Kovalevska
theorem presented above.

Our setting involves a sequence of Banach spaces $(E_n)_{n\ge 1}$. For each $n\ge 1$, the norm in $E_n$ is denoted by $|\cdot|_n$.

Along with the Banach spaces $E_n$, our setting also involves 

\smallskip
\noindent
a) a strongly continuous group of isometries $U_n(t)$ defined on $E_n$ for each $n\ge 1$, and

\smallskip
\noindent
b) a sequence of bounded linear operators $L_{n,n+1}:\,E_{n+1}\to E_n$ defined for each $n\ge 1$.

\smallskip
It will be assumed that the sequence of operators $L_{n,n+1}$ satisfies the following bound: there exists $C>0$ such that
$$
\|L_{n,n+1}\|_{\cL(E_{n+1},E_n)}\le Cn\,,\qquad\hbox{ for each }n\ge 1\,.\leqno{(HL)}
$$

\smallskip
In the setting so defined, we consider the infinite hierarchy of differential equations
$$
\dot u_n(t)=U_n(t)L_{n,n+1}U_{n+1}(-t)u_{n+1}(t)\,,\qquad n\ge 1\,.
$$

\begin{Thm}\lb{T-UniqHier}
Let $t^*>0$ and let $u_n\in C^1([0,t^*],E_n)$ for each $n\ge 1$ be a solution of
$$
\left\{
\ba
{}&\dot u_n(t)=U_n(t)L_{n,n+1}U_{n+1}(-t)u_{n+1}(t)\,,\qquad n\ge 1\,,
\\
&u_n(0)=0\,.
\ea
\right.
$$

Assume there exists $R>0$ such that 
$$
\sup_{0\le t\le t^*}\|u_n(t)\|_n\le R^n\qquad\hbox{ for all }n\ge 1\,.\leqno{(Hu)}
$$
Then 
$$
u_n(t)=0\quad\hbox{ for all }t\in[0,t^*] \hbox{ and each }n\ge 1\,.
$$
\end{Thm}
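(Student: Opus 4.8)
The plan is to reduce Theorem \ref{T-UniqHier} to the abstract Cauchy-Kovalevska uniqueness theorem (Theorem \ref{T-CKNO}) by packaging the whole sequence $(u_n)_{n\ge 1}$ into a single unknown living in a scale of Banach spaces. First I would set, for each $r\in(0,R^{-1})$ (or a suitable range),
$$
B_r:=\left\{v=(v_n)_{n\ge 1}\ :\ v_n\in E_n\ \hbox{ and }\ \|v\|_r:=\sup_{n\ge 1}r^n\|v_n\|_n<\infty\right\}\,.
$$
Each $B_r$ is a Banach space, and for $r'\le r$ one has $B_r\subset B_{r'}$ with $\|v\|_{r'}\le\|v\|_r$ since $r'^n\le r^n$; this is precisely (HB$_r$). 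Hypothesis (Hu) says exactly that the candidate solution $u(t):=(u_n(t))_{n\ge1}$ satisfies $\|u(t)\|_r\le 1$ for every $r\le R^{-1}$ and every $t\in[0,t^*]$, so $u\in C^1([0,t^*],B_r)$ for each such $r$.

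Next I would define the time-dependent vector field $F(t,\cdot)$ componentwise by $F(t,v)_n:=U_n(t)L_{n,n+1}U_{n+1}(-t)v_{n+1}$, so that the infinite hierarchy becomes the single equation $\dot u(t)=F(t,u(t))$ with $u(0)=0$. The crucial point is to verify (HF$_b$): using that $U_n(t)$ and $U_{n+1}(-t)$ are isometries and the bound (HL), $\|F(t,v)_n\|_n\le Cn\|v_{n+1}\|_{n+1}$, hence
$$
r'^n\|F(t,v)_n\|_n\le Cn\,r'^n\|v_{n+1}\|_{n+1}=\frac{Cn\,r'^n}{r^{n+1}}\cdot r^{n+1}\|v_{n+1}\|_{n+1}\le \frac{Cn}{r}\left(\frac{r'}{r}\right)^n\|v\|_r\,.
$$
Since $\sup_{n\ge1}n(r'/r)^n$ is finite and, using $x^n\le (e/\log(1/x))^{?}$-type elementary estimates, is bounded by $C'/(r-r')$ for $r'<r<r_0$ (the standard bound $n\theta^n\le \frac{1}{e}\cdot\frac{1}{1-\theta}$ type inequality with $\theta=r'/r$, giving $\le \mathrm{const}\cdot\frac{r}{r-r'}$), one obtains $\|F(t,v)\|_{r'}\le C''\frac{\|v\|_r}{r-r'}$ on a fixed range $0<r'<r<r_0$. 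Linearity in $v$ and continuity in $(t,v)$ (assumption (HF$_a$)) follow from strong continuity of the groups $U_n$ and boundedness of $L_{n,n+1}$; here I would be slightly careful that $F(t,\cdot):B_r\to B_{r'}$ is genuinely continuous jointly in $t$, which uses strong continuity of $t\mapsto U_n(t)$ together with the uniform-in-$n$ geometric decay built into the norms.

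The main obstacle I anticipate is exactly this quantitative step: extracting from (HL) a bound of the precise form $\|F(t,v)\|_{r'}\le C\|v\|_r/(r-r')$ rather than merely $\le C\|v\|_r/(r-r')^{k}$ for some $k>1$. The linear growth constant $n$ in (HL) is what makes the sharp ``first order'' bound possible — if $\|L_{n,n+1}\|$ grew like $n^2$ one would lose a power of $r-r'$ and the abstract theorem would not apply. So the heart of the argument is the elementary inequality $\sup_{n\ge1}n(r'/r)^n\le \frac{C\,r}{r-r'}$, which I would prove by calculus (the maximum over real $n$ of $n\theta^n$ is at $n=1/\log(1/\theta)$, and $\log(1/\theta)=\log(r/r')\ge (r-r')/r$ for $r'<r$). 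Once (HB$_r$), (HF$_a$), (HF$_b$) are all verified on a common range $(0,r_0)$ with $r_0$ chosen below $\min(t^{*-1},R^{-1})$ appropriately, Theorem \ref{T-CKNO} applied with this scale and with $\alpha:=t^*$ (shrinking $t^*$ if needed so the abstract theorem's interval fits, then iterating over $[0,t^*]$ in finitely many steps of length dictated by $r_0$ and $C$) forces $u(t)=0$ in some $B_r$ for $|t|$ small, and hence $u_n(t)=0$ for all $n\ge1$; a standard continuation argument (reapplying the result on successive subintervals, using that the bound (Hu) holds on all of $[0,t^*]$) upgrades this to $u_n\equiv0$ on the whole interval $[0,t^*]$.
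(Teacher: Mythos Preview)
Your approach is essentially the same as the paper's: package the sequence $(u_n)_{n\ge1}$ into a single unknown in a scale of weighted Banach spaces and apply Theorem \ref{T-CKNO}. The only differences are cosmetic: the paper uses the $\ell^1$-type norm $\|v\|_r=\sum_{n\ge1} r^n|v_n|_n$ instead of your sup norm, which makes the verification of (HF$_b$) a one-liner via the algebraic identity $nr'^n\le (r^{n+1}-r'^{n+1})/(r-r')$; and your iteration over subintervals is unnecessary, since Theorem \ref{T-CKNO} as stated already asserts uniqueness on $(-\alpha,\alpha)$ for \emph{every} $\alpha>0$.
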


\begin{proof}
We apply the abstract Cauchy-Kovalevska theorem in the following setting. First we choose the scale of Banach spaces: for each $r>0$, we set
$$
B_r:=\left\{v=(v_n)_{n\ge 1}\in\prod_{n\ge 1}E_n\hbox{ s.t. }\|v\|_r:=\sum_{n\ge 1}r^n|v_n|_n<\infty\right\}\,.
$$
Obviously, if $0<r'<r$, one has
$$
B_r\subset B_{r'}\,,
$$
and
$$
\|v\|_{r'}=\sum_{n\ge 1}r'^n|v_n|_n\le\sum_{n\ge 1}r^n|v_n|_n=\|v\|_r
$$
for each $v\in B_r$.

The mapping $F$ is defined on sequences $v=(v_n)_{n\ge 1}$ as follows:
$$
F(t,v):=(U_n(t)L_{n,n+1}U_{n+1}(-t)v_{n+1})_{n\ge 1}\,.
$$

With this definition, observing that
$$
nr'^n\le r'^n+r^{n-1}r+\ldots+r'r^{n-1}+r^n=\frac{r^{n+1}-r'^{n+1}}{r-r'}\,,
$$
assumption (HL) implies that
$$
\ba
\|F(t,v)\|_{r'}\le C\sum_{n\ge 1}n{r'}^n|v_n|_n\le C\sum_{n\ge 1}\frac{r^{n+1}-{r'}^{n+1}}{r-r'}|v_n|_n
\\
\le\frac{C}{r-r'}\sum_{n\ge 1}r^{n+1}|v|_{n+1}\le\frac{C\|v\|_r}{r-r'}
\ea
$$
for each $r,r'>0$ such that $r'<r$, where $C$ is the constant that appears in the control of the norm of $L_{n,n+1}$ in condition (HL). 

This implies that $F$ satisfies assumption (HF$_b$). As for assumption (HF$_a$), it follows from assumption (HF$_b$) and the fact  that $U_n(t)$ is a 
linear group of isometries on $E_n$ for each $n\ge 1$.

Thus, if a sequence $u(t)=(u(t)_n)_{n\ge 1}$ with $u(t)_n\in E_n$ for each $n\ge 1$ and each $t\in[0,t^*]$ satisfies the growth condition (Hu) and the 
infinite hierarchy, then $u(t)\in B_r$ for all $t\in[0,t^*]$ and for each $r\in(0,1/R)$. 

We deduce from the differential equation that $u\in C^1([0,t^*],B_r)$ for each $r\in(0,1/R)$. By the abstract Cauchy-Kovalevska theorem, 
we conclude that $u(t)=0$ for each $t\in[0,t^*]$, which means that 
$$
u_n(t)=0\qquad\hbox{ for each }t\in[0,t^*]\hbox{ and each }n\ge 1\,.
$$
This is precisely the expected uniqueness property.\end{proof}

\smallskip
Notice that assumption (Hu) is essential for the uniqueness property above. Here is an easy counterexample, showing that this assumption cannot be
dispensed with.

Consider the infinite hierarchy of ODEs
$$
\left\{
\ba
{}&\dot{y}_k(t)=ky_{k+1}(t)\,,\qquad k\ge 1\,,
\\
&y_k(0)=y^{in}_k\,,
\ea
\right.
$$
This hierarchy has obvious factorized solutions: if $x$ is the solution of the Riccati equation
$$
\left\{
\ba
{}&\dot{x}(t)=x(t)^2\,,
\\
&x(0)=x^{in}\,,
\ea
\right.
$$
i.e.
$$
x(t)=\frac{x^{in}}{1-tx^{in}}\,,
$$
then
$$
y_k(t)=x(t)^k\,,\quad k\ge 1
$$
is a solution of the infinite hierarchy with initial data
$$
y_k^{in}:=(x^{in})^k\,,\quad k\ge 1\,.
$$

The infinite hierarchy  with $y^{in}_k=0$ for all $k\ge 0$ has a unique solution satisfying the growth condition (Hu), which is $y_k(t)=0$ for all $t\in\bR$ and 
$k\ge 1$.

Consider now the function $E$ defined by
$$
E(t)=\left\{\begin{array}{ll}e^{-1/t}&\quad\hbox{ if }t>0\\ 0&\quad\hbox{ if }t\le 0\end{array}\right.
$$
that is of class $C^\infty$ on the real line and satisfies $E^{(n)}(0)=0$ for all $n\ge 0$. Then the formula
$$
y_k(t)=\frac1{(k-1)!}E^{(k-1)}(t)\,,\quad t\in\mathbf{R}\hbox{ and }k\ge 1\,,
$$
also defines a solution of the infinite hierarchy. 

That this solution does not grows exponentially as $k\to\infty$ corresponds with the fact that $E$ is not an analytic function on the line, since it vanishes on 
the half-line without being identically $0$, which contradicts the principle of isolated zeros.
 
\smallskip
References for this section are \cite{Ukai01,BEGMY}.

\subsection{Application to the Hartree limit in the bounded potential case}\index{Hartree limit (bounded potential case)}

Finally we explain how to prove Theorem \ref{T-QMFHier} with the formalism discussed above.

\begin{proof}[Proof of Theorem \ref{T-QMFHier}]

\noindent
\textit{Step 1:} First we write the infinite quantum mean field hierarchy 
$$
i\dot D_j(t)=-\tfrac12\sum_{k=1}^j[\Dlt_j,D_j(t)]+\sum_{k=1}^j[V_{k,j+1},D_{j+1}]_{:j}\,,\quad j\ge 1
$$
in the form
$$
\dot u_j(t)=U_j(t)L_{j,j+1}U_{j+1}(-t)u_{j+1}(t)\,,\quad j\ge 1\,,
$$
where
$$
L_{j,j+1}D_{j+1}:=\sum_{k=1}^j[V_{k,j+1},D_{j+1}]_{:j}
$$
while 
$$
U_j(t)D_j:=e^{-\tfrac12it(\Dlt_1+\ldots+\Dlt_j)}D_je^{\tfrac12it(\Dlt_1+\ldots+\Dlt_j)}\,.
$$
As before, $\Dlt_k$ designates the Laplacian acting on the $k$th variable in $L^2((\bR^d)^j)$, for each $k=1,\ldots,j$, while
$$
u_j(t):=U_j(t)D_j(t)\,.
$$

\smallskip
\noindent
\textit{Step 2:} Next we choose appropriate norms and function spaces so as to apply Theorem \ref{T-UniqHier}. Define $E_j:=\cL^1(L^2((\bR^d)^j))$ 
with trace norm denoted by $\|\cdot\|_1$ (or $\|\cdot\|_{1,j}$ in cases where the previous notation would lead to ambiguities).

The operator $U_j(t)$ defined in Step 1 is an isometry of $E_j$ for all $j\ge 1$ and all $t\in\bR$, since it consists of conjugating elements of $E_j$ with 
the unitary group associated to the $j$-particle free Schr\"odinger equation. Indeed, we recall the following elementary result.

\begin{Lem}
Let $\fH$ be a separable Hilbert space. For each $A\in L^1(\fH)$ and each unitary operator $U$ on $\fH$, then $UAU^*\in\cL^1(\fH)$ and one has
$$
\|UAU^*\|_1=\|A\|_1\,.
$$
\end{Lem}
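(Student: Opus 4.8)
The plan is to work directly from the characterization of the trace norm recalled in Section~\ref{S-OPTH}, namely
$$
\|B\|_1=\sup_{e,f\in HB(\fH)}\sum_{k\ge 0}|\la Be_k|f_k\ra|\in[0,\infty]
$$
valid for every $B\in\cL(\fH)$, together with the fact that $\cL^1(\fH)=\{B\in\cL(\fH)\hbox{ s.t. }\|B\|_1<\infty\}$. Thus it suffices to establish the norm identity $\|UAU^*\|_1=\|A\|_1$; the finiteness of the left-hand side (hence the membership $UAU^*\in\cL^1(\fH)$) then follows automatically from the hypothesis $A\in\cL^1(\fH)$.

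First I would record the elementary observation that, since $U$ is unitary, the map $e=(e_k)_{k\ge0}\mapsto U^*e:=(U^*e_k)_{k\ge0}$ is a bijection of $HB(\fH)$ onto itself: orthonormality and totality are preserved by applying the unitary operator $U^*$, and $e\mapsto Ue$ furnishes the inverse map. Then, for any $e,f\in HB(\fH)$, unitarity of $U$ gives $\la UAU^*e_k|f_k\ra=\la AU^*e_k|U^*f_k\ra$ for each $k$, whence
$$
\sum_{k\ge0}|\la UAU^*e_k|f_k\ra|=\sum_{k\ge0}|\la A(U^*e)_k|(U^*f)_k\ra|\,.
$$
Taking the supremum of both sides over all $e,f\in HB(\fH)$, and using the bijectivity just noted to identify the supremum of the right-hand side over the pairs $(U^*e,U^*f)$ with the supremum over all pairs of Hilbert bases, yields exactly $\|UAU^*\|_1=\|A\|_1$.

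This argument is entirely routine and I do not anticipate any genuine obstacle; the only point deserving a line of care is the bijectivity of $e\mapsto U^*e$ on $HB(\fH)$, which is precisely what makes the two suprema coincide. An alternative route would be to invoke the polar decomposition: one checks $|UAU^*|=\sqrt{UAU^*(UAU^*)^*}=\sqrt{UAA^*U^*}=U\sqrt{AA^*}U^*=U|A|U^*$, so that $\|UAU^*\|_1=\Tr(U|A|U^*)=\Tr(|A|)=\|A\|_1$ by the cyclicity of the trace. However this presupposes that $UAU^*$ is already known to be trace-class, so the argument via the supremum formula for $\|\cdot\|_1$ is cleaner and self-contained, and that is the one I would write out.
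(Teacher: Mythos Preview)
Your proof is correct and follows exactly the route the paper intends: the lemma is left as an exercise with the hint ``use the definition of the trace-norm,'' and your argument via the supremum formula together with the bijection $e\mapsto U^*e$ on $HB(\fH)$ is precisely that. There is nothing to add.
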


\smallskip
\noindent
\textbf{Exercise:} prove the lemma above. (Hint: use the definition of the trace-norm.)

\smallskip
Now we need to control the interaction term. 

First, since $V\in L^\infty$, one has
$$
\ba
\|[V_{k,j+1},D_{j+1}]\|_1\le\|V_{k,j+1}D_{j+1}\|_1+\|D_{j+1}V_{k,j+1}\|_1
\\
\le 2\|V_{k,j+1}\|\|D_{j+1}\|_1\le 2\|V\|_{L^\infty}\|D_{j+1}\|_1
\ea
$$
for each $(D_j)_{j\ge 1}$ with $D_j\in\cL^1(L^2((\bR^d)^j))$ for each $j\ge 1$.

\smallskip
Next, we use the following auxiliary result.

\begin{Lem}
Set $\fH=L^2(\bR^{m+n})$ and $\fH_1=L^2(\bR^m)$. Let $K\in\cL^1(\fH)$ be an integral operator of the form
$$
K\phi(x,z)=\int_{\bR^{m+n}}k(x,z,y,w)\phi(y,w)dydw\,.
$$
Let $K_{z,w}$ be the operator defined on $\fH_1$ by
$$
K_{z,w}\psi(x)=\int_{\bR^{m}}k(x,z,y,w)\psi(y)dy
$$
for a.e. $z,w\in\bR^n$. Then

\smallskip
\noindent
(1) the map $h\mapsto[z\mapsto K_{z,z+h}]$ belongs to $C(\bR^n_h,L^1(\bR^n_z,\cL^1(\fH_1)))$;

\smallskip
\noindent
(2) for a.e. $z\in\bR^n$ the operator $K_{z,z}$ is trace-class on $L^2(\bR^m)$ and
$$
\Tr\left|\int_{\bR^n}K_{z,z}dz\right|\le\int_{\bR^n}\Tr|K_{z,z}|dz\le\|K\|_1\,.
$$
\end{Lem}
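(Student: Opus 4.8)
The plan is to follow the blueprint of the proof of Lemma~\ref{L-ContTrCl}, reducing everything to the singular value decomposition of $K$ and to the strong continuity of translations on $L^2$.

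First I would expand $K$. Being in $\cL^1(\fH)$ it is compact, so write its singular value decomposition $K=\sum_{j\ge1}\l_j\,|E_j\ra\la F_j|$, where $(E_j)_{j\ge1},(F_j)_{j\ge1}$ are orthonormal systems in $\fH=L^2(\bR^{m+n})$, $\l_j>0$, and $\sum_{j\ge1}\l_j=\|K\|_1$. Its integral kernel is $k(x,z,y,w)=\sum_j\l_jE_j(x,z)\overline{F_j(y,w)}$, the series converging in $L^2$ since $\|\cdot\|_2\le\|\cdot\|_1$. For a.e.\ $z\in\bR^n$ the slices $x\mapsto E_j(x,z)$ and $x\mapsto F_j(x,z)$ lie in $\fH_1=L^2(\bR^m)$ for all $j$ simultaneously (countably many Fubini conditions), and for each fixed $h\in\bR^n$ I would take as the working definition, for a.e.\ $z$,
$$
K_{z,z+h}:=\sum_{j\ge1}\l_j\,|E_j(\cdot,z)\ra\la F_j(\cdot,z+h)|\,,
$$
the absolute convergence in $\cL^1(\fH_1)$ being supplied by the estimate below; I would then check, as in Lemma~\ref{L-ContTrCl}, that this agrees with the operator whose Hilbert--Schmidt kernel is the corresponding slice of $k$. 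The one genuinely fussy point is this a.e.\ bookkeeping --- it requires, among other things, the measure-preserving change of variables $(z,w)\mapsto(z,w-z)$ so as to avoid restricting the $L^2$ kernel $k$ to the measure-zero set $\{w-z=h\}$ --- and I expect it to be the main obstacle, though it is entirely routine.

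The quantitative heart is one elementary estimate. From $\||a\ra\la b|\|_1=|a|_{\fH_1}|b|_{\fH_1}$ and the triangle inequality for the trace norm,
$$
\|K_{z,z+h}\|_1\le\sum_{j\ge1}\l_j\,|E_j(\cdot,z)|_{\fH_1}\,|F_j(\cdot,z+h)|_{\fH_1}\qquad\text{for a.e.\ }z\,,
$$
and integrating in $z$ (Tonelli on the nonnegative series) and then using Cauchy--Schwarz in $L^2(\bR^n_z)$ together with $\int_{\bR^n}|E_j(\cdot,z)|_{\fH_1}^2\,dz=|E_j|_{\fH}^2=1$ and the translation-invariance of the $F_j$-integral yields
$$
\int_{\bR^n}\|K_{z,z+h}\|_1\,dz\le\sum_{j\ge1}\l_j=\|K\|_1\,.
$$
This already gives: the defining series for $K_{z,z+h}$ converges absolutely in $\cL^1(\fH_1)$ for a.e.\ $z$; the map $z\mapsto K_{z,z+h}$ belongs to $L^1(\bR^n_z;\cL^1(\fH_1))$ for every $h$ --- so in particular $K_{z,z}$ is trace-class for a.e.\ $z$ and $z\mapsto K_{z,z}$ is in $L^1$; and $\int_{\bR^n}\Tr|K_{z,z}|\,dz\le\|K\|_1$, which is the second inequality in (2).

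For part (1) I would set $g(h):=[z\mapsto K_{z,z+h}]$. Subtracting the expansions at $h$ and $h'$ and running the same computation,
$$
\|g(h)-g(h')\|_{L^1(\bR^n_z;\cL^1(\fH_1))}\le\sum_{j\ge1}\l_j\Big(\int_{\bR^n}|F_j(\cdot,z+h)-F_j(\cdot,z+h')|_{\fH_1}^2\,dz\Big)^{1/2}=\sum_{j\ge1}\l_j\,\big|\,F_j(\cdot,\cdot+h-h')-F_j\,\big|_{\fH}\,,
$$
the last equality because translation in the $z$-variable is an isometry of $\fH$. Each summand tends to $0$ as $h'\to h$ by continuity of translations on $L^2(\bR^{m+n})$, and the tail is $\le2\sum_{j\ge J}\l_j$ uniformly in $h,h'$, so $g$ is continuous --- statement (1). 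Finally, to close (2), since $z\mapsto K_{z,z}$ lies in $L^1(\bR^n;\cL^1(\fH_1))$ --- a separable Banach space, the partial sums being measurable because the $\fH_1$-valued maps $z\mapsto E_j(\cdot,z),\,F_j(\cdot,z)$ are measurable and $(a,b)\mapsto|a\ra\la b|$ is continuous into $\cL^1(\fH_1)$ --- the integral $\int_{\bR^n}K_{z,z}\,dz$ exists as a Bochner integral, and the Bochner norm inequality together with $\Tr|A|=\|A\|_1$ gives
$$
\Tr\Big|\int_{\bR^n}K_{z,z}\,dz\Big|=\Big\|\int_{\bR^n}K_{z,z}\,dz\Big\|_1\le\int_{\bR^n}\|K_{z,z}\|_1\,dz=\int_{\bR^n}\Tr|K_{z,z}|\,dz\le\|K\|_1\,,
$$
which completes (2); this Bochner integral moreover coincides with the integral operator of kernel $\int_{\bR^n}k(x,z,y,z)\,dz$, i.e.\ with the partial trace of $K$ over the $L^2(\bR^n_z)$ factor.
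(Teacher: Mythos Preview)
Your proof is correct and follows precisely the approach the paper indicates: the lemma is stated as ``an amplification of Lemma~\ref{L-ContTrCl}'' with reference to \cite{BaGoMau}, and the exercise outlining the proof of Lemma~\ref{L-ContTrCl} proceeds exactly as you do --- singular value decomposition of the trace-class operator, the rank-one estimate $\||a\ra\la b|\|_1=|a|\,|b|$, Cauchy--Schwarz in the auxiliary variable, and continuity of translations on $L^2$ with a tail-of-$\sum\l_j$ domination. Your handling of the Bochner-integral step for the first inequality in (2) and your flagging of the a.e.\ bookkeeping via the shear $(z,w)\mapsto(z,w-z)$ are both appropriate additions to the scalar case.
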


\smallskip
This result is an amplification of Lemma \ref{L-ContTrCl} --- see Lemma 2.1 in \cite{BaGoMau}.

\smallskip
We apply the lemma above to 
$$
K=[V_{k,j+1},D_{j+1}]\,,
$$
which is an integral operator with integral kernel 
$$
\ba
k(X_j,x_{j+1},Y_j,y_{j+1})
\\
=(V(x_k-x_{j+1})-V(y_k-y_{j+1}))D_{j+1}(X_j,x_{j+1},Y_j,y_{j+1})\,.
\ea
$$
This shows that
$$
\ba
\|[V_{k,j+1},D_{j+1}]_{:j}\|_1&=\Tr\left|\int_{\bR^d}K_{z_{j+1},z_{j+1}}dz_{j+1}\right|&
\\
&\le\|K\|_1\le  2\|V\|_{L^\infty}\|D_{j+1}\|_1&\,.
\ea
$$
Therefore we conclude that 
$$
\|L_{j,j+1}\|_{\cL(E_{j+1},E_j)}\le 2j\|V\|_{L^\infty}\,,
$$
which means that the infinite quantum mean field hierarchy with bounded potential satisfies assumption (HL) in Theorem \ref{T-UniqHier}.

\smallskip
\noindent
\textit{Step 3:} Finally we control the growth of $D_j$ as $j\to\infty$, where $D_j$ is a limit point of $D_{N:j}$ in $L^\infty(\bR_+;\cL^1(L^2((\bR^d)^j)))$
weak-* as $N\to\infty$. First we recall that, for each $N\ge 1$ and each $t\ge 0$, one has
$$
D_N(t)=D_N(t)^*\ge 0\,,\quad\hbox{ and }\|D_N(t)\|_1=\Tr D_N(t)=1\,.
$$
Applying the lemma above shows that
$$
\|D_{N:j}(t)\|_1\le 1\qquad\hbox{ for all }t\ge 0\,,\,\,N,j\ge 1\,.
$$
(We recall the convention $D_{N:j}=0$ whenever $j>N$.) Since
$$
D_{N:j}\wto D_j(t)\hbox{ in }L^\infty(\bR_+;\cL^1(L^2((\bR^d)^j)))\hbox{ weak-*}\,,
$$
we conclude that
$$
\|D_j(t)\|_1\le 1\qquad\hbox{ for all }t\ge 0\,,\,\,j\ge 1\,.
$$

\smallskip
\noindent
\textit{Step 4:} One can apply Theorem \ref{T-UniqHier} since the infinite hierarchy under consideration satisfies the bound (HL) with $C=2\|V\|_{L^\infty}$, 
while each limit point $D_j$ of the sequence $D_{N:j}$ in $L^\infty(\bR_+;\cL^1(L^2((\bR^d)^j)))$ as $N\to\infty$ satisfies the growth estimate (Hu) with 
$R=1$.

Indeed, setting 
$$
v_j(t):=U_j(t)(D_j(t)-|\psi(t,\cdot)^{\otimes j}\ra\la\psi(t,\cdot)^{\otimes j}|)
$$
we know that the sequence $(v_j)_{j\ge 1}$ is a solution of the infinite mean field hierarchy, which satisfies
$$
\|v_j(t)\|_{1,j}\le\|D_j(t)\|_{1,j}+\|\,|\psi(t,\cdot)^{\otimes j}\ra\la\psi(t,\cdot)^{\otimes j}|\,\|_{1,j}
\\
\le 1+\|\psi(t,\cdot)\|_{L^2}^{2j}=2\,.
$$
Since $v_j(0)=0$ for each $j\ge 1$, we conclude from Theorem \ref{T-UniqHier} that $v_j(t)=0$ for each $j\ge 1$ and each $t\ge 0$. In other words,
$$
D_j(t)=|\psi(t,\cdot)^{\otimes j}\ra\la\psi(t,\cdot)^{\otimes j}|
$$
for each $j\ge 1$, which is precisely the desired result.
\end{proof}

\smallskip
Notice that the verification of the growth condition (Hu) on the sequence $(D_j)_{j\ge 1}$ is trivial in this problem. This is not always the case, and we shall
see some examples below, where checking this crucial condition can be a serious difficulty.

\smallskip
References for this section are \cite{Spohn80,BaGoMau}.

\section{Other mean field limits in quantum mechanics}\lb{S-MFQuantFurther}

\subsection{Derivation of the Schr\"odinger-Poisson equation}\index{Schr\"odinger-Poisson limit}

The quantum analogue of the Vlasov-Poisson system is the following variant of the Hartree equation, where the interaction potential is the repulsive
Coulomb potential (between particles of equal charges):
$$
V(z)=\frac1{4\pi |z|}
$$
in space dimension $d=3$. It takes the form
$$
\left\{
\ba
{}&i\d_t\psi(t,x)=-\tfrac12\Dlt_x\psi(t,x)+U(t,x)\psi(t,x)\,,\quad x\in\bR^3\,,
\\
&U(t,x)=\tfrac1{4\pi}\int_{\bR^3}\frac{|\psi(t,y)|^2}{|x-y|}dy\,,
\ea
\right.
$$
or equivalently
$$
\left\{
\ba
{}&i\d_t\psi(t,x)=-\tfrac12\Dlt_x\psi(t,x)+U(t,x)\psi(t,x)\,,\quad x\in\bR^3\,,
\\
&-\Dlt_xU(t,x)=|\psi(t,x)|^2\,.
\ea
\right.
$$
This last form of the mean field equation explains why, as mentioned above, the Hartree equation in this case is also known as the ``Schr\"odinger-Poisson'' 
equation.

Although the Coulomb potential is unbounded near the origin, the Schr\"odin\-ger-Poisson equation can be derived following the strategy described above: 
see \cite{ErdosYau02} for this very interesting result. In that case, the right choice of spaces $E_n$ is as follows. Define
$$
S_j:=\sqrt{I-\Dlt_{x_j}}\,,\qquad j=1,\ldots,N\,.
$$
With the same notation as above, the sequence of Banach spaces in Theorem \ref{T-UniqHier} is defined as follows:
$$
E_n:=\{D\in\cL^1(L^2((\bR^3)^n))\,|\,S_1\ldots S_nDS_1\ldots S_n\in\cL^1(L^2((\bR^3)^n))\}\,,
$$
for all $n\ge 1$, with norm
$$
\|D\|_{E_n}=\Tr|S_1\ldots S_nDS_1\ldots S_n|\,.
$$

With this choice of spaces, the inequality in assumption (HL) in Theorem \ref{T-UniqHier} follows from Hardy's inequality\index{Hardy's inequality} 
$$
\int_{\bR^3}\frac{f(x)^2}{|x|^2}dx\le C\int_{\bR^3}|\grad f(x)|^2dx
$$
(see Lemma 7.1 in \cite{ErdosYau02}).

The estimate (Hu) is a follows from the conservation laws
$$
\frac{d}{dt}\Tr(H_N^mD_N(t))=0\quad\hbox{ for all }m\ge 0\,,
$$
where
$$
H_N:=-\tfrac12\sum_{k=1}^N\Dlt_{x_k}+\frac1N\sum_{1\le k<l\le N}V_{kl}
$$
(with $V_{kl}$ denoting the multiplication by $V(x_k-x_l)$) and
$$
D_N(t)=|e^{itH_N}(\psi^{in})^{\otimes N}\ra\la e^{itH_N}(\psi^{in})^{\otimes N}|\,.
$$
A crucial step in the proof of the growth estimate (Hu) is to compare powers of the $N$-particle Hamiltonian $H_N$ with powers of the $N$-particle free 
Schr\"odinger operator 
$$
L_N:=-\tfrac12\sum_{k=1}^N\Dlt_{x_k}
$$
--- see Propositions 4.1 and 5.1 in \cite{ErdosYau02}. Verifying the growth condition (Hu) is perhaps the most technical parts of \cite{ErdosYau02}.

A more direct proof, by a somewhat different argument, was proposed later in \cite{Pickl} --- see section \ref{SS-Pickl}Ê below.

\subsection{Derivation of the nonlinear Schr\"odinger equation}\index{Nonlinear Schr\"odinger limit}

One can consider the mean field limit with even more singular interactions, viz. $V(z)=\de_0(z)$\index{Dirac potential}.  Equivalently, one can prove the 
mean field limit with a potential whose range shrinks to zero as the number of particles tends to infinity. Specifically, one can consider the $N$-particle 
Schr\"odinger operator in space dimension $1$:
$$
H_N:=-\tfrac12\sum_{k=1}^N\d^2_{x_k}+\frac1N\sum_{k,l=1}^NN^\g U(N^\g (x_k-x_l))
$$
where $U\ge 0$ belongs to the Schwartz class $\cS(\bR)$, and $\g\in(0,1)$\,.

The corresponding mean-field equation is the cubic nonlinear Schr\"odinger equation
$$
i\d_t\psi(t,x)=-\tfrac12\d_x^2\psi(t,x)+a|\psi(t,x)|^2\psi(t,x)\,,\quad x\in\bR\,,
$$
where
$$
a=\int_\bR U(z)dz\,.
$$

In this case again, the strategy outlined in Theorem \ref{T-UniqHier} applies with some appropriate choice of norms. The right choice of Banach spaces 
in this case is as follows:
$$
E_n:=\{D\in\cL^2(L^2(\bR^n))\,|\,S_1\ldots S_nDS_1\ldots S_n\in\cL^2(L^2(\bR^n))\}\,,
$$
with norm
$$
\|D\|_{E_n}=(\Tr(|S_1\ldots S_nDS_1\ldots S_n|^2))^{1/2}\,,
$$
where, as in the previous example, 
$$
S_j:=\sqrt{I-\d^2_{x_j}}\,,\quad j=1,\ldots N\,.
$$

Controling the interaction operator in the $n$-th equation of the infinite mean field hierarchy associated to the Schr\"odinger operator $H_N$ is essentially 
equivalent to controling $n$ integral operators on $L^2(\bR^n)$ with integral kernels of the form
$$
D_{n+1}(x_1,\ldots,x_n,x_1,y_1,\ldots,y_n,x_1)
$$
in terms of $\|D_{n+1}\|_n$. 

If $D_n$ is the density matrix associated to a factorized wave function, i.e. if
$$
D_n(x_1,\ldots,x_n,y_1,\ldots,y_n)=\psi(x_1)\ldots\psi(x_n)\overline{\psi(y_1)}\ldots\overline{\psi(y_n)}\,,
$$
then $D_n\in E_n$ if and only if $\psi\in H^1(\bR)$ and $\|D_n\|_n=\|\psi\|_{H^1}^{2n}$. In this case, 
$$
\ba
D_{n+1}&(x_1,\ldots,x_n,x_1,y_1,\ldots,y_n,x_1)
\\
&=\psi(x_1)|\psi(x_1)|^2\psi(x_2)\ldots\psi(x_n)\overline{\psi(y_1)}\ldots\overline{\psi(y_n)}
\ea
$$
and the estimate in assumption (HL) reduces to the fact that the Sobolev space $H^1(\bR)$ is an algebra. (Indeed, we recall that
$$
u,v\in H^1(\bR)\Rightarrow uv\in H^1(\bR)\,,
$$
with
$$
\|uv\|_{H^1}\le C\|u\|_{H^1}\|v\|_{H^1}\quad\hbox{ for all }u,v\in H^1(\bR)
$$
for some positive constant $C$. This elementary fact is most easily checked in terms of the Fourier transforms of $u$ and $v$.)

Proving the bound in assumption (HL) for general density matrices (and not only in the factorized case) is a rather straightforward generalization of the 
(classical) proof that $H^1(\bR)$ is an algebra. Proving the bound in assumption (Hu) is somewhat more technical and will not be discussed here.

We refer the interested reader to \cite{AdaFGTeta07} --- see also \cite{AdaBarFGTeta} --- for a detailed statement of the result and a complete proof thereof.

The analogous result in space dimension higher than $1$ is considerably more involved --- notice that $H^1(\bR^d)$ is not an algebra whenever $d\ge 2$,
so that the argument above for the bound in assumption (HL) is no longer valid. The proof of the mean field limit for interaction potentials shrinking to a 
Dirac measure can be found in \cite{ElgartErdSchleinYau04,ErdSchleinYau05,ErdSchleinYau06,Pickl2}. An interesting approach to the uniqueness problem 
in the corresponding infinite hierarchy can be found in \cite{KlainMache}.

A general reference for this and the previous section is \cite{PGBBki}. However G\'erard's survey in \cite{PGBBki} predates Pickl's contributions to the subject,
which are described in section \ref{SS-Pickl} below.

\subsection{The time-dependent Hartree-Fock equations}\index{Hartree-Fock limit}

In the quantum mean field theories considered so far, the asymptotic $N$-particle wave function always approached a factorized state of the form
$$
\Psi_N(t,x_1,\ldots,x_N)\sim\prod_{k=1}^N\psi(t,x_k)
$$
in the limit as $N\to\infty$. In fact, we have assumed that the initial state is already of this form, i.e.
$$
\Psi_N(0,x_1,\ldots,x_N)=\prod_{k=1}^N\psi^{in}(x_k)
$$
in all the situations considered so far. In particular, $\Psi_N$ is symmetric in its space variables, i.e.
$$
\Psi_N(t,x_{\si(1)},\ldots,x_{\si(N)})=\Psi_N(t,x_1,\ldots,x_N)
$$
for all $\si\in\fS_N$ and all $x_1,\ldots,x_N\in\bR^d$. In other words, all the mean field theories considered above apply to the case of bosons; the case of 
fermions requires a separate study.

In the case of fermions, one should replace the initial factorized state with 
$$
\Psi_N(0,x_1,\ldots,x_N)=\frac1{\sqrt{N!}}\Det(\psi^{in}_{k}(x_l))_{1\le k,l\le N}
$$
where $(\psi_1,\ldots,\psi_N)$ is an orthonormal system in $L^2(\bR^d)$, i.e.
$$
\int_{\bR^d}\overline{\psi^{in}_k(x)}\psi^{in}_l(x)dx=\de_{kl}\,,\quad k,l=1,\ldots,N\,.
$$
This special form of $N$-particle wave function is referred to as a \textit{Slater determinant}\index{Slater determinant}.

Elementary computations show that Slater determinants satisfy
$$
\int_{(\bR^d)^N}|\Det(\psi^{in}_{k}(x_l))_{1\le k,l\le N}|^2dx_1\ldots dx_N=N!
$$
so that $\Psi_N(0,\ldots)$ is normalized in $L^2((\bR^d)^N)$.

Moreover, one easily checks that the density matrix associated to a Slater determinant has its first marginal given by
$$
D_{N:1}(0):=|\Psi_N(0,\ldots)\ra\la\Psi_N(0,\ldots)|_{:1}=\frac1N\sum_{k=1}^N|\psi^{in}_k\ra\la\psi^{in}_k|\,.
$$
In other words, $ND_{N:1}(0)$ is the orthogonal projector on $\Span(\{\psi_1,\ldots,\psi_N\})$, so that
$$
D_{N:1}(0)^*\!=\!D_{N:1}(0)=ND_{N:1}(0)^2\!\ge 0\,,\quad\hbox{ and }\Tr(D_{N:1}(0))\!=\!1\,.
$$
Higher order marginals of the density matrix associated with a Slater determinant are given in terms of the first marginal by the following expressions:
$$
D_{N:n}(0)=\frac{N^n(N-n)!}{N!}D_{N:1}(0)^{\otimes n}\sum_{\si\in\fS_n}(-1)^{\Sign(\si)}U_\si
$$
for all $n=1,\ldots,N$, where we recall that, for each permutation $\si\in\fS_n$, the operator $U_\si$ is defined by
$$
U_\si\Phi_n(x_1,\ldots,x_n)=\Phi_n(x_{\si^{-1}(1)},\ldots,x_{\si^{-1}(n)})
$$
for all $\Phi_n\in L^2((\bR^d)^n)$.

Consider the $N$-particle Schr\"odinger operator in the mean field scaling, i.e.
$$
H_N:=-\tfrac12\sum_{k=1}^N\Dlt_k+\frac1{N-1}\sum_{1\le k<l\le N}^NV_{kl}
$$
for all $N>1$, where we recall that $V_{kl}$ designates the multiplication by $V(x_k-x_l)$ acting on $L^2((\bR^d)^N)$.

The expected mean field equation in the case of fermions is the time-depen\-dent Hartree-Fock (TDHF) equation, written below in the language of operators
\index{TDHF equation (operator form)}:
$$
i\dot{F}(t)=[-\tfrac12\Dlt,F(t)]+[V_{12},F(t)^{\otimes 2}(I-U_{12})]_{:1}\,.
$$

Assuming that $F(t)$ is an integral operator on $L^2(\bR^d)$ with integral kernel $F(t,x,y)$, the equation above can be recast in terms of integral kernels
as follows:
$$
\ba
{}&i\d_tF(t,x,y)=-\tfrac12(\Dlt_x-\Dlt_y)F(t,x,y)
\\
&+\int_{\bR^d}(V(x-z)-V(y-z))(F(t,x,y)F(t,z,z)-F(t,z,y)F(t,x,z))dz\,,
\ea
$$
or equivalently
$$
\ba
i\d_tF(t,x,y)=&-\tfrac12(\Dlt_x-\Dlt_y)F(t,x,y)
\\
&+F(t,x,y)\int_{\bR^d}(V(x-z)-V(y-z))F(t,z,z)dz
\\
&-\int_{\bR^d}(V(x-z)-V(y-z))F(t,x,z)F(t,z,y)dz\,.
\ea
$$
The existence and uniqueness of the solution of the Cauchy problem for the TDHF equation is extensively studied in \cite{Bove1,Bove2,ChadamGlassey}.

The mean field limit in the case of $N$ fermions with $2$-body bounded interaction is summarized in the following statement.

\begin{Thm}
Assume that $V$ is an even function belonging to $L^\infty(\bR^d)$, and let $(\psi_1,\ldots,\psi_N)$ be an orthonormal system in $L^2(\bR^d)$. Let
$$
\Psi_N(t,\cdot)=e^{-itH_N}\Psi_N^{in}\,,
$$
where 
$$
H_N:=-\tfrac12\sum_{k=1}^N\Dlt_{x_k}+\frac1N\sum_{1\le k<l\le N}V(x_k-x_l)\,,
$$
and
$$
\Psi_N^{in}(x_1,\ldots,x_N)=\frac1{\sqrt{N!}}\Det(\psi_k(x_l))_{1\le k,l\le N}\,.
$$
is the Slater determinant built on the orthonormal system $(\psi_1,\ldots,\psi_N)$.

Then the density operator $D_N(t):=|\Psi_N(t)\ra\la\Psi_N(t)|$ satisfies
$$
D_{N:n}(t)-\frac{N^n(N-n)!}{N!}D_{N:1}(t)^{\otimes n}\sum_{\si\in\fS_n}(-1)^{\Sign(\si)}U_\si\to 0
$$
in $\cL^1(L^2((\bR^d)^n))$ as $N\to\infty$ for each $n\ge 1$, and
$$
D_{N:1}(t)-F_N(t)\to 0
$$
in $\cL^1(L^2(\bR^d))$ as $N\to\infty$, where $F_N(t)$ is the solution of the Cauchy problem
$$
\left\{
\ba
{}&i\dot{F_N}(t)=[-\tfrac12\Dlt,F_N(t)]+[V_{12},F_N(t)^{\otimes 2}(I-U_{12})|_{:1}\,,
\\
&F_N(0)=\frac1N\sum_{k=1}^N|\psi^{in}_k\ra\la\psi^{in}_k|\,.
\ea
\right.
$$
\end{Thm}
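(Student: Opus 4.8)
The plan is to run the hierarchy method of the previous sections, with the factorized ansatz replaced by a Slater‑determinant ansatz and the Hartree equation replaced by TDHF, following \cite{BGGM}. Since a Slater determinant $\Psi_N^{in}$ satisfies $U_\si\Psi_N^{in}=(-1)^{\Sign(\si)}\Psi_N^{in}$, its density matrix obeys the indistinguishability symmetry $S_\si D_N^{in}=D_N^{in}$, so Theorem \ref{T-QBBGKY} applies verbatim: the marginals $\Gamma_n^{(N)}(t):=D_{N:n}(t)$ solve, in $\cL^1(L^2((\bR^d)^n))$, the BBGKY hierarchy
$$
i\dot\Gamma_n^{(N)}=-\tfrac12\sum_{j=1}^n[\Dlt_j,\Gamma_n^{(N)}]+\tfrac{N-n}N\sum_{j=1}^n[V_{j,n+1},\Gamma_{n+1}^{(N)}]_{:n}+\tfrac1N\sum_{1\le l<m\le n}[V_{lm},\Gamma_n^{(N)}]\,,\qquad 1\le n\le N\,,
$$
with $\Gamma_N^{(N)}=D_N$ and $\Gamma_n^{(N)}=0$ for $n>N$. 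In parallel I would introduce the candidate limit: let $F_N$ solve the Cauchy problem for TDHF with datum $F_N(0)=\tfrac1N\sum_{k=1}^N|\psi_k^{in}\rangle\langle\psi_k^{in}|$ (existence and uniqueness from \cite{Bove1,Bove2,ChadamGlassey}), and set
$$
\Delta_n^{(N)}(t):=\frac{N^n(N-n)!}{N!}\,F_N(t)^{\otimes n}\sum_{\si\in\fS_n}(-1)^{\Sign(\si)}U_\si\,,\qquad 1\le n\le N\,.
$$
Two structural facts are needed. First, $N F_N(0)$ is an orthogonal projector of rank $N$, i.e. $F_N(0)^2=\tfrac1N F_N(0)$; since the TDHF flow is a conjugation $F_N(t)=W_N(t)F_N(0)W_N(t)^*$ by the unitary mean‑field propagator $W_N(t)$ generated by the self‑adjoint TDHF Hamiltonian, this projector relation (and the rank) is preserved for all $t$. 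Second, for any $F$ of that form one has the algebraic identities $\Tr_n\Delta_n^{(N)}=\Delta_{n-1}^{(N)}$ and $\Tr\,\Delta_n^{(N)}=1$ (the signed‑permutation sum evaluates to the falling factorial $N(N-1)\cdots(N-n+1)$), exactly as for the marginals of a genuine Slater determinant; in particular $\Delta_n^{(N)}(0)=D_{N:n}(0)$.

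Second, I would prove that the Slater ansatz is an \emph{approximate} solution of the BBGKY hierarchy. Inserting $\Delta_n^{(N)}$ into the $n$‑th equation and using that $F_N$ solves TDHF: the kinetic terms match identically; the small interaction term $\tfrac1N\sum_{l<m\le n}[V_{lm},\Delta_n^{(N)}]$ has trace norm $\le \tfrac{n(n-1)}N\|V\|_{L^\infty}$ because $\|\Delta_n^{(N)}\|_1=1$; and the main interaction term $\tfrac{N-n}N\sum_{j=1}^n[V_{j,n+1},\Delta_{n+1}^{(N)}]_{:n}$ must be matched, up to $O(1/N)$, against the \emph{direct} ($V\star\rho$) and \emph{exchange} terms of TDHF — this is precisely where the antisymmetrizer $\sum_\si(-1)^{\Sign\si}U_\si$ and the Slater structure enter, and where the mismatch between the combinatorial factors $\tfrac{N^n(N-n)!}{N!}$ and $\tfrac{N-n}N$ generates the error. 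The outcome I would establish is
$$
i\dot\Delta_n^{(N)}=-\tfrac12\sum_{j=1}^n[\Dlt_j,\Delta_n^{(N)}]+\tfrac{N-n}N\sum_{j=1}^n[V_{j,n+1},\Delta_{n+1}^{(N)}]_{:n}+\tfrac1N\sum_{1\le l<m\le n}[V_{lm},\Delta_n^{(N)}]+R_n^{(N)}\,,
$$
with a remainder satisfying $\|R_n^{(N)}(t)\|_1\le C(n,t)/N$, $C(n,t)$ growing at most polynomially in $n$ and depending only on $\|V\|_{L^\infty}$ (and, locally, on $t$).

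Third, I would estimate $E_n^{(N)}(t):=\Gamma_n^{(N)}(t)-\Delta_n^{(N)}(t)$, which vanishes at $t=0$ and solves the same linear BBGKY‑type hierarchy with source $-iR_n^{(N)}$. Passing to the interaction picture $e_n^{(N)}(t):=U_n(-t)E_n^{(N)}(t)U_n(t)$ (conjugation by the free $n$‑particle Schrödinger group, as in the proof of Theorem \ref{T-UniqHier}), the hierarchy takes the form $\dot e_n^{(N)}=U_n(-t)L_{n,n+1}U_{n+1}(t)e_{n+1}^{(N)}+(\text{bounded op.})e_n^{(N)}+(\text{source})$, with $\|L_{n,n+1}\|_{\cL(E_{n+1},E_n)}\le 2n\|V\|_{L^\infty}$ — the bound (HL) already verified in the bosonic case — and with the a priori estimate $\|E_n^{(N)}(t)\|_1\le 2$ playing the role of hypothesis (Hu) with $R=2$. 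Iterating Duhamel's formula — the exact mechanism underlying the abstract Cauchy–Kovalevska/Nirenberg–Ovcyannikov argument of Theorem \ref{T-UniqHier}, cf. Theorem \ref{T-CKNO}, but now inhomogeneous — expresses $E_n^{(N)}(t)$, for $|t|$ below a threshold depending only on $\|V\|_{L^\infty}$, as a convergent series each of whose terms carries a factor $1/N$ from $R^{(N)}$; summing gives $\|E_n^{(N)}(t)\|_1=O(1/N)$ on that window, and restarting on consecutive windows (now from a datum of size $O(1/N)$ rather than $0$) propagates $\|E_n^{(N)}(t)\|_1\to 0$ to every fixed $t\in\bR$ and every fixed $n$. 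This yields $D_{N:n}(t)-\Delta_n^{(N)}(t)\to 0$ in $\cL^1$; taking $n=1$ gives $D_{N:1}(t)-F_N(t)\to 0$, and for general $n$, replacing $F_N(t)$ by $D_{N:1}(t)$ inside the Slater ansatz costs only $O(\|D_{N:1}(t)-F_N(t)\|_1)=o(1)$ (the ansatz being, for fixed $n$, Lipschitz in its building block on the trace‑norm ball), giving the first assertion. The main obstacle I anticipate is the second step: producing the remainder bound $\|R_n^{(N)}\|_1=O(1/N)$ with constant controlled in $n$, which forces a careful matching of the TDHF exchange term against the reorganized interaction terms of the hierarchy while tracking the $N$‑dependent combinatorial prefactors — this is the one genuinely fermionic ingredient, everything else being parallel to the bosonic case treated above.
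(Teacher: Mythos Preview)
The paper does not give its own proof of this theorem: immediately after the statement it writes ``See \cite{BGGM,BGGM2} for proofs of this result and of more general theorems in the same direction,'' and the Afterword confirms that the discussion of TDHF is ``limited to a presentation of the equations and to the statement of the main result.'' So there is no in-paper proof to compare against.

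That said, your outline is exactly the strategy of the cited references \cite{BGGM,BGGM2}: build the Slater ansatz $\Delta_n^{(N)}$ from the TDHF solution $F_N$, use that $NF_N(t)$ remains a rank-$N$ projector so that the ansatz reproduces the marginal structure of a genuine Slater determinant (in particular $\Delta_n^{(N)}(0)=D_{N:n}(0)$), verify that $\Delta_n^{(N)}$ solves the BBGKY hierarchy up to an $O(1/N)$ remainder, and then close via the inhomogeneous version of the Nirenberg--Ovcyannikov/Duhamel argument already set up in Theorems \ref{T-CKNO}--\ref{T-UniqHier} with the same (HL) bound $\|L_{n,n+1}\|\le 2n\|V\|_{L^\infty}$. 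You have also correctly identified the one genuinely new difficulty: matching the $\tfrac{N-n}{N}\sum_j[V_{j,n+1},\Delta_{n+1}^{(N)}]_{:n}$ term against the direct \emph{and} exchange parts of TDHF, tracking the combinatorial prefactors $N^n(N-n)!/N!$, is where the fermionic antisymmetrizer does the work and where the $O(1/N)$ defect must be extracted with a constant controlled in $n$. Nothing in your plan is off-track.
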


See \cite{BGGM,BGGM2} for proofs of this result and of more general theorems in the same direction.

There is another, more familiar formulation of the TDHF equation. Seek the operator $F(t)$ in the form
$$
F(t)=\frac1N\sum_{k=1}^N|\psi_k(t,\cdot)\ra\la\psi_k(t,\cdot)|\,,
$$
or equivalently its integral kernel in the form
$$
F(t,x,y)=\frac1N\sum_{k=1}^N\psi_k(t,x)\overline{\psi_k(t,y)}\,,
$$
where $(\psi_1(t,\cdot),\ldots,\psi_N(t,\cdot))$ is an orthonormal system in $L^2(\bR^d)$. Then the functions $\psi_k(t,\cdot)$ should satisfy the system
of PDEs\index{TDHF equations (orbital form)}
$$
\ba
i\d_t\psi_k(t,x)=&-\tfrac12\Dlt_x\psi(t,x)+\psi_k(t,x)\int_{\bR^d}V(x-z)\frac1N\sum_{l=1}^N|\psi_l(t,z)|^2dz
\\
&-\frac1N\sum_{k=1}^N\psi_l(x)\int_{\bR^d}V(x-z)\overline{\psi_l(t,z)}\psi_k(t,z)dz\,,\quad 1\le k\le N\,.
\ea
$$
In this formulation, the functions $\psi_k$ indexed by $k=1,\ldots,N$ are referred to as ``molecular orbitals'' in the context of quantum chemistry.

These equations are to be compared with the Hartree equation discussed above, i.e.
$$
i\d_t\psi(t,x)=-\tfrac12\Dlt_x\psi(t,x)+\psi(t,x)\int_{\bR^d}V(x-z)|\psi(t,z)|^2dz\,.
$$
Obviously the term 
$$
\psi_k(t,x)\int_{\bR^d}V(x-z)\frac1N\sum_{l=1}^N|\psi_l(t,z)|^2dz
$$
is the analogue in the Hartree-Fock case of the term
$$
\psi(t,x)\int_{\bR^d}V(x-z)|\psi(t,z)|^2dz
$$
in Hartree's equation. The additional term in the Hartree-Fock equation, i.e.
$$
-\frac1N\sum_{k=1}^N\psi_l(x)\int_{\bR^d}V(x-z)\overline{\psi_l(t,z)}\psi_k(t,z)dz
$$
is called the ``exchange interaction integral'' \index{Exchange interaction}and is special to the case of fermions: see for instance \cite{LL3} (Problem 1 on 
p. 233).

Approaching the $N$-particle wave function of a system of fermions by a Slater determinant is obviously the most natural idea. However, other choices are
also possible --- and often made in practice, especially for the purpose of numerical computations in the context of quantum chemistry. One such method is 
the theory of multi-configuration time-dependent Hartree-Fock equations \cite{BardMaus}\index{Multi-configuration TDHF}, where the $N$-particle functions 
is approximated by a linear combination of Slater determinants. For the first detailed analysis of the multi-configuration ansatz, which has a very rich 
mathematical structure, we refer the interested reader to \cite{BardCattMausTrab}.

Hartree-Fock equations are used primarily in the context of quantum chemistry. Therefore, one has to take into account the following interactions

\smallskip
(a) electron-electron,

(b) electron-nuclei, 

(c) nuclei-nuclei.

\smallskip
In many situations, the nuclei are considered either as fixed (as in \cite{LiebSimon}), or as macroscopic objects governed by a system of ODEs (as in 
\cite{CancesLeBris}), while the quantum description of the electron system involves the Hartree-Fock equations. In that case, the only part of the Hamiltonian 
that is approximated by a nonlinear, self-consistent mean-field interaction is the repulsive force (a) between electrons. 

All these interactions involve the Coulomb potential; the derivation of the time dependent Hartree-Fock theory from the $N$-particle Schr\"odinger equation 
with Coulomb interactions remains an open problem at the time of this writing.

\subsection{Pickl's approach to quantum mean field limits}\lb{SS-Pickl}

There are other approaches to the mean field limit in quantum mechanics --- see for instance the work of Rodnianski and Schlein \cite{RodniSchlein}, which proposes an error estimate for the mean field limit, that uses the formalism of Fock spaces in second quantization, as well as the more recent references
\cite{Froh1,Froh2}.

In the present section, we give a brief presentation of yet another approach of the mean field limit that avoids the technicalities of BBGKY hierarchies
as well as the formalism of Fock spaces. The discussion below is based on \cite{Pickl}.

Pickl's idea is to consider some appropriate quantity that measures the distance between the first marginal $D_{N:1}(t)$ of the $N$-particle density matrix
$D_N(t)$ and the single particle density matrix $|\psi(t,\cdot)\ra\la\psi(t,\cdot)|$ built on the solution $\psi$ of the Hartree equation.

Let $\Psi_N$ be the solution of the $N$-particle Schr\"odinger equation
$$
\left\{
\ba
{}&i\d_t\Psi_N(t,x_1,\ldots,x_N)=H_N\Psi_N(t,x_1,\ldots,x_N)\,,\quad x_1,\ldots,x_N\in\bR^d\,,
\\
&\Psi_N\rstr_{t=0}=(\psi^{in})^{\otimes N}\,,
\ea
\right.
$$
with 
$$
H_N:=-\tfrac12\sum_{k=1}^N\Dlt_{x_k}+\frac1N\sum_{1\le k<l\le N}V(x_k-x_l)\,.
$$
Let $\psi$ be the solution of Hartree's equation
$$
\left\{
\ba
{}&i\d_t\psi(t,x)=-\tfrac12\Dlt_x\psi(t,x)+(V\star_x|\psi|^2)\psi(t,x)\,,\quad x\in\bR^d\,,
\\
&\psi\rstr_{t=0}=\psi^{in}\,.
\ea
\right.
$$

Consider the quantity
$$
\cE_N(t):=\Tr(D_{N:1}(t)(I-|\psi(t,\cdot)\ra\la\psi(t,\cdot)|))\,.
$$
Pickl's derivation of the mean field limit is based on proving that
$$
\cE_N(t)\to 0\quad\hbox{ as }N\to\infty\,.
$$
This estimate implies that $D_{N:1}(t)$ converges to $|\psi(t,\cdot)\ra\la\psi(t,\cdot)|$ in operator norm, in view of the lemma below.

Henceforth we denote by $\fH_N$ the $N$-particle Hilbert space $\fH_N:=L^2((\bR^d)^N)$ for each $N\ge 1$, and we set
$$
\rho(t):=|\psi(t,\cdot)\ra\la\psi(t,\cdot)|\,.
$$

\begin{Lem}
Let $N\ge 1$ and $\Psi_N$ be a symmetric element of $\fH_N$ such that $\|\Psi_N\|_{\fH_N}=1$. Let $D_N:=|\Psi_N\ra\la\Psi_N|$ and let $\psi\in\fH_1$ 
satisfy $\|\psi\|_{\fH_1}=1$. Then
$$
\Tr(D_{N:1}(I-|\psi\ra\la\psi|))\to 0\hbox{ as }N\to\infty
$$
if and only if
$$
D_{N:1}\to|\psi\ra\la\psi|\hbox{ in operator norm as }N\to\infty\,.
$$
\end{Lem}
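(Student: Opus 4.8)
The plan is to prove the equivalence by exploiting the fact that $\rho:=|\psi\ra\la\psi|$ is a rank-one orthogonal projection and that $D_{N:1}$ is a nonnegative trace-class operator with $\Tr(D_{N:1})=1$. Write $P:=|\psi\ra\la\psi|$ and $P^\perp:=I-P$. The quantity under consideration is $\cE_N=\Tr(D_{N:1}P^\perp)=1-\la\psi|D_{N:1}\psi\ra$, which is nonnegative because $D_{N:1}\ge 0$ and $P^\perp\ge 0$ (indeed $\Tr(D_{N:1}P^\perp)=\Tr(\sqrt{D_{N:1}}P^\perp\sqrt{D_{N:1}})$ and the operator in parentheses is nonnegative).

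First I would prove the direction ``$\cE_N\to 0\Rightarrow D_{N:1}\to P$ in operator norm''. The key algebraic identity is the block decomposition of $D_{N:1}$ with respect to the orthogonal splitting $\fH_1=\bC\psi\oplus(\bC\psi)^\perp$:
$$
D_{N:1}=PD_{N:1}P+PD_{N:1}P^\perp+P^\perp D_{N:1}P+P^\perp D_{N:1}P^\perp\,.
$$
Since $D_{N:1}-P=(PD_{N:1}P-P)+PD_{N:1}P^\perp+P^\perp D_{N:1}P+P^\perp D_{N:1}P^\perp$, it suffices to bound each block in operator norm by something going to $0$ with $\cE_N$. For the diagonal blocks: $PD_{N:1}P=\la\psi|D_{N:1}\psi\ra P=(1-\cE_N)P$, so $\|PD_{N:1}P-P\|=\cE_N$; and $\|P^\perp D_{N:1}P^\perp\|\le\Tr(P^\perp D_{N:1}P^\perp)=\cE_N$ since $P^\perp D_{N:1}P^\perp\ge 0$. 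For the off-diagonal blocks I would use the operator Cauchy-Schwarz inequality (or, equivalently, the fact that for $A\ge 0$ one has $|\la u|Av\ra|\le\la u|Au\ra^{1/2}\la v|Av\ra^{1/2}$): for any unit vectors $u,v$,
$$
|\la u|P^\perp D_{N:1}Pv\ra|=|\la P^\perp u|D_{N:1}Pv\ra|\le\la P^\perp u|D_{N:1}P^\perp u\ra^{1/2}\la Pv|D_{N:1}Pv\ra^{1/2}\le\cE_N^{1/2}\cdot 1\,,
$$
using $\la P^\perp u|D_{N:1}P^\perp u\ra\le\Tr(P^\perp D_{N:1}P^\perp)=\cE_N$ and $\la Pv|D_{N:1}Pv\ra\le\Tr D_{N:1}=1$; the adjoint block is handled the same way. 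Hence $\|D_{N:1}-P\|\le\cE_N+\cE_N+2\cE_N^{1/2}\to 0$.

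For the converse, ``$D_{N:1}\to P$ in operator norm $\Rightarrow\cE_N\to 0$'', I would simply note that $P^\perp$ is a bounded operator with $\|P^\perp\|=1$, hence
$$
\cE_N=\Tr(D_{N:1}P^\perp)=\Tr((D_{N:1}-P)P^\perp)+\Tr(PP^\perp)=\Tr((D_{N:1}-P)P^\perp)
$$
since $PP^\perp=0$. Now one cannot in general bound $|\Tr(AB)|$ by $\|A\|\,\|B\|$, but here $D_{N:1}-P$ is trace-class with a uniform trace-norm bound: $\|D_{N:1}-P\|_1\le\|D_{N:1}\|_1+\|P\|_1=2$. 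The clean way is to observe $|\Tr((D_{N:1}-P)P^\perp)|\le\|D_{N:1}-P\|_1\,\|P^\perp\|$, which does go to zero only if $\|D_{N:1}-P\|_1\to0$, not merely the operator norm — so instead I would argue directly: $\cE_N=\la\psi|(P-D_{N:1})\psi\ra\le\|P-D_{N:1}\|\to 0$. This last inequality is immediate since $\cE_N=1-\la\psi|D_{N:1}\psi\ra=\la\psi|\psi\ra-\la\psi|D_{N:1}\psi\ra=\la\psi|(P-D_{N:1})\psi\ra$. This direction is therefore the easy one.

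I do not expect any serious obstacle here; the only point requiring a little care is the operator Cauchy-Schwarz step for the off-diagonal blocks, and making sure every estimate uses only $D_{N:1}\ge0$, $\Tr D_{N:1}=1$, and the rank-one projection structure of $P$ (symmetry of $\Psi_N$ plays no role beyond guaranteeing $D_{N:1}$ is a well-defined state). One should also remark that operator-norm convergence of $D_{N:1}$ to the \emph{specific} operator $P$ already pins down the limit, so no compactness or subsequence argument is needed. The whole argument is a handful of lines once the block decomposition is written down.
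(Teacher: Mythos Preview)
Your proof is correct. The paper does not actually supply a proof of this lemma: it simply states that ``this elementary result is statement (a) in Lemma 2.3 of \cite{Pickl}'' and remarks that it is a variant of the exercise at the end of section \ref{S-OPTH}. Your block-decomposition argument with respect to $P=|\psi\ra\la\psi|$ and $P^\perp$, together with the operator Cauchy--Schwarz inequality for the off-diagonal terms, is exactly the sort of elementary computation the paper has in mind, and your observation that only the properties $D_{N:1}\ge 0$, $\Tr(D_{N:1})=1$, and $\mathrm{rank}\,P=1$ are used (not the symmetry of $\Psi_N$ per se) is apt. The converse direction via $\cE_N=\la\psi|(P-D_{N:1})\psi\ra\le\|P-D_{N:1}\|$ is the cleanest route, as you noted after discarding the trace-norm detour.
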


This elementary result is statement (a) in Lemma 2.3 of \cite{Pickl}. In some sense, it can be considered as a variant of the exercise at the end of section
\ref{S-OPTH}. Notice that $\Psi_N(t,\cdot)$ is a symmetric function of $x_1,\ldots,x_N$ for all $t\in\bR$ by Lemma \ref{L-Sym}, and because of the choice 
of the initial data $\Psi_N\rstr_{t=0}=(\psi^{in})^{\otimes N}$, which is itself a symmetric function of $x_1,\ldots,x_N$.

\smallskip
Next we seek to control the evolution of $\cE_N(t)$. We recall that, for $k,l=1,\ldots,N$, the notation $V_{kl}$ designates the operator on $\fH_N$ defined
as
$$
(V_{kl}\Phi_N)(x_1,\ldots,x_N):=V(x_k-x_l)\Phi_N(x_1,\ldots,x_N)\,.
$$
Since $\Tr(D_{N:1}(t))=1$, one has
$$
\ba
\dot{\cE}_N(t)&=-\frac{d}{dt}\Tr(D_{N:1}(t)\rho(t))
\\
&=-\Tr(\dot{D}_{N:1}(t)\rho(t))-\Tr(D_{N:1}(t)\dot{\rho}(t))
\\
&=-\Tr(i[\tfrac12\Dlt,D_{N:1}(t)]\rho(t))-\Tr(iD_{N:1}(t)[\tfrac12\Dlt,\rho(t)])
\\
&+\frac{N-1}{N}\Tr(i[V_{12},D_{N:2}(t)]_{:1}\rho(t))+\Tr(iD_{N:1}(t)[V_{12},\rho(t)^{\otimes 2}]_{:1})\,.
\ea
$$

First
$$
[\tfrac12\Dlt,D_{N:1}(t)]\rho(t)+D_{N:1}(t)[\tfrac12\Dlt,\rho(t)]=[\tfrac12\Dlt,D_{N:1}(t)\rho(t)]\,,
$$
so that
$$
\ba
\Tr([\tfrac12\Dlt,D_{N:1}(t)]\rho(t))&+\Tr(D_{N:1}(t)[\tfrac12\Dlt,\rho(t)])
\\
&=\Tr([\tfrac12\Dlt,D_{N:1}(t)\rho(t)])=0\,.
\ea
$$
Next
$$
\ba
\Tr([V_{12},D_{N:2}(t)]_{:1}\rho(t))&=\Tr([V_{12},D_{N:2}(t)](\rho(t)\otimes I))\,,
\\
\Tr(D_{N:1}(t)[V_{12},\rho(t)^{\otimes 2}]_{:1})&=\Tr(D_{N:2}(t)([V_{12},\rho(t)^{\otimes 2}]_{:1}\otimes I))\,.
\ea
$$

Therefore
$$
\ba
\dot{\cE}_N(t)&=\tfrac{N-1}{N}\Tr(i[V_{12},D_{N:2}(t)]\rho(t)\otimes I)+\Tr(iD_{N:2}(t)[V_{12},\rho(t)^{\otimes 2}]_{:1})
\\
&=-i\Tr(D_{N:2}(t)[\tfrac{N-1}{N}V_{12}-(V\star|\psi(t,\cdot)|^2)\otimes I,\rho(t)\otimes I])\,.
\ea
$$

The result of this computation is summarized in the following lemma.

\begin{Lem}
With the same notation as above, one has
$$
\dot{\cE}_N(t)=\cF_N(t)
$$
where
$$
\cF_N(t)\!:=\!2\IM\Tr\left(D_{N:2}(t)(\rho(t)\!\otimes\! I)A((I\!-\!\rho(t))\otimes\!I)\right)
$$
and 
$$
A:=\tfrac{N-1}{N}V_{12}-(V\star|\psi(t,\cdot)|^2)\otimes I\,.
$$

\end{Lem}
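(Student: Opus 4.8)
The plan is to start from the identity already obtained just above,
$$
\dot{\cE}_N(t)=-i\,\Tr\bigl(D_{N:2}(t)\,[A,\,\rho(t)\otimes I]\bigr)\,,\qquad A=\tfrac{N-1}{N}V_{12}-(V\star|\psi(t,\cdot)|^2)\otimes I\,,
$$
and to rewrite the trace of this commutator by splitting $A$ into its ``diagonal'' and ``off-diagonal'' parts relative to the orthogonal projection $P:=\rho(t)\otimes I$ on $\fH_2=L^2((\bR^d)^2)$. First I would record the structural facts that make the manipulation legitimate: $A$ is a bounded self-adjoint operator on $\fH_2$ — bounded since $V\in L^\infty(\bR^d)$ gives $\|V_{12}\|\le\|V\|_{L^\infty}$ and $\|V\star|\psi(t,\cdot)|^2\|_{L^\infty}\le\|V\|_{L^\infty}\|\psi(t,\cdot)\|_{L^2}^2=\|V\|_{L^\infty}$, and self-adjoint since $V$ and $V\star|\psi(t,\cdot)|^2$ are real-valued and the corresponding multiplication operators are therefore self-adjoint; $D_{N:2}(t)=D_{N:2}(t)^*$ is trace-class, being a partial trace of the self-adjoint trace-class operator $D_N(t)$; and $P=\rho(t)\otimes I$ is an orthogonal projection, because $\rho(t)=|\psi(t,\cdot)\ra\la\psi(t,\cdot)|$ is a rank-one orthogonal projection on $\fH_1=L^2(\bR^d)$, so that $Q:=I-P=(I-\rho(t))\otimes I$ is also a projection.

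Then comes the only computation. Inserting $I=P+Q$ and using $P^2=P$ one gets the block decomposition
$$
[A,P]=AP-PA=(P+Q)AP-PA(P+Q)=QAP-PAQ\,,
$$
so that $[A,\rho(t)\otimes I]$ is exactly the sum of the two off-diagonal operators $((I-\rho(t))\otimes I)\,A\,(\rho(t)\otimes I)$ and $-(\rho(t)\otimes I)\,A\,((I-\rho(t))\otimes I)$. Pairing against $D_{N:2}(t)$ — legitimate since $D_{N:2}(t)\in\cL^1(\fH_2)$ and the remaining factors are bounded, which also licenses cyclic permutation under the trace — gives
$$
\Tr\bigl(D_{N:2}(t)\,[A,P]\bigr)=\Tr\bigl(D_{N:2}(t)\,QAP\bigr)-\Tr\bigl(D_{N:2}(t)\,PAQ\bigr)\,.
$$
Because $D_{N:2}(t)$, $A$, $P$, $Q$ are all self-adjoint, $\bigl(D_{N:2}(t)\,QAP\bigr)^{*}=PAQ\,D_{N:2}(t)$, so that, using $\Tr(B^{*})=\overline{\Tr(B)}$ and cyclicity, the traces of $D_{N:2}(t)$ against the two off-diagonal blocks are complex conjugates of each other; hence $\Tr(D_{N:2}(t)[A,P])$ equals $2i$ times the imaginary part of one such trace. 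Multiplying by $-i$ converts $2i\,(\cdot)$ into $2\,(\cdot)$, and one reads off
$$
\dot{\cE}_N(t)=2\,\IM\,\Tr\bigl(D_{N:2}(t)\,(\rho(t)\otimes I)\,A\,((I-\rho(t))\otimes I)\bigr)=\cF_N(t)\,.
$$

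There is no genuine obstacle here: granted the commutator identity from the preceding computation, the lemma is a one-line manipulation resting on the projection identity $[A,P]=QAP-PAQ$, self-adjointness of $D_{N:2}(t)$ and $A$, and cyclicity of the trace on $\cL^1(\fH_2)$. The hard part, such as it is, is purely clerical: keeping the order of the three factors straight inside each off-diagonal block and tracking the sign produced by $-i\cdot 2i=2$, so that the final expression is recorded with the ordering of $(\rho(t)\otimes I)$ and $((I-\rho(t))\otimes I)$ written in $\cF_N(t)$; all the functional-analytic ingredients (boundedness of $A$, self-adjointness and trace-class property of $D_{N:2}(t)$, the projection property of $\rho(t)\otimes I$) are elementary and available from the standing hypotheses on $V$ and $\psi^{in}$ (cf. Theorem \ref{T-QMFHier}). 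It is worth noting, looking ahead, that this is exactly the representation one wants for closing the argument: the projections $\rho(t)\otimes I$ and $(I-\rho(t))\otimes I$ flanking $A$ are what will let the ``bad'' part of $A$ — the full pair interaction $V_{12}$, rather than its mean-field average $(V\star|\psi(t,\cdot)|^2)\otimes I$ — be controlled in terms of $\cE_N(t)$ itself, yielding a Gronwall inequality for $t\mapsto\cE_N(t)$.
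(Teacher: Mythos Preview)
Your proposal is correct and follows exactly the approach implicit in the paper: the paper presents the computation leading to $\dot{\cE}_N(t)=-i\Tr(D_{N:2}(t)[A,\rho(t)\otimes I])$ immediately before the lemma and then simply states the lemma as a summary, so your projection decomposition $[A,P]=QAP-PAQ$ together with self-adjointness and cyclicity is precisely the missing line the reader is expected to supply.

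One clerical remark, since you yourself flag the sign-tracking as the delicate point: with $z:=\Tr(D_{N:2}PAQ)$ and $\bar z=\Tr(D_{N:2}QAP)$, one has $\Tr(D_{N:2}[A,P])=\bar z-z=2i\,\IM\bar z$, so $-i\Tr(D_{N:2}[A,P])=2\,\IM\Tr(D_{N:2}QAP)$, i.e.\ the expression with $(I-\rho)\otimes I$ on the left and $\rho\otimes I$ on the right of $A$, which differs from the ordering written in $\cF_N$ by a sign. This is entirely inconsequential for the sequel, since Proposition~\ref{P-PicklBnd} bounds $|\cF_N(t)|$ and the Gronwall argument only uses $|\dot{\cE}_N(t)|$; but it is worth recording that your ``$-i\cdot 2i=2$'' step lands on $2\,\IM\bar z$ rather than $2\,\IM z$, so that either the ordering in the displayed $\cF_N$ or the sign in front should be adjusted if one insists on a literal match.
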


The core of Pickl's argument is the following estimate for $\cF_N(t)$ in terms of $\cE_N(t)$ and $N$.

\begin{Prop}\lb{P-PicklBnd}
Assuming that $r\ge 1$ and denoting by $r'=\frac{r}{r-1}$ the dual H\"older exponent of $r$, one has
$$
|\cF_N(t)|\le 10\|V\|_{L^{2r}}\|\psi(t,\cdot)\|_{L^{2r'}}(\cE_N(t)+\tfrac1N)\,.
$$
\end{Prop}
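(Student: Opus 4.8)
The plan is to estimate the quantity
$$
\cF_N(t)=2\IM\Tr\left(D_{N:2}(t)(\rho(t)\otimes I)A((I-\rho(t))\otimes I)\right)
$$
by a Cauchy--Schwarz-type argument after a careful algebraic rearrangement of the operator $A$. The key observation is that $A$ naturally splits into two pieces: write
$$
A=\tfrac{N-1}{N}\left(V_{12}-(V\star|\psi(t,\cdot)|^2)\otimes I\right)-\tfrac1N(V\star|\psi(t,\cdot)|^2)\otimes I\,.
$$
The second, $O(1/N)$, piece is harmless: since $\|V\star|\psi(t,\cdot)|^2\|_{L^\infty}\le\|V\|_{L^{2r}}\|\,|\psi(t,\cdot)|^2\|_{L^{(2r)'}}=\|V\|_{L^{2r}}\|\psi(t,\cdot)\|_{L^{2r'}}^2$ by Young's and H\"older's inequalities (noting $(2r)'=r'$ gives the exponent $2r'$ on $\psi$ once we take the square root appropriately), one bounds the corresponding trace contribution by a constant times $\tfrac1N\|V\|_{L^{2r}}\|\psi(t,\cdot)\|_{L^{2r'}}$ using $\|D_{N:2}\|_1=1$ and $\|\rho\|\le 1$. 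Actually I would be slightly more careful and absorb one power of $\|\psi(t,\cdot)\|_{L^{2r'}}$ so the final constant matches the stated $\|\psi(t,\cdot)\|_{L^{2r'}}$; this forces me to use that $\|\psi(t,\cdot)\|_{L^2}=1$ to trade a superfluous $L^{2r'}$ norm away only when $2r'\le 2$, i.e. $r'\le1$, which never happens — so in fact the $L^\infty$ bound must be used with the single power $\|V\star|\psi|^2\|_{L^\infty}\le\|V\|_{L^{2r}}\|\psi\|_{L^{2r'}}^2$ and one of the two $\psi$-norms genuinely appears squared; I would then check the numerology in Pickl's paper to reconcile the constant $10$, which presumably comes from splitting into several terms each contributing a small integer multiple.

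For the main piece, the idea is to exploit the factor $((I-\rho(t))\otimes I)$ on the right and $(\rho(t)\otimes I)$ on the left to interpolate between $D_{N:2}(t)$ and the ``good'' reference state. Concretely I would insert projectors on the second particle as well: using the identity $I\otimes I=(\rho\otimes\rho)+(\rho\otimes(I-\rho))+((I-\rho)\otimes I)$ applied on the right of $V_{12}-(V\star|\psi|^2)\otimes I$, one checks that the ``potential minus mean field'' operator annihilates the pure $\rho\otimes\rho$ sector in a suitable weak sense. The surviving terms each carry at least one factor $(I-\rho)$ acting on particle $1$ or particle $2$; by the symmetry of $D_N$ (hence of $D_{N:2}$) the trace $\Tr(D_{N:2}((I-\rho)\otimes I))=\Tr(D_{N:1}(I-\rho))=\cE_N(t)$, and more generally $\Tr(D_{N:2}(B_1\otimes B_2))$ with one $B_i=I-\rho$ can be controlled, after a Cauchy--Schwarz split on the positive operator $D_{N:2}$ (writing $D_{N:2}=D_{N:2}^{1/2}D_{N:2}^{1/2}$), by $\sqrt{\cE_N(t)}$ times the operator norm of the multiplication piece times $\sqrt{\cE_N(t)}$ again — giving the linear dependence $\cE_N(t)$ rather than $\sqrt{\cE_N(t)}$. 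The H\"older exponents $r,r'$ enter precisely here: to make sense of $V_{12}$ acting between these $D_{N:2}^{1/2}$ factors one writes $V(x_1-x_2)=V(x_1-x_2)^{1/2}\cdot V(x_1-x_2)^{1/2}$ (for $V\ge -V_m$ one works with $|V|$ and tracks signs), and estimates $\||V(x_1-x_2)|^{1/2}\phi\|_{L^2}$ in terms of $\|V\|_{L^{r}}^{1/2}$ and $\|\phi\|$ after an $L^2_{x_2}$-averaging against $|\psi(t,x_2)|^2$, which produces $\|\psi(t,\cdot)\|_{L^{2r'}}$ by H\"older with conjugate exponents $r,r'$; doubling gives $\|V\|_{L^{2r}}$ and the single power of $\|\psi(t,\cdot)\|_{L^{2r'}}$ claimed.

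So the key steps in order are: (i) decompose $A$ into its main part and its $O(1/N)$ part, dispose of the latter by crude trace bounds; (ii) insert the resolution of identity $\rho+(I-\rho)$ on particle $2$ and use that the mean-field subtraction kills the $\rho\otimes\rho$ block, reducing everything to traces of $D_{N:2}$ against operators with at least one $(I-\rho)$ factor; (iii) Cauchy--Schwarz on $D_{N:2}\ge0$, splitting $V_{12}$ multiplicatively, and apply H\"older with exponents $(2r,2r')$ fibrewise in $x_2$ together with the symmetry identity $\Tr(D_{N:2}((I-\rho)\otimes I))=\cE_N(t)$ to land on $\|V\|_{L^{2r}}\|\psi(t,\cdot)\|_{L^{2r'}}(\cE_N(t)+\tfrac1N)$; (iv) collect the numerical constants to reach the factor $10$. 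I expect the main obstacle to be step (iii): getting the estimate to be \emph{linear} in $\cE_N(t)$ rather than $\sqrt{\cE_N(t)}$ requires using the $(I-\rho)$ factor on \emph{both} sides of $A$ in the original expression for $\cF_N(t)$ (the left $\rho\otimes I$ and the right $(I-\rho)\otimes I$), and threading this through the fibrewise H\"older bound without losing a power of $\cE_N$ is the delicate combinatorial bookkeeping; this is exactly where Pickl's ``counting measure'' manipulation of the projectors does the work, and I would follow Lemma~(\ref{P-PicklBnd})'s proof in \cite{Pickl} closely for that part.
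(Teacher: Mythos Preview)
Your outline has the right skeleton --- inserting $\rho+p$ on particle $2$, observing that $(I\otimes\rho)\bigl(V_{12}-(V\star|\psi|^2)\otimes I\bigr)(I\otimes\rho)=0$ so that the $\rho^{\otimes2}$ block only contributes the $O(1/N)$ remainder, and then estimating the surviving terms --- and this matches the paper's decomposition into the three pieces
\[
\Tr(D_{N:2}\rho^{\otimes2}A(p\otimes\rho)),\quad
\Tr(D_{N:2}\rho^{\otimes2}Ap^{\otimes2}),\quad
\Tr(D_{N:2}(\rho\otimes p)Ap^{\otimes2}).
\]
But there is a genuine gap in your treatment of the middle term, and you yourself flag it: the Cauchy--Schwarz you describe (splitting $D_{N:2}=D_{N:2}^{1/2}D_{N:2}^{1/2}$ and writing $V=|V|^{1/2}|V|^{1/2}$) yields only $\sqrt{\cE_N(t)}$, not $\cE_N(t)$. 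Indeed, working at the $N$-particle level one has $\Tr(D_{N:2}\rho^{\otimes2}Ap^{\otimes2})=\la\Psi_N|\rho_{1,N}\rho_{2,N}Ap_{1,N}p_{2,N}\Psi_N\ra$, and plain Cauchy--Schwarz gives $\|A\rho_{1,N}\rho_{2,N}\Psi_N\|\cdot\|p_{1,N}p_{2,N}\Psi_N\|$; the second factor is $\le\sqrt{\cE_N(t)}$ while the first is only $O(1)$, so the product is $O(\sqrt{\cE_N(t)})$, which is useless for Gronwall.

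What the paper actually does --- and what your reference to ``Pickl's counting measure manipulation'' gestures at without supplying --- is to \emph{transfer half a power of smallness from the $p$-side to the $\rho$-side} before applying Cauchy--Schwarz. One introduces the operator $M_N(t)=\tfrac1N\sum_{j=1}^Np_{j,N}(t)$, writes its spectral decomposition, and inserts $M_N^{1/2}M_N^{-1/2}$ (with $M_N^{-1/2}$ the pseudo-inverse) between $A$ and $p_{1,N}p_{2,N}$, using $M_N^{-1/2}p_{1,N}p_{2,N}=M_N^{1/2}M_N^{-1}p_{1,N}p_{2,N}$. Then Cauchy--Schwarz gives
\[
\|M_N^{-1/2}p_{1,N}p_{2,N}\Psi_N\|\cdot\|M_N^{1/2}A\rho_{1,N}\rho_{2,N}\Psi_N\|\,.
\]
A combinatorial identity using the symmetry of $\Psi_N$ bounds the \emph{first} factor squared by $\tfrac{N}{N-1}\cE_N(t)$; expanding $M_N=\tfrac1N\sum_jp_{j,N}$ in the \emph{second} factor squared and using symmetry again gives a term bounded by $\|\rho_{1,N}A^2\rho_{1,N}\|\cE_N(t)$ plus a term of size $\tfrac2N\|\rho_{1,N}A^2\rho_{1,N}\|$. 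Thus each factor is $O(\sqrt{\cE_N+1/N})$ and the product is $O(\cE_N+1/N)$. This $M_N^{\pm1/2}$ insertion is the heart of the argument and is not recoverable from the approach you describe.

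Two smaller points. First, the H\"older estimate in the paper is simpler than your $|V|^{1/2}$ splitting: one directly bounds $\|(\rho\otimes I)A\|^2=\|(\rho\otimes I)A^2(\rho\otimes I)\|$, and this operator norm is controlled by $\int|V(x-y)|^2|\psi(x)|^2dx\le\|V\|_{L^{2r}}^2\|\psi\|_{L^{2r'}}^2$ via a single application of H\"older with exponents $(r,r')$ on $|V|^2$ and $|\psi|^2$. Second, your worry about the exponent of $\|\psi\|_{L^{2r'}}$ in the $O(1/N)$ term is resolved the same way: the paper bounds $\|(\rho\otimes I)(I\otimes\rho)A(I\otimes\rho)\|$ directly and the H\"older step produces exactly one power of $\|\psi\|_{L^{2r'}}$.
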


The proof of Proposition \ref{P-PicklBnd} is sketched below. The interested reader is referred to the original paper \cite{Pickl} for more details.

For the sake of notational simplicity, we set
$$
p(t):=I-\rho(t)\,.
$$
Elementary computations show that 
$$
\ba
\cF_N(t)&=2\IM\Tr(D_{N:2}(t)\rho(t)^{\otimes 2}A(p(t)\otimes\rho(t)))
\\
&+2\IM\Tr(D_{N:2}(t)\rho(t)^{\otimes 2}Ap(t)^{\otimes 2})
\\
&+2\IM\Tr(D_{N:2}(t)(\rho(t)\otimes p(t))Ap(t)^{\otimes 2})\,.
\ea
$$
The term 
$$
\IM\Tr(D_{N:2}(t)(\rho(t)\otimes p(t))A(p(t)\otimes\rho(t)))
$$
vanishes identically by symmetry of $\Psi_N(t,\cdot)$.

The first term on the right hand side is mastered as follows: observe that
$$
(I\otimes\rho(t))A(I\otimes\rho(t))=-\tfrac1N(V\star|\psi(t,\cdot)|^2)\otimes\rho(t)
$$
--- where $V\star|\psi(t,\cdot)|^2$ designates the operator on $\fH_1=L^2(\bR^d)$ defined by
$$
\phi\mapsto(V\star|\psi(t,\cdot)|^2)\phi\,.
$$
Therefore
$$
\ba
|\Tr(D_{N:2}(t)\rho(t)^{\otimes 2}A(p(t)\otimes\rho(t)))|&
\\
=|\Tr(D_{N:2}(t)(\rho(t)\otimes I)(I\otimes\rho(t))A(I\otimes\rho(t))(p(t)\otimes I))|&
\\
\le\|(\rho(t)\otimes I)(I\otimes\rho(t))A(I\otimes\rho(t))(p(t)\otimes I)\|\|D_{N:2}\|_1&
\\
\le\|(\rho(t)\otimes I)(I\otimes\rho(t))A(I\otimes\rho(t))\|&\,.
\ea
$$
Then, observing that $A$ is self-adjoint since $V$ is a real-valued, even function, one has
$$
\ba
\|(\rho(t)\otimes I)(I\otimes\rho(t))A(I\otimes\rho(t))\|^2&
\\
=
\|(\rho(t)\otimes I)((I\otimes\rho(t))A(I\otimes\rho(t)))^2(\rho(t)\otimes I)\|&
\\
=
\frac1{N^2}\|\la\psi(t,\cdot)|(V\star|\psi(t,\cdot)|^2)^2\psi(t,\cdot)\ra\rho(t)\otimes\rho(t)\|&
\\
\le
\frac1{N^2}\int_{\bR^d}(V\star|\psi(t,\cdot)|^2)^2(x)|\psi(t,x)|^2dx&
\\
\le\frac1{N^2}\|(V\star|\psi(t,\cdot)|^2)^2\|_{L^r}\|\psi(t,\cdot)^2\|_{L^{r'}}&
\\
=\frac1{N^2}\|V\star|\psi(t,\cdot)|^2\|^2_{L^{2r}}\|\psi(t,\cdot)\|^2_{L^{2r'}}&
\\
\le\frac1{N^2}\|V\|^2_{L^{2r}}\|\psi(t,\cdot)\|_{L^{2r'}}&\,.
\ea
$$
Indeed,
$$
\|V\star|\psi(t,\cdot)|^2\|_{L^{2r}}\le\|V\|_{L^{2r}}\||\psi(t,\cdot)|^2\|_{L^1}=\|V\|_{L^{2r}}
$$
since the solution of Hartree's equation satisfies $\|\psi(t,\cdot)\|_{L^2}=1$ for all $t\in\bR$.

The third term satisfies
$$
\ba
|\Tr(D_{N:2}(t)(\rho(t)\otimes p(t))Ap(t)^{\otimes 2})|&
\\
\le
\|(\rho(t)\otimes I)A(p(t)\otimes I)\| \|(I\otimes p(t))D_{N:2}(t)(I\otimes p(t))\|_1&
\\
=\|(\rho(t)\otimes I)A(p(t)\otimes I)\|\Tr((I\otimes p(t))D_{N:2}(t)(I\otimes p(t)))&
\\
\le\|(\rho(t)\otimes I)A\|\Tr(D_{N:1}p(t))&
\\
=\|(\rho(t)\otimes I)A\|\cE_N(t)&\,.
\ea
$$
Now
$$
\ba
\|(\rho(t)\otimes I)A\|^2=\|(\rho(t)\otimes I)A^2(\rho(t)\otimes I)\|&
\\
\le\left(2\|V^2\|_{L^{r}}+2\|(V\star|\psi(t,\cdot)|^2)^2\|_{L^r}\right)\||\psi(t,\cdot)|^2\|_{L^{r'}}&
\\
=\left(2\|V\|^2_{L^{2r}}+2\|V\star|\psi(t,\cdot)|^2\|^2_{L^{2r}}\right)\|\psi(t,\cdot)\|^2_{L^{2r'}}&
\\
\le 4\|V\|^2_{L^{2r}}\|\psi(t,\cdot)\|^2_{L^{2r'}}&\,,
\ea
$$
again because the solution $\psi$ of Hartree's equation satisfies $\|\psi(t,\cdot)\|_{L^2}=1$ for all $t\in\bR$.

Controlling the second term in $\cF_N(t)$ is slightly more complicated. 

We shall need the following elements of notation. First we denote by $p(t)$ the projection $p(t)=I_1-\rho(t)=I_1-|\psi(t,\cdot)\ra\la\psi(t,\cdot)|$. 

For each $j=1,\ldots,N$, we set $I_j:=I_{\fH_j}$, and for each $T\in\cL(\fH_1)$, we define $T_{j,N}\in\cL(\fH_N)$ for $j=1,\ldots,N$ as
$$
T_{j,N}:=I_{j-1}\otimes T\otimes I_{N-j}\,.
$$ 
For each $a\in\{0,1\}^{\{1,\ldots,N\}}$ and each $\pi\in\cL(\fH_1)$ such that $\pi^*=\pi=\pi^2$, we define
$$
P_N[a,\pi]:=\prod_{j=1}^N\pi_{j,N}^{1-a(j)}(I-\pi_{j,N})^{a(j)}
$$

\begin{Lem}\lb{L-SpecMN}
Let
$$
M_N(t)=\frac1N\sum_{j=1}^Np_{j,N}(t)
$$
Then
$$
M_N(t)=\sum_{k=1}^N\frac{k}{N}\Pi_{k,N}[\rho(t)]
$$
where
$$
\Pi_{k,N}[\rho(t)]:=\sum_{a\in\{0,1\}^{\{1,\ldots,N\}}\atop a(1)+\ldots+a(N)=k}P_N[a,\rho(t)]\,.
$$
Besides
$$
\Pi_{j,N}[\rho(t)]^*=\Pi_{j,N}[\rho(t)]\,,\quad\hbox{ and }\Pi_{j,N}[\rho(t)]\Pi_{k,N}[\rho(t)]=\de_{jk}\Pi_{j,N}[\rho(t)]
$$
for each $j,k=1,\ldots,N$, and
$$
\sum_{k=0}^N\Pi_{k,N}[\rho(t)]=I_{\fH_N}\,.
$$
\end{Lem}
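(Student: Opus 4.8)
The statement to prove, Lemma \ref{L-SpecMN}, is essentially a spectral decomposition of the "counting operator" $M_N(t)=\frac1N\sum_{j=1}^N p_{j,N}(t)$ in terms of the spectral projections $\Pi_{k,N}[\rho(t)]$ associated to the number of factors in which $p(t)=I-\rho(t)$ (rather than $\rho(t)$) appears. The whole proof is a computation inside the tensor algebra $\cL(\fH_N)=\cL(\fH_1)^{\otimes N}$ using that $\rho(t)$ and $p(t)=I_1-\rho(t)$ are complementary orthogonal projections on $\fH_1$; in particular $\rho(t)^2=\rho(t)$, $p(t)^2=p(t)$, $\rho(t)p(t)=p(t)\rho(t)=0$ and $\rho(t)+p(t)=I_1$. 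Since $t$ is a frozen parameter throughout, I would suppress it from the notation and write $\rho$, $p$, $p_{j,N}$, $P_N[a]:=P_N[a,\rho]$, $\Pi_{k,N}:=\Pi_{k,N}[\rho]$.

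\textit{Step 1: the $P_N[a]$ are pairwise orthogonal projections summing to the identity.} For $a\in\{0,1\}^{\{1,\ldots,N\}}$ we have $P_N[a]=\bigotimes_{j=1}^N q_j$ with $q_j=\rho$ if $a(j)=0$ and $q_j=p$ if $a(j)=1$. Each $q_j$ is a self-adjoint projection, so $P_N[a]^*=P_N[a]=P_N[a]^2$. For $a\neq b$ there is an index $j_0$ with $a(j_0)\neq b(j_0)$, hence the $j_0$-th tensor factor of $P_N[a]P_N[b]$ is $\rho p=0$ or $p\rho=0$; therefore $P_N[a]P_N[b]=\de_{ab}P_N[a]$. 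Finally $\sum_{a\in\{0,1\}^{\{1,\ldots,N\}}}P_N[a]=\bigotimes_{j=1}^N(\rho+p)=I_{\fH_N}$ by expanding the product. Grouping the $a$'s by $|a|:=a(1)+\ldots+a(N)=k$ gives $\Pi_{k,N}=\sum_{|a|=k}P_N[a]$, and the three displayed properties $\Pi_{j,N}^*=\Pi_{j,N}$, $\Pi_{j,N}\Pi_{k,N}=\de_{jk}\Pi_{j,N}$, $\sum_{k=0}^N\Pi_{k,N}=I_{\fH_N}$ follow immediately from those for the $P_N[a]$ (orthogonality of the $P_N[a]$ makes a sum of them again a projection, and makes two such sums over disjoint index sets orthogonal).

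\textit{Step 2: the identity $M_N=\sum_{k=1}^N\frac kN\Pi_{k,N}$.} Here the key observation is that $p_{j,N}$ acts diagonally on the decomposition $I_{\fH_N}=\sum_a P_N[a]$: I would check that $p_{j,N}\,P_N[a]=a(j)\,P_N[a]$, because the $j$-th tensor factor of the product is $p\,\rho=0$ when $a(j)=0$ and $p\,p=p$ when $a(j)=1$, while all other factors are unchanged. Summing over $j$, $\bigl(\sum_{j=1}^N p_{j,N}\bigr)P_N[a]=|a|\,P_N[a]$. Now multiply the resolution of identity $I_{\fH_N}=\sum_{a}P_N[a]$ on the right by $\sum_{j}p_{j,N}$ and regroup by $|a|=k$:
$$
\sum_{j=1}^N p_{j,N}=\Bigl(\sum_{j=1}^N p_{j,N}\Bigr)\sum_{k=0}^N\Pi_{k,N}=\sum_{k=0}^N k\,\Pi_{k,N}=\sum_{k=1}^N k\,\Pi_{k,N}\,,
$$
and divide by $N$ to get $M_N=\sum_{k=1}^N\frac kN\Pi_{k,N}$, as claimed.

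\textit{What needs care.} There is no serious analytic obstacle: everything lives in the bounded operator algebra $\cL(\fH_N)$ and the sums are finite, so no convergence or domain issue arises. The only place to be attentive is bookkeeping with the tensor-factor positions — making precise that $p_{j,N}=I_{j-1}\otimes p\otimes I_{N-j}$ multiplied by $P_N[a]=\bigotimes_j q_j$ only affects the $j$-th slot, which is the content of the elementary identities $p\rho=0$, $p^2=p$. I would state that one-line slot-by-slot computation explicitly, since it is the engine of both steps, and then the regrouping-by-cardinality argument closes the proof.
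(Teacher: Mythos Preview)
Your proof is correct and complete. The paper does not actually prove this lemma; it merely remarks that ``these elementary computations are summarized in formula (6) of \cite{Pickl}'' and moves on. Your argument is exactly the natural one: exploit that $\rho(t)$ and $p(t)$ are complementary orthogonal projections to see that the $P_N[a,\rho(t)]$ form a complete family of pairwise orthogonal projections, then observe that each $p_{j,N}$ acts as multiplication by $a(j)$ on $P_N[a,\rho(t)]$, so $\sum_j p_{j,N}$ acts as multiplication by $|a|$. Nothing is missing.
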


\smallskip
In other words, the relation
$$
M_N(t)=\sum_{k=0}^N\frac{k}{N}\Pi_{k,N}[\rho(t)]
$$
is the spectral decomposition of the self-adjoint element $M_N(t)$ of $\cL(\fH_N)$. These elementary computations are summarized in formula (6)
of \cite{Pickl}. 

Define, for each $\a>0$,
$$
M_N^\a(t)=\sum_{k=1}^N\left(\frac{k}{N}\right)^\a\Pi_{k,N}[\rho(t)]
$$
and
$$
M_N^{-\a}(t):=\sum_{k=1}^N\left(\frac{k}{N}\right)^{-\a}\Pi_{k,N}[\rho(t)]\,,
$$
that is the pseudo-inverse of $M_N^\a(t)$ extended by $0$ on $\Ker(M_N(t))$. In other words
$$
M_N^\a(t)M_N^{-\a}(t)=I_N-\Pi_{0,N}[\rho(t)]
$$
In particular
$$
M_N^\a(t)M_N^{-\a}(t)p_{j,N}(t)=p_{j,N}(t)
$$
for each $j=1,\ldots,N$.

After this observation, the term that remains to be estimated, i.e.
$$
\Tr(D_{N:2}(t)\rho(t)^{\otimes 2}A(I-\rho(t))^{\otimes 2})
$$
is recast as
$$
\Tr(D_N(t)\rho_{1,N}(t)\rho_{2,N}(t)AM_N^{1/2}(t)M_N^{-1/2}(t)p_{1,N}(t)p_{2,N}(t))\,.
$$
Then
$$
\ba
|\Tr(D_N(t)\rho_{1,N}(t)\rho_{2,N}(t)AM_N^{1/2}(t)M_N^{-1/2}(t)p_{1,N}(t)p_{2,N}(t))|&
\\
\le
\|M_N^{-1/2}(t)p_{1,N}(t)p_{2,N}(t)\Psi_N(t,\cdot))\|_{\fH_N}&
\\
\times\|M_N^{1/2}(t)A\rho_{1,N}(t)\rho_{2,N}(t)\Psi_N(t,\cdot))\|_{\fH_N}&\,.
\ea
$$

The second factor on the right hand side is mastered by observing that
$$
\ba
\|M_N^{1/2}(t)A\rho_{1,N}(t)\rho_{2,N}(t)\Psi_N(t,\cdot))\|^2_{\fH_N}
\\
=\la\Psi_N(t,\cdot)|\rho_{1,N}(t)\rho_{2,N}(t)AM_N(t)A\rho_{1,N}(t)\rho_{2,N}(t)\Psi_N(t,\cdot))\ra
\\
=\tfrac{N-2}N\la\Psi_N(t,\cdot)|\rho_{1,N}(t)\rho_{2,N}(t)Ap_{3,N}(t)A\rho_{1,N}(t)\rho_{2,N}(t)\Psi_N(t,\cdot))\ra
\\
+
\tfrac2N\la\Psi_N(t,\cdot)|\rho_{1,N}(t)\rho_{2,N}(t)Ap_{1,N}(t)A\rho_{1,N}(t)\rho_{2,N}(t)\Psi_N(t,\cdot))\ra
\ea
$$
since $\Psi_N(t,\cdot)$ is symmetric. The first summand is bounded by
$$
\ba
\tfrac{N-2}N\|\rho_{1,N}(t)A^2\rho_{1,N}(t)\|_{\cL(\fH_N)}\|p_{3,N}(t)\Psi_N(t)\|^2_{\fH_N}&
\\
\le
\|\rho_{1,N}(t)A^2\rho_{1,N}(t)\|_{\cL(\fH_N)}\cE_N(t)&
\\
\le
4\|V\|^2_{L^{2r}}\|\psi(t,\cdot)\|^2_{L^{2r'}}\cE_N(t)&\,,
\ea
$$
while the second is bounded by
$$
\tfrac2N\|\rho_{1,N}(t)A^2\rho_{1,N}(t)\|_{\cL(\fH_N)}\le\tfrac8N\|V\|^2_{L^{2r}}\|\psi(t,\cdot)\|^2_{L^{2r'}}\,.
$$

Finally, we use again the fact that $\Psi_N(t,\cdot)$ is symmetric to conclude that
$$
\ba
N(N-1)\|M_N^{-1/2}(t)p_{1,N}(t)p_{2,N}(t)\Psi_N(t,\cdot))\|^2_{\fH_N}&
\\
=
N(N-1)\la\Psi_N(t,\cdot),M_N^{-1}(t)p_{1,N}(t)p_{2,N}(t)\Psi_N(t,\cdot)\ra&
\\
=2\sum_{1\le j<k\le N}\la\Psi_N(t,\cdot),M_N^{-1}(t)p_{j,N}(t)p_{k,N}(t)\Psi_N(t,\cdot)\ra&
\\
\le N^2\la\Psi_N(t,\cdot),M_N^{-1}(t)M_N^2(t)\Psi_N(t,\cdot)\ra&
\\
=N^2\la\Psi_N(t,\cdot),M_N(t)\Psi_N(t,\cdot)\ra&
\\
=N^2\cE_N(t)&\,.
\ea
$$
Putting all these estimates together establishes the bound in Proposition \ref{P-PicklBnd}

Applying Gronwall's estimate, we deduce from Proposition \ref{P-PicklBnd} the following result, stated in space dimension 3 for the sake of simplicity.
The assumptions on the potential $V$ have been adapted to match those in Kato's Theorem \ref{T-Kato}.

\begin{Thm}\lb{T-ThmPickl}
Let $V$ be a real-valued even function such that
$$
V\in L^{2r}(\bR^3)\quad\hbox{ and }V\rstr_{\bR^3\setminus B(0,R)}\in L^\infty
$$
for some $r\ge 1$ and some $R>0$. 

Let $\psi^{in}\in L^2(\bR^3)$ be such that $\|\psi^{in}\|_{L^2}=1$, and assume that the Cauchy problem for Hartree's equation with initial data $\psi^{in}$
has a unique solution $\psi\in C(\bR;L^{2r'}(\bR^3))$, where $r'=\frac{r}{r-1}$ is the dual H\"older exponent of $r$. Let $\Psi_N$ be the solution of the 
Cauchy problem for the $N$-particle Schr\"odinger equation with initial data $(\psi^{in})^{\otimes N}$, and $D_N(t):=|\Psi_N(t,\cdot)\ra\la\Psi_N(t,\cdot)|$.

Then, for each $t\in\bR$, one has
$$
D_{N:1}(t)\to|\psi(t,\cdot)\ra\la\psi(t,\cdot)|\hbox{ in operator norm as }N\to+\infty\,,
$$
with the estimate
$$
\ba
\Tr&(D_{N:1}(t)(I-|\psi(t,\cdot)\ra\la\psi(t,\cdot)|))
\\
&\le\frac1N\left(\exp\left|\int_0^t10\|V\|_{L^{2r}}\|\psi(s,\cdot)\|_{L^{2r'}}ds\right|-1\right)\,.
\ea
$$
\end{Thm}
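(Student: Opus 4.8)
The plan is to run Pickl's Gronwall argument on the single scalar quantity
$$
\cE_N(t):=\Tr\left(D_{N:1}(t)(I-|\psi(t,\cdot)\ra\la\psi(t,\cdot)|)\right)\,,
$$
which is precisely the left-hand side of the estimate to be proved. First I would record the elementary facts: $\cE_N(t)$ is well defined because $D_{N:1}(t)\in\cL^1(L^2(\bR^3))$ with $\Tr D_{N:1}(t)=1$ while $I-|\psi(t,\cdot)\ra\la\psi(t,\cdot)|$ is bounded; one has $\cE_N(t)=1-\la\psi(t,\cdot)|D_{N:1}(t)\psi(t,\cdot)\ra\in[0,1]$ since $D_{N:1}(t)\ge 0$; and, crucially, $\cE_N(0)=0$ because the initial data is factorized, so $D_{N:1}(0)=|\psi^{in}\ra\la\psi^{in}|=\rho(0)$ and $\Tr(\rho(0)(I-\rho(0)))=0$. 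I would also note that under the hypotheses on $V$ (for $r\ge1$ one has $V\rstr_{B(0,R)}\in L^2$ and $V\rstr_{\bR^3\setminus B(0,R)}\in L^\infty$) Kato's theorem (Theorem \ref{T-Kato}) applies, so $e^{-itH_N}$ is a well-defined unitary group, and that $\Psi_N(t,\cdot)$ stays symmetric for all $t$ by Lemma \ref{L-Sym} together with the symmetry of $(\psi^{in})^{\otimes N}$; this symmetry is what licenses the cancellations used in the two results invoked below.

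The heart of the matter is already packaged in the two results stated just above the theorem. The first is the identity $\dot\cE_N(t)=\cF_N(t)$, obtained by differentiating $\Tr(D_{N:1}(t)\rho(t))$, inserting the von Neumann equation for $D_N$ and the Hartree equation for $\rho$, and observing that the free-Schr\"odinger (kinetic) contributions assemble into $\Tr([\tfrac12\Dlt,D_{N:1}(t)\rho(t)])=0$. The second is Proposition \ref{P-PicklBnd}, which bounds $\cF_N(t)$ in terms of $\cE_N(t)$ and $1/N$:
$$
|\cF_N(t)|\le 10\|V\|_{L^{2r}}\|\psi(t,\cdot)\|_{L^{2r'}}\left(\cE_N(t)+\tfrac1N\right)\,.
$$
Taking both for granted, set $c(s):=10\|V\|_{L^{2r}}\|\psi(s,\cdot)\|_{L^{2r'}}$ and $g(t):=\cE_N(t)+\tfrac1N$. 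Then $|\dot g(t)|=|\dot\cE_N(t)|=|\cF_N(t)|\le c(t)g(t)$, and Gronwall's inequality (applied on $[0,t]$ for $t\ge0$, and on $[t,0]$ with the obvious sign change for $t<0$, whence the absolute value) yields $g(t)\le g(0)\exp\big|\int_0^tc(s)ds\big|$. Since $g(0)=\cE_N(0)+\tfrac1N=\tfrac1N$, this rearranges into
$$
\cE_N(t)\le\frac1N\left(\exp\left|\int_0^t10\|V\|_{L^{2r}}\|\psi(s,\cdot)\|_{L^{2r'}}ds\right|-1\right)\,,
$$
which is the claimed estimate. Because $\psi\in C(\bR;L^{2r'}(\bR^3))$, the map $s\mapsto\|\psi(s,\cdot)\|_{L^{2r'}}$ is locally bounded, so for each fixed $t$ the right-hand side is $O(1/N)$; hence $\cE_N(t)\to0$, and the first lemma of the section (the one stating that $\Tr(D_{N:1}(I-|\psi\ra\la\psi|))\to0$ is equivalent to operator-norm convergence $D_{N:1}\to|\psi\ra\la\psi|$) upgrades this to convergence in operator norm.

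Given that Proposition \ref{P-PicklBnd} and the identity $\dot\cE_N=\cF_N$ are available, the assembly above is essentially mechanical, so the one genuine issue I expect is regularity: to differentiate $\cE_N$ in $t$ one wants $\Psi_N(t,\cdot)$ in (the form domain of) $H_N$ and $\psi(t,\cdot)$ regular enough for the Hartree equation to hold in a strong sense, whereas the hypotheses only assume $\psi^{in}\in L^2$. I would resolve this either by proving the identity and the estimate first for $\psi^{in}\in H^1(\bR^3)$ --- where Proposition \ref{P-ExUnHartree} and standard Schr\"odinger theory supply the needed smoothness --- and then passing to the general case by density, both $\cE_N(t)$ and the right-hand bound depending continuously on $\psi^{in}$ in the relevant norms, or, alternatively, by working throughout with the mild (Duhamel) formulation so that no pointwise time derivative is ever required and Gronwall is applied in integral form. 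This regularization bookkeeping, rather than any individual estimate, is the step that needs care.
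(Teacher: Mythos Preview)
Your proposal is correct and follows exactly the paper's approach: the paper simply says ``Applying Gronwall's estimate, we deduce from Proposition \ref{P-PicklBnd} the following result,'' and you have unpacked precisely that step, using the two preceding lemmas ($\dot\cE_N=\cF_N$ and the equivalence of $\cE_N\to0$ with operator-norm convergence) together with the bound of Proposition \ref{P-PicklBnd} and the initial condition $\cE_N(0)=0$. Your additional remarks on verifying Kato's hypotheses, on the propagation of symmetry, and on the regularity needed to differentiate $\cE_N$ are sensible refinements that the paper leaves implicit.
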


\smallskip
Some remarks are in order after this rather remarkable result.

\smallskip
The first obvious advantage of Pickl's argument is its simplicity. The proof of Proposition \ref{P-PicklBnd} and the resulting Theorem \ref{T-ThmPickl} provide 
a much simpler derivation of the mean field limit for potentials with a Coulomb singularity at the origin than the Erd\"os-Yau proof \cite{ErdosYau02} based 
on the BBGKY hierarchy. According to Theorem 3.1 in \cite{GinVeloHartree}, if $\psi^{in}\in H^{k}(\bR^3)$, then the unique mild solution $\psi$ of Hartree's 
equation belongs to $C(\bR_+;H^k(\bR^3))$ under the same assumption on $V$ as in Theorem \ref{T-ThmPickl}. Therefore $\psi\in C(\bR;L^{2r'}(\bR^3))$ 
by Sobolev's embedding theorem provided that $k$ is chosen large enough. In fact, even stronger singularities at the origin than Coulomb in the interaction potential can be handled with this method. 

A second advantage of Pickl's method is its versatility. For instance, it can also be applied to the derivation of the Gross-Pitaevski: see \cite{Pickl2}. In this
latter reference, Pickl uses appropriate nonlinear functions of the operator $M_N(t)$, which are defined in terms of the spectral decomposition  provided by
Lemma \ref{L-SpecMN}. The same type of method also applies to study the convergence rate in the mean field limit \cite{KnowlesPickl} in a more general
and systematic way than in Theorem \ref{T-ThmPickl}Ê above.

On the other hand, unlike methods based on the BBGKY hierarchy, Pickl's argument \cite{Pickl} seems to be limited to pure states and to bosonic particles 
so far. Therefore, in spite of its appealing simplicity, Pickl's method does not really supersedes all previous derivations of the mean field limits in quantum
mechanics.

Besides, even if BBGKY hierarchies are a rather cumbersome mathematical object, they are ubiquitous in the field of nonequilibrium statistical physics,
and appear in the treatment of quantum as well as classical mechanical models. This accounts for the importance of these hierarchies in these lectures.

\section[Afterword]{Afterword}

These notes are an updated and (much) expanded version of my earlier (2003) lectures \cite{FGForges}. I express my gratitude to my colleagues A. Muntean,
J. Radema\-cher and A. Zagaris for their kind invitation to the NDNS+ 2012 Summer School at Universiteit Twente in Enschede. The first part of these notes
(sections 1-6) were also presented in a joint course with L. Desvillettes at the 37th Summer School on Mathematical Physics in Ravello (2012). I am also very 
grateful to Profs. T. Ruggeri and S. Rionero, and to the participants for their positive feedback, as well as for the most enjoyable hospitality of the Ravello school.

As in \cite{FGForges}, the core of the material presented here is the mean field limit of $N$-particle systems governed by the equations of classical mechanics 
in the case where the $N$-particle empirical measure is an exact weak solution of the mean field PDE, and the mean field limit of the $N$-particle Schr\"odinger 
equation.

Unlike in \cite{FGForges}, I have chosen to introduce both the approach based on empirical measures and the method based on the BBGKY hierarchy on the 
mean field limit of the $N$-particle systems in classical mechanics with Lipschitz continuous interaction kernel $K$. The discussion of the propagation of chaos 
in section \ref{S-CHAO} is much more detailed here than in \cite{FGForges}. The connection between the empirical measure and the hierarchy approaches is 
hopefully clarified by the discussion in sections \ref{SS-DobruBBGKY}, \ref{SS-QuantPropaChao} and \ref{SS-Lions}. 

The most recent developments --- in particular results obtained since the publication of \cite{FGForges} --- are surveyed in sections \ref{S-MFClassFurther}
and \ref{S-MFQuantFurther}. However, I have chosen to give less many details on the derivation of the time-dependent Hartree-Fock equations from the
$N$-particle Schr\"odinger equation. The discussion of the time-dependent Hartree-Fock equations in the present notes is limited to a presentation of the
equations and to the statement of the main result obtained on this problem. My earlier notes \cite{FGForges} contain a sketch of the proof of the mean field
limit leading to the time-dependent Hartree-Fock equations; the interested reader is referred to \cite{FGForges} for more details in this direction.
 
I have limited myself to a discussion of evolution problems. Yet there are many important results bearing on steady problems: see for instance the work
of Caglioti-Lions-Marchioro-Pulvirenti \cite{CaLiMaPu92} and Kie\ss ling \cite{Kiess} on the Onsager statistical theory of vortices in incompressible fluids.
I have also neglected to discuss the similarities and the differences between the mean field limit of large particles systems and the convergence of particle
methods for mean field equations. The interested reader will find information on these issues in chapter 5 of \cite{MarchioPulvi} in the context of the vortex
formulation of the Euler equations for  two dimensional incompressible fluids. See also, in the context of Vlasov type equations, \cite{Batt,CottRav,Woll} 
and the references therein.

I have also deliberately chosen to avoid discussing results that would involve both the mean field limit (as the particle number $N$ tends to infinity) and
the classical limit of quantum mechanics (assuming that the typical action of an individual particle is large compared to Planck's constant $h$). One of the
reasons for this choice is that this part of the theory is technically more involved and perhaps not as complete as what has been presented above. Yet
there are many important contributions of in this directions, and the interested reader should have a look at \cite{Hepp,NarnhoSewell}, to quote only a few 
references exploring that topic. 

Another conspicuous omission in the second part of these notes is the formalism of Fock spaces and quantum field theory. This mathematical setting allows 
considering data where the particle number can be infinite --- more precisely, the particle number $N$ is an unbounded operator in this formalism. The mean 
field limit can be formulated in this setting: see for instance \cite{BGGM2,Froh1,Froh2}, together with the papers \cite{Hepp,GiniVelo} already mentioned above, 
involving the classical limit in addition to the case of infinitely many particles. Interestingly, there is a striking analogy between the classical limit of quantum 
mechanics in formulated in terms of Egorov's theorem (Theorem 25.3.5 in \cite{Horm4}) and the point of view introduced in \cite{Froh1,Froh2} for the quantum
mean field limit: see \cite{Froh1}Êon p. 1024.

I am indebted in various ways to several colleagues for the material presented in these notes. First I wish to thank my collaborators R. Adami, C. Bardos, 
B. Ducomet, L. Erd\"os, A. Gottlieb, N. Mauser, V. Ricci, A. Teta and H.-T. Yau, with whom I have worked on mean field limits on various occasions. Besides, 
the material presented here is in (many) places far from original, and the present notes owe much to earlier presentations of mean field limits --- such as 
\cite{Sznitman} or unpublished notes by C. Villani, as well as lectures by P.-L. Lions at the Coll\`ege de France. I also learned a lot from a series of lectures
by C. Mouhot presenting his joint work with S. Mischler and B. Wennberg \cite{MischlerMouhot, MischlerMouhotWenn} on quantitative estimates for the
propagation of chaos. Most of what I know on the derivation of Vlasov type equations from classical particle dynamics comes from discussions with J. Batt, 
H. Neunzert, J. Wick on various occasions in the late 80s. I learned of Dobrushin's beautiful estimate \cite{Dobrushin} from a lecture given by M. Pulvirenti 
at Ecole Normale Sup\'erieure in 1997. 

Another key idea in these lectures is the use of the Nirenberg-Ovcyannikov abstract version of the Cauchy-Kovalevska theorem to handle BBGKY type hierarchies. I got acquainted with this idea in a seminar given by S. Ukai at the Sone seminar in 1998 in Kyoto. Ukai used this technique in a slightly
different context (the derivation of the Boltzmann equation from the dynamics of a large number of hard spheres in the Boltzmann-Grad limit), and for a 
slightly different purpose (he wanted to simplify the uniform stability estimates in Lanford's argument). With his usual modesty, Ukai presented his result 
as a failed attempt at simplifying Lanford's notoriously intricate proof, and did not immediately published it --- I came across his paper \cite{Ukai01} only
very recently. In the case of the infinite mean field hierarchies for quantum $N$-particle dynamics, I find the argument based on the abstract variant the 
Cauchy-Kovalevska theorem much more elegant and illuminating than the conventional argument based on the Duhamel series as in Lanford's proof, 
and thus I have chosen to present the former in these notes, as in \cite{FGForges}. The argument based on the Duhamel series can be found in many 
places --- see for instance the proof of Theorem 5.7 on p. 610 in \cite{Spohn80}.

S. Ukai was at the origin of several fundamental results on the mathematical analysis of kinetic models --- such as the existence and uniqueness of global
solutions of the Boltzmann equation for all initial data near Maxwellian equilibrium, the existence and uniqueness of global solutions of the Vlasov-Poisson 
system \cite{UkaiOka} in space dimension $2$, the hydrodynamic limit of the Boltzmann equation leading to the incompressible Navier-Stokes equations
in the regime of  ``small'' initial data. In view of his influence on some of the topics discussed above, these notes are dedicated to his memory.


\printindex

\end{document}